\newtheorem{prop}{Proposition}[section]
\newtheorem{coro}[prop]{Corollary}
\newtheorem{thm}[prop]{Theorem}
\newcommand{\Alt}{\mathrm{A}} 
\newcommand{\Sym}{\mathrm{S}}
\begin{document}
\title{Highest rank of a polytope for $A_n$}
\author[P. J. Cameron]{Peter J. Cameron}
\address{Peter J. Cameron,
School of Mathematics and Statistics, 
University of St Andrews,
North Haugh,
St Andrews, Fife KY16 9SS,
UK
}
\email{pjc20@st-andrews.ac.uk}

\author[M. E. Fernandes]{Maria Elisa Fernandes}
\address{Maria Elisa Fernandes,
Center for Research and Development in Mathematics and Applications, Department of Mathematics, University of Aveiro, Portugal
}
\email{maria.elisa@ua.pt}

\author[D. Leemans]{Dimitri Leemans}
\address{Dimitri Leemans, Department of Mathematics, University of Auckland, Private Bag 92019, Auckland 1142, New Zealand
}
\email{d.leemans@auckland.ac.nz}

\author[M. Mixer]{Mark Mixer}
\address{Mark Mixer, Department of Applied Mathematics, Wentworth Institute of Technology, Boston, MA 02115, USA
}
\email{mixerm@wit.edu}

\date{}
\maketitle

\begin{abstract}
We prove that the highest rank of a string C-group constructed from an alternating group $\Alt_n$ is 0 if $n=3, 4, 6, 7, 8$; 3 if $n=5$; 4 if $n=9$; 5 if $n=10$; 6 if $n=11$; and $\lfloor\frac{n-1}{2}\rfloor$ if $n\geq 12$.
This solves
a conjecture made by the last three authors in 2012. 
\end{abstract}

\section{Introduction}
Given a group $G$ and a set of involutions $S:=\{\rho_0, \ldots, \rho_{r-1}\}$ which generate $G$, such that 
\[ \forall i,j \mathrm{\;with\;} |i-j|>1, \rho_i \mathrm{\;and\;} \rho_j \mathrm{\;commute\;(the\;\emph{string\;property}}),
\]
we call the pair $(G,S)$ a {\em string group generated by involutions} (or {\em sggi} for short).
We denote by $\Gamma_I$ the group generated by $\{\rho_i:i\in I\}$ for $I\subseteq\{0,\ldots,r-1\}$.
The pair $(G,S)$ satisfies the {\em intersection property} if for every $I,J \subseteq\{0,\ldots,r-1\}$,
$\Gamma_I\cap\Gamma_J=\Gamma_{I\cap J}$.
A sggi $\Gamma:= (G,S)$ that satisfies the intersection property is called a {\em string C-group} of {\em rank $|S|$}.
If $\Gamma:=(G,S)$ is a string C-group, we sometimes will abuse language and talk about the group $\Gamma$ and denote the {\em rank} of $G$ as the largest size of a set $S$ of involutions such that $\Gamma:=(G,S)$ is a string C-group.
For $i \in \{0, \ldots, r-1\}$, we denote by $\Gamma_i$ the group generated by all the elements of $S$ except $\rho_i$.

It is known that string C-groups are automorphism
groups of abstract regular polytopes and that, given an abstract regular polytope and a base flag of the polytope, one can construct a string C-group whose group $G$ is the automorphism group of the polytope~\cite[Section 2E]{ARP}. Hence the study of string C-groups has interests not only in group theory, but also in geometry.

Classifications of string C-groups from almost simple groups started with experimental work of Leemans and Vauthier~\cite{LVatlas} (see also~\cite{Halg, HHalg, LMalg, CLM2012} for more experimental results) and quickly led to the determination of the rank of a string C-group of Suzuki type~\cite{Leemans:2006}. A series of results then followed for the almost simple groups with socle $\mathrm{PSL}(2,q)$~\cite{ls07,ls09,DiJuTho}, groups $\mathrm{PSL}(3,q)$ and $\mathrm{PGL}(3,q)$~\cite{Brooksbank:2010}, groups $\mathrm{PSL}(4,q)$~\cite{Brooksbank:2015}, small Ree groups~\cite{Leemans:2015} and finally, symmetric groups~\cite{fl,Corr} and alternating groups~\cite{flm,flm2}. In particular, only the last two families gave rise to string C-groups of arbitrary large rank. It was proved in \cite{transitive} that the maximal rank of a string C-group for transitive subgroups of $S_n$ that are neither $\Alt_n$ nor $\Sym_n$ is $\frac{n}{2}+1$.
 A symmetric group $\Sym_n$ is known to have rank $n-1$~\cite{fl} and an alternating group $\Alt_n$ with $n\geq 12$ is known to have rank at least $\lfloor\frac{n-1}{2}\rfloor$ when $n\geq 12$~\cite{flm2}.  It is conjectured in~\cite{flm2} that this is the highest possible rank for a string C-group of alternating type.
In this paper, we prove this conjecture. Our main result is as follows.

\begin{thm}\label{maintheorem}
The rank of $\Alt_n$ is $0$ if $n=3, 4, 6, 7, 8$; $3$ if $n=5$; $4$ if $n=9$; $5$ if $n=10$; $6$ if $n=11$ and $\lfloor\frac{n-1}{2}\rfloor$ if $n\geq 12$.
\end{thm}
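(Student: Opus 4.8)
The plan is to separate the small cases from the generic range. For $n\leq 11$ the stated values can be confirmed by a direct, computer-assisted enumeration of the string C-group representations of $\Alt_n$, of exactly the kind recorded in the atlas computations cited in the introduction; here one verifies in particular that $\Alt_6,\Alt_7,\Alt_8$ carry no string C-group of rank $\geq 3$, that $\Alt_5$ attains rank $3$, and that $\Alt_9,\Alt_{10},\Alt_{11}$ attain ranks $4,5,6$. For $n\geq 12$ the lower bound $\lfloor\frac{n-1}{2}\rfloor$ is already furnished by the construction of~\cite{flm2}, so the whole content of the theorem is the matching upper bound: every string C-group $(\Alt_n,S)$ with $n\geq 12$ satisfies $|S|\leq\lfloor\frac{n-1}{2}\rfloor$. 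Writing $r=|S|$, this is equivalent to the single inequality $n\geq 2r+1$, which is what I would aim to prove.

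To attack the upper bound I would work throughout with the permutation representation graph (CPR graph) of $(\Alt_n,S)$ on the natural $n$ points. With $S=\{\rho_0,\dots,\rho_{r-1}\}$, each $\rho_i$ is an involution and so acts as a matching on the $n$ vertices; because $\rho_i\in\Alt_n$ has no fixed transposition available, it is a product of at least two disjoint transpositions, so its support has size at least $4$. Transitivity of $\Alt_n$ forces the CPR graph to be connected. The string property enters as a local constraint on the matchings: for $|i-j|>1$ the group $\langle\rho_i,\rho_j\rangle$ is a Klein four-group (or smaller), so every connected component carrying only the labels $i$ and $j$ has at most $4$ vertices, and in particular $\rho_j$ permutes the label-$i$ edges among themselves. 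The target $n\geq 2r+1$ should then be read as the $\Alt_n$-analogue of the symmetric count: where the extremal example for $\Sym_n$ uses single transpositions in a path (giving $n\geq r+1$), the parity obstruction to single transpositions in $\Alt_n$ roughly doubles the support that must be consumed.

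The engine making this rigorous is an induction on $n$ through the maximal sections $\Gamma_0=\langle\rho_1,\dots,\rho_{r-1}\rangle$ and $\Gamma_{r-1}=\langle\rho_0,\dots,\rho_{r-2}\rangle$, each of which is a string C-group of rank $r-1$ by the intersection property. I would analyse the action of $\Gamma_{r-1}$ on the $n$ points. If it is transitive of degree $n$ and is neither $\Alt_n$ nor $\Sym_n$, then~\cite{transitive} bounds its rank by $\frac{n}{2}+1$, immediately constraining $r$; if it is $\Sym_m$ on its support then the rank bound $m-1$ of~\cite{fl} applies. Otherwise $\Gamma_{r-1}$ is intransitive or imprimitive, its orbits (or blocks) partition the $n$ points, and restricting the string C-group structure to these pieces produces string C-groups of alternating or symmetric type on fewer points, to which the inductive hypothesis for $\Alt_m$ and the symmetric bound apply piecewise. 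Adding back $\rho_0$, which must link the orbits of $\Gamma_{r-1}$ to restore connectivity, and summing the resulting point-counts, should deliver $n\geq 2r+1$.

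The hard part will be the case analysis for the structure of the sections $\Gamma_i$ together with the way $\rho_0$ (respectively $\rho_{r-1}$) glues the pieces back together. One must exclude, or tightly bound, precisely the configurations where a naive count would permit $r>\frac{n-1}{2}$: sections that are imprimitive with many small blocks, and families of generators that share most of their support. Controlling these reduces to a delicate local study of the CPR graph near its ends, namely how a single involution can extend a string C-group on $m$ points to one on $n$ points without violating the intersection property. Finally, the sporadic values at $n=5,9,10,11$ (note that $\Alt_{11}$ reaches rank $6$, exceeding the generic value at $n=12$) signal that the inductive base and the exceptional primitive embeddings of small alternating groups must be treated by hand; making sure the induction never silently assumes $n$ large, and that every exceptional section is accounted for, is the most delicate bookkeeping in the argument.
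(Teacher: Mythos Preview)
Your outline correctly identifies the small-case computations, the lower bound from~\cite{flm2}, and the target inequality $n\geq 2r+1$, and you rightly note that parity forces each $\rho_i$ to have support at least $4$. However, the proposed inductive engine has two genuine gaps.

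First, in the transitive case the cited bounds are too weak. If $\Gamma_{r-1}$ is transitive on $n$ points and is neither $\Alt_n$ nor $\Sym_n$, the result of~\cite{transitive} gives only $r-1\leq \frac{n}{2}+1$, hence $r\leq \frac{n}{2}+2$, which does not yield $r\leq\lfloor\frac{n-1}{2}\rfloor$. The paper must (and does) work much harder here: when some $\Gamma_i$ is primitive it invokes Mar\'oti's bounds on primitive group orders together with Conder's lower bound $|G|\geq 2^{2d-1}$ to obtain $r-1\leq\frac{n-3}{2}$; when $\Gamma_i$ is transitive imprimitive, an elaborate analysis of the block decomposition (the sets $L$, $C$, $R$ of generators, with separate treatments of block sizes $2$, number of blocks $2$, and the generic case) is required to squeeze out the extra constants.

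Second, and more fundamentally, your intransitive step does not go through as stated. When $\Gamma_{r-1}$ (or any $\Gamma_i$) is intransitive, the restriction of the generators to an orbit is an sggi, but it is \emph{not} in general a string C-group (the intersection property can fail on restriction), nor is it in general an alternating group, nor are the restricted generators independent. So you cannot simply apply the inductive hypothesis ``piecewise'' and sum. The paper's substitute for this naive induction is the machinery of \emph{fracture graphs} and \emph{$2$-fracture graphs}: one selects, for each $i$, one or two $i$-edges joining different $\Gamma_i$-orbits, and studies the resulting labelled subgraph on $n$ vertices. The key counting fact is that a suitably chosen $2$-fracture graph has $2r$ edges and can be normalised (via a sequence of edge-swap lemmas) to have at most one cycle per component, forcing $2r\leq n-1$; when no $2$-fracture graph exists, a long and delicate case analysis (Sections~\ref{s:2frac} and~\ref{s:no2frac}) handles the residual configurations. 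This graph-theoretic apparatus is precisely what converts the parity observation ``each $\rho_i$ has at least two transpositions'' into the global inequality $n\geq 2r+1$, and it is absent from your proposal.
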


The cases where $n\leq 11$ had already been dealt with the use of {\sc Magma}~\cite{flm}.
In this paper, we show that if $\Gamma:=(\Alt_n,S)$ is a string C-group and $n\geq 12$, then $|S| \leq \lfloor\frac{n-1}{2}\rfloor$.
In some parts of this proof we use induction over $n$.
The proof is divided into three parts. In Sections~\ref{s:prim} and
\ref{s:imprim}, we deal with the case where some subgroup $\Gamma_i$ is
primitive or transitive imprimitive, respectively, and our main tool here is permutation group theory. In the remainder of the
    paper we have to deal with the case where all $\Gamma_i$'s are intransitive.
Our main tool for this case is the use of \emph{fracture graphs}; but these
are also used elsewhere, so we give a brief introduction here.

Let $\Gamma=\langle\rho_0,\,\ldots,\,\rho_{r-1}\rangle$ be an sggi acting as a permutation group on a set $\{1,\,\ldots,\,n\}$.
We define the {\it permutation representation graph} $\mathcal{G}$ as the $r$-edge-labeled multigraph with $n$ vertices and with a single $i$-edge $\{a,\,b\}$ whenever $a\rho_i=b$ with $a\neq b$.  
Suppose we have a sggi $\Gamma$ which is a transitive subgroup of
the symmetric group $S_n$, such that every subgroup $\Gamma_i$ is intransitive.
Then, for each $i$, the permutation $\rho_i$ has a cycle whose points lie in
different $\Gamma_i$-orbits. Choosing one such cycle for each $i$, and
regarding them as the edges of a graph on the vertex set $\{1,\ldots,n\}$,
we obtain a \emph{fracture graph} for $\Gamma$. The fracture graph is of
course not unique, and indeed much of our proof involves showing how to
replace a fracture graph by a more convenient one.

If $\Gamma$ is contained in the group of even permutations (as 
in our main theorem), then each permutation $\rho_i$ has at least two cycles.
If it happens that for each $i$ we can find two pairs of points in
different $\Gamma_i$-orbits, then taking an $i$-edge between each of these pairs of points we obtain a
\emph{$2$-fracture graph}. Section~\ref{s:2frac} handles the case where a
$2$-fracture graph exists. Section~\ref{s:no2frac} the case where it
does not, and we then use fracture graphs instead.

\section{$\Gamma_i$ is primitive for some $i$}
\label{s:prim}

Now we embark on the proof of the main theorem.  In this section we prove the
theorem in the case where some $\Gamma_i$ is primitive.

Given a string C-group $\Phi:=(G,T)$ with $T:= \{\rho_0, \ldots, \rho_{d-1}\}$, the {\em diagram} of $\Phi$ is a graph with $d$ vertices   and an edge between vertices $i$ and $j$ whenever $\rho_i\rho_j$ is not an involution. Moreover, the edge $\{i,j\}$ is then labelled with the order of  $\rho_i\rho_j$. 
Observe that, by the string property, the diagram of $\Phi$ is a union of paths.  
We say that a set $S\subseteq T$ is {\em connected} provided the labels of the generators of $S$ form an interval. 

Let us first state a Theorem due to Mar\'oti that will be useful in the proof of the next proposition and also later on.
\begin{thm}[Mar\'oti~\cite{Maroti}]\label{marotiThm}
Let $G$ be a primitive group of degree~$n$ which is not $S_n$ or $A_n$. Then
one of the following possibilities occurs:
\begin{enumerate}
\item For some integers $m,k,l$, we have $n={m\choose k}^l$, and $G$ is a
subgroup of $S_m\wr S_l$, where $S_m$ is acting on $k$-subsets of
$\{1,\ldots,m\}$;
\item $G$ is $M_{11}$, $M_{12}$, $M_{23}$ or $M_{24}$ in its natural
$4$-transitive action;
\item $\displaystyle{|G|\le n\cdot\prod_{i=0}^{\lfloor \log_2n\rfloor-1}
(n-2^i)}$.
\end{enumerate}
\end{thm}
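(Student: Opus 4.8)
The plan is to derive this as a consequence of the classification of finite simple groups (CFSG) together with the O'Nan--Scott theorem, which partitions the primitive groups of degree $n$ into a short list of structural types: affine, almost simple, simple-diagonal, product, and twisted-wreath. The three conclusions tell us the shape of the argument: the bound in (c) is a \emph{generic} estimate that should hold for almost every primitive group, while (a) and (b) are meant to collect exactly those families whose order is too large for (c) to hold. So I would split the proof into two halves: first set up an order bound of the form (c) coming from a stabiliser chain, and then run through the O'Nan--Scott types to verify that the only groups forcing a violation of (c) are the subset-actions of symmetric groups together with their product-action extensions, which land in (a), and the $4$-transitive Mathieu groups, which land in (b).

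The skeleton of the order bound is a base / stabiliser-chain argument. Choosing points $\alpha_1,\alpha_2,\ldots$ so that $G>G_{\alpha_1}>G_{\alpha_1\alpha_2}>\cdots$ strictly decreases and terminates at the identity after $b$ steps, one has $|G|=\prod_{i=1}^{b}[G_{\alpha_1\cdots\alpha_{i-1}}:G_{\alpha_1\cdots\alpha_i}]$, each index being an orbit length and hence at most $n$. Conclusion (c) is essentially the assertion that, away from the exceptions, one can take $b\le\lfloor\log_2 n\rfloor$ with the $i$-th orbit length bounded by $n-2^i$; the factor $n-2^i$ reflects a doubling phenomenon, namely that after a well-chosen partial base of size $i$ at least $2^i$ points have their images determined. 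It is worth noting that this small-base property is \emph{not} elementary: groups such as $A_m$ acting on $k$-subsets have degree $n$ but base size far exceeding $\log_2 n$, so establishing the bound already requires knowing the group is not one of the exceptional families.

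The real work, and the main obstacle, is the type-by-type verification, where the CFSG-based order estimates enter. In the affine case $G\le\AGL(d,p)$ with $n=p^d$ one compares $|G|$ with (c) using standard estimates for $|\mathrm{GL}(d,p)|$; in the diagonal and twisted-wreath cases one bounds $|G|$ in terms of the order of the simple composition factor $T$ and the top group and checks (c) directly. The product-action type is delicate, since this is precisely where conclusion (a) genuinely arises and one must confirm the wreath bound $|S_m\wr S_l|$ is compatible with the claimed form of $n$. Hardest of all is the almost simple type: running through alternating, classical, exceptional and sporadic socles, one compares the exact group order against $n\cdot\prod_{i=0}^{\lfloor\log_2 n\rfloor-1}(n-2^i)$, and the analysis must show that (c) survives in every case except $A_m$ acting on $k$-subsets, whose order is absorbed into (a), and the four $4$-transitive Mathieu groups $M_{11},M_{12},M_{23},M_{24}$, recorded separately as (b). Pinning down exactly this minimal list of exceptions, rather than a looser one, is what makes the estimate tight and is where essentially all of the effort is concentrated.
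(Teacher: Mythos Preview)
The paper does not prove this theorem at all: it is quoted verbatim as a result of Mar\'oti \cite{Maroti} and used as a black box in the proof of Proposition~\ref{prim} and Proposition~\ref{IGSprim}. There is therefore no proof in the paper to compare your attempt against.

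That said, your outline is broadly faithful to how Mar\'oti's original argument proceeds: it does rest on the O'Nan--Scott decomposition together with CFSG-based order estimates for the almost simple case, and the exceptions in (a) and (b) are exactly the large-base families that escape the generic bound. Your sketch is an accurate high-level summary of that strategy, though of course the quantitative work---especially the sharp form $n\cdot\prod_{i=0}^{\lfloor\log_2 n\rfloor-1}(n-2^i)$ rather than a cruder $n^{1+\log_2 n}$---is where the substance of Mar\'oti's paper lies and is not reproduced here. For the purposes of the present paper, no proof is expected: the theorem is simply invoked.
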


\begin{prop}\label{prim}
Let $n\geq 12$. If $\Phi:=(G,T)$ is a string C-group of rank $d$ with $G<A_n$ and $G$ primitive, then $d\leq (n-3)/2$.
\end{prop}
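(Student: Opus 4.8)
The plan is to squeeze the rank $d$ between a lower and an upper bound on $|G|$: a lower bound of the form $c^{d}\le |G|$ coming from the intersection property, and an upper bound coming from Mar\'oti's classification (Theorem~\ref{marotiThm}). Since $G$ is primitive and $G<A_n\le S_n$, the group $G$ is neither $A_n$ nor $S_n$, so Theorem~\ref{marotiThm} applies. For the lower bound I would exploit the intersection property along the nested intervals $I=\{0,\dots,k-1\}$ and $J=\{k-1,k\}$, for which $I\cap J=\{k-1\}$; the product formula then gives
\[
|\Gamma_{\{0,\dots,k\}}|\ \ge\ \frac{|\Gamma_{\{0,\dots,k-1\}}|\,|\Gamma_{\{k-1,k\}}|}{|\Gamma_{\{k-1\}}|}=\frac{|\Gamma_{\{0,\dots,k-1\}}|\,|\Gamma_{\{k-1,k\}}|}{2}.
\]
In the worst case $\rho_{k-1},\rho_k$ commute and this only doubles the order, yielding the familiar $|G|\ge 2^{d}$; when the diagram is connected each factor $|\Gamma_{\{k-1,k\}}|$ is at least $6$, improving this to $|G|\ge 2\cdot 3^{d-1}$. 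A disconnected diagram forces $G$ to be a direct product of proper string C-groups, which (for a primitive group) can only happen inside the product-action case~(a).

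I would then run through the three cases of Theorem~\ref{marotiThm}. Case~(c) is generic: there $|G|\le n\prod_{i=0}^{\lfloor\log_2 n\rfloor-1}(n-2^i)<n^{1+\log_2 n}$, so taking logarithms in $c^{d}\le|G|$ gives $d=O((\log_2 n)^2)$, which is comfortably below $(n-3)/2$ once $n$ exceeds an explicit threshold $N_0$; pinning down $N_0$ is a single monotone inequality check. In case~(b) only the Mathieu groups $M_{12},M_{23},M_{24}$ survive the hypothesis $n\ge 12$, and each is a finite, known check whose rank lies far below $(n-3)/2$. In case~(a) we have $n=\binom{m}{k}^{l}$ with $(k,l)\ne(1,1)$ and $|G|\le (m!)^{l}\,l!$; since the degree grows like a power of $\binom{m}{k}$ while the order is only about $m^{lm}l!$, the inequality $d\le(n-3)/2$ holds for all but finitely many small parameter triples $(m,k,l)$.

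The main obstacle is that the lower bound $c^{d}\le|G|$ cannot be pushed much further: the elementary abelian group $C_2^{d}$, with its standard generators, is a rank-$d$ string C-group of order exactly $2^{d}$, so any real improvement must genuinely use primitivity rather than the order alone. Consequently the order estimates by themselves do not settle the finitely many primitive groups of small degree $n<N_0$, nor the small parameter cases left over from~(a) and~(b). These I would dispatch directly, using the census of primitive groups of small degree together with the string C-group classifications already available for the relevant families — in particular the known rank bounds for $\PSL(2,q)$ and $\PGL(2,q)$, which account for many of the small primitive degrees. Verifying that this finite residue really does satisfy $d\le (n-3)/2$, and choosing $N_0$ so that the residual list stays manageable, is where the genuine work lies.
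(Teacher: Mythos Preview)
Your outline matches the paper's strategy: lower-bound $|G|$ from the string C-group structure, upper-bound it via Mar\'oti, and mop up finitely many small degrees. The paper sharpens two of your steps. For the lower bound (connected diagram) it cites Conder's theorem $|G|\ge 2^{2d-1}$, which is genuinely stronger than your $2\cdot 3^{d-1}$ and brings the case~(c) threshold down to $n\ge 34$; with your bound the residual list would be noticeably longer. In case~(a) the paper avoids order comparisons altogether: since $G$ embeds in $S_m$ (when $l=1$) or in $S_m\wr S_l$ on $ml$ points (when $l>1$), it simply quotes $d\le m-1$, respectively $d\le ml/2+1$ from~\cite{transitive}, and these dispose of all parameter triples uniformly with no leftover small cases. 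Finally, your handling of the disconnected-diagram case is slightly off: a primitive group that is a nontrivial direct product is of diagonal type, not product-action type~(a); the two factors are isomorphic nonabelian simple groups each acting regularly, so $|G|=n^2$ with $n\ge 60$, and $d\le 2\log_2 n<(n-3)/2$ finishes that case directly.
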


Indeed, in this case $d$ is asymptotically much smaller than $n/2$.

\begin{proof}
We use the methods of~\cite{transitive}.

Suppose first that the diagram of $\Phi$ is not connected. Then the 
primitive group $G$ is the direct product of two proper subgroups,
each of which is necessarily simple and acts regularly; so $|G|=n^2$, and $n\ge60$. But
clearly $|G|\ge2^{d}$; so $d\le 2\log_2n<(n-3)/2$ for $n\ge60$.

So we may suppose that the diagram of $\Phi$ is connected. Now we combine Conder's
lower bound $2^{2d-1}$ for the order of a string C-group of rank $d$
\cite{Conder} with well-known upper bounds for the order of primitive groups,
such as Mar\'oti's (see Theorem~\ref{marotiThm}). We deal with the three cases of Mar\'oti's Theorem.
Case (b) is handled by computer. In case (a), since we are only interested in
an upper bound for $|G|$, we can assume that $G$ is maximal in $A_n$, so that
either $G$ is $S_m$ acting on $k$-sets, or $G=S_m\wr S_l$ with $l>1$. In the first
subcase, $d\leq m-1$, while $n={m\choose k}\ge m(m-1)/2$, hence  $d\leq \frac{n-3}{2}$ for $n\geq 12$. In the second subcase, we can use the main result of \cite{transitive} to conclude that, $d\leq \frac{ml}{2}+1$ while $n={m\choose k}^l$ and $n\geq 12$; again this gives $d\leq \frac{n-3}{2}$.
In these cases, $G$ is embeddable in a smaller symmetric group, of degree $m$
in the first case, or $ml$ in the second.
Finally, in case (c), we have 
\[2^{2d-1}\leq |G|\le n^{1+\log_2n},\] 

If we assume $d\geq \frac{n-2}{2}$, we get

\[2^{n-3}\leq 2^{2d-1}\leq |G|\le n^{1+\log_2n},\] 

thus

\[n\leq (\log_2n)(\log_2n+1)+3\]
which 
gives a contradiction for $n\geq 34$.

For $n\leq 33$, we give in Table~\ref{primBound} the list of primitive groups of degree $n$ such that their order is $\geq 2^{2\lfloor\frac{n}{2}\rfloor-3}$, following numbering of Sims's list~\cite{BL96}.
When {\sc Magma} is mentioned in the references column, it means we computed all string C-groups representations of the corresponding group using {\sc Magma} and the bound is sharp.

\begin{table}
\begin{tabular}{|c|c|c|c|c|}
\hline
Degree&Number&$G$&Max rank&Reference\\
\hline
12&1&$PSL(2,11)$&4&\cite{LVatlas,ls07}\\
&2&$PGL(2,11)$&3&\cite{LVatlas,ls09}\\
&3&$M_{11}$&0&\cite{LVatlas}\\
&4&$M_{12}$&4&\cite{LVatlas}\\
\hline
13&7&$PSL(3,3)$&0&\cite{LVatlas,Brooksbank:2010}\\
\hline
14&2&$PGL(2,13)$&3&\cite{LVatlas,ls09}\\
\hline
15&3&$A_7$&0&\cite{LVatlas}\\
&4&$PSL(4,2)$&0&{\sc Magma}\\
\hline
16&18&$2^4:S_6$&5&{\sc Magma}\\
&19&$2^4:A_7$&0&{\sc Magma}\\
&20&$2^4:PSL(4,2)$&0&{\sc Magma}\\
\hline
17&8&$P\Gamma L(2,16)$&0&\cite{LVatlas}\\
\hline
22&2&$M_{22}:2$&4&\cite{LVatlas}\\
\hline
23&5&$M_{23}$&0&\cite{HHalg}\\
\hline
24&3&$M_{24}$&5&\cite{HHalg}\\
\hline
\end{tabular}
\caption{Primitive groups $G$ of degree $\leq 33$ with $|G| \geq  2^{2\lfloor\frac{n}{2}\rfloor-3}$.}\label{primBound}
\end{table}

So $d\le(n-3)/2$ in all cases.
\end{proof}

We remark that Mar\'oti's bound uses the Classification of Finite Simple Groups.
The use of such heavy machinery could in principle be avoided by using the
slightly weaker bounds proved by `elementary' means by Babai and
Pyber~\cite{Babai,Pyber}; however, this would require examining of many more
`small' cases, some of which are too large for practical computation.

The previous proposition gives the following corollary that finishes a case for our main theorem, when some $\Gamma_i$ is primitive.
\begin{coro}
Let $n\geq 12$ and let $\Gamma:=(A_n,S)$ be a string C-group of rank $r$. If $\Gamma_i$ is primitive for some $i$ then $r\leq \frac{n-1}{2}$.
\end{coro}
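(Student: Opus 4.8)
The plan is to reduce the corollary directly to Proposition~\ref{prim} by passing to the parabolic subgroup $\Gamma_i$ together with its induced generating set. First I would record that the pair $\Phi:=(\Gamma_i,S\setminus\{\rho_i\})$ is itself a string C-group of rank $r-1$. It is an sggi, since the string property and the fact that the generators are involutions are both inherited from $\Gamma$; and the intersection property for subsets of $\{0,\ldots,r-1\}\setminus\{i\}$ is a special case of the intersection property for $\Gamma$. Hence $\Gamma_i$, equipped with the $r-1$ remaining involutions, is a string C-group of rank $r-1$.

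Next I would check that $\Gamma_i$ is a proper subgroup of $A_n$, so that the hypotheses of Proposition~\ref{prim} are met. Since $\Gamma_i\le\Gamma=A_n$, every element of $\Gamma_i$ is even. To see that the inclusion is strict, I would apply the intersection property with $I=\{0,\ldots,r-1\}\setminus\{i\}$ and $J=\{i\}$, which gives $\Gamma_i\cap\langle\rho_i\rangle=\Gamma_{\emptyset}=1$; thus $\rho_i\notin\Gamma_i$ and $\Gamma_i\ne A_n$. As $\Gamma_i$ is primitive by hypothesis and is neither $S_n$ nor $A_n$, Proposition~\ref{prim} applies to $\Phi$, yielding $r-1\le(n-3)/2$, i.e.\ $r\le(n-1)/2$, which is exactly the desired bound.

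One subtlety I would flag, if only to dismiss it, concerns the shape of the diagram of $\Phi$. When $i$ is an interior index, the string property forces $\rho_j$ and $\rho_k$ to commute for all $j\le i-1$ and $k\ge i+1$, and the intersection property gives $\Gamma_{\{0,\ldots,i-1\}}\cap\Gamma_{\{i+1,\ldots,r-1\}}=1$, so $\Gamma_i$ decomposes as the direct product $\Gamma_{\{0,\ldots,i-1\}}\times\Gamma_{\{i+1,\ldots,r-1\}}$ and the diagram of $\Phi$ is disconnected. This causes no difficulty, because Proposition~\ref{prim} treats the disconnected-diagram case explicitly; so the same conclusion holds whether $i$ is an endpoint or an interior index of the string.

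Since the substantive work — the upper bound on the rank of a primitive string C-group embedded in $A_n$ — is already carried out in Proposition~\ref{prim}, this deduction is short, and I do not expect a serious obstacle. The only points genuinely needing care are the two structural observations above: that the induced pair $(\Gamma_i,S\setminus\{\rho_i\})$ is a genuine string C-group of rank exactly $r-1$, and that $\Gamma_i$ is a proper (hence non-$A_n$, non-$S_n$) primitive subgroup, so that Proposition~\ref{prim} is legitimately applicable. The remainder is routine bookkeeping of the rank.
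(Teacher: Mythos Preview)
Your proposal is correct and follows essentially the same approach as the paper: apply Proposition~\ref{prim} to the string C-group $\Gamma_i$ of rank $r-1$, obtaining $r-1\le(n-3)/2$. The paper's proof is a two-line version of yours; your additional remarks (verifying that $\Gamma_i$ is a genuine string C-group, that $\rho_i\notin\Gamma_i$ via the intersection property, and that the disconnected-diagram case is covered by Proposition~\ref{prim}) are accurate elaborations of points the paper leaves implicit.
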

\begin{proof}
If $\Gamma_i$ is primitive for some $i$, then $\Gamma_i < A_n$ and satisfies the hypotheses of Proposition~\ref{prim}. Hence the rank $r-1$ of $\Gamma_i$ is bounded by $\frac{n-3}{2}$. 
\end{proof}

\section{$\Gamma_i$ is transitive imprimitive for some $i$}
\label{s:imprim}

In this section, we prove the main theorem in the case where $\Gamma_i$ is
transitive but imprimitive for some $i$.

Let $\Gamma\cong A_n$ with $n\geq 12$ and $\Gamma_i$ be transitive imprimitive for some $i\in\{0,\ldots, r-1\}$.
Let $k$ and $m$ be such that $\Gamma_i$ is embedded into $S_k\wr S_m$. We assume that the blocks of imprimitivity are maximal (so $\Gamma_i$ acts primitively
on the set of blocks), but do not require that $k$ is as big as possible.

Consider the following sets of generators of $\Gamma_i$:
\begin{itemize}\itemsep0pt
\item $L$ an independent generating set for the block action;
\item $C$ the set of generators that commute with all elements of $L$;
\item $R$ the remaining generators.
\end{itemize}
Let us first recall some important results found in \cite{transitive}. We have that $|L|\leq m-1$ and  $|C|\leq k-1$. The group $\langle L\rangle$ is primitive on the set of blocks, and $L$ has at most two connected components. When $L$ has two components, $r-1\leq2\log_2m+(k-1)+4$ and $m\geq 60$, thus in that case we have $r\leq \frac{n}{2}-1$. So, in what follows, we assume that $L$ is connected and generates a primitive group on the set of blocks.

\begin{prop}\label{R<=1}
If $m\neq 2$ then $\{\rho_i\}\cup L$ must be connected and $|R|\leq 1$.
\end{prop}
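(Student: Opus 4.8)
The plan is to reduce both assertions to the single claim that $\rho_i$ is \emph{adjacent} to $L$ in the diagram of $\Gamma$, i.e.\ that the label in $L$ nearest to $i$ differs from $i$ by $1$. Since $L$ is connected, write its labels as the interval $\{a,a+1,\ldots,b\}$ with $i\notin\{a,\ldots,b\}$, and note that as $L$ avoids $i$ it lies entirely in one of the two segments obtained by deleting $i$. By the string property, the only generators that can fail to commute with every element of $L$ are $\rho_{a-1}$ and $\rho_{b+1}$; hence $R\subseteq\{\rho_{a-1},\rho_{b+1}\}$. If $\rho_i$ is adjacent to $L$, then $i=b+1$ when $L$ lies to the left of $i$ and $i=a-1$ when $L$ lies to the right, so that this very label is removed in passing to $\Gamma_i$. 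This at once makes $\{\rho_i\}\cup L$ equal to the interval $\{a-1,\ldots,b\}$ or $\{a,\ldots,b+1\}$, proving that $\{\rho_i\}\cup L$ is connected, and it leaves at most one surviving candidate for $R$, proving $|R|\le 1$. Thus both conclusions follow from the adjacency claim.

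To prove adjacency I argue by contradiction and suppose $\rho_i$ is not adjacent to $L$. Then $|i-j|>1$ for every $j\in L$, so by the string property $\rho_i$ centralises $\langle L\rangle$. Here is where the hypothesis $m\neq2$ enters: since $\langle L\rangle$ is primitive on the $m$ blocks and is generated by involutions, it cannot be regular (a regular primitive group has prime degree and is cyclic, so for $m\ge3$ it would be $C_m$ with $m$ an odd prime, which contains no involutions). Hence for $m\ge3$ the group $\langle L\rangle$ is a primitive non-regular subgroup of $S_m$ and therefore has trivial centraliser in $S_m$. Consequently any element of $\Gamma_i$ commuting with all of $L$ acts trivially on the block system; applied to $C$ this shows $\langle C\rangle$ fixes every block (consistent with the recalled bound $|C|\le k-1$), and it is precisely this rigidity of the block action that I will exploit for $\rho_i$.

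It remains to derive a contradiction from the fact that $\rho_i$ centralises $\langle L\rangle$. The key point is that the block system $\mathcal{B}$ should be pinned down by the centraliser-free primitive action of $\langle L\rangle$ on the blocks, together with the fact that the within-block part $\langle C\rangle$ fixes each block; I expect to use this to show that the centralising involution $\rho_i$ must itself preserve $\mathcal{B}$, fixing each block setwise. Granting this, $\Gamma=\langle\Gamma_i,\rho_i\rangle$ preserves the nontrivial block system $\mathcal{B}$ (with $1<k<n$), contradicting the fact that $A_n$ acts primitively on the $n$ points for $n\ge 12$. This forces $\rho_i$ to be adjacent to $L$, and with the first paragraph completes the proof.

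The main obstacle is exactly the step asserting that an element of $S_n$ centralising $\langle L\rangle$ must preserve $\mathcal{B}$: a priori $\langle L\rangle$ may stabilise several block systems of the same shape, among which a centralising element could permute. I plan to control this by exploiting that $\langle L\rangle$ is \emph{primitive} on $\mathcal{B}$ with trivial centraliser in $S_m$ (the content of $m\neq2$), which should force $\mathcal{B}$ to be the unique maximal $\langle L\rangle$-block system compatible with the wreath embedding, so that $C_{S_n}(\langle L\rangle)$ reduces to within-block symmetries and in particular fixes every block. The case $m=2$ is genuinely excluded by this argument: there $\langle L\rangle\le S_2$ is regular and abelian with nontrivial centraliser, the block action is no longer rigid, and $\rho_i$ may legitimately interchange the two blocks.
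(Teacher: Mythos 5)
Your reduction of both conclusions to the single claim that $\rho_i$ fails to commute with some element of $L$ is correct and is exactly the (implicit) structure of the paper's own argument: since $L$ is a connected interval $\{a,\ldots,b\}$ avoiding $i$, the string property confines $R$ to $\{\rho_{a-1},\rho_{b+1}\}$, and adjacency of $\rho_i$ to $L$ both makes $\{\rho_i\}\cup L$ an interval and removes one of the two candidates for $R$. Your observation about where $m\neq 2$ enters is also correct and is more explicit than the paper: an involution-generated primitive group of degree $m\ge 3$ cannot be regular, so the image of $\langle L\rangle$ in $S_m$ has trivial centraliser, which is why every element of $\Gamma_i$ commuting with all of $L$ must fix each block. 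The paper compresses all of this into ``$\langle L\rangle$ is primitive, thus $\rho_i$ cannot commute with every element of $L$.''

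The gap is in the step you yourself flag but do not carry out: you never prove that an involution of $S_n$ centralising $\langle L\rangle$ must preserve the block system $\mathcal{B}$; you only say you ``expect'' and ``plan'' to show it, via the claim that $\mathcal{B}$ is the unique maximal $\langle L\rangle$-invariant partition. That claim is false in general. If a primitive non-regular group $P$ acts diagonally on $\Omega\times\Omega$, then the row partition and the column partition are both $P$-invariant, each is maximal (the quotient action on either is the primitive action of $P$ on $\Omega$), and the transpose map centralises $P$, interchanges the two systems, and swaps points lying in different rows. Triviality of the centraliser in $S_m$ only controls centralising elements that already induce a permutation of $\mathcal{B}$ (such as the elements of $C$, which lie in $\Gamma_i$); it says nothing by itself about $\rho_i$, which need not act on $\mathcal{B}$ at all. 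To close the argument you must bring in more of the ambient structure: for instance, $\rho_i\mathcal{B}$ is again $\langle L\rangle$-invariant, so the join $\mathcal{B}\vee\rho_i\mathcal{B}$ is an $\langle L\rangle$-invariant coarsening of $\mathcal{B}$ and hence, by primitivity of the block action, is either $\mathcal{B}$ itself --- which forces $\rho_i\mathcal{B}=\mathcal{B}$ and the desired contradiction with the primitivity of $A_n$ --- or the trivial partition, a ``transversal'' configuration that still has to be excluded using the transitivity of $\Gamma_i$ and the remaining generators. The paper disposes of this point in one sentence; your write-up isolates the missing step honestly but does not supply it, so the proof is incomplete at precisely its crucial point.
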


\begin{proof}
As $\Gamma$ is primitive, $\rho_i$ must break the imprimitivity of $\Gamma_i$, thus it must swap at least one pair of points in different blocks.
On the other hand  $\langle L\rangle$ is primitive, thus $\rho_i$ cannot commute with every element of $L$.  Hence $\{\rho_i\}\cup L$ must be connected and $|R|\leq 1$.
\end{proof}

\subsection{The case $k,m>2$}

\begin{prop}
If $k>2$ and $m>2$, then $r\le\lfloor(n-1)/2\rfloor$.
\end{prop}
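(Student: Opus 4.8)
The plan is to exploit the decomposition of the generating set of $\Gamma_i$ recorded above. Writing $n=km$, we have $r-1=|L|+|C|+|R|$, and since $m>2$ Proposition~\ref{R<=1} gives $|R|\le 1$, while we already know $|C|\le k-1$. Hence
\[ r \le |L| + (k-1) + 1 + 1 = |L| + k + 1, \]
so everything comes down to bounding $|L|$. Since $\langle L\rangle$ induces a primitive string C-group of degree $m$ on the set of blocks, with $L$ mapping to an independent generating set of size $|L|$, I would bound $|L|$ according to the isomorphism type of this block group, splitting into three cases via Mar\'oti's Theorem~\ref{marotiThm}.

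First, if the block group is neither $A_m$ nor $S_m$, I would argue exactly as in the proof of Proposition~\ref{prim}: combining Conder's bound $2^{2|L|-1}\le|\langle L\rangle^{\mathrm{block}}|$ with Mar\'oti's upper bounds forces $|L|$ to be of order $(\log_2 m)^2$ (with the Mathieu cases of~\ref{marotiThm}(b) checked by computer, and the subset/wreath cases of~\ref{marotiThm}(a) giving an even smaller bound, since there the block group embeds in a much smaller symmetric group). Then $r\le|L|+k+1$ lies far below $\lfloor(km-1)/2\rfloor$ for all $k,m\ge 3$. Second, if the block group is $A_m$, then since $m\le n/3<n$ the inductive hypothesis applies and bounds $|L|$ by the rank of $A_m$; for $m\ge 12$ this is $\lfloor(m-1)/2\rfloor$, and for $5\le m\le 11$ one reads off the value from the base cases of the main theorem (which in particular excludes $m\in\{3,4,6,7,8\}$, whose alternating groups are not generated by involutions). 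A short computation then confirms $r\le|L|+k+1\le\lfloor(km-1)/2\rfloor$ whenever $k\ge 3$.

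The remaining, and hardest, case is that the block group is $S_m$, where only the crude estimate $|L|\le m-1$ is available and the bound degrades to $r\le m+k$. Checking the inequality $m+k\le\lfloor(km-1)/2\rfloor$ shows that it holds for all $k,m\ge 3$ except the finite list $(k,m)\in\{(3,4),(4,3),(4,4),(3,5),(5,3),(3,6),(6,3)\}$, corresponding to the four degrees $n\in\{12,15,16,18\}$. I expect the main obstacle to be precisely this residual list: here the naive summation of the separate bounds on $|L|$, $|C|$ and $|R|$ loses too much, and one must use that $\Gamma=A_n$ rather than $S_n$. The idea is that a full $S_m$ action on the blocks together with $|C|=k-1$ cannot both be realised inside $A_n$: a generator inducing an odd permutation of the blocks is an even permutation of the $n$ points only after a within-block correction, and tracking this parity constraint should force one of $|L|$, $|C|$ to drop by one, recovering the bound. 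For the finitely many groups $A_n$ with $n\in\{12,15,16,18\}$ that survive this analysis, I would finish with a direct {\sc Magma} verification that no string C-group representation with $\Gamma_i$ imprimitive of type $S_k\wr S_m$ exceeds $\lfloor(n-1)/2\rfloor$.
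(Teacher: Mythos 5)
Your overall strategy is the same as the paper's: use $|R|\le 1$, $|C|\le k-1$, $|L|\le m-1$ to get $r\le k+m$, reduce to the finite list of pairs with $(k-2)(m-2)\le 5$, and then kill the survivors using the fact that $\Gamma\le A_n$. Two points in your write-up, however, do not hold up as stated.

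First, in your case where the block group is $A_m$ you bound $|L|$ by ``the rank of $A_m$'' read off from the main theorem. That is not legitimate: $L$ only induces an \emph{independent} generating set of the block group as a string group generated by involutions, with no intersection property, so the string C-group rank of $A_m$ (which is $0$ for $m=6,7,8$, for instance) is not an upper bound for $|L|$ --- $A_6$, $A_7$, $A_8$ are certainly generated by involutions satisfying the string property, just not as string C-groups. The whole case split on the isomorphism type of the block group is in fact unnecessary here: the uniform bound $|L|\le m-1$ already produces the residual list, and the paper observes that in the critical situation $|L|=m-1$ the block group is forced to be $S_m$ anyway, so the $S_m$ analysis is the only one needed.

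Second, your endgame is a conjecture plus a computation rather than a proof. The paper makes the parity idea precise: when $|L|=m-1$, $|C|=k-1$, $|R|=1$, one shows $\langle C\rangle\cong S_k$ acts identically on every block (because it centralises a primitive block action), whence $\langle L,C\rangle\cong S_k\times S_m$ acting on a product set; a transposition in the $S_k$ factor is a product of $m$ transpositions of points and one in the $S_m$ factor is a product of $k$ transpositions, so $\Gamma$ contains an odd permutation unless both $k$ and $m$ are even. This disposes of every pair in your residual list except $(k,m)=(4,4)$, which the paper then eliminates by a short centraliser argument inside $S_4\times S_4$ (the centraliser of the relevant $S_3\times S_4$ is trivial, so $\rho_i$ cannot exist), leaving only $n=12$ for \textsc{Magma}. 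Your fallback of a direct \textsc{Magma} search over all string C-group representations of $A_{15}$, $A_{16}$, $A_{18}$ of rank exceeding $\lfloor(n-1)/2\rfloor$ is not a realistic substitute: such enumerations are far beyond what is computationally feasible at those degrees and ranks, which is precisely why the paper closes these cases by hand.
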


\begin{proof}
As observed at the beginning of the section,  $|L|\leq m-1$, $|C|\leq k-1$. By Proposition~\ref{R<=1}, $|R|\leq 1$, hence $r-1\leq (m-1)+(k-1)+1$. When $n=12$ the bound that we get for the rank is $7$, but using  {\sc Magma}~\cite{BCP97} we found out that there are no polytopes of ranks $6$ or $7$ for $A_{12}$. So we may assume that $n>12$. 

If $r>\lfloor(n-1)/2\rfloor$, then
\[\frac{n-1}{2}=\frac{km-1}{2}\leq r\leq k+m,\]
so $(k-2)(m-2)\le5$. The solutions with $km>12$ are $(k,m)=(3,5)$, $(3,6)$,
$(3,7)$, $(4,4)$, $(5,3)$, $(6,3)$, $(7,3)$.

Now we consider these cases. If $(k,m)=(3,7)$ or $(7,3)$, then $r\le10=(21-1)/2$, as required. If $(k,m)=(3,5)$, $(3,6)$, $(4,4)$, $(5,3)$ or
$(6,3)$, then we have $r\le\lfloor(n-1)/2\rfloor$ unless $|L|=m-1$, $|C|=k-1$,
and $|R|=1$. So $\langle C\rangle\cong S_k$, and since $\langle C \rangle $ commutes with a group
acting primitively on the blocks, it acts in the same way on each block.

We also see that $\langle L\rangle$ acts as $S_m$ on the set of blocks, and
since it commutes with $S_k$ fixing the blocks, we have
$\langle L,C\rangle\cong S_k\times S_m$. Transpositions in $S_k$ (resp.~$S_m$)
act as products of $m$ (resp.~$k$) transpositions on the point set. So if either
$m$ or $k$ is odd, then $\Gamma$ contains an odd permutation, a contradiction.

Now suppose that $(k,m)=(4,4)$ and $r=8$. We know that $\rho_i$ commutes with a subgroup
$S_3\times S_4$ with orbits of sizes $4$ and $12$.
We know from the previous paragraph that $S_4\times S_4$
is acting on the product of two sets of size $4$. So when we descend to
$S_3\times S_4$, the orbit of size $4$ has $S_4$ acting in the usual action (and its
centraliser is trivial), while the orbit of size $12$ is the product of sets
of sizes $3$ and $4$. A permutation which commutes with it must fix the two
systems of imprimitivity, so its projection onto each factor commutes with
the corresponding symmetric group, and so is trivial.
\end{proof}

\subsection{The case $k=2$}

The estimate above gives $r\le m+2=\frac{n}{2}+2$. We have to knock three off
this bound. The group induced on the blocks is primitive. It follows that
the centraliser of $\langle L\rangle$ in the symmetric group is generated by
the involution $z$ which interchanges the points of each block. Now if $m$ is
odd, then $z$ is an odd permutation, and so $C=\emptyset$. If $m$ is even, then
$|C|\le1$, and if $z\in\langle L\rangle$ then the intersection property forces
$C=\emptyset$.

We separate the argument into three cases, according as the group $H$ induced on blocks is
$S_m$, $A_m$, or neither of these.  The cases $H=S_m$ and $H=A_m$ use similar
arguments, but differ in detail, so we have kept them separate.

\paragraph{\underline{Case $H=S_m$}}  We assume that $m\ge7$ for this proof.

 Note first that $|R\cup C|\le 1$. Indeed, if both $R$ and $C$ are nonempty, $m$ is even,
and $z$ is contained in $\langle L\cup R\rangle\cap \langle C\rangle$, a contradiction. [This is because the kernel
of the action of $\Gamma_i$ on blocks is an $S_m$-submodule of $(C_2)^m$;
the only such submodules are the trivial ones, the module $M_1$ generated
by $z$, and the module $M_2$ consisting of elements interchanging an even
number of blocks; if $m$ is even then $M_1\le M_2$, and there cannot exist
two independent submodules.]

So $r\le|L|+2$, and if either $|L|\le m-3$ or
$R\cup C=\emptyset$ and $|L|\le m-2$ then we have the required result. 
Up to duality, there are the three possibilities, either
\begin{itemize}
\item[(A)] $\rho_i=\rho_0, \, L=\{\rho_1,\ldots, \rho_{r-2}\}, \, R=\{\rho_{r-1}\}\mbox{ and }C=\emptyset,$
\item[(B)] $\rho_i=\rho_{r-2},\, L=\{\rho_0,\ldots, \rho_{r-3}\}, \, C=\{\rho_{r-1}\}\mbox{ and }R=\emptyset,$ or 
\item[(C)] $\rho_i=\rho_0,\, L=\{\rho_0,\ldots, \rho_{r-1}\}, \mbox{ and }R=C=\emptyset.$
\end{itemize}
Let $G=\langle L\rangle$. Now $G$ induces the symmetric group $S_m$ on the
set of blocks, and $L$ is an independent set of generators for $G$ as a
string group (not necessarily a string C-group!). We have $|L|\le m-1$.

Assume that $|L|=m-1$ and $m\ge7$. The elements of $L$ induce the Coxeter
generators on the set of blocks: for a certain numbering of the blocks, $\rho_j$ swaps blocks $j$ and $j+1$, for $j\in\{1,\ldots,m-1\}$. (This
is an easy deduction from the result of \cite{CC}.)
In (A) and (C), $\rho_i$ commutes with $\rho_2,\ldots,\rho_{m-1}$, and these elements
generate a group acting as $S_{m-1}$ on blocks, fixing the first block.
Since $m\ge7$, we see that $\rho_i$ must fix all the blocks numbered from
$2$ to $m$, and clearly also block $1$; so it preserves the block system.
In (B) $\rho_i$ commutes with $\rho_0,\ldots,\rho_{m-3}$, which
also acts as $S_{m-1}$ on blocks, and the same applies.  So $\Gamma$ preserves
the block system, and is imprimitive, a contradiction to the assumption that
$\Gamma$ is the alternating group. 

So we can assume that $|L|=m-2$ and $R\cup C\neq\emptyset.$

Let $K$ be the kernel of the action of $\Gamma_i$ on
the blocks, and let $K_1=K\cap G$. Then $K$ and $K_1$ are $S_m$-submodules of
the permutation module $F^m$, where $F$ is the field with two elements. The
only submodules have dimensions $0$, $1$ (spanned by the all-$1$ vector),
$m-1$ (the vectors of even weight), and $m$. Now since $\Gamma_i$ consists
of even permutations, we cannot have $K=F^m$.

We show that $K=K_1$ is impossible. If $K=K_1$, then $G=\Gamma_i$, and so
$R\cup C=\emptyset$, contrary to our assumption.

Next we show that $K_1=1$ is impossible. In this case, $L$ generates $S_m$ as
string C-group. By the main result of ~\cite{fl,Corr}, there is a unique 
possibility, up to duality. In (A) the group generated by 
$\rho_2,\ldots,\rho_{r-2}$ is $S_{m-1}$, and $\rho_0$ commutes with this
group, so $\rho_0$ preserves the block system, a contradiction. In (B) the group generated by $\rho_0,\ldots,\rho_{r-4}$ is $S_{m-1}$, and
$\rho_i$ commutes with this group.
We then get the same contradiction as before.


So we are left with the case $K=[(C_2)^m]^+$ and $K_1=\langle z\rangle$.
In this case $G$ is an extension of $C_2$ by $S_m$ and $C=\emptyset$ (as in case (A)).

The involutions $z$ and $\rho_0$ both commute with $\Gamma_{0,1}$, since
$z$ is in the centre of $\Gamma_0$. So the dihedral group $D$ they generate also
commutes with $\Gamma_{0,1}$. Moreover, $\rho_0$ and $z$ do not commute with
each other; if they did, then $\Gamma=\langle\Gamma_0,\rho_0\rangle$ would be
contained in the centralizer of $z$, contradicting the fact that $\Gamma$ is
the alternating group. So $D$ has order $2d$ with $d\geq 3$. We now separate in two
cases.

In the case where $\Gamma_{0,1}$ is transitive, the group $D$ is semiregular;
thus $\Gamma_{0,1}$ has $m/d$ blocks of imprimitivity
each of size $2d$, and is contained in $D\wr S_{m/d}$. Now since $d\ge3$, we
can replace the action of $\Gamma_{0,1}$ by one where each orbit of $D$ has
size $d$, rather than $2d$; this action is still faithful (since $D$ acts
faithfully on $d$ points if $d\ge3$). So $\Gamma_{0,1}$ is isomorphic to a
transitive imprimitive group of degree $m$. By the main result of
\cite{transitive}, we have $r-2\le\frac{m}{2}+1$ (whence $m\le 6$,
which is not so).

So suppose that $\Gamma_{0,1}$ is intransitive on blocks; then also
$\Gamma_{0,1,r-1}$ is intransitive. Let $H_1$ be the group it induces on the
blocks. Now the images of $\rho_2,\ldots,\rho_{r-2}$ form a set of $r-3$
generators for $H_1$ as an sggi (not necessarily a string C-group). If $r\ge m$,
then we conclude that
\begin{itemize}\itemsep0pt
\item  $H_1$ has at most three orbits;
\item if it has three orbits, then it acts on each as the symmetric group;
\item if it has two orbits, then it acts on one of them as the symmetric
group.
\end{itemize}
Suppose there are three orbits. Then $H_1$ commutes with the group induced by $D$
(which has at least one orbit of size $d\ge3$), the three orbits must be
isomorphic and a $D$-orbit meets each in one point. But then $H_1\le S_{m/3}$
and the number of independent generators for $H_1$ is at most $m/3-1$. So we
have $m/3-1\ge m-3$, which is impossible.

Suppose that $H_1$ has two orbits $O_1$ and $O_2$ with the action of $H_1$ on $O_1$ being that of the symmetric group $S_{|O_1|}$. We have a dihedral group $D$ commuting with the symmetric group such that each $D$-orbit meets $O_1$ in one point, as these intersections form a system of imprimitivity for $D$ on $O_1$.
Suppose that
the action on the other orbit is not faithful. Then there is a non-trivial
subgroup fixing all points in this orbit (and hence fixing all $D$-orbits)
but non-trivially on $O_1$, and so moving the intersections of $D$-orbits
with $O_1$ (since these have size $1$, only the trivial group fixes them all).
As to the size, each $D$-orbit has one point in $O_1$ and $d-1$ in $O_2$,
so $|O_2|$ is $(d-1)/2$ times the degree, that is, $(d-1)m/d$.
Thus $H_1$ has at most $(d-1)m/d-1$ independent generators. If our
inequality holds, then $(d-1)m/d-1\ge m-3$, from which we get $m\le 2d$. But
then the dihedral group has at most two orbits, and
$\Gamma_{0,1}\le D\wr C_2$. 
A group of order $2m^2$ has
largest independent set of size at most $2\log_2m+1$.
This number cannot be $m-2$ or greater for 
$m>8$; the remaining cases are resolved by a computer check.

\paragraph{\underline{Case $H=A_m$}} As before, let $L$ be an independent set of generators
for the action of $\Gamma_i$ on blocks, and let $G=\langle L\rangle$. If
$G$ is intransitive, then its orbits form a transversal for the blocks, and
so $G\cong A_m$. By the induction hypothesis, if $m\ge12$, then
$|L|\le\lfloor\frac{m-1}{2}\rfloor$, and so 
$r\le\lfloor\frac{m+3}{2}\rfloor\le m-1$, since $m\ge7$. For $m<12$, if
the bound fails, we have either $m=10$ and $|L|=5$, or $m=11$ and $|L|=6$; the
required bound is satisfied in either case.

So we may assume that $G$ is transitive.

Let $K$ be the kernel of the action of $\Gamma_i$ on blocks, and $K'=K\cap G$.
Since $\Gamma_i/K$ and $G/K'$ are both isomorphic to $A_m$, we see that 
$K\ne K'$. Moreover, $A_m$ cannot act transitively on $2m$ points, and so
$K'\ne 1$. Since $K$ and $K'$ are submodules of the $A_m$-module $(C_2)^m$,
and neither is the whole of $(C_2)^m$ (which contains odd permutations), we
must have $|K|=2^{m-1}$, $|K'|=2$. The generator of $K'$ is the involution $z$
which interchanges the points in each block. Since this is an even permutation,
$m$ must be even. Moreover $C=\emptyset$.  Thus up to duality we may assume that $i=0$.
If $R=\emptyset$ then $r\leq m-1$, hence we now assume that $R=\{\rho_{r-1}\}$.

Now $z$ is in the centre of $\Gamma_0$, and so commutes with $\Gamma_{0,1}$.
The involution $\rho_0$ also commutes with $\Gamma_{0,1}$, and by the
intersection property $\rho_0\ne z$. Let $D=\langle z,\rho_0\rangle$, a
dihedral group of order $2d$, say. Now $\rho_0$ and $z$ do not commute: for,
if they did, then $\langle\rho_0,\Gamma_0\rangle$ would be contained in the
centraliser of $z$, whereas in fact this group is $A_{2m}$. In particular,
$d\ge3$.

Suppose first that $\Gamma_{0,1}$ is transitive. Then $D$, which commutes
with a transitive group, is semiregular; and $\Gamma_{0,1}$, which commutes
with $D$, is isomorphic to a subgroup of $D\wr S_{m/d}$. Since $D$ acts
faithfully on $d$ points (as $d\ge3$), $\Gamma_{0,1}$ is isomorphic to a transitive
imprimitive group on $m$ points. By the main theorem of \cite{transitive},
we get $r-2\le\frac{m}{2}+1$, so $m\le6$.

Now suppose that $\Gamma_{0,1}$ is intransitive.

We know that $\langle\rho_1,\ldots,\rho_{r-2}\rangle\cong C_2\times A_m$,
and $\{\rho_1,\ldots,\rho_{r-2}\}$ are string C-group generators. We claim that their images, $\bar{\rho}_1,\ldots,\bar{\rho}_{r-2}$,
in $\Gamma_{0,r-1}/\langle z\rangle\cong A_m$ are independent. 

Suppose not. Note that they generate $A_m$ as an sggi. If they fail to be
independent, one of them can be expressed in terms of the others. Suppose that
it is $\bar{\rho}_h$. We cannot have $1<h<r-2$, since then these elements would
generate a commuting product of two subgroups. We cannot have $h=1$, since
we are assuming that $\Gamma_{0,1}$ is intransitive. And finally, we cannot
have $h=r-2$. For if so, then
$C_{A_{2m}}(A_m)=\langle z\rangle$; but $\rho_{r-1}$ centralises
$\langle\rho_1,\ldots,\rho_{r-3}\rangle=A_m$, so $\rho_{r-1}=z$, contradicting
the intersection property, since $z\in\langle\rho_1,\ldots,\rho_{r-2}\rangle$.

Now the images mod $z$ of $\rho_2,\ldots,\rho_{r-2}$ are independent, and
generate an intransitive subgroup of $A_m$. So $r-3\le m-3$.  If equality
holds, then this group has just two orbits; it acts on each orbit as the
symmetric group.
But this contradicts the fact that these elements belong to
$\Gamma_{0,1}$, which centralises the dihedral group $D=\langle\rho_0,z\rangle$
having at least one orbit of size greater than $2$ on the set of blocks.
\paragraph{\underline{Case $H\ne S_m,A_m$}}
In this case we prove the following result on independent sets.

\begin{prop} \label{IGSprim}
Let $G$ be a primitive group of degree $n\ge8$, not isomorphic to $A_n$ or
$S_n$. Then the maximum size of an independent generating set of $G$ is at
most $n-4$.
\end{prop}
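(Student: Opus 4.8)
```latex
The plan is to bound the size of an independent generating set of a primitive
group $G$ of degree $n\ge8$ that is neither $A_n$ nor $S_n$, showing it cannot
exceed $n-4$. The key observation is that an independent generating set
$\{g_0,\ldots,g_{s-1}\}$ gives rise to a strictly increasing chain of
subgroups
\[
1=H_0<H_1<\cdots<H_s=G,\qquad H_j=\langle g_0,\ldots,g_{j-1}\rangle,
\]
so that $|G|\ge 2^s$ and hence $s\le\log_2|G|$. Thus it suffices to combine
Mar\'oti's upper bound on $|G|$ (Theorem~\ref{marotiThm}) with the inequality
$s\le\log_2|G|$ and check that the resulting bound is at most $n-4$. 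First I
would dispose of case (c) of Mar\'oti's theorem, where
$|G|\le n\cdot\prod_{i=0}^{\lfloor\log_2 n\rfloor-1}(n-2^i)\le n^{1+\log_2 n}$;
here $s\le(1+\log_2 n)\log_2 n$, which grows like $(\log_2 n)^2$ and so is far
below $n-4$ for all large $n$, reducing the case to a finite computation for
small $n$.

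Next I would treat case (a), the subgroups of $S_m\wr S_l$ acting on
$k$-subsets, and case (b), the four Mathieu groups. For case (b), the four
groups $M_{11},M_{12},M_{23},M_{24}$ have known orders and their maximal
independent generating sets can be determined directly, or bounded by
$\log_2|G|$; in each instance the value is comfortably below $n-4$ for the
relevant degree $n\in\{11,12,23,24\}$. For case (a), I would use the bound
$|G|\le |S_m\wr S_l|=(m!)^l\, l!$, so that
$s\le l\log_2(m!)+\log_2(l!)\le lm\log_2 m+l\log_2 l$, and compare this against
$n-4={m\choose k}^l-4$. Since ${m\choose k}\ge m$ when $1\le k\le m-1$, with
${m\choose k}$ growing polynomially in $m$ while the bound on $s$ grows only
like $lm\log_2 m$, the inequality $s\le n-4$ holds once $m$ (equivalently $n$)
is large enough; the finitely many small exceptions are again handled by
direct inspection or computation.

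The main obstacle is not the asymptotics but the \emph{boundary} cases: the
bound $s\le\log_2|G|$ is wasteful, and near the claimed threshold $n-4$ one
must argue more carefully, particularly for the smallest admissible degrees
$n$ and for case-(a) groups with $k=1$ (which are the imprimitive-looking
product actions) or $l=1$ (the $k$-subset action of a single symmetric group).
For the latter, $G\le S_m$ acting on $k$-subsets has degree $n={m\choose k}$,
and one must verify $s\le m-1\le n-4$, which fails only for the very smallest
$(m,k)$; these I would check individually against the requirement $n\ge8$. I
expect the proof to reduce cleanly to a finite list of primitive groups of
small degree, whose maximal independent generating sets are either already
recorded in the literature or computable in \textsc{Magma}, and for each the
inequality $s\le n-4$ is confirmed to complete the argument.
```
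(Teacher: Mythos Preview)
Your overall strategy matches the paper's: bound an independent generating set by the subgroup-chain length $s\le\log_2|G|$, invoke Mar\'oti's trichotomy, and handle each case with a finite residual computation. However, there is a genuine gap in your treatment of case~(b). You assert that for the Mathieu groups $s$ can be bounded by $\log_2|G|$ and that ``in each instance the value is comfortably below $n-4$''. In fact $\log_2|G|$ exceeds $n-4$ for \emph{every} one of these groups: $\log_2|M_{11}|\approx 12.9>7$, $\log_2|M_{12}|\approx 16.5>8$, $\log_2|M_{23}|\approx 23.3>19$, and $\log_2|M_{24}|\approx 27.9>20$. So the crude bound closes none of the four cases. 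The paper instead appeals to sharper chain-length results due to Whiston: the maximal subgroup chains of $M_{11}$, $M_{23}$, $M_{24}$ have lengths $7$, $11$, $14$ respectively, and a short subgroup-lattice argument gives $M(M_{12})\le 8$. You would need to cite such results or perform an equivalent direct computation; the bound $\log_2|G|$ alone does not suffice here.

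For cases~(a) and~(c) your approach is sound but coarser than the paper's. In case~(a) the paper bounds $M(G)$ by $ml-2$ directly (using that $S_m\wr S_l$ embeds in $S_{ml}$ via its imprimitive action), whereas your estimate $s\le l\log_2(m!)+\log_2(l!)$ is substantially larger; in case~(c) your relaxation of Mar\'oti's product bound to $n^{1+\log_2 n}$ pushes the cutoff for the asymptotic argument from about $n=26$ (as in the paper) to about $n=36$. Neither change is an error, but both enlarge the finite check you would have to carry out.
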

\begin{proof}
Let $M(G)$ be the maximum size of an independent generating set of $G$.

We consider separately the three possibilities given by Theorem~\ref{marotiThm}. 

In case (a)  when $l\geq 2$ we have 
$n-4 = (^m_k)^l-4\geq m^l-4\geq ml-2\geq M(G)$. When $l=1$ we have $k\geq 2$, $m\leq n/2$ and the group is a subgroup of $S_m$ or $A_m$, so $M(G) \leq m-1$, much smaller than $n-4$.

In case (b) we have to consider the groups $M_{11}, M_{12}, M_{23}$ or $M_{24}$. The maximal length of a chain of subgroups of $M_{11}$, $M_{23}$ or $M_{24}$ is 7, 11 and 14 resp. (see \cite{Whis00}). 
If $G$ is isomorphic to $M_{12}$ then $M(G) \leq 9$ by~\cite{Whis00}. 
Suppose that $M(G) = 9$. Then one of the following subgroups $H$ of $M_{12}$, namely $M_{11}$ or $P\Gamma L(2,9)$, has to have $M(H) = 8$.
As $M(M_{11}) \leq 7$ (see~\cite{Whis00}), we must have $M(P\Gamma L(2,9))=8$. A quick look at the subgroup lattice of $M_{12}$ shows that this is impossible as two subgroups of order 1440 never intersect in a subgroup of order 720. 
Hence $M(M_{12}) \leq 8$.

In case (c) the chain length is bounded by $\log_2 \left[n.\prod_{i=0}^{\lfloor\log _2 n-1\rfloor}(n-2^i)\right]$ that is at most $n-4$ for $n\geq 26$.
We also know that if $|G| = p_1^{e_1}\ldots p_k^{e_k}$ then the chain length (and hence $M(G)$) is bounded by $e_1+ \ldots + e_k$.
Combining these two bounds, and using {\sc Magma}, we conclude the result holds for $n>9$ and for $n=8$ we are left with $P\Gamma L_1(8)$, $PSL_2(7)$, $PGL_2(7)$  and $ASL_3(2)$. But for those, looking at the subgroup lattice we get $M(G)\leq 4$.
\end{proof}

\subsection{The case $m=2$}

Suppose that $m=2$, so that $\Gamma_i\le S_k\wr S_2$. 
An involution interchanging the blocks is fixed-point-free so $k$ is even and $n = 0 \mod 4$.

We separate the argument into two cases, according as there is or is not a
value of $j\ne i$ such that $\Gamma_{i,j}$ is transitive. First, suppose
that such a $j$ exists.

\begin{prop}
If $\Gamma_{i,j}$ is transitive for some $j\neq i$ and $\Gamma_i$ is transitive imprimitive embedded into $S_{n/2}\wr S_2$, then $r\leq \frac{n-1}{2}$.
\end{prop}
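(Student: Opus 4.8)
The plan is to use the transitivity hypothesis to treat $\Gamma_{i,j}$ as a transitive imprimitive group preserving the two blocks of size $k=n/2$, and then to study the action of its block kernel on a single block, which has degree $k<n$. First I would fix the structure. Since $\Gamma_{i,j}\le\Gamma_i\le S_{k}\wr S_2$ preserves the block system and is transitive on the $n$ points, it induces the full $S_2$ on the pair of blocks; its kernel $N$ on the blocks therefore has index $2$ and is transitive of degree $k$ on each block. As the subgroup generated by all but two of the distinguished generators of the string C-group $\Gamma\cong A_n$, the pair $\Gamma_{i,j}$ is itself a string C-group, of rank $r-2$. Recall also that $k$ is even, since a block-interchanging involution is fixed-point-free.

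A first estimate is immediate: $\Gamma_{i,j}$ is transitive and imprimitive, so it is neither $A_n$ nor $S_n$, and the main theorem of~\cite{transitive} gives $r-2\le\frac{n}{2}+1$, that is $r\le\frac{n}{2}+3$. To reach $r\le\frac{n-1}{2}$, i.e.\ $r\le\frac{n}{2}-1$ since $r$ is an integer and $n$ is even, I must gain four more, and the extra leverage is the degree-$k$ action of $N$, available precisely because $\Gamma_{i,j}$ is transitive. I would bound the number of string generators of $\Gamma_{i,j}$ lying in $N$ in terms of the image $\overline{N}\le S_k$ of this action, splitting into cases on $\overline{N}$ in parallel with the earlier trichotomy for the block action. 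If $\overline{N}$ is primitive but not $A_k$ or $S_k$, Proposition~\ref{IGSprim} caps its independent generating sets at $k-4$; if $\overline{N}$ is transitive imprimitive, a further application of~\cite{transitive} in degree $k$ caps them near $\frac{k}{2}+1$; in both cases the resulting bound on $r$ is comfortably below $\frac{n-1}{2}$ for $n$ large.

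The main obstacle is the near-extremal situation where $\overline{N}\cong S_k$ or $A_k$, for then the generator count alone only yields $r\le\frac{n}{2}+2$, together with the situation where $N$ fails to act faithfully on a block. To close these I expect to need the commuting structure around the fixed-point-free block-interchanging involution $t$, exactly as in the cases $H=S_m$ and $H=A_m$ already treated: $t$ centralises a large symmetric or alternating action, the dihedral group generated by $t$ and $\rho_i$ has an orbit of size at least $3$, and matching this against the symmetric or alternating action on a block forces either a failure of the intersection property or the conclusion that $\Gamma$ preserves the block system, contradicting the primitivity of $A_n$. Here I would also invoke~\cite{fl,Corr} and~\cite{CC} to control how $\overline{N}$ can be generated as a string group. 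The non-faithful case is eliminated by analysing the kernel of the block action as a submodule of $(C_2)^k$ over $\overline{N}$ and using the parity constraint coming from $k$ being even to exclude the offending submodules, just as before. Finally, the small values of $n$ that lie below the thresholds of these asymptotic estimates would be confirmed directly with~\textsc{Magma}, as in Table~\ref{primBound}.
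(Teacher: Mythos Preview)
Your plan diverges from the paper's at the first structural step, and that is exactly where the key idea lives. You analyse $\Gamma_{i,j}$ only through the $2$-block system inherited from $\Gamma_i$, then try to shave four units off the bound $r-2\le \frac{n}{2}+1$ from~\cite{transitive} by studying the kernel $N$ on a single block of size $k=n/2$. The paper instead observes that $\Gamma_j$ is itself transitive (it contains $\Gamma_{i,j}$), so under the standing contradiction hypothesis $r>\frac{n-1}{2}$ it too may be taken transitive imprimitive with its own system of two blocks of size $n/2$. These two block systems are necessarily distinct (otherwise $\Gamma=\langle\Gamma_i,\Gamma_j\rangle$ would preserve them), and their common refinement equips $\Gamma_{i,j}$ with a system of \emph{four} blocks of size $l=n/4$ on which the induced action is the Klein group $C_2\times C_2$. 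Running the $L,C,R$ bookkeeping for this $4$-block system gives $|L|=2$, $|C|\le l-1$, $|R|\le 4$, whence $r-2\le l+5=\frac{n}{4}+5$; this already forces $n\le 28$, and the four surviving multiples of $4$ are eliminated by a short structural case analysis (with a computer check only at $n=16$). You never reach degree $n/4$, so no amount of sharpening at degree $n/2$ will reproduce this.

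There are also concrete gaps in the degree-$k$ plan itself. First, the images in $\overline{N}\le S_k$ of the generators of $\Gamma_{i,j}$ lying in $N$ need not be independent, and they need not form a string C-group, so neither Proposition~\ref{IGSprim} nor~\cite{transitive} applies to $\overline{N}$ in the way you suggest. Second, you have no bound on how many generators lie \emph{outside} $N$: for $m=2$ the analogue of Proposition~\ref{R<=1} fails, so block-swapping generators are not limited to one or two. Third, your handling of the hard case $\overline{N}\cong S_k$ or $A_k$ invokes the dihedral argument from the $k=2$ subsection, but that argument rests on the \emph{central} involution $z$ swapping the two points in each of the $m$ blocks; in the $m=2$ setting a block-interchanging involution $t$ is not central in $\Gamma_i$ and does not commute with the relevant parabolic, so the mechanism that forced $d\ge 3$ and produced the semiregular dihedral action simply is not available. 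The missing ingredient throughout is precisely the second block system coming from $\Gamma_j$.
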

\begin{proof}

Suppose $r>\frac{n-1}{2}$ and $\Gamma_{i,j}$ is transitive for $j\neq i$.
 Then the two
groups $\Gamma_i$ and $\Gamma_j$ are transitive, so each has two
blocks of size $n/2$. The stabilisers of the blocks for the two subgroups
each have index $2$, so their intersection is a normal subgroup of index $4$
in $\Gamma_{i,j}$.
The two block systems cannot be the same, since then they
would be preserved by $\langle\Gamma_i,\Gamma_j\rangle=\Gamma$. These
intersections are blocks for $\Gamma_{i,j}$, of size $n/4=l$, say, and the action
on the blocks (which is not primitive in this case) is isomorphic to the Klein
group $C_2\times C_2$ generated, modulo the normal subgroup, by a set $L$ of
size $2$.

Now we play the usual game: let $C$ be the set of generators commuting with
$L$ (so $C$ acts in the same way on each block, and has rank at most $l-1$),
and $R$ the remaining generators of $\Gamma_{i,j}$, so $|R|\le4$.

Thus, $r-2=|R|+|C|+|L|\le l+5$. As $r>\frac{n-1}{2}$, $l+7\ge2l$, so $l\le7$, and $n\le 28$. Since $n$ is a multiple of $4$,
we only need to consider the cases $n=16$, $20$, $24$ and $28$.

To deal with the exceptions, we first subdivide into two cases, according as
the permutations of $L$ commute. Let $L=\{\rho_s, \rho_r\}$.

Suppose first that $\rho_s$ and $\rho_r$  do not commute. Then they are
adjacent in the diagram; so we can improve our estimate to $|R|\le2$. Also,
there is a vertex $v$ such that, if we follow a path with labels $r,s,r,s$,
we arrive at a different point $w$ in the same block. Then the stabilizer of
$v$ in $\langle C\rangle$ also fixes $w$. So $\langle C\rangle$ is not the
symmetric group, and we have $|C|\le l-2$. Then we have $r-2\le l+2$, so
$n\le16$.

Now suppose that $\rho_s$ and $\rho_r$ commute. If $|R|=4$, or if
$|R|=3$ and the diagram of $C$ is connected, then at
least one element of $R$, say $\rho_h$, also commutes with $C$. If
$\langle C\rangle$ acts primitively on a block, then the centralizer of $\langle C\rangle$
is generated by $\rho_r$ and $\rho_s$, and so
$\rho_h\in\langle\rho_r,\rho_s\rangle$, a contradiction. So we conclude that
either $C$ is disconnected (giving $|C|\le l-2$), or $\langle C\rangle$ is imprimitive (giving $|C|\le l/2+1$)
or $|R|\le 2$. Putting these into our estimates shows that $n\le24$.

If $n=24$, in the worst case scenario, we have $|R| = 4$, $|C| = 4$ and $|L| = 2$ possibly giving $r=12$.
If there are two permutations $\rho_a$ and $\rho_b$ such that $\Gamma_{i,j,a,b}$ is transitive, then it may be assumed that $\Gamma_a$ and $\Gamma_b$ are both embedded into $S_{n/2}\wr S_2$.
This forces $n$ to be divisible by 16, a contradiction. If $|R|\geq 2$ then $\langle C\rangle$  is intransitive within the blocks, otherwise another two generators can be removed from $\Gamma_{i,j}$ and the group remains transitive. Let $\langle C\rangle$  be intransitive within the blocks. If $|C|=4$ then one element of $R$ fuses the orbits of $\langle C\rangle$ inside a block, hence $|R|\leq 2$, by the same reason. Thus the case $|C|=|R|=4$ leads to a contradiction.

We eliminate the case $n=20$ as follows. For $n=20$, we must have either
$|C|=|R|=3$, or $|C|=4$, $|R|=2$. In the first case, the diagram of $C$
is disconnected, so we must have $C\cong S_3\times S_2$, with orbit lengths
$3$ and $2$. Take an element of $R$ which commutes with one component of
the diagram of $C$, and modify it using $\rho_i$ and $\rho_j$ so that it
fixes all the blocks; this element commutes with $S_2\times S_3$, but this
group contains its centraliser in $S_5$, a contradiction. In the second
case, $\langle C\rangle=S_5$. Then $\langle C,\rho_j\rangle\cong S_5\times S_2$
fixes the blocks of imprimitivity for $\Gamma_i$. But this group is maximal
in the stabiliser of this block system. There can be at most one more generator
in $\Gamma_i$, giving $|R|\le1$, a contradiction.

For $n=16$, we used the computer to show the nonexistence of such a group.
\end{proof}

In what follows we consider that $\Gamma_{i,j}$ is intransitive for every $j\neq i$. We prove that $r\leq \frac{n-1}{2}$ using a certain fracture graph and we use the following results that are immediate consequences of the definition of a fracture graph of a given sggi $G$ with permutation representation graph $\mathcal{G}$. By an alternating square, in $\mathcal{G}$, we mean a square having opposed edges with the same label.
\begin{itemize}
\item (\cite{extension}, Proposition 3.2 ) if two edges of $\mathcal{G}$, $e$ and $e'$, have the same label and $e$ is an edge of a fracture graph, then at least one vertex of $e'$ is in a different component of that fracture graph;
\item  (\cite{extension}, Proposition 3.6) when two edges of an alternating  square  of $\mathcal{G}$, belong to a fracture graph of $G$, the vertex of the square not in these edges is in another component of that fracture graph.  
\end{itemize}
When representing $\mathcal{G}$, dashed edges are used to represent edges that are in  $\mathcal{G}$ but not in the chosen fracture graph.

\begin{prop}
If $\Gamma_{i,j}$ is intransitive for every $j\neq i$ and $\Gamma_i$ is transitive imprimitive embedded into $S_{n/2}\wr S_2$, then $r\leq \frac{n-1}{2}$.
\end{prop}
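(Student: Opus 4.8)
The plan is to use a fracture graph for $\Gamma_i$, amplified by the two-block symmetry and by the fact that $\Gamma\le A_n$. Since $\Gamma_i$ is transitive on the $n$ points while every $\Gamma_{i,j}$ with $j\ne i$ is intransitive, for each $j\ne i$ the involution $\rho_j$ has a transposition joining two distinct $\Gamma_{i,j}$-orbits; choosing one such transposition for each $j$ produces a fracture graph $\mathcal F$ for $\Gamma_i$ with exactly $r-1$ edges, all carrying distinct labels. The target is $r\le\tfrac{n-1}{2}$; as $r$ is an integer and $n\equiv0\pmod4$ this means $r\le\lfloor\tfrac{n-1}{2}\rfloor=\tfrac n2-1$, equivalently $2(r-1)\le n-4$. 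So morally we must fit two edges per generator into a forest with several components.

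First I would specialise the bookkeeping of \cite{transitive} to $m=2$. The block action is $C_2$, so $L=\{\rho_s\}$ is a single fixed-point-free block-swap, while every other generator fixes both blocks; here $C$ is the set of generators commuting with $\rho_s$ (which act identically on the two blocks $B_1,B_2$ of size $k=n/2$) and $R$ is the rest. Unlike the case $m\ne2$, Proposition~\ref{R<=1} gives no bound on $|R|$, which is exactly why the naive count $r-1=|L|+|C|+|R|$ is useless and a graph argument is forced. The crucial geometric fact is that $\rho_s$ conjugates the $B_1$-action to the $B_2$-action: a generator in $C$ therefore has mirror-image restrictions to the two blocks, and any transposition it contributes inside $B_1$ has a mirror transposition inside $B_2$ joining the correspondingly mirrored orbits. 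Because $\Gamma_i\le A_n$ each $\rho_j$ is even and so has at least two transpositions, and I would use this to promote $\mathcal F$ to a $2$-fracture graph $\mathcal F^{(2)}$ in which each generator contributes two edges: a $C$-generator a mirror pair (one per block), $\rho_s$ two of its $k$ crossing transpositions, and each $R$-generator two of its transpositions.

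With $\mathcal F^{(2)}$ (carrying $2(r-1)$ edges on $n$ vertices) in hand I would invoke the two quoted consequences of \cite{extension} to show that $\mathcal F^{(2)}$ has no cycles, the same-label principle constraining where equally-labelled edges can lie and the alternating-square principle excluding the short cycles that the mirror pairs might otherwise close. To pass from the forest bound $2(r-1)\le n-1$ down to $2(r-1)\le n-4$ I would aim to manufacture extra components: the $k-2$ crossing transpositions of $\rho_s$ that were not selected all carry the label $s$, and by the same-label principle each forces a vertex into a component distinct from the one meeting the selected crossing edges; together with the mirror symmetry keeping the two blocks in separate families of components until the single crossing pair is added, this should yield enough components, and parity (both $2(r-1)$ and $n$ being even) then upgrades the bound to $2(r-1)\le n-4$, i.e.\ $r\le\tfrac n2-1$.

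The hard part will be the $R$-generators. A generator that fixes both blocks but does not commute with $\rho_s$ may act differently on $B_1$ and $B_2$, so it need not respect the mirror structure, its second chosen transposition need not join distinct orbits, and it may a priori create a cycle or fuse components that the count relies on keeping apart. Resolving this requires a careful analysis of how such an edge can attach to the two block-forests---ruling out the forbidden cycles with the alternating-square principle and, where necessary, replacing the chosen fracture edges by more convenient ones, as anticipated in the introduction. Finally, the forest estimate is tight or fails only for a short list of small $n\equiv0\pmod4$ (the values $16,20,24,28$ already met in the transitive subcase), and these I would dispatch by direct computation in {\sc Magma}.
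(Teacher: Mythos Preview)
Your plan has a genuine gap at the step where you claim that the $2$-fracture graph $\mathcal F^{(2)}$ is a forest. The two consequences of \cite{extension} that you cite concern ordinary (one-edge-per-label) fracture graphs: they say that a second edge with the same label, or the fourth vertex of an alternating square, must lie in a \emph{different component}. In a $2$-fracture graph each label already appears twice inside the graph, so those statements say nothing, and indeed $2$-fracture graphs routinely contain cycles: by Proposition~\ref{isoncy} any cycle simply has zero or two $i$-edges of each colour, and alternating squares $q_{i,j}$ are typical. Your mirror pairs for $C$-generators together with a pair of $\rho_s$-crossing edges are precisely such alternating squares. So the acyclicity claim fails, and with it the forest count $2(r-1)\le n-1$ and all the refinements you layer on top of it. The secondary difficulty you flag for $R$-generators (that their second transposition need not separate $\Gamma_{i,j}$-orbits) is real too, but it is moot once the main counting step collapses. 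The small-case list $16,20,24,28$ belongs to the companion proposition where some $\Gamma_{i,j}$ is transitive; here no such computation is needed.

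The paper avoids all of this by changing the vertex set rather than doubling the edges. Since $L=\{\rho_l\}$ generates the block action, one passes to the $n/2$ orbits of $\langle\rho_l\rangle$ and builds an ordinary fracture graph $\mathcal L$ there for the $r-2$ generators in $C\cup R$: one edge per label, joining $\langle\rho_l\rangle$-orbits in different $\Gamma_{i,j}$-orbits. Now the \cite{extension} facts apply verbatim, so $\mathcal L$ is a forest. Because $R\ne\emptyset$ (the case $R=\emptyset$ reduces to blocks of size~$2$) and any $\rho_j\in R$ is even but does not commute with $\rho_l$, it must join at least two pairs of $\langle\rho_l\rangle$-orbits; this forces $\mathcal L$ to have at least two components, whence $|C\cup R|\le n/2-2$ and $r\le n/2$. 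The residual equality case $r=n/2$ is then eliminated by determining the permutation representation graph explicitly and observing that the only way to insert $\rho_i$ makes it either an odd permutation or a block-preserving map. The move you are missing is this quotient to $n/2$ vertices, which is exactly what converts a would-be $2$-fracture argument on $n$ points into a legitimate $1$-fracture argument.
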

\begin{proof}
Let $L=\{\rho_l\}$.
Suppose that $R=\emptyset$. In this case $\Gamma_i\leq 2\times S_{n/2}$ and therefore we could see $\Gamma_I$ as an imprimitive group with blocks of size two which we dealt with before. Hence $r\leq \frac{n-1}{2}$.

We may now assume that $R$ is nonempty.

First observe that if $\rho_j\in R$ then $\rho_j$ connects at least two pairs of $\langle \rho_l\rangle$-orbits. 
Indeed as  $\rho_j$ is an even permutation and $\rho_j$  does not commute with $\rho_l$, there must be an even number (different from zero) of pairs of $\langle\rho_l\rangle$-orbits joined by a single $\rho_j$-edge. 

Suppose $\rho_{l-1}\in R$.
Consider the graph $\mathcal{L}$ whose vertices are the $\langle \rho_l\rangle$-orbits and with a $j$-edge for each element of $C\cup R$ connecting
$\langle \rho_l\rangle$-orbits in different $\Gamma_{i,j}$-orbits. 
Note that $\mathcal{L}$ is a fracture graph for the group action on the $\langle \rho_l\rangle$-orbits of $\Gamma_i$.
Since $\Gamma_{i,j}$ is intransitive for every $j\neq i$, we have that $\mathcal{L}$ has no cycles. 
Then $\rho_{l-1}$ must connect at least two pairs of $\langle \rho_l\rangle$-orbits, as observed before.
Let us denote two of them by $(L_1,L'_1)$, and $(L_2,L'_2)$. We have that $|\{L_1,L'_1,L_2,L'_2\}|\geq 3$.
Suppose that $\{L_1,L'_1\}$ is the $(l-1)$-edge of $\mathcal{L}$.
Then either $L_2$ or $L'_2$ is another connected  component of $\mathcal{L}$, different from the component of $L_1$.
Let $L_1$ and $L_2$ be in different connected components of $\mathcal{L}$.
Therefore $|C\cup R|\leq n/2-2$. 
Suppose we have the equality. 
Then $\mathcal{L}$ has exactly two connected components, and 
either $|C|=n/2-3$ and $|R|=1$, or $|C|=n/2-4$ and $R=2$. 

Let $|C|=n/2-3$ and $R=\{\rho_{l-1}\}$. 
We claim that the incident edges of $\mathcal{L}$  have consecutive labels.
Suppose the contrary. 
Let  $g$ and $h$ be non-consecutive labels of incident edges of $\mathcal{L}$. Consider first  $g,h\neq l-1$.
Then there exists an alternating square, with labels $g$ and $h$, whose vertices are $\langle\rho_l\rangle$-orbits.
Three vertices of the square belong to one connected component of  $\mathcal{L}$  and  the fourth belongs to another component of  $\mathcal{L}$.
Let us denote this fourth vertex by $L$. 
\[ \xymatrix@-.4pc{   *++[][F]{} \ar@{-}[d]_g\ar@{-}[r]^h  & *++[][F]{}\ar@{--}[d]^g \\
*++[][F]{} \ar@{--}[r]_h& *+[][F]{L}}\]
Hence it may be assumed that $L$ and $L_2$ are in the same connected component of $\mathcal{L}$.
\[ \xymatrix@-.4pc{   *+[][F]{L'_2} \ar@{--}[d]_{l-1} \ar@{~}[rr]  && *++[][F]{} \ar@{-}[d]_g \ar@{-}[r]^h  & *++[][F]{}\ar@{--}[d]^g \\
*+[][F]{L_2} \ar@{~}[rr]&& *++[][F]{} \ar@{--}[r]_h& *+[][F]{L}}\]
There are paths in $\mathcal{L}$ from the two ends of the $g$-edge to $L_2$ and $L_2'$. 
But then these two vertices are in the same $\Gamma_g$-orbit, a contradiction. 
Now  suppose there is an $h$-edge with $h\neq l-2$ incident to an $(l-1)$-edge. These two edges are on an alternating square. 
Let $h$ be minimal with this property.
As $k>4$, there is a vertex incident to that square.
Now the label of the edge connecting that vertex to the square must be $h+1$.
Hence there is also an alternating square adjacent to the first one, with labels $h+1$ and $l-1$.
These two squares give three different connected components of $\mathcal{L}$, a contradiction.
Thus two incident edges in $\mathcal{L}$ must be consecutive.
This gives a unique possibility for the permutation representation graph of $\Gamma_i$.
$$\xymatrix@-.2pc{ *+[o][F]{}  \ar@{-}[r]^l     & *+[o][F]{}  \ar@{-}[d]^{l+1}\\
 *+[o][F]{} \ar@{-}[d]^{l+1}    \ar@{-}[r]^l     & *+[o][F]{}  \\
 *+[o][F]{} \ar@{-}[d]^{l+2}    \ar@{-}[r]^l     & *+[o][F]{}  \ar@{-}[d]^{l+2}    \\ 
 *+[o][F]{} \ar@{-}[d]^{l+3}    \ar@{-}[r]^l     & *+[o][F]{}  \ar@{-}[d]^{l+3}    \\
  *+[o][F]{} \ar@{-}[d]^{l+4}    \ar@{-}[r]^l     & *+[o][F]{}  \ar@{-}[d]^{l+4}    \\
 *+[o][F]{}     \ar@{.}[d]  \ar@{-}[r]^l   & *+[o][F]{}  \ar@{.}[d]    \\  
  *+[o][F]{}   & *+[o][F]{}   } $$
Now there are two possibilities for $i$, namely either $i=0$ or $i=r-1$. 
But as $\rho_i$ must  break the block system, the only possibility is $i=r-1$ and $\rho_i$ a single transposition  connecting the two bottom vertices of the picture below, a contradiction.

Let  $|C|=n/2-4$ and $R=\{\rho_{l-1},\rho_{l+1}\}$. As observed at the beginning of the proof of this proposition, $\rho_{l+1}$ connects at least two pairs of $\langle\rho_l\rangle$-orbits. If it does not connect  $L_2$ to $L'_2$, then $\mathcal{L}$ has three connected components, a contradiction. 
Hence assume both $\rho_{l-1}$ and $\rho_{l+1}$ connect $L_2$ to $L'_2$.  
As $k\geq 3$ there exists another $\langle \rho_l\rangle$-orbit $L$ that is adjacent to either $L_2$ or $L'_2$. Let $\rho_h\in R\cup C$ be a permutation connecting $L$ and $L_2$. If $\rho_h\in C$ then, as it commutes with at least one of $\rho_{l-1}$ and $\rho_{l+1}$,
there is an alternating square in the permutation representation graph of $\Gamma$ with labels $h$ and one of $l-1$ and $l+1$.
This implies that $\rho_h$ also connects two pairs of $\langle \rho_l\rangle$-orbits different from $(L_2,L_2')$. 
Hence $|C\cup R|\leq n/2-3$, a contradiction. 
Thus $\rho_h\in R$. 
We may assume that $h=l-1$.
If a $(l+1)$-edge is incident to a $(l-1)$-edge then there exists an alternating square and we get a contradiction as before.
Hence there exists a double edge with labels $l+1$ and $l-1$ and no other incidence between edges with these labels.
Also if incident edges have labels in $C$, then the labels must be consecutive.
Moreover $\Gamma_i$ has the following permutation representation graph.
$$\xymatrix@-.2pc{ *+[o][F]{} \ar@{=}[d]_{l-1}^{l+1} \ar@{-}[r]^l     & *+[o][F]{}  \\
 *+[o][F]{}    \ar@{-}[r]_l     & *+[o][F]{} \ar@{-}[d]^{l-1} \\
 *+[o][F]{} \ar@{-}[d]_{l+1}    \ar@{-}[r]^l     & *+[o][F]{}    \\
 *+[o][F]{} \ar@{-}[d]_{l+2}    \ar@{-}[r]^l     & *+[o][F]{}  \ar@{-}[d]^{l+2}    \\ 
 *+[o][F]{} \ar@{-}[d]_{l+3}    \ar@{-}[r]^l     & *+[o][F]{}  \ar@{-}[d]^{l+3}    \\
 *+[o][F]{}     \ar@{.}[d]  \ar@{-}[r]^l   & *+[o][F]{}  \ar@{.}[d]    \\  
  *+[o][F]{}      & *+[o][F]{}   } $$
Now there are two possibilities for $i$, namely either $i=0$ or $i=r-1$. In each case either $\rho_i$ is an odd permutation or $\rho_i$ fixes the blocks, a contradiction.
\end{proof}

\section{All $\Gamma_i$'s intransitive: $2$-fracture graphs exist}
\label{s:2frac}

In this section and the next, we handle the case where all subgroups
$\Gamma_i$ are intransitive. As explained in the introduction, we use the
techniques of fracture graphs. In this section we deal with the case where
$2$-fracture graphs exist. In fact, for later use, our results are more
general: we do not assume that $\Gamma\cong A_n$ until the end of the section. 

Let $\mathcal{G}$ be a permutation representation graph on $n$ vertices for $\Gamma$, which is connected (meaning that this permutation representation of $\Gamma$ is transitive). From now on, we assume that $\Gamma$ is indeed the permutation group defined by $\mathcal{G}$. 

Suppose that $\Gamma$ has no transitive maximal parabolic subgroup $\Gamma_i$. 
We make the further assumption that each $\rho_i$ interchanges at least two pairs of points which lie in different $\Gamma_i$-orbits for all $i$.
Then $\mathcal{G}$ has a subgraph with $n$ vertices and $2r$ edges corresponding to two pairs of vertices in different $\Gamma_i$-orbits. We call this graph a  \emph{2-fracture graph}.

For ease of notation, we denote by $q_{i,j}$ an alternating square with labels $i$ and $j$, and call a cycle with more than four vertices a \emph{big cycle}.   

We give some basic results that follow immediately from the definition of a 2-fracture graph.

\begin{prop}\label{paths}
If $e=\{v,w\}$ is an $i$-edge of a 2-fracture graph $Q$ of $\Gamma$, then any path from $v$ to $w$ in $\mathcal{G}$ which does not contain $e$ must contain another $i$-edge.
\end{prop}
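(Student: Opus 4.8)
The plan is to read off a path in $\mathcal{G}$ as a word in the generators and exploit the defining property of the edges of a 2-fracture graph, namely that an $i$-edge $\{v,w\}$ of $Q$ joins two vertices lying in \emph{different} $\Gamma_i$-orbits. The argument is by contradiction, and is essentially immediate once the dictionary between walks in $\mathcal{G}$ and products of the $\rho_j$ is set up correctly.

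First I would suppose, for contradiction, that there is a path $v=v_0,v_1,\ldots,v_t=w$ in $\mathcal{G}$ which avoids $e$ and contains no $i$-edge whatsoever. For each step $s$, the edge $\{v_{s-1},v_s\}$ carries some label $j_s$, and by the definition of the permutation representation graph this means $v_{s-1}\rho_{j_s}=v_s$ (here $\rho_{j_s}$ is an involution, so the edge may be traversed in either direction). Composing along the path gives the single element $g=\rho_{j_1}\rho_{j_2}\cdots\rho_{j_t}$ with $vg=w$.

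Next I would observe that, by our assumption, none of the labels $j_s$ equals $i$, so every factor of $g$ lies in $\{\rho_j : j\neq i\}$; hence $g\in\Gamma_i$. But then $w=vg$ lies in the same $\Gamma_i$-orbit as $v$, contradicting the fact that the $i$-edge $e=\{v,w\}$ of the 2-fracture graph $Q$ was chosen precisely so that $v$ and $w$ lie in different $\Gamma_i$-orbits. This contradiction shows that any such path must use at least one $i$-edge, and since it avoids $e$ this must be an $i$-edge other than $e$, as required.

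There is no serious obstacle here; the only point demanding care is the correspondence between a path (a sequence of labelled edges, possibly in a multigraph where several differently-labelled edges join the same pair of vertices) and the word obtained by concatenating the corresponding generators, together with the remark that each $\rho_j$ is an involution so that the orientation of the traversal is irrelevant. Everything then reduces to the fact that $\Gamma_i$ is generated by $\{\rho_j : j\neq i\}$.
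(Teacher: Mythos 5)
Your proof is correct and is exactly the "immediate from the definition" argument the paper has in mind (the paper states this proposition without proof as a basic consequence of the definition of a 2-fracture graph): a path avoiding all $i$-edges yields an element of $\Gamma_i=\langle\rho_j : j\neq i\rangle$ carrying $v$ to $w$, contradicting the defining property that the $i$-edge $\{v,w\}$ of $Q$ joins points in different $\Gamma_i$-orbits.
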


\begin{prop}\label{isoncy}
A cycle in a 2-fracture graph has either zero or two $i$-edges. In particular, a 2-fracture graph has no multiple edges.
\end{prop}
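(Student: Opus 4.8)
The plan is to derive both assertions directly from Proposition~\ref{paths}, which is the only ingredient needed. Throughout I would keep in mind two structural facts: by construction a $2$-fracture graph $Q$ contains exactly two $i$-edges for each label $i$, and by the definition of the permutation representation graph $\mathcal{G}$ there is at most one $i$-edge between any given pair of vertices (since $a\rho_i=b$ determines a single edge).

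First I would prove the dichotomy. Fix a label $i$ and a cycle $C$ in $Q$, and suppose for contradiction that $C$ contains exactly one $i$-edge $e=\{v,w\}$. Deleting $e$ from $C$ leaves a path $P$ from $v$ to $w$ lying in $Q\subseteq\mathcal{G}$ and not containing $e$. By Proposition~\ref{paths}, $P$ must contain a second $i$-edge; but then $C=P\cup\{e\}$ has at least two $i$-edges, contradicting the assumption that it has exactly one. Hence $C$ contains either no $i$-edge or at least two $i$-edges, and since $Q$ has only two $i$-edges in total, the count is exactly $0$ or $2$.

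For the final clause, I would observe that a multiple edge is precisely a cycle of length two. Two parallel edges cannot carry the same label, since $\mathcal{G}$ has at most one $i$-edge between any two vertices. If two parallel edges joining $v$ and $w$ carried distinct labels $i$ and $j$, they would form a cycle with exactly one $i$-edge, which is excluded by the dichotomy just proved; equivalently, the single-edge path consisting of the $j$-edge contradicts Proposition~\ref{paths} applied to the $i$-edge. Thus $Q$ has no multiple edges.

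There is no real obstacle here: the whole statement is a short deduction from Proposition~\ref{paths}. The only points requiring a little care are to treat a pair of parallel edges as a genuine (length-two) cycle, so that the second assertion becomes literally a special case of the first, and to invoke the two structural facts recorded above, namely that $Q$ has exactly two $i$-edges per label and that $\mathcal{G}$ has no repeated label between a fixed pair of vertices.
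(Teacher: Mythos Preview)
Your proposal is correct and follows exactly the line the paper intends: the paper states Propositions~\ref{paths} and~\ref{isoncy} as ``basic results that follow immediately from the definition of a 2-fracture graph'' and gives no explicit proof, and your argument is precisely the natural deduction of Proposition~\ref{isoncy} from Proposition~\ref{paths} together with the fact that a 2-fracture graph has exactly two $i$-edges per label. Your treatment of the ``no multiple edges'' clause as the length-two special case is also the intended reading.
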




\begin{prop}\label{common}
If one $i$-edge is in the intersection of a pair of cycles of a 2-fracture graph, then both $i$-edges are in the intersection of those cycles. In particular, an edge of a square of a 2-fracture graph cannot be common to any other cycle.
\end{prop}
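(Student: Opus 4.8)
The plan is to read off the first assertion directly from Proposition~\ref{isoncy} and then feed it, together with Proposition~\ref{paths}, into a short case analysis for the statement about squares. Throughout I use the fact, built into the definition, that the $2$-fracture graph $Q$ has exactly two $i$-edges for each label $i$.

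For the first assertion, suppose an $i$-edge $e$ lies in the intersection of two cycles $C_1$ and $C_2$ of $Q$, and let $e'$ be the other $i$-edge of $Q$. By Proposition~\ref{isoncy} each cycle containing $e$ contains exactly two $i$-edges; since $e$ and $e'$ are the only $i$-edges available, the two $i$-edges of $C_1$ are $\{e,e'\}$ and likewise for $C_2$. Hence $e'\in C_1$ and $e'\in C_2$, so $e'\in C_1\cap C_2$, which is exactly what is claimed.

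For the ``in particular'' statement I would take a square $S=q_{i,j}$, writing its edges as the two $i$-edges $\{a,b\},\{c,d\}$ and the two $j$-edges $\{b,c\},\{a,d\}$ in cyclic order; note these are the only $i$- and $j$-edges of $Q$. Assume for contradiction that some edge of $S$, say (after possibly interchanging the roles of $i$ and $j$) the $i$-edge $\{a,b\}$, also lies in a cycle $C\neq S$. By the first assertion, both $i$-edges $\{a,b\}$ and $\{c,d\}$ lie in $C$. Deleting these from $C$ splits it into two vertex-disjoint arcs whose endpoints are $a,b,c,d$, and up to relabelling there are only two admissible pairings: an arc from $b$ to $c$ together with an arc from $d$ to $a$, or an arc from $b$ to $d$ together with an arc from $c$ to $a$. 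In the first pairing I would apply Proposition~\ref{paths} to the $j$-edge $\{b,c\}$: the $b$--$c$ arc is a path which, unless it is the single edge $\{b,c\}$, must contain the other $j$-edge $\{a,d\}$; but that arc omits the vertices $a$ and $d$, so it must equal $\{b,c\}$, and symmetrically the other arc must equal $\{a,d\}$, forcing $C=S$, a contradiction. In the second pairing I would instead build the $b$--$c$ path consisting of the arc from $b$ to $d$ followed by the edge $\{c,d\}$; since $a$ and $c$ lie on the opposite arc, this path contains neither $\{b,c\}$ nor $\{a,d\}$, again contradicting Proposition~\ref{paths}.

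The first assertion is routine once Proposition~\ref{isoncy} is in hand. The main obstacle is the second pairing in the square case: one must produce an explicit $b$--$c$ path that simultaneously avoids \emph{both} $j$-edges, and the key point is that closing up the $b$--$d$ arc with the second $i$-edge $\{c,d\}$ works precisely because the fourth vertex $a$ has been quarantined on the opposite arc, so neither $j$-edge can sneak in.
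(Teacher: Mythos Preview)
Your argument is correct. The first assertion is exactly the paper's proof, just unpacked: since $Q$ has only the two $i$-edges $e,e'$, any cycle containing $e$ must (by Proposition~\ref{isoncy}) contain $e'$ as well.

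For the ``in particular'' clause the paper gives no separate argument, so your case analysis is filling a genuine gap left to the reader. Your treatment is sound; one minor point of phrasing: in the second pairing you justify that $\{b,c\}$ is absent by saying ``$a$ and $c$ lie on the opposite arc'', but $c$ is of course the terminal vertex of the path you built. The real reason $\{b,c\}$ cannot be an edge of that path is that $b$ and $c$ are its two endpoints and the path has length at least~$2$. The exclusion of $\{a,d\}$ is exactly as you say, since $a$ is confined to the other arc.

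If you prefer to stay entirely within Proposition~\ref{isoncy} rather than invoking Proposition~\ref{paths}, the second pairing can also be dispatched by forming the cycle $a,\,b,\,P_1,\,d,\,a$ using the $i$-edge $\{a,b\}$, the $b$--$d$ arc $P_1$, and the $j$-edge $\{a,d\}$: this is a simple cycle (as $a\notin P_1$) containing exactly one $i$-edge, contradicting Proposition~\ref{isoncy}. Either route works; yours has the virtue of treating both pairings with the same tool.
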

\begin{proof}
If only one $i$-edge is common to two cycles of a 2-fracture graph, then there is a cycle with only one $i$-edge, which is not possible by Proposition~\ref{isoncy}.
\end{proof}

\begin{prop}\label{cons}
A big cycle of a 2-fracture graph has adjacent edges with nonconsecutive labels.
\end{prop}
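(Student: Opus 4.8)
The plan is to argue by contradiction using an extremal-label argument on the labels occurring around the cycle. Suppose $C$ is a big cycle of a $2$-fracture graph $Q$, so $C$ has at least five vertices, and suppose for contradiction that every pair of adjacent edges of $C$ has consecutive labels (that is, labels differing by exactly $1$). First I would record two preliminary facts. Since each $\rho_i$ is an involution, its two $i$-edges in $Q$ are vertex-disjoint, so two adjacent edges of $C$ can never carry the same label; thus ``not nonconsecutive'' really does mean ``differing by exactly $1$''. And by Proposition~\ref{isoncy}, every label that occurs on $C$ occurs there exactly twice.

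Next I would look at the largest label $M$ appearing on $C$. Each of the two $M$-edges has exactly two neighbouring edges along the cycle, and by the contradiction hypothesis their labels lie in $\{M-1,M+1\}$; since $M$ is maximal, $M+1$ does not occur, so all four of these neighbours are $(M-1)$-edges. In particular $M-1$ occurs on $C$, hence (again by Proposition~\ref{isoncy}) there are exactly two $(M-1)$-edges, providing exactly four neighbour-slots among them. These four slots are precisely filled by the two $M$-edges, so each $M$-edge is adjacent to both $(M-1)$-edges and, symmetrically, each $(M-1)$-edge is adjacent to both $M$-edges.

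I would then conclude that the two $M$-edges and the two $(M-1)$-edges form a closed $4$-cycle sitting inside $C$: their cyclic order is forced to alternate $M,\,M-1,\,M,\,M-1$. Because $C$ is itself a cycle --- a connected $2$-regular graph, whose only cycle subgraph is the whole of $C$ --- this forces $C$ to have exactly four edges, contradicting the assumption that $C$ is a big cycle. Hence some pair of adjacent edges of $C$ must carry nonconsecutive labels.

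The steps are all short; the only point that needs care is the neighbour-counting that pins the local structure at the maximal label down to a genuine $4$-cycle, and here the exact count ``each label occurs exactly twice'' from Proposition~\ref{isoncy} is essential --- an inequality such as ``at most twice'' would leave room for the $(M-1)$-edges to connect downward to $(M-2)$-edges, and the contradiction would evaporate. I expect this counting step to be the main (and essentially only) obstacle; everything else is bookkeeping.
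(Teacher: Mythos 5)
Your proof is correct and follows essentially the same route as the paper's: an extremal-label argument combined with the fact from Proposition~\ref{isoncy} that each label occurring on a cycle of a 2-fracture graph occurs there exactly twice. The paper works with the \emph{minimal} label and concludes directly by a pigeonhole count of neighbours (the two minimal-label edges have at least three distinct neighbours in a big cycle, but at most two can carry the unique consecutive label), whereas you work with the maximal label and derive a contradiction by forcing the cycle to be an alternating square; the difference is cosmetic.
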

\begin{proof}
Let $i$ be the smallest edge label in a big cycle. By Proposition~\ref{isoncy} there are two $i$-edges in that cycle. These $i$-edges must be adjacent to at least three further edges. The only edges in the cycle with labels consecutive with $i$ have label $(i+1)$ (by minimality of $i$), and there are at most two of these; so the cycle has adjacent edges with nonconsecutive labels.
\end{proof}

In the following two propositions we show two useful  ways of getting a 2-fracture graph from another. In the first one an $i$-edge of a 2-fracture graph that is in a cycle is replaced by another $i$-edge in the same cycle. In the second proposition, one $i$-edge of a cycle is in a 2-fracture graph and 
the other $i$-edge in that cycle is replaced by the $i$-edge not
in the cycle in the 2-fracture graph.
\begin{prop}\label{swap}
If there is a cycle $C$ in $\mathcal{G}$ containing exactly two $i$-edges $e_1, e_2$, such that $e_1$ is in a 2-fracture graph $\mathcal{Q}$ and $e_2$ is not, then there is another 2-fracture graph $\mathcal{Q}'$ obtained by removing $e_1$ and adding $e_2$.
\end{prop}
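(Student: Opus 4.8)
The plan is to prove Proposition~\ref{swap} by verifying directly that the graph $\mathcal{Q}'$ obtained from $\mathcal{Q}$ by deleting $e_1$ and inserting $e_2$ is again a valid 2-fracture graph. Recall that a 2-fracture graph is specified by the data of two $i$-edges for each label $i$, each joining a pair of vertices lying in \emph{different} $\Gamma_i$-orbits. Since we are only altering the pair of $i$-edges (leaving the $j$-edges untouched for all $j\neq i$), the only thing to check is that after the swap both designated $i$-edges still join vertices in distinct $\Gamma_i$-orbits. The edge $e_1$ we are removing is one of the two $i$-edges of $\mathcal{Q}$; the other $i$-edge of $\mathcal{Q}$, call it $f$, is retained and is unproblematic. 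So the whole content is to show that the endpoints of the newly inserted edge $e_2$ lie in different $\Gamma_i$-orbits.

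The key step is the observation that $e_1$ and $e_2$ are the two $i$-edges of a common cycle $C$ of $\mathcal{G}$. First I would note that, because $e_1$ is in the fracture graph $\mathcal{Q}$, its endpoints lie in different $\Gamma_i$-orbits by definition. Now consider the cycle $C$: removing the two $i$-edges $e_1,e_2$ from $C$ splits $C$ into two arcs, each of which consists entirely of edges with labels $j\neq i$. Every such edge connects two points in the same $\Gamma_i$-orbit (since $\rho_j\in\Gamma_i$). Hence each arc lies within a single $\Gamma_i$-orbit, so the four endpoints of $e_1$ and $e_2$ are partitioned by these two arcs into two $\Gamma_i$-orbit classes. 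Because the endpoints of $e_1$ are known to lie in \emph{different} classes, a parity/connectivity argument around the cycle then forces the endpoints of $e_2$ to lie in different classes as well: traversing $C$, one crosses between $\Gamma_i$-orbits precisely at the two $i$-edges, so $e_1$ and $e_2$ must each connect the two distinct orbits that the cycle visits. Thus $e_2$ joins points in different $\Gamma_i$-orbits, which is exactly the condition required of an edge of a 2-fracture graph.

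The final step is bookkeeping: I would confirm that $\mathcal{Q}'$ still has exactly $2r$ edges (we removed one $i$-edge and added one, and $e_2\notin\mathcal{Q}$ by hypothesis so no multiplicities collapse), and that the retained $i$-edge $f$ together with $e_2$ still form a legitimate pair of $i$-edges. Since $\mathcal{Q}'$ spans the same $n$ vertices and the designated edges for every label continue to join points in different $\Gamma_i$-orbits, $\mathcal{Q}'$ is a 2-fracture graph, completing the proof.

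The main obstacle I anticipate is the careful argument, via the structure of the cycle $C$, that $e_2$ lands between two distinct $\Gamma_i$-orbits rather than within one. This is really a reformulation of Proposition~\ref{isoncy}: a cycle has either zero or two $i$-edges, so within the cycle the $\Gamma_i$-orbit can change only at the $i$-edges, and it must change an even number of times returning to the start; since $e_1$ genuinely changes the orbit, $e_2$ must restore it and hence also straddle two orbits. Making this ``the orbit changes only at $i$-edges'' precise — and ruling out that $e_2$ might connect two points in the same orbit because of some other coincidence in $\mathcal{G}$ outside the cycle — is the delicate point, but it follows cleanly from the fact that all non-$i$ generators fix the $\Gamma_i$-orbits setwise.
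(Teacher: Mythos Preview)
Your proof is correct and follows the same approach as the paper. The paper's own proof is a single sentence asserting that $e_2$ lies between vertices in different $\Gamma_i$-orbits; you have simply supplied the (correct) justification for that assertion via the two-arc argument along the cycle $C$.
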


\begin{proof}
The graph $\mathcal{Q}'$ is a 2-fracture graph, because the edge $e_2$ is between vertices in different $\Gamma_i$-orbits.
\end{proof}

\begin{prop}\label{putincycle}
If there is a cycle $C$ in $\mathcal{G}$ containing exactly two $i$-edges $e_1, e_2$, such that $e_1$ is in a 2-fracture graph $\mathcal{Q}$ and $e_2$ is not, then there is another 2-fracture graph $\mathcal{Q}'$ obtained by removing the $i$-edge of $\mathcal{Q}$ which is not $e_1$ and adding $e_2$.
\end{prop}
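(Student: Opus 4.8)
The plan is to verify directly that the candidate graph $\mathcal{Q}'=(\mathcal{Q}\setminus\{f\})\cup\{e_2\}$, where $f$ denotes the $i$-edge of $\mathcal{Q}$ other than $e_1$, satisfies the definition of a 2-fracture graph. The observation driving everything is that this is, in substance, the same situation as in Proposition~\ref{swap}: the hypotheses are identical, and in both statements the entire question reduces to knowing that the edge $e_2$ joins two vertices lying in different $\Gamma_i$-orbits. Proposition~\ref{swap} keeps $f$ and replaces $e_1$ by $e_2$; here I instead keep $e_1$ and replace $f$ by $e_2$. Since the $\Gamma_i$-orbits depend only on $\mathcal{G}$ and the subgroup $\Gamma_i$, and not on the choice of fracture graph, the property that $e_2$ crosses $\Gamma_i$-orbits is the one fact I need, and it is insensitive to which of the two $i$-edges I choose to remove.

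First I would dispose of all labels $j\neq i$: the $j$-edges of $\mathcal{Q}'$ are exactly those of $\mathcal{Q}$, so they still form a pair of edges joining vertices in distinct $\Gamma_j$-orbits, and nothing needs to be checked. Next I would confirm that $\mathcal{Q}'$ has exactly two $i$-edges. Since $e_2\notin\mathcal{Q}$ we have $e_2\neq f$, and $e_2\neq e_1$ because $e_1$ and $e_2$ are distinct edges of the cycle $C$; hence deleting $f$ and adjoining $e_2$ leaves precisely the two $i$-edges $e_1$ and $e_2$. The main step is then to show that $e_2$ joins vertices in different $\Gamma_i$-orbits. For this I would delete the two $i$-edges $e_1,e_2$ from the cycle $C$, splitting it into two arcs; every edge of these arcs carries a label $j\neq i$, and since each such $\rho_j$ lies in $\Gamma_i$, traversing such an edge stays within a single $\Gamma_i$-orbit, so all vertices of one arc lie in one orbit. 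Calling these orbits $O_1$ and $O_2$, the fact that $e_1\in\mathcal{Q}$ forces the endpoints of $e_1$, which lie one in each arc, into different orbits, so $O_1\neq O_2$; as $e_2$ likewise bridges the two arcs, its endpoints lie in $O_1$ and $O_2$ respectively, hence in different $\Gamma_i$-orbits.

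I do not expect a genuine obstacle here. Once $e_2$ is seen to cross $\Gamma_i$-orbits, $\mathcal{Q}'$ has $n$ vertices and, for every label, exactly two edges joining vertices in distinct orbits of the corresponding maximal parabolic subgroup, which is the definition of a 2-fracture graph. The only point requiring a little care is conceptual rather than computational: one must remember that the orbit partitions are fixed data attached to $\mathcal{G}$, so replacing one $i$-edge by another cannot alter whether the remaining edges cross orbits. Indeed the argument is so close to that of Proposition~\ref{swap} that one could simply cite it for the crossing property of $e_2$ and record only the one-line bookkeeping that distinguishes the two replacements.
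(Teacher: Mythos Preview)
Your proposal is correct and follows exactly the paper's approach: the paper's proof consists of the single sentence ``The proof is the same as that of the previous Proposition,'' and you have simply unpacked that reference, supplying the orbit-crossing argument for $e_2$ explicitly and noting the minor bookkeeping that distinguishes this replacement from the one in Proposition~\ref{swap}.
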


\begin{proof}
The proof is the same as that of the previous Proposition.
\end{proof}

Both of the above Propositions will be of particular use when the cycle $C$ is an alternating square.

\begin{prop}\label{movingsquares}
Let $q_{i,j}$ be an alternating square with a vertex $v$ $l$-adjacent to a vertex $w$ in a 2-fracture graph  $\mathcal{Q}$.  If $l$ is not consecutive with $i$, then the square can be moved to include the edge $\{v,w\}$.  That is, there is another 2-fracture graph $\mathcal{Q}'$ obtained from  $\mathcal{Q}$ by changing exactly two edges, as pictured below.   Furthermore $\mathcal{Q}'$ does not have more alternating squares than $\mathcal{Q}$.
$$ \xymatrix@-1.3pc{*+[o][F]{}  \ar@{-}[rr]^j \ar@{-}[dd]^i && *+[o][F]{v}  \ar@{-}[rr]^l \ar@{-}[dd]^i&&*+[o][F]{w}\ar@{--}[dd]^i\\
 &&&&\\
 *+[o][F]{}\ar@{-}[rr]_j &&*+[o][F]{}\ar@{--}[rr]_l&& *+[o][F]{}}\qquad\longrightarrow\qquad\xymatrix@-1.3pc{*+[o][F]{}  \ar@{-}[rr]^j \ar@{--}[dd]^i && *+[o][F]{v}  \ar@{-}[rr]^l \ar@{-}[dd]^i&&*+[o][F]{w}\ar@{-}[dd]^i\\
 &&&&\\
 *+[o][F]{}\ar@{-}[rr]_j &&*+[o][F]{}\ar@{-}[rr]_l&& *+[o][F]{}}$$
\end{prop}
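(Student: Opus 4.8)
The plan is to draw everything from the hypothesis that $l$ is not consecutive with $i$, which says exactly that $\rho_i$ and $\rho_l$ commute. Write $u=v\rho_i$ for the $i$-neighbour of $v$ in the square, so that $q_{i,j}$ has vertex set $\{v,u,a,b\}$ with $a=v\rho_j$, $b=u\rho_j=a\rho_i$, its two $i$-edges being $\{v,u\}$ and $\{a,b\}$ and its two $j$-edges $\{v,a\}$ and $\{u,b\}$, all lying in $\mathcal{Q}$; and write $w=v\rho_l$ for the $l$-neighbour of $v$. The first step is to use the commuting relation to introduce the point $x:=u\rho_l=v\rho_i\rho_l=v\rho_l\rho_i=w\rho_i$. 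Then $\{u,x\}$ is an $l$-edge and $\{w,x\}$ an $i$-edge of $\mathcal{G}$, so $\mathcal{G}$ carries the alternating square $q_{i,l}$ on the vertices $v,w,x,u$, with $i$-edges $\{v,u\}$, $\{w,x\}$ and $l$-edges $\{v,w\}$, $\{u,x\}$. This is the square into which the old one will be slid, and it manifestly contains the edge $\{v,w\}$.

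Next I would construct $\mathcal{Q}'$ by replacing two edges of $\mathcal{Q}$ and verify that it is again a $2$-fracture graph. The cleanest route applies Proposition~\ref{putincycle} twice, both times to the cycle $q_{i,l}$: applied to its two $i$-edges, with $\{v,u\}\in\mathcal{Q}$ and $\{w,x\}\notin\mathcal{Q}$, it replaces the other $i$-edge $\{a,b\}$ of $\mathcal{Q}$ by $\{w,x\}$; applied to its two $l$-edges, with $\{v,w\}\in\mathcal{Q}$ and $\{u,x\}\notin\mathcal{Q}$, it replaces the other $l$-edge of $\mathcal{Q}$ by $\{u,x\}$. (Equivalently one checks the orbit condition by hand: since $\rho_l\in\Gamma_i$ preserves the $\Gamma_i$-orbits and $\{v,u\}$ joins two of them, so does $\{w,x\}=\{v\rho_l,u\rho_l\}$; symmetrically $\rho_i\in\Gamma_l$ shows $\{u,x\}$ joins two $\Gamma_l$-orbits.) The outcome is the pictured graph $\mathcal{Q}'$: it still carries exactly two edges of each label, it differs from $\mathcal{Q}$ only in the replaced $i$-edge and $l$-edge, and it contains all four edges of $q_{i,l}$, so the square has been moved to include $\{v,w\}$.

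The step I expect to require the most care is the final assertion that $\mathcal{Q}'$ has no more alternating squares than $\mathcal{Q}$, and here the key is a loss/gain count sharpened by Proposition~\ref{isoncy}. Deleting $\{a,b\}$ destroys $q_{i,j}$, which lay in $\mathcal{Q}$, and one checks $q_{i,j}$ is not recreated, so at least one square is lost. For the gains, note that in $\mathcal{Q}'$ the edges $\{v,u\}$, $\{w,x\}$ are the \emph{only} two $i$-edges and $\{v,w\}$, $\{u,x\}$ the only two $l$-edges, so any alternating square of $\mathcal{Q}'$ that is not a square of $\mathcal{Q}$ must use one of the two inserted edges, hence must use both $i$-edges or both $l$-edges, and therefore has vertex set exactly $\{v,w,u,x\}$. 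On these four vertices the only edges of $\mathcal{Q}'$ are the four ``sides'' $\{v,u\}$, $\{w,x\}$, $\{v,w\}$, $\{u,x\}$: neither diagonal can occur, since an edge $\{u,w\}$ would make the triangle on $\{v,u,w\}$ carry a single $i$-edge, and an edge $\{v,x\}$ would make the triangle on $\{v,x,u\}$ carry a single $i$-edge, each impossible by Proposition~\ref{isoncy}. Hence the unique four-cycle on $\{v,w,u,x\}$ in $\mathcal{Q}'$ is $q_{i,l}$, so $\mathcal{Q}'$ gains precisely this one square while losing $q_{i,j}$, and its number of alternating squares does not increase. The point to watch throughout is that the uniqueness of the second $i$- and $l$-edge is what forces every potential new square onto the four vertices $v,w,u,x$, where Proposition~\ref{isoncy} then pins it down.
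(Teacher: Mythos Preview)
Your argument is correct and follows essentially the same route as the paper: identify the alternating square $q_{i,l}$ in $\mathcal{G}$ sharing the $i$-edge $\{v,u\}$ with $q_{i,j}$, then use Propositions~\ref{swap}/\ref{putincycle} on that square to replace the appropriate $i$- and $l$-edges. (The paper phrases the $i$-step as an application of Proposition~\ref{swap} ``to the other $i$-edge of $q_{i,j}$'' rather than as Proposition~\ref{putincycle} on $q_{i,l}$, but the resulting $\mathcal{Q}'$ is the same.) Your added verification of the square count---losing $q_{i,j}$, and pinning any new square onto $\{v,u,w,x\}$ via Proposition~\ref{isoncy} where only $q_{i,l}$ can live---is a point the paper leaves implicit, and your treatment of it is sound.
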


\begin{proof}
Let $l$ be not consecutive with $i$. There is an alternating square in $\mathcal{G}$, sharing an $i$-edge with $q_{i,j}$.  We apply Proposition~\ref{swap} to the other $i$-edge of $q_{i,j}$, and Proposition~\ref{putincycle} for the $l$-edges.

\end{proof}

\begin{prop}\label{changelabels}
If a 2-fracture graph  $\mathcal{Q}$ contains the subgraph on the left of the following figure, then there is a 2-fracture graph $\mathcal{Q}'$ containing the subgraph on the right, such that  $\mathcal{Q}$ and  $\mathcal{Q}'$ differ only in four edges.
$$ \xymatrix@-1.3pc{*+[o][F]{}  \ar@{-}[rr]^{i-1} \ar@{-}[dd]_{i+1}&& *+[o][F]{}   \ar@{-}[dd]_{i+1}&&*+[o][F]{} &&\\
 &&&&\\
 *+[o][F]{}\ar@{-}[rr]_{i-1}&&*+[o][F]{}\ar@{-}[rr]_i\ar@{--}[uurr]^l && *+[o][F]{}\ar@{-}[rr]_l&& *+[o][F]{}\ar@{--}[uull]_i}\qquad\longrightarrow\qquad\xymatrix@-1.3pc{*+[o][F]{}  \ar@{-}[rr]^{i-1} \ar@{-}[dd]_{i+1}&& *+[o][F]{}   \ar@{-}[dd]_{i+1}&&*+[o][F]{} &&\\
 &&&&\\
 *+[o][F]{}\ar@{-}[rr]_{i-1}&&*+[o][F]{}\ar@{--}[rr]_i\ar@{-}[uurr]^l && *+[o][F]{}\ar@{--}[rr]_l&& *+[o][F]{}\ar@{-}[uull]_i} $$
\end{prop}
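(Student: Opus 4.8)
The plan is to recognise the right-hand portion of the figure as a single alternating square and to rotate the two active edges around it by applying Proposition~\ref{swap} twice, once for each of the two labels $i$ and $l$. Label the four vertices of this right-hand square as follows: let $w$ be the bottom vertex incident with both the $i$-edge and the $l$-edge, let $u$ be the other end of that $i$-edge, let $x$ be the other end of that $l$-edge, and let $t$ be the top-right vertex, joined to $u$ by a dashed $l$-edge and to $x$ by a dashed $i$-edge. Since all four of $\{u,w\}$, $\{w,x\}$, $\{x,t\}$, $\{t,u\}$ are edges of $\mathcal{G}$ (the last two being present but dashed), they close into a four-cycle whose opposite edges carry the same label; that is, an alternating square $q_{i,l}$ with $i\neq l$. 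In $\mathcal{Q}$ the two edges of this square lying in the fracture graph are the solid edges $\{u,w\}$ and $\{w,x\}$, meeting at $w$. The left-hand square $q_{i-1,i+1}$ and every edge outside the right-hand square play no role in the argument; they merely describe the configuration in which the proposition will be applied.

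First I would apply Proposition~\ref{swap} to the square $q_{i,l}$ for the label $i$: the square is a cycle of $\mathcal{G}$ containing exactly the two $i$-edges $\{u,w\}\in\mathcal{Q}$ and $\{x,t\}\notin\mathcal{Q}$, so removing $\{u,w\}$ and adding $\{x,t\}$ produces a 2-fracture graph $\mathcal{Q}_1$. Passing from $\mathcal{Q}$ to $\mathcal{Q}_1$ does not alter $\mathcal{G}$, so the same alternating square is still available. I would then apply Proposition~\ref{swap} once more, now for the label $l$: the square contains exactly the two $l$-edges $\{w,x\}\in\mathcal{Q}_1$ and $\{t,u\}\notin\mathcal{Q}_1$, and removing $\{w,x\}$ while adding $\{t,u\}$ produces a 2-fracture graph $\mathcal{Q}'$. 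In $\mathcal{Q}'$ the two active edges of the square are $\{x,t\}$ and $\{t,u\}$, meeting at $t$, which is exactly the right-hand figure, and the only edges changed are $\{u,w\}$, $\{w,x\}$, $\{x,t\}$, $\{t,u\}$. Hence $\mathcal{Q}$ and $\mathcal{Q}'$ differ in precisely these four edges, as required.

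The one point needing care, and the place where I expect the only real subtlety, is the legitimacy of each swap, namely that the newly added edges genuinely join vertices in \emph{different} orbits. This is exactly the content invoked in the proof of Proposition~\ref{swap}: in an alternating square $q_{i,l}$ with $i\neq l$ the endpoints of one $i$-edge lie in the same $\Gamma_i$-orbits as the endpoints of the other $i$-edge, because the two $l$-edges (which use $\rho_l\in\Gamma_i$) identify them pairwise; thus $\{x,t\}$ straddles two $\Gamma_i$-orbits precisely because $\{u,w\}$ does, and the symmetric statement with $i$ and $l$ interchanged validates the $l$-swap. The only thing to confirm beyond this is that the second application of Proposition~\ref{swap} is still available after the first swap, which holds because the underlying graph $\mathcal{G}$, and hence the square $q_{i,l}$ together with its two $l$-edges, is unchanged by modifying which edges are selected into the fracture graph.
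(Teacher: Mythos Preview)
Your proof is correct and is exactly the paper's approach: the paper's entire proof is the single sentence ``This is a consequence of applying Proposition~\ref{swap} twice,'' and you have spelled out precisely those two applications on the alternating square $q_{i,l}$ formed by the four right-hand vertices. Your additional remarks about why each swap is legitimate and why the second swap remains available after the first are accurate elaborations of what Proposition~\ref{swap} already guarantees.
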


\begin{proof}
This is a consequence of applying Proposition~\ref{swap} twice.
\end{proof}


\begin{prop}\label{nobigcicles}
If $\Gamma$ has a 2-fracture graph, then it has one with no big cycle.
\end{prop}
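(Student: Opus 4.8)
The goal is to show that if $\Gamma$ has a 2-fracture graph at all, then it has one containing no big cycle (a cycle on more than four vertices).

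My plan is to argue by a descent on the total number of big cycles in the 2-fracture graph, showing that any big cycle can be destroyed by a local modification that replaces a small number of edges, without creating a new big cycle. The starting observation, via Proposition~\ref{cons}, is that a big cycle must contain two adjacent edges whose labels $g,h$ are nonconsecutive. The idea is to use these nonconsecutive adjacent labels as the seed for a square-moving argument. Since $g$ and $h$ are nonconsecutive, the generators $\rho_g$ and $\rho_h$ commute, so the corresponding pair of adjacent edges $\{u,v\}$ (labelled $g$) and $\{v,w\}$ (labelled $h$) sits inside an alternating configuration in $\mathcal{G}$: commutation produces a fourth vertex $x$ with a $g$-edge from $w$ to $x$ and an $h$-edge from $u$ to $x$, forming the alternating square $q_{g,h}$ in $\mathcal{G}$ (though not all four edges need lie in $\mathcal{Q}$).

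The key technical step, which I expect to be the main obstacle, is to turn this alternating square into an actual reduction of the number of big cycles. I would like to apply Propositions~\ref{swap}, \ref{putincycle}, and especially \ref{movingsquares} and \ref{changelabels} to reroute the big cycle through the shortcut provided by the square, thereby shortening it (ideally collapsing the big cycle into a square or a shorter cycle) while controlling what happens to the rest of the fracture graph. Proposition~\ref{movingsquares} is the natural tool: it lets me slide an alternating square along a nonconsecutively-labelled edge \emph{without increasing the number of alternating squares}, which is exactly the kind of monovariant guarantee I need so that the descent terminates. The delicate point is bookkeeping: I must verify that performing the swap does not simultaneously create a new big cycle elsewhere, and that the modified graph is still a genuine 2-fracture graph (each edge still joining vertices in distinct $\Gamma_i$-orbits, which Propositions~\ref{swap} and \ref{putincycle} guarantee locally).

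Concretely, I would proceed as follows. First, if the 2-fracture graph is already acyclic or all its cycles are squares, there is nothing to prove. Otherwise pick a big cycle $C$ and, by Proposition~\ref{cons}, adjacent edges in $C$ with nonconsecutive labels; build the alternating square on these two edges in $\mathcal{G}$. Then apply the edge-swapping machinery (Propositions~\ref{swap}/\ref{putincycle}) to reroute $C$ across the square, reducing its length, and invoke Proposition~\ref{movingsquares} to ensure no new alternating square—and hence, after checking, no new big cycle—is introduced. Iterating, each step strictly decreases a suitable monovariant (the number of big cycles, or the total length of big cycles), so the process terminates in a 2-fracture graph with no big cycle. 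The main work, and the likely source of case analysis, is confirming that the reroute genuinely shortens the cycle and does not spawn a longer one; I would handle the possible interactions with other cycles by appealing to Propositions~\ref{isoncy} and \ref{common}, which tightly constrain how cycles in a 2-fracture graph can share $i$-edges.
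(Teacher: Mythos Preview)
Your overall strategy matches the paper's: induct on the number of big cycles, use Proposition~\ref{cons} to locate adjacent edges in a big cycle $C$ with nonconsecutive labels $i,j$, and exploit the resulting alternating square $q_{i,j}$ in $\mathcal{G}$. But the key step is not the ``reroute $C$ across the square, reducing its length'' that you describe. Replacing the two adjacent edges $u\!-\!v\!-\!w$ in $C$ by the other two sides $u\!-\!x\!-\!w$ of the square yields a cycle of the \emph{same} length; nothing is shortened. The actual reduction uses a different pair of swaps, and it hinges on an observation you do not make: by Proposition~\ref{isoncy}, both $i$-edges and both $j$-edges of $\mathcal{Q}$ lie in $C$, so the two sides of $q_{i,j}$ that are not already in $C$ cannot be in $\mathcal{Q}$ at all. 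One then applies Proposition~\ref{swap} (or equivalently Proposition~\ref{putincycle}) twice to remove the \emph{other} $i$-edge and the \emph{other} $j$-edge of $C$ (the ones far from the square) and insert the two missing sides of $q_{i,j}$. This completes the square inside $\mathcal{Q}'$ and breaks $C$ at two non-adjacent places, so $C$ disappears entirely rather than merely shrinking.

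The verification that no new big cycle is created is then a one-liner via Proposition~\ref{common}: an edge of a square in a 2-fracture graph cannot lie on any other cycle. There is no case analysis, and Propositions~\ref{movingsquares} and~\ref{changelabels} play no role here (they are used later, in Proposition~\ref{max1squareforcomp}, to merge squares within a component). So your anticipated ``main work'' evaporates once you identify the correct swap; the gap in your proposal is precisely that identification.
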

\begin{proof}
Consider a 2-fracture graph $\mathcal{Q}$ with $q$ big cycles. Suppose that $C$ is a big cycle of $\mathcal{Q}$. By Proposition~\ref{cons} there is at least one pair of adjacent edges with nonconsecutive labels $i$ and $j$. Hence these edges belong to an alternating square $q_{i,j}$ of $\mathcal{G}$. The other two edges of this square are not edges of $\mathcal{Q}$. Indeed the remaining $i$ and $j$ edges of $\mathcal{Q}$ must belong to $C$ by Proposition~\ref{isoncy}.
Consider the 2-fracture graph $\mathcal{Q}'$ that is obtained from $\mathcal{Q}$ by applying Proposition~\ref{swap} twice, replacing the $i$-edge and the $j$-edge of $C$ which are not in $q_{i,j}$ by those that are not in $C$ but are in $q_{i,j}$.
By Proposition~\ref{common} the edges of the square cannot belong to another cycle, thus $\mathcal{Q}'$ has $q-1$ big cycles.
Continuing this process we obtain a 2-fracture graph without big cycles.
\end{proof}

\begin{prop}\label{max1squareforcomp}
If $\mathcal{G}$ has a 2-fracture graph, then it has a 2-fracture graph such that each connected component has at most one cycle, and any cycle is an alternating square.
\end{prop}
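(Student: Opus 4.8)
The plan is to start from the conclusion of Proposition~\ref{nobigcicles}, which gives a 2-fracture graph $\mathcal{Q}$ with no big cycle. In such a graph every cycle must already be an alternating square: by Proposition~\ref{isoncy} a cycle carries exactly two edges of each label occurring on it and has no repeated edge, so the shortest cycles are $4$-cycles, and the two equal-label edges of such a $4$-cycle cannot be adjacent (an involution cannot send two distinct points to a common image), forcing them to be opposite, i.e.\ an alternating square. This already settles the second assertion (``any cycle is an alternating square''). Moreover, by Proposition~\ref{common} distinct squares are edge-disjoint, so the number of squares inside a connected component of $\mathcal{Q}$ equals its cyclomatic number; hence the first assertion (``at most one cycle per component'') is exactly the statement that no component contains two alternating squares.

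To prove this I would argue by extremality: among all big-cycle-free 2-fracture graphs, choose $\mathcal{Q}$ with the fewest alternating squares and, subject to that, with the smallest total distance (in $\mathcal{Q}$) between pairs of squares lying in a common component. Suppose for contradiction that some component $K$ contains two squares $q_{a,b}$ and $q_{c,d}$, chosen to be joined by a path $P$ of minimal length. The key step is to push $q_{a,b}$ one step along $P$ towards the other square using Proposition~\ref{movingsquares}: if the edge of $P$ leaving a corner $v$ of $q_{a,b}$ carries a label $l$ non-consecutive with the side label of $q_{a,b}$ at $v$, the square slides to occupy that edge, strictly shortening the distance to $q_{c,d}$ while creating no new square (the final clause of Proposition~\ref{movingsquares}). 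Iterating drives the two squares into an adjacent position; since they cannot share an edge (Proposition~\ref{common}) nor create a multiple edge (Proposition~\ref{isoncy}), the resulting local configuration forces, via the two equal-label edges of each square and Proposition~\ref{paths}, a cycle in $\mathcal{G}$ along which a single application of Proposition~\ref{swap} or Proposition~\ref{putincycle} reroutes one of the squares' edges into the tree part, lowering the number of squares. This contradicts the minimality of $\mathcal{Q}$ and completes the argument.

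The main obstacle is precisely the hypothesis excluded by Proposition~\ref{movingsquares}: when the label $l$ on the path edge is consecutive with the relevant side label, the slide stalls and the monovariant cannot be decreased directly. This is where Proposition~\ref{changelabels} is needed, to relabel the offending configuration (two parallel edges labelled $i-1$ and $i+1$ meeting an $i$-edge followed by an $l$-edge) so that the square can be carried past the obstruction and the slide resumed; making sure such an obstruction can \emph{always} be cleared, and that no new big cycle is introduced when it is, is the delicate heart of the proof. The remaining care is purely bookkeeping: every intermediate modification must be checked to remain a genuine 2-fracture graph (its replacement edges still joining points in different $\Gamma_i$-orbits, exactly as in the proofs of Propositions~\ref{swap} and~\ref{putincycle}) and to stay within the big-cycle-free class of Proposition~\ref{nobigcicles}, so that the extremal choice of $\mathcal{Q}$ is respected throughout. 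I expect the bulk of the work, and the only genuinely subtle point, to lie in the finite case analysis of how two squares can meet once they are adjacent and in verifying that a reduction is available in each such collision.
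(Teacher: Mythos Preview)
Your proposal is correct and follows essentially the same approach as the paper: start from Proposition~\ref{nobigcicles}, then run a double induction (which you phrase equivalently as an extremal argument) using Proposition~\ref{movingsquares} to slide a square along the path towards the other and Proposition~\ref{changelabels} to clear the consecutive-label obstruction. The one place where the paper is more concrete than your sketch is the endgame: when the two squares $q_{i,j}$ and $q_{l,k}$ finally share a vertex, the paper observes that since the squares cannot share an edge their label sets are disjoint, so some pair of labels (say $i$ and $k$) from the two squares is non-consecutive, yielding an auxiliary alternating square $q_{i,k}$ in $\mathcal{G}$ overlapping both; a single application of Proposition~\ref{swap} to an $i$-edge then drops the square count by one, with Proposition~\ref{common} guaranteeing no new cycle is created. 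Your description of this step via Proposition~\ref{paths} and a ``cycle in $\mathcal{G}$'' is on the right track but vaguer than necessary; the auxiliary-square argument is the clean way to finish.
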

\begin{proof}

By Proposition~\ref{nobigcicles}, $\mathcal{G}$ contains a 2-fracture graph $\mathcal{Q}_0$ having no big cycle.
If $\mathcal{Q}_0$ has a cycle, then it is an alternating square by Proposition~\ref{isoncy}.
Suppose that a connected component of $\mathcal{Q}_0$ has $p$ alternating squares with $p>1$.
We will prove that there exists a 2-fracture graph with $p-1$ squares and without any big cycle.  

The proof is a double induction. Suppose that $q_{i,j}$ and $q_{l,k}$ are squares of $\mathcal{Q}_0$ that have distance $s$ which is the smallest distance between two squares in $\mathcal{Q}_0$. We produce another 2-fracture graph with
either one fewer square, or the minimal distance $s$ reduced by $1$.

Either the squares $q_{i,j}$ and $q_{l,k}$ share a vertex, or there is a path in $\mathcal{Q}_0$ connecting them.

Suppose that they share a vertex. Then there exists another alternating square in $\mathcal{G}$. We may assume that it is $q_{i,k}$, which shares an edge with both $q_{i,j}$ and $q_{l,k}$. 
$$\xymatrix@-1.4pc{*+[o][F]{}\ar@{-}[rr]^j &&*+[o][F]{} \ar@{--}[rr]^k  &&*+[o][F]{}\ar@{--}[dd]^i\\
&&&&\\
*+[o][F]{}\ar@{-}[uu]^i  \ar@{-}[rr]^j&& *+[o][F]{} \ar@{-}[uu]^i  \ar@{-}[rr]^k   \ar@{-}[dd]_l&& *+[o][F]{}  \ar@{-}[dd]_l\\
&&&&\\
&& *+[o][F]{} \ar@{-}[rr]_k   &&*+[o][F]{}  }
$$
By Proposition~\ref{swap}, we can replace one of the $i$-edges of $\mathcal{Q}_0$ with the $i$-edge of $q_{i,k}$ not in $\mathcal{Q}_0$ to obtain a 2-fracture graph $\mathcal{Q}_1$.  Note that this new $i$-edge of $\mathcal{Q}_1$ does not belong to any cycle by Proposition~\ref{common}.  Thus $\mathcal{Q}_1$ has $p-1$ squares.

Now assume that the shortest path between $q_{i,j}$ and $q_{k,l}$ has $s \geq 1$ edges.  If the first or the last edge of this path has a label not consecutive with the square that it meets, then we can use Proposition~\ref{movingsquares}, to create a new 2-fracture graph which does not have more squares, and with a smaller minimal distance between squares.
$$ \xymatrix@-1.3pc{*+[o][F]{}  \ar@{-}[rr]^j \ar@{-}[dd]^i && *+[o][F]{}  \ar@{--}[rr]^x \ar@{-}[dd]^i&&*+[o][F]{}\ar@{--}[dd]^i&& *+[o][F]{}  \ar@{-}[rr]^k \ar@{-}[dd]^l&& *+[o][F]{} \ar@{-}[dd]^l\\
 &&&&&&&&\\
 *+[o][F]{}\ar@{-}[rr]_j &&*+[o][F]{}\ar@{-}[rr]_x&& *+[o][F]{}\ar@{.}[rr]&&*+[o][F]{}\ar@{-}[rr]_k&&*+[o][F]{}}\qquad\longrightarrow\qquad\xymatrix@-1.3pc{*+[o][F]{}  \ar@{--}[rr]^j \ar@{--}[dd]^i && *+[o][F]{}  \ar@{-}[rr]^x \ar@{-}[dd]^i&&*+[o][F]{}\ar@{-}[dd]^i&& *+[o][F]{}  \ar@{-}[rr]^k \ar@{-}[dd]^l&& *+[o][F]{} \ar@{-}[dd]^l\\
 &&&&&&&&\\
 *+[o][F]{}\ar@{-}[rr]_j &&*+[o][F]{}\ar@{-}[rr]_x&& *+[o][F]{}\ar@{.}[rr]&&*+[o][F]{}\ar@{-}[rr]_k&&*+[o][F]{}}$$

 Otherwise, we can use Proposition~\ref{changelabels} in order to create a new 2-fracture graph with the first or the last edge of a new path having a label not consecutive with the square that it meets, and then use Proposition~\ref{movingsquares} as before.

Continuing this process, we get a 2-fracture graph where the minimal distance between squares is zero, and then applying the argument above, we get a 2-fracture graph with fewer squares, and without big cycles.   This construction terminates with a 2-fracture graph with no big cycles and with at most one square in each connected component.

\end{proof}


\begin{prop}\label{3cases}
Suppose that $\Gamma$ has a disconnected 2-fracture graph $\mathcal{Q}$ with no big cycles such that every connected component has exactly one square. 
Then $\Gamma$ has a 2-fracture graph $\mathcal{Q}'$ with the same characteristics  as $\mathcal{Q}$ and such that the minimal distance between two squares in $\mathcal{G}$ is either one, two or three accordantly to one of the following three cases.
$$(1)\xymatrix@-1.3pc{*+[o][F]{}  \ar@{-}[rr]^i \ar@{-}[dd]_j&& *+[o][F]{}   \ar@{-}[dd]_j&& *+[o][F]{}  \ar@{-}[rr]^l \ar@{-}[dd]_k&& *+[o][F]{}\ar@{-}[dd]_k  \\
&&&&&&\\
*+[o][F]{}  \ar@{-}[rr]^i && *+[o][F]{}  \ar@{--}[rr]^x&& *+[o][F]{}  \ar@{-}[rr]^l&& *+[o][F]{} }
 \quad (2)\xymatrix@-1.3pc{*+[o][F]{}  \ar@{-}[rr]^{i-1} \ar@{-}[dd]_{i+1}&& *+[o][F]{}   \ar@{-}[dd]_{i+1}&&&& *+[o][F]{}  \ar@{-}[rr]^l \ar@{-}[dd]_k&& *+[o][F]{}\ar@{-}[dd]_k  \\
&&&&&&&&\\
*+[o][F]{}  \ar@{-}[rr]_{i-1} && *+[o][F]{}  \ar@{-}[rr]_i&& *+[o][F]{}  \ar@{--}[rr]^x&& *+[o][F]{} \ar@{-}[rr]_l && *+[o][F]{} }$$
$$(3) \xymatrix@-1.3pc{*+[o][F]{}  \ar@{-}[rr]^{i-1} \ar@{-}[dd]_{i+1}&& *+[o][F]{}   \ar@{-}[dd]_{i+1}&&&&&& *+[o][F]{}  \ar@{-}[rr]^{j-1} \ar@{-}[dd]_{j+1}&& *+[o][F]{}\ar@{-}[dd]_{j+1}  \\
&&&&&&&&&&\\
*+[o][F]{}  \ar@{-}[rr]_{i-1} && *+[o][F]{}  \ar@{-}[rr]_i&& *+[o][F]{}  \ar@{--}[rr]^x&& *+[o][F]{} \ar@{-}[rr]_j && *+[o][F]{}\ar@{-}[rr]_{j-1} && *+[o][F]{} }$$
\end{prop}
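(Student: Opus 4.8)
The plan is to start from a 2-fracture graph $\mathcal{Q}$ guaranteed by Proposition~\ref{max1squareforcomp}: disconnected, with no big cycles, and with exactly one alternating square in each connected component. The goal is to reduce the minimal distance between two squares in $\mathcal{G}$ (not merely in $\mathcal{Q}$) to one of the three configurations pictured. First I would pick two squares $q_{i,j}$ and $q_{l,k}$ realising the minimal distance between squares in $\mathcal{Q}$, and connect them by a shortest path $P$ in $\mathcal{G}$ (using connectivity of $\mathcal{G}$, even though the fracture graph is disconnected). The machinery of Propositions~\ref{movingsquares} and \ref{changelabels} lets me slide squares along edges of $\mathcal{Q}$ and relabel locally without increasing the number of squares or creating big cycles, exactly as in the proof of Proposition~\ref{max1squareforcomp}; so I would first drive the minimal distance in $\mathcal{G}$ down as far as these moves allow.

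Next I would do a case analysis on how short the path $P$ between the two squares can be made. The three pictured cases correspond to distances one, two, and three. I expect the logic to be: if the edge $x$ joining the two squares has a label \emph{not} consecutive with the labels on the square it abuts, then Proposition~\ref{movingsquares} applies and the square can be slid directly onto $x$, collapsing the distance further — so at the terminal configuration the label $x$ must be consecutive with the adjacent square's labels at both ends. Case~(1) is where the bridging edge $x$ sits directly between two squares whose near edges have labels $i$ and $l$ with $x$ forced to lie strictly between the intervals $\{i,j\}$ and $\{l,k\}$. When $x$ \emph{is} consecutive with a square, that square must be of the special ``consecutive-label'' form $q_{i-1,i+1}$ with an $i$-edge hanging off it (as in the left-hand figure of Proposition~\ref{changelabels}), which is precisely Case~(2) on one side and Case~(3) on both sides. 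So the three cases are distinguished by how many of the two bounding squares are forced into the $q_{i-1,i+1}$ shape because the bridge label is consecutive with them.

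The heart of the argument, and the step I expect to be the main obstacle, is showing that the reduction process actually \emph{terminates} in one of these three normal forms rather than cycling, and that no fourth essentially different configuration can survive. Concretely, I must verify that whenever the bridge edge $x$ is consecutive with a neighbouring square, I can invoke Proposition~\ref{changelabels} to convert that square into the $q_{i-1,i+1}$ form and expose a new path whose first or last edge is \emph{non}-consecutive, feeding back into Proposition~\ref{movingsquares}; the worry is that this relabelling on one side might reintroduce consecutiveness on the other side, so I would track a monovariant — the minimal square-distance in $\mathcal{G}$ together with the number of squares — and argue it strictly decreases (lexicographically) until no move applies. When no move applies, the surviving local picture at each end of the bridge is forced to be either a ``free'' square meeting a non-consecutive bridge (Case~(1) end) or a $q_{i-1,i+1}$ square whose only attachment is a consecutive edge (Case~(2)/(3) end), and the distance of the irreducible bridge is then exactly $1$, $2$, or $3$ according to how many ends are of the latter type.

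Finally I would confirm that each terminal configuration respects Propositions~\ref{isoncy}, \ref{common}, and \ref{cons}: the new edges introduced by the swaps never lie on a second cycle (Proposition~\ref{common}), so each connected component still has at most one square and no big cycle appears, preserving all the stated characteristics of $\mathcal{Q}$. The remaining bookkeeping — checking that the labels in the pictures are internally consistent with the string property and that $x$ cannot be made non-consecutive in the final position — is routine given the earlier propositions, so I would state it and move on rather than expand every subcase.
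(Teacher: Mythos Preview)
Your approach is essentially the paper's—slide the two squares toward each other via Propositions~\ref{movingsquares} and~\ref{changelabels}—but you miss the one observation that collapses most of the work. Since $\mathcal{Q}$ spans all $n$ vertices of $\mathcal{G}$ and $\mathcal{G}$ is connected, some component of $\mathcal{Q}$ is at distance exactly one in $\mathcal{G}$ from another component; fix such a pair and let $x$ be the single $\mathcal{G}$-edge joining them. Now all sliding happens along $\mathcal{Q}$-edges \emph{inside} each component (a tree with one attached square), pushing that component's square toward the fixed endpoint of $x$. This dissolves your termination and ``cycling'' worries: the distance from the square to the bridge endpoint inside its own component strictly drops at each application of Proposition~\ref{movingsquares} or~\ref{changelabels}, and the two sides do not interact. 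The three cases then just record whether zero, one, or two of the squares get stuck one edge short of the bridge—which happens precisely when the last remaining $\mathcal{Q}$-edge has label consecutive with both square labels, forcing the $q_{i-1,i+1}$ shape with its pendant $i$-edge. The paper's proof is accordingly two sentences; your monovariant and the careful invocation of Propositions~\ref{isoncy}, \ref{common}, \ref{cons} are not needed once the bridge is a single edge.
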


\begin{proof}
As $\mathcal{Q}$ contains all vertices of $\mathcal{G}$, any component of $\mathcal{Q}$ is at distance one from another component of $\mathcal{Q}$. Consider  two components in $\mathcal{Q}$ at distance one. Repeatedly using Propositions~\ref{movingsquares} and \ref{changelabels}, we can obtain a 2-fracture graph that contains one of the above subgraphs.
\end{proof}


\subsection{Disconnected 2-fracture graphs}

We now separate the argument into two cases, dealing first with the case of
a disconnected 2-fracture graph.

\begin{prop}\label{dislc}
If $\mathcal{G}$  has no connected 2-fracture graph, then it has a 2-fracture graph that has at least one component which is a tree, all the others having only one cycle (which is an alternating square).
\end{prop}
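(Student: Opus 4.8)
The plan is to start from the strong normal form already established in Proposition~\ref{max1squareforcomp}: a $2$-fracture graph $\mathcal{Q}$ with no big cycle and with at most one alternating square per connected component. Since by hypothesis $\mathcal{G}$ has no connected $2$-fracture graph, every $2$-fracture graph is disconnected, so in particular $\mathcal{Q}$ has at least two components. The goal is to produce (possibly after further edge-swaps) a $2$-fracture graph in which at least one component is a tree, i.e.\ contains no square. I would argue by a counting/parity obstruction: if \emph{every} component carried a square, I could derive a contradiction with the evenness of $\Gamma\cong A_n$ or with the connectedness of $\mathcal{G}$.

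Here is the forward structure I would follow. First I would count: a $2$-fracture graph has exactly $2r$ edges on $n$ vertices, and if it has $c$ components each containing exactly one cycle (an alternating square), then each component is unicyclic, so the number of edges equals the number of vertices, giving $2r = n$. This is the critical equation. I would then suppose for contradiction that \emph{no} $2$-fracture graph in the normal form of Proposition~\ref{max1squareforcomp} has a tree component, i.e.\ every component has exactly one square. Under that assumption $2r=n$ for every such graph. I would then invoke Proposition~\ref{3cases} to place two nearest squares into one of the three standard configurations, and examine the local picture around the connecting path. In each of the three cases of Proposition~\ref{3cases}, the dashed $x$-edge of $\mathcal{G}$ joining the two components supplies an extra edge that, together with the two squares, should let me locate a cycle in $\mathcal{G}$ through both squares; using Proposition~\ref{swap} (or Proposition~\ref{putincycle}) along that cycle I would swap so as to break one square, producing a $2$-fracture graph with a tree component, contradicting the assumption.

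The key technical point is that the swaps of Propositions~\ref{swap}--\ref{changelabels} and \ref{movingsquares} all \emph{preserve} the property of being a $2$-fracture graph and do not increase the number of squares, so I may freely normalize while tracking the square count. The main obstacle I expect is the bookkeeping in Proposition~\ref{3cases}'s case~(1), where the two squares share labels only through the consecutive-label constraints: I must check that the $x$-edge genuinely closes a cycle passing through a square edge rather than merely linking two vertices already in the same component, so that Proposition~\ref{swap} is applicable and actually removes a square rather than relocating it. In other words, the delicate step is verifying that the edge realizing distance one between components, when added, forces the existence of a cycle with exactly two $i$-edges (as required by Propositions~\ref{isoncy} and~\ref{swap}), after which the removal of one square to create a tree component is immediate. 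Once a single tree component is produced, the remaining components still satisfy the one-square-per-component condition inherited from Proposition~\ref{max1squareforcomp}, which gives the stated conclusion.
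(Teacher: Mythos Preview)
Your overall architecture is right: start from Proposition~\ref{max1squareforcomp}, assume every component carries a square, invoke Proposition~\ref{3cases}, and then swap to kill a square. Two things are off, however.

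First, the appeal to evenness of $\Gamma\cong A_n$ is not available here: as the paper remarks at the start of Section~\ref{s:2frac}, the results of this section are proved \emph{without} assuming $\Gamma\cong A_n$, precisely so they can be reused later. Your counting observation $2r=n$ is correct and harmless, but it does not by itself generate a contradiction, and you should not expect parity to rescue the argument.

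Second, and more seriously, the swap mechanism you describe is not the one that works. You propose to use the $x$-edge to ``close a cycle in $\mathcal{G}$ through both squares'' and then apply Proposition~\ref{swap} along that cycle. But such a cycle is long and will typically fail the ``exactly two $i$-edges'' hypothesis of Proposition~\ref{swap}; your own worry about this is well founded and does not dissolve. The actual lever is local and comes from the string property: in each configuration of Proposition~\ref{3cases}, the label $x$ cannot be consecutive with \emph{all} the labels of the adjacent square (and in cases (2)--(3) one pushes $x$ against labels like $i-1$ which are forced non-consecutive). Hence there is an alternating square $q_{x,l}$ in $\mathcal{G}$ sharing one edge with the square $q_{k,l}$ of $\mathcal{Q}$, with its fourth vertex a \emph{new} point outside both squares. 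Applying Proposition~\ref{swap} to this $4$-cycle replaces one $l$-edge of $q_{k,l}$ by the other $l$-edge of $q_{x,l}$. Now there is a dichotomy: if the relevant second $x$-edge was not in $\mathcal{Q}$, the component that used to contain $q_{k,l}$ has become a tree; if it was in $\mathcal{Q}$, the two components have merged and the component count drops by one. Iterating, since by hypothesis you can never reach a single component, you must eventually produce a tree component. This is the missing idea: the cycle you swap along is a commutation square attached to \emph{one} of the two squares, not a long cycle threading both.
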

\begin{proof}
Suppose that $\mathcal{Q}$ is a disconnected 2-fracture graph of $\Gamma$.
By Proposition~\ref{max1squareforcomp} there exists a 2-fracture graph $\mathcal{Q}$ of $\Gamma$ having at most one cycle, that is an alternating square, in each connected component. We proceed by induction on the number of connected components of $\mathcal{Q}$.  

Suppose that $\mathcal{Q}$ has $p$ connected components, each with an alternating square.  By Proposition~\ref{3cases}, we can choose $\mathcal{Q}$ so that it has a subgraph as shown in one of the three cases of Proposition~\ref{3cases}.
In what follows we deal with each of these cases separately.

Let us consider that  $\mathcal{G}$ has the subgraph (1)  of Proposition~\ref{3cases}. Observe here that $x$ is not consecutive with at least one of the labels of the squares.  In addition since the squares of $\mathcal{Q}$  are in different connected components of $\mathcal{Q}$, the $x$-edge in not in $\mathcal{Q}$. Suppose $x$ and $l$ are nonconsecutive. Then we have an alternating square as in the following picture on the left. Using Proposition~\ref{swap} we obtain $\mathcal{Q}'$ on the right.

$$\xymatrix@-1.3pc{*+[o][F]{}  \ar@{-}[rr]^i \ar@{-}[dd]_j&& *+[o][F]{}   \ar@{-}[dd]_j&& *+[o][F]{}  \ar@{-}[rr]^l \ar@{-}[dd]_k&& *+[o][F]{}\ar@{-}[dd]_k  \\
&&&&&&\\
*+[o][F]{}  \ar@{-}[rr]^i && *+[o][F]{}  \ar@{--}[rr]^x&& *+[o][F]{}  \ar@{-}[rr]^l&& *+[o][F]{}\\
&&&&&&\\
&&&&*+[o][F]{}\ar@{--}[uurr]_x\ar@{--}[uull]^l && }\qquad\longrightarrow\qquad \xymatrix@-1.3pc{*+[o][F]{}  \ar@{-}[rr]^i \ar@{-}[dd]_j&& *+[o][F]{}   \ar@{-}[dd]_j&& *+[o][F]{}  \ar@{--}[rr]^l \ar@{-}[dd]_k&& *+[o][F]{}\ar@{-}[dd]_k  \\
&&&&&&\\
*+[o][F]{}  \ar@{-}[rr]^i && *+[o][F]{}  \ar@{--}[rr]^x&& *+[o][F]{}  \ar@{-}[rr]^l&& *+[o][F]{}\\
&&&&&&\\
&&&&*+[o][F]{}\ar@{--}[uurr]_x\ar@{-}[uull]^l && }$$
 If the other $x$-edge is not in $\mathcal{Q}$, then we created a connected component which is a tree.  On the other hand, if the other $x$-edge is in $\mathcal{Q}$, then the new 2-fracture graph has $p-1$ connected components.
 
Let us consider the second case of Proposition~\ref{3cases}. 
First suppose that $x$ is not consecutive either with $l$ or $k$.
Without loss of generality we assume that it is not consecutive with $l$; then we use Proposition~\ref{swap} as shown in the following diagram. 
$$\xymatrix@-1.3pc{*+[o][F]{}  \ar@{-}[rr]^{i-1} \ar@{-}[dd]_{i+1}&& *+[o][F]{}   \ar@{-}[dd]_{i+1}&&&& *+[o][F]{}  \ar@{-}[rr]^l \ar@{-}[dd]_k&& *+[o][F]{}\ar@{-}[dd]_k  \\
&&&&&&&&\\
*+[o][F]{}  \ar@{-}[rr]_{i-1} && *+[o][F]{}  \ar@{-}[rr]_i&& *+[o][F]{}  \ar@{--}[rr]^x&& *+[o][F]{} \ar@{-}[rr]_l && *+[o][F]{} \\
&&&&&&&&\\
&&&&&&*+[o][F]{}\ar@{--}[uurr]_x\ar@{--}[uull]^l &&}\qquad\longrightarrow\qquad \xymatrix@-1.3pc{*+[o][F]{}  \ar@{-}[rr]^{i-1} \ar@{-}[dd]_{i+1}&& *+[o][F]{}   \ar@{-}[dd]_{i+1}&&&& *+[o][F]{}  \ar@{--}[rr]^l \ar@{-}[dd]_k&& *+[o][F]{}\ar@{-}[dd]_k  \\
&&&&&&&&\\
*+[o][F]{}  \ar@{-}[rr]_{i-1} && *+[o][F]{}  \ar@{-}[rr]_i&& *+[o][F]{}  \ar@{--}[rr]^x&& *+[o][F]{} \ar@{-}[rr]_l && *+[o][F]{} \\
&&&&&&&&\\
&&&&&&*+[o][F]{}\ar@{--}[uurr]_x\ar@{-}[uull]^l &&}$$
The argument for why this exchange does not create any new cycles is the same as the previous case.
Similar to above, if neither of the $x$-edges is in $\mathcal{Q}$, then we created a connected component which is a tree.  On the other hand, if one of the $x$-edges is in $\mathcal{Q}$, then the new 2-fracture graph has $p-1$ connected components.
Next, consider that $x$ is consecutive with both $l$ and $k$, then we have the following diagram.
$$\xymatrix@-1.3pc{*+[o][F]{}  \ar@{-}[rr]^{i-1} \ar@{-}[dd]_{i+1}&& *+[o][F]{}   \ar@{-}[dd]_{i+1}&&&& *+[o][F]{}  \ar@{-}[rr]^l \ar@{-}[dd]_k&& *+[o][F]{}\ar@{-}[dd]_k  \\
&&&&&&&&\\
*+[o][F]{}  \ar@{-}[rr]_{i-1} && *+[o][F]{}  \ar@{-}[rr]_i&& *+[o][F]{}  \ar@{--}[rr]^x&& *+[o][F]{} \ar@{-}[rr]_l && *+[o][F]{} \\
&&&&&&&&\\
&&&&*+[o][F]{}\ar@{--}[uurr]_i\ar@{--}[uull]^x&& &&\\
&&*+[o][F]{}\ar@{--}[urr]_{i-1}\ar@{--}[uuull]^x&&&&&&}\qquad\longrightarrow\qquad\xymatrix@-1.3pc{*+[o][F]{}  \ar@{--}[rr]^{i-1} \ar@{-}[dd]_{i+1}&& *+[o][F]{}   \ar@{-}[dd]_{i+1}&&&& *+[o][F]{}  \ar@{-}[rr]^l \ar@{-}[dd]_k&& *+[o][F]{}\ar@{-}[dd]_k  \\
&&&&&&&&\\
*+[o][F]{}  \ar@{-}[rr]_{i-1} && *+[o][F]{}  \ar@{-}[rr]_i&& *+[o][F]{}  \ar@{--}[rr]^x&& *+[o][F]{} \ar@{-}[rr]_l && *+[o][F]{} \\
&&&&&&&&\\
&&&&*+[o][F]{}\ar@{--}[uurr]_i\ar@{--}[uull]^x&& &&\\
&&*+[o][F]{}\ar@{-}[urr]_{i-1}\ar@{--}[uuull]^x&&&&&&}$$

Note that $x$ is not consecutive with $i$, and not consecutive with either $i-1$ or $i+1$.  Without loss we assume it is not consecutive with $i-1$.  Suppose that both $x$-edges of the square $q_{x,i-1}$
are in  $\mathcal{Q}$. Then using Proposition~\ref{swap}, we create a square $q_{x,i-1}$ in $\mathcal{Q}'$. 
Now if both $i$-edges are in  $\mathcal{Q}$ we have reduced the number of connected components; otherwise, we go back to case (1).
Similarly, if an $x$-edge of $q_{x,i-1}$ is not in $\mathcal{Q}$, then we either created a 2-fracture graph with $p-1$ components or with a component that is a tree.

Finally, we consider the diagram below for case (3). Here, $x$ is not consecutive with either $i$ or $j$.  We may assume it is not consecutive with $i$.  Furthermore,  $x$ is not consecutive either with $i-1$ or $i+1$.  Let us assume it is not consecutive with  $i-1$.   A similar argument shows that the new graph either has $p-1$ components or a component that is a tree.

$$\xymatrix@-1.4pc{*+[o][F]{}  \ar@{-}[rr]^{i-1} \ar@{-}[dd]_{i+1}&& *+[o][F]{}   \ar@{-}[dd]_{i+1}&&&&&& *+[o][F]{}  \ar@{-}[rr]^{j-1} \ar@{-}[dd]_{j+1}&& *+[o][F]{}\ar@{-}[dd]_{j+1}  \\
&&&&&&&&&&\\
*+[o][F]{}  \ar@{-}[rr]_{i-1} && *+[o][F]{}  \ar@{-}[rr]_i&& *+[o][F]{}  \ar@{--}[rr]^x&& *+[o][F]{} \ar@{-}[rr]_j && *+[o][F]{}\ar@{-}[rr]_{j-1} && *+[o][F]{} \\
&&&&&&&&&&\\
&&&& *+[o][F]{}\ar@{--}[uurr]_i\ar@{--}[uull]^x &&&&&&\\
&&*+[o][F]{}\ar@{--}[urr]_{i-1}\ar@{--}[uuull]^x&&&&&&&&}\quad\longrightarrow\quad\xymatrix@-1.4pc{*+[o][F]{}  \ar@{--}[rr]^{i-1} \ar@{-}[dd]_{i+1}&& *+[o][F]{}   \ar@{-}[dd]_{i+1}&&&&&& *+[o][F]{}  \ar@{-}[rr]^{j-1} \ar@{-}[dd]_{j+1}&& *+[o][F]{}\ar@{-}[dd]_{j+1}  \\
&&&&&&&&&&\\
*+[o][F]{}  \ar@{-}[rr]_{i-1} && *+[o][F]{}  \ar@{-}[rr]_i&& *+[o][F]{}  \ar@{--}[rr]^x&& *+[o][F]{} \ar@{-}[rr]_j && *+[o][F]{}\ar@{-}[rr]_{j-1} && *+[o][F]{} \\
&&&&&&&&&&\\
&&&& *+[o][F]{}\ar@{--}[uurr]_i\ar@{--}[uull]^x &&&&&&\\
&&*+[o][F]{}\ar@{-}[urr]_{i-1}\ar@{--}[uuull]^x&&&&&&&&}$$

After any of the exchanges seen above, we have a 2-fracture graph with no big cycle, and either $p-1$ connected components or a component which is a tree.  Therefore, by induction, there exists a 2-fracture graph that is either connected or disconnected with at least one component being a tree.

\end{proof}



\subsection{Connected 2-fracture graphs}

In this subsection we will assume that  $\Gamma$ has a connected 2-fracture graph.

\begin{prop}\label{squareinconeclcomp0}
Let $q_{i,j}$ be an alternating square of a connected 2-fracture graph $\mathcal{Q}$ of $\Gamma$.  Let $q_{i,l}$ be an alternating square in $\mathcal{G}$ sharing an $i$-edge with $q_{i,j}$.  Then neither or both the $l$-edges of $q_{i,l}$ are in $\mathcal{Q}$.
\end{prop}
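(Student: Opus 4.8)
The plan is to name the four vertices of $q_{i,l}$ and exploit the commuting relation between $\rho_i$ and $\rho_l$. Writing the shared $i$-edge as $\{a,b\}$ with $b=a\rho_i$, the square $q_{i,l}$ has vertices $a,b,e,f$ with $f=a\rho_l$ and $e=b\rho_l=f\rho_i$, so its two $l$-edges are $\{a,f\}$ and $\{b,e\}$ while its second $i$-edge is $\{e,f\}$. First I would record that a $2$-fracture graph carries exactly two edges of each label, so the two $i$-edges of $\mathcal{Q}$ are precisely the two $i$-edges of $q_{i,j}$; in particular $\{e,f\}\notin\mathcal{Q}$ unless it coincides with the second $i$-edge of $q_{i,j}$, a degenerate case I would treat separately.

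The engine of the argument is an orbit computation. Since $i\ne l$ we have $\rho_i\in\Gamma_l$, so $\rho_i$ fixes every $\Gamma_l$-orbit setwise; as $b=a\rho_i$ and $e=f\rho_i$, the points $a,b$ lie in one $\Gamma_l$-orbit and $e,f$ in another (these being distinct precisely because an $l$-edge of a $2$-fracture graph joins points in different $\Gamma_l$-orbits). Consequently \emph{both} $\{a,f\}$ and $\{b,e\}$ join the same unordered pair of $\Gamma_l$-orbits, and in particular each is an admissible $l$-edge for some $2$-fracture graph.

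I would then argue by contradiction. Suppose exactly one of them, say $\{a,f\}$, lies in $\mathcal{Q}$ while $\{b,e\}$ does not. Because $q_{i,l}$ is an alternating square containing both $l$-edges, Proposition~\ref{swap} lets me replace $\{a,f\}$ by $\{b,e\}$ (and Proposition~\ref{putincycle} lets me swap the $i$-edges), producing a genuine $2$-fracture graph $\mathcal{Q}'$. The key structural move is to contract the $\Gamma_l$-orbits of $\mathcal{G}$: every edge of $\mathcal{Q}$ whose label differs from $l$ joins two points of a single $\Gamma_l$-orbit, so after contraction only the two $l$-edges survive as genuine (non-loop) edges. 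Since $\mathcal{Q}$ spans all $n$ vertices and is connected, the contracted graph is connected on the set of $\Gamma_l$-orbits using only two edges; this pins down the number of orbits and how the two chosen $l$-edges join them, and feeding in that $\{a,f\}$ and $\{b,e\}$ join the \emph{same} pair should force $\{b,e\}$ to be the second chosen $l$-edge, contradicting $\{b,e\}\notin\mathcal{Q}$.

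The main obstacle is exactly this last bookkeeping: ruling out the configuration in which the two chosen $l$-edges join three distinct $\Gamma_l$-orbits in a path, with $\{a,f\}$ one of them and $\{b,e\}$ a redundant chord joining the first two. Here I expect to need connectedness of $\mathcal{Q}$ together with Proposition~\ref{common} (an edge of the square $q_{i,j}$ lies in no other cycle) and the no-multiple-edge property of Proposition~\ref{isoncy}, verifying that the swap above neither disconnects $\mathcal{Q}$ nor creates a cycle sharing an edge with $q_{i,j}$. Controlling how these swaps interact with the cycle $q_{i,j}$, and thereby excluding the three-orbit path, is the delicate part of the argument.
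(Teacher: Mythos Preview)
Your approach via contracting $\Gamma_l$-orbits and invoking the swap propositions is substantially more elaborate than what the paper does, and the gap you yourself flag (the three-orbit path configuration) is real: nothing in your orbit-contraction picture forces the second chosen $l$-edge to be $\{b,e\}$, so the contradiction never materialises along that route.

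The paper's argument is far more direct and uses neither Proposition~\ref{swap} nor Proposition~\ref{putincycle}. Assume for contradiction that exactly one $l$-edge of $q_{i,l}$ lies in $\mathcal{Q}$, and let $u$ be the vertex of $q_{i,l}$ outside $q_{i,j}$ incident to the $l$-edge \emph{not} in $\mathcal{Q}$. Note that the other $i$-edge of $q_{i,l}$ is also not in $\mathcal{Q}$ (both $i$-edges of $\mathcal{Q}$ already sit in $q_{i,j}$). Since $\mathcal{Q}$ is connected, there must be a path in $\mathcal{Q}$ from $u$ back to $q_{i,j}$. One then checks that any such path, combined with the edges already drawn, yields a path in $\mathcal{G}$ between the endpoints of some edge of $\mathcal{Q}$ that avoids that edge and contains no other edge of its label---contradicting Proposition~\ref{paths}.

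So the missing idea in your proposal is simply to exploit the connectedness of $\mathcal{Q}$ head-on: follow a path from the orphan vertex $u$ back to the square and read off a forbidden label-free path. Your observation that both $l$-edges of $q_{i,l}$ join the same pair of $\Gamma_l$-orbits is correct but is not what drives the proof; the contraction and swap machinery can be dropped entirely.
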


\begin{proof}
Suppose that exactly one $l$-edge of $q_{i,l}$ is in $\mathcal{Q}$.  Let $u$ be the vertex of $q_{i,l}$ which is adjacent to $q_{i,j}$ in $\mathcal{G}$ by the $l$ edge of $q_{i,l}$ which is not in $\mathcal{Q}$.  
$$\xymatrix@-1.4pc{*+[o][F]{}  \ar@{-}[rr]^j \ar@{-}[dd]_i&& *+[o][F]{} \ar@{-}[rr]^l   \ar@{-}[dd]_i&& *+[o][F]{}  \ar@{--}[dd]^i\\
&&&&\\
*+[o][F]{}  \ar@{-}[rr]_j && *+[o][F]{} \ar@{--}[rr]_l   && *+[o][F]{u} }$$
Any possibility to connect $u$ to $q_{i,j}$ gives a contradiction to Proposition~\ref{paths}.
\end{proof}

\begin{prop}\label{squareinconeclcomp2}
Let $q_{i,j}$ be an alternating square of a connected 2-fracture graph $\mathcal{Q}$.
All edges of $\mathcal{G}$ meeting the vertices of $q_{i,j}$ belong to $\mathcal{Q}$ and have labels consecutive either with $i$ or $j$.
\end{prop}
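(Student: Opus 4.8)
The plan is to fix the alternating square $q_{i,j}$, label its vertices so that a chosen vertex $v$ has its $i$-neighbour $v\rho_i$ and its $j$-neighbour $v\rho_j$ inside the square, and then examine an arbitrary edge $e=\{v,w\}$ of $\mathcal{G}$ incident with $v$, say of label $l$. Since each involution $\rho_m$ contributes at most one $m$-edge at any vertex, the unique $i$- and $j$-edges at $v$ are already the two square edges, so $l\neq i,j$. I would establish the two assertions of the statement---that $e\in\mathcal{Q}$ and that $l$ is consecutive with $i$ or with $j$---by playing the string property (two generators commute precisely when their labels are non-consecutive) against Proposition~\ref{squareinconeclcomp0}, using throughout that a $2$-fracture graph contains \emph{exactly} two edges of each label (Proposition~\ref{isoncy} forbidding repetitions).

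For the label assertion, suppose for contradiction that $l$ is consecutive with neither $i$ nor $j$. Then $\rho_l$ commutes with both $\rho_i$ and $\rho_j$, so $e$ is an $l$-edge of two alternating squares of $\mathcal{G}$: a square $q_{i,l}$ sharing with $q_{i,j}$ the $\mathcal{Q}$-edge $\{v,v\rho_i\}$, and a square $q_{j,l}$ sharing the $\mathcal{Q}$-edge $\{v,v\rho_j\}$. Proposition~\ref{squareinconeclcomp0} (applied to the second square with the roles of $i$ and $j$ interchanged) then says that the two $l$-edges of $q_{i,l}$ are together in or together out of $\mathcal{Q}$, and likewise for $q_{j,l}$. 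If $e\in\mathcal{Q}$, all four of these $l$-edges lie in $\mathcal{Q}$; but they comprise the three \emph{distinct} edges $\{v,w\}$, $\{v\rho_i,v\rho_i\rho_l\}$ and $\{v\rho_j,v\rho_j\rho_l\}$, contradicting that $\mathcal{Q}$ has only two $l$-edges. Hence a label consecutive with neither $i$ nor $j$ forces $e\notin\mathcal{Q}$, and the same two applications of Proposition~\ref{squareinconeclcomp0} then push all three of these $l$-edges outside $\mathcal{Q}$.

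It therefore remains to rule out $e\notin\mathcal{Q}$, which simultaneously closes the non-consecutive case and proves membership. Here I would argue as in Proposition~\ref{squareinconeclcomp0}: the endpoint $w=v\rho_l$, together with its images under $\rho_i$ and $\rho_j$, is attached to the square only by edges lying outside $\mathcal{Q}$, so---$\mathcal{Q}$ being connected---these stray vertices must rejoin $\mathcal{Q}$ through some other edge; but every way of doing so yields a path in $\mathcal{G}$ between the endpoints of a square edge of label $i$ or $j$ that avoids that edge and carries no second edge of its label, contradicting Proposition~\ref{paths}. When $l$ is consecutive with exactly one of $i,j$ the same reattachment analysis runs with the single available commuting square (namely $q_{j,l}$ when $l$ is consecutive with $i$ but not $j$, and symmetrically), and the residual possibility $j=i+2$, $l=i+1$, where $\rho_l$ commutes with neither, is handled directly from the local action of $\langle\rho_i,\rho_{i+1},\rho_{i+2}\rangle$ on the four square vertices.

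I expect this reattachment step to be the main obstacle: converting "$\mathcal{Q}$ is connected, yet $w$ and its relatives dangle off the square by non-$\mathcal{Q}$ edges" into a genuine violation of Proposition~\ref{paths} requires carefully tracing the ways the stray vertices can rejoin $\mathcal{Q}$ and verifying in each that some square edge of label $i$, $j$ (or $l$) gets short-circuited without a partner of the same label. Everything else---the commuting dichotomy, the manufacture of the auxiliary squares, and the clean "three edges of one label" contradiction---is routine once Propositions~\ref{isoncy}, \ref{paths} and \ref{squareinconeclcomp0} are in hand.
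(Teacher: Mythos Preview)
Your plan reverses the paper's order: you attack the label condition first and defer membership to a vague ``reattachment'' argument, whereas the paper proves membership first and gets the label condition as an immediate corollary. The gap is precisely that reattachment step, and it is not just a matter of filling in details---you are applying Proposition~\ref{squareinconeclcomp0} to the wrong edge.

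Here is the paper's key move. With $w$ on the square and $v$ outside, suppose $\{v,w\}\notin\mathcal{Q}$. Connectivity gives a path in $\mathcal{Q}$ from $v$ to $w$; let $l$ be the label of an edge of this path incident to the square. Crucially, this edge is \emph{in} $\mathcal{Q}$, so Proposition~\ref{squareinconeclcomp0} applied to $q_{i,l}$ (when $|i-l|>1$) forces \emph{both} $l$-edges of $q_{i,l}$ into $\mathcal{Q}$; these are then the only two $l$-edges of $\mathcal{Q}$, and one derives a contradiction. This forces $l$ to be consecutive with both $i$ and $j$, hence $|i-j|=2$; but then the original label $k$ cannot be consecutive with both, and a second pass with Proposition~\ref{squareinconeclcomp0} on $q_{j,k}$ (now looking at the $\mathcal{Q}$-path from $v'=v\rho_j$ to $w'=w\rho_j$, which avoids the already-located $l$-edges) finishes.

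By contrast, you apply Proposition~\ref{squareinconeclcomp0} directly to the label of the missing edge $e$. When $e\notin\mathcal{Q}$ this only tells you that certain other edges are \emph{also} outside $\mathcal{Q}$; it does not pin down where the two genuine $l$-edges of $\mathcal{Q}$ live, so you cannot manufacture the Proposition~\ref{paths} violation you are hoping for, and your treatment of the residual case $j=i+2$, $l=i+1$ is a placeholder. Your ``three distinct $l$-edges'' observation is correct and is essentially the paper's final paragraph (which counts four, chaining Proposition~\ref{squareinconeclcomp0} around all four corners), but there it comes \emph{after} membership is established, as the quick deduction of the label condition, not as the opening move.
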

\begin{proof}
Let $w$ be a vertex of $q_{i,j}$ $k$-adjacent (in $\mathcal{G}$) to a vertex $v$.
Suppose that $\{v,w\}$ is not in $\mathcal{Q}$.
$$\xymatrix@-1.4pc{&&*+[o][F]{v}\ar@/^/@{.}[ddrr] &&\\
&&&&\\
*+[o][F]{}  \ar@{-}[rr]^j \ar@{-}[dd]_i&& *+[o][F]{w} \ar@{--}[uu]^k  \ar@{-}[rr]^l   \ar@{-}[dd]_i&& *+[o][F]{}  \ar@{--}[dd]^i\\
&&&&\\
*+[o][F]{}  \ar@{-}[rr]_j && *+[o][F]{} \ar@{--}[rr]_l   && *+[o][F]{u} }$$
There exists a path in $\mathcal{Q}$ from $v$ to $w$.
Let $l$ be the label of the edge of this path meeting $q_{i,j}$. 
Suppose that there is a square $q_{i,l}$ or $q_{j,l}$ sharing an edge with $q_{i,j}$. 
Suppose first that $q_{i,l}$ contains $w$.
Let $u$ be the vertex of $q_{l,i}$ diagonally opposed to $w$. One of the $l$-edges of $q_{l,i}$ is not in $\mathcal{Q}$, because both are in the path from $w$ to $v$.   By Proposition \ref{squareinconeclcomp0}, we have a contradiction.
Similar arguments rule out the case where $q_{i,l}$ shares the other $i$-edge with $q_{i,j}$.
Thus $|i-j|=2$ and $l$ is consecutive with both $i$ and $j$. But then $k$ is not consecutive either with $i$ or with $j$. 
Suppose without loss of generality it is not consecutive with $j$, thus we have a square $q_{j,k}$ sharing an edge with $q_{i,j}$.
Let $\{v',w'\}$ be the other $k$-edge of $q_{i,k}$, with $v'$ not in $q_{i,j}$.
$$\xymatrix@-1.4pc{*+[o][F]{v'}\ar@{--}[dd]_k\ar@{--}[rr]^j&&*+[o][F]{v}\ar@/^/@{.}[ddrr] &&\\
&&&&\\
*+[o][F]{w'}  \ar@{-}[rr]^j \ar@{-}[dd]_i&& *+[o][F]{w} \ar@{--}[uu]^k  \ar@{-}[rr]^l   \ar@{-}[dd]_i&& *+[o][F]{}  \\
&&&&\\
*+[o][F]{}  \ar@{-}[rr]_j && *+[o][F]{} && }$$
By Proposition~\ref{squareinconeclcomp0}, $\{v',w'\}$ is not in $\mathcal{Q}$.
Thus there is a path in $\mathcal{Q}$, connecting $w'$ to $v'$, and this path does not have any $l$-edges.  Let $l'$ be the label of the edge of this path meeting $w'$.  As $l'$ is not consecutive with both $i$ and $j$ we have a contradiction as before. Thus $\{v,w\}$ is in $\mathcal{Q}$.

Furthermore, as all edges of $\mathcal{G}$ adjacent to $q_{i,j}$ are in $\mathcal{Q}$, $k$ must be consecutive to either $i$ or $j$.  Otherwise, we would have at least four edges with the same label in $\mathcal{Q}$.
\end{proof}

\begin{prop}\label{squareedge}
If $q_{i,j}$ is an alternating square of a connected 2-fracture graph $\mathcal{Q}$ with $n \geq 9$ vertices, then each vertex of  $q_{i,j}$ has degree at most three in $\mathcal{Q}$.
\end{prop}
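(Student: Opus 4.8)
The plan is to show that no vertex of $q_{i,j}$ can carry two edges of $\mathcal{Q}$ beyond the two edges (one labelled $i$, one labelled $j$) it already has as a corner of the square. First I would record the local restriction coming from Proposition~\ref{squareinconeclcomp2}: every edge of $\mathcal{G}$ incident with a vertex of $q_{i,j}$ lies in $\mathcal{Q}$ and has a label consecutive with $i$ or with $j$. Since a $2$-fracture graph has exactly two edges of each label and the alternating square $q_{i,j}$ already uses two $i$-edges and two $j$-edges, all $i$- and $j$-edges of $\mathcal{Q}$ occur in the square; and since each $\rho_k$ is an involution, it contributes at most one $k$-edge at a given vertex. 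Hence any extra edge at a vertex of the square has a label in $\{i-1,i+1,j-1,j+1\}$, and it suffices to prove that a single vertex, say $a$, cannot be incident with two such extra edges.

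Next I would exploit the ``mirror'' consequence of Proposition~\ref{squareinconeclcomp0}. If $a$ carries an extra edge with label $k$ consecutive with $i$ (hence nonconsecutive with $j$), then $\rho_k$ commutes with $\rho_j$, so $\mathcal{G}$ contains an alternating square $q_{k,j}$ sharing the $j$-edge of $q_{i,j}$ at $a$; Proposition~\ref{squareinconeclcomp0} then forces both $k$-edges of $\mathcal{Q}$ to be present, one at $a$ and one at its $j$-neighbour, and these are therefore the only two $k$-edges of $\mathcal{Q}$. The symmetric statement holds for a label consecutive with $j$, the pairing now taking place across the $i$-edge at $a$. Thus the extra labels at $a$ split according to whether they are consecutive with $i$ (paired across the $j$-edge) or with $j$ (paired across the $i$-edge), each such label exhausting both available $\mathcal{Q}$-edges of that label.

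I would then suppose for contradiction that $a$ has two extra edges and argue by cases on the two labels $k_1,k_2$, treating the coincidences $j=i\pm2$ (where $\{i-1,i+1\}$ and $\{j-1,j+1\}$ overlap) separately. In each case the two labels are mutually nonconsecutive, so $\rho_{k_1}$ and $\rho_{k_2}$ commute and determine an alternating square $q_{k_1,k_2}$ with $a$ as a corner; combined with the mirror pairings this produces a further alternating square sharing an $i$- or $j$-edge with $q_{i,j}$. Tracking the endpoints, and using that $\mathcal{Q}$ has exactly two edges of each label and no multiple edges (Proposition~\ref{isoncy}), one of two things happens: either one of the forced edges becomes a diagonal of $q_{i,j}$, creating a triangle with a single $i$- or $j$-edge and contradicting Proposition~\ref{isoncy}; or the forced edges are genuinely new, in which case they begin to build a second independent cycle.

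The main obstacle is this last, ``generic'' subcase, where the auxiliary squares do not collapse onto the original one: here the contradiction is not purely local, since the far edges of the auxiliary squares need not lie in $\mathcal{Q}$. To close it I would first reduce, by Proposition~\ref{max1squareforcomp}, to a connected $\mathcal{Q}$ having a single cycle, so that $\mathcal{Q}$ is unicyclic and $q_{i,j}$ is its only square. I would then show that a degree-$4$ corner forces, through the mirror pairings at $a$ and at its neighbours together with the commuting auxiliary squares, either a second cycle in $\mathcal{Q}$ (impossible by unicyclicity) or a collection of pairwise distinct vertices too large to be accommodated. It is exactly here that the hypothesis $n\ge 9$ is used: it guarantees that the vertices produced by the mirror pairings and the auxiliary squares are distinct, ruling out the degenerate collapses that can genuinely occur for small $n$, so that the count yields the contradiction. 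This establishes that every vertex of $q_{i,j}$ has degree at most three in $\mathcal{Q}$.
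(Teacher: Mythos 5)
Your opening reductions are sound and agree with the paper's: by Proposition~\ref{squareinconeclcomp2} any extra edge at a corner of $q_{i,j}$ lies in $\mathcal{Q}$ and has label in $\{i-1,i+1,j-1,j+1\}$, and by Proposition~\ref{isoncy} its label cannot be $i$ or $j$. But the core of your argument has a genuine gap. You assert that whenever a corner carries two extra labels $k_1,k_2$, these are ``mutually nonconsecutive'', so that $\rho_{k_1}$ and $\rho_{k_2}$ commute and determine an auxiliary square $q_{k_1,k_2}$. That fails precisely in the configuration the paper has to work hardest on: the square $q_{i,i+3}$ with extra labels $k_1=i+1$ and $k_2=i+2$ at one corner, where all four labels $i,i+1,i+2,i+3$ are consecutive. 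You flag only the overlap $j=i\pm2$ as a special case, but the problematic coincidence is $j=i\pm3$, and there your commuting square simply does not exist. The paper isolates exactly this subcase (``suppose the labels are consecutive'') and resolves it differently: since $n\ge9$ there is a further vertex of $\mathcal{Q}$ attached to the eight-vertex configuration, and Propositions~\ref{swap} and~\ref{squareinconeclcomp2} then force a cycle of $\mathcal{Q}$ containing a single edge of some label, contradicting Proposition~\ref{isoncy}.

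Second, your closing of the ``generic'' subcase is not legitimate as stated. You propose to reduce to a unicyclic $\mathcal{Q}$ via Proposition~\ref{max1squareforcomp}, but that proposition produces a \emph{different} 2-fracture graph, whereas the statement concerns the given $\mathcal{Q}$ (and is later applied to specific graphs, e.g.\ one minimizing $|i-j|$, which you are not free to replace). Moreover a connected 2-fracture graph has $2r$ edges on $n$ vertices and is not yet known to be unicyclic --- that is essentially the inequality the whole section is trying to establish. The paper's actual mechanism avoids all of this: in each case it takes the far endpoints $v,w$ of the two extra edges, uses connectedness of $\mathcal{Q}$ to find a path between them, and applies Proposition~\ref{swap} to manufacture a cycle with only one edge of some label, contradicting Proposition~\ref{isoncy}. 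Your proposal never identifies this path-plus-swap contradiction, and without it the ``too many distinct vertices'' count is not actually carried out.
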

\begin{proof}
Suppose that $q_{i,j}$ has a vertex adjacent, in $\mathcal{Q}$, to two other edges, and let $k$ and $l$ be the labels of those edges.  
By Proposition~\ref{squareinconeclcomp2} $k$ and $l$ must be consecutive with one of the labels of $q_{i,j}$.
We may assume that $i<j$ and $l<k$. Then there are three possibilities either $i<l<k<j$, $l<i<j<k$ or $i<l<j<k$ corresponding to the following graphs.
$$\xymatrix@-1.4pc{*+[o][F]{}\ar@{--}[rr]^i  &&*+[o][F]{} &&\\
&&&&\\
*+[o][F]{}\ar@{-}[uu]^k   \ar@{-}[rr]^i\ar@{-}[dd]_j&& *+[o][F]{} \ar@{-}[uu]^k  \ar@{-}[rr]^l   \ar@{-}[dd]_j&& *+[o][F]{}  \ar@{--}[dd]^j\\
&&&&\\
*+[o][F]{}  \ar@{-}[rr]_i && *+[o][F]{} \ar@{-}[rr]_l   && *+[o][F]{} }\quad 
\xymatrix@-1.4pc{*+[o][F]{}\ar@{--}[rr]^i  &&*+[o][F]{w} \ar@{--}[rr]^l &&*+[o][F]{v} \ar@{--}[dd]^k\\
&&&&\\
*+[o][F]{}\ar@{-}[uu]^k   \ar@{-}[rr]^i\ar@{-}[dd]_j&& *+[o][F]{} \ar@{-}[uu]^k  \ar@{-}[rr]^l   \ar@{-}[dd]_j&& *+[o][F]{}  \ar@{--}[dd]^j\\
&&&&\\
*+[o][F]{}  \ar@{-}[rr]_i && *+[o][F]{} \ar@{-}[rr]_l   && *+[o][F]{} }\quad
 \xymatrix@-1.4pc{*+[o][F]{}\ar@{--}[rr]^i  &&*+[o][F]{} \ar@{--}[rr]^l  &&*+[o][F]{}\ar@{--}[dd]^k\\
&&&&\\
*+[o][F]{}\ar@{-}[uu]^k   \ar@{-}[rr]^i\ar@{-}[dd]_j&& *+[o][F]{} \ar@{-}[uu]^k  \ar@{-}[rr]^l   \ar@{-}[dd]_j&& *+[o][F]{}  \\
&&&&\\
*+[o][F]{}  \ar@{-}[rr]_i && *+[o][F]{}   &&  }
$$
Consider the second case. As $\mathcal{Q}$ is connected, there is a path in $\mathcal{Q}$ between  $v$ and $w$. Then by Proposition~\ref{swap}, we can create a cycle in a 2-fracture graph having only one edge with some label, contradicting Proposition~\ref{isoncy}.

The third case becomes the same as the first case by using Proposition~\ref{swap} on the $i$-edges.

Now we consider the first case. If the labels are not all consecutive, then another pair of labels give an alternating square and we get the same contradiction as in the second case.
Suppose the labels are consecutive, then we use the fact that there is another vertex of $\mathcal{Q}$ connected to one of the vertices of one of the graphs above.  Again using Propositions~\ref{swap} and \ref{squareinconeclcomp2}, independent of how this other vertex is connected, we can obtain the same contradiction as in the second case.
\end{proof}
\begin{prop}\label{minimalsquare}
Let $n\geq 9$. If $\Gamma$ has a 2-fracture graph with a square, then it has a 2-fracture graph having a square  $q_{i,j}$ such that $|i-j|=2$.
\end{prop}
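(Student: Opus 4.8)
The plan is to work inside the connected 2-fracture graph supplied by the standing hypothesis of this subsection, and to argue by reducing the label gap of a square. Among all \emph{connected} 2-fracture graphs of $\Gamma$ carrying a square, and all squares they contain, I would choose $\mathcal{Q}$ and a square $q_{i,j}$ (say $i<j$) for which $j-i$ is smallest; it suffices to reach $j-i=2$, so assume for contradiction that $j-i\geq 3$. By Proposition~\ref{squareinconeclcomp2} every edge of $\mathcal{G}$ incident to a vertex of $q_{i,j}$ already lies in $\mathcal{Q}$ and carries a label in $\{i-1,i+1,j-1,j+1\}$; since $n\geq 9$ and $\mathcal{Q}$ is connected, at least one such ``extra'' edge emanates from the square.

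The core step is the case where some vertex of $q_{i,j}$ carries an emanating edge labelled $i+1$ (the label $j-1$ being symmetric). Because $j\geq i+3$, the label $i+1$ is not consecutive with $j$, so this $(i+1)$-edge together with the $j$-edge of the square at the same vertex spans an alternating square $q_{i+1,j}$ of $\mathcal{G}$ sharing a $j$-edge with $q_{i,j}$. Applying Proposition~\ref{squareinconeclcomp0} with the roles of the two labels interchanged forces both $(i+1)$-edges of $q_{i+1,j}$ into $\mathcal{Q}$; and since a 2-fracture graph has exactly two $j$-edges, both already used by $q_{i,j}$, the remaining $j$-edge of $q_{i+1,j}$ is \emph{not} in $\mathcal{Q}$. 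Proposition~\ref{putincycle}, applied to this $j$-edge, then removes the unshared $j$-edge of $q_{i,j}$ and installs the missing one, yielding a 2-fracture graph in which $q_{i+1,j}$ is a square of gap $j-i-1$. The removed edge lies on the cycle $q_{i,j}$, so connectedness is preserved; this contradicts minimality. (One may equally effect this reduction via Proposition~\ref{movingsquares}.) Hence no emanating edge is labelled $i+1$ or $j-1$.

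It remains to treat the \emph{bad case}, where every emanating edge is labelled $i-1$ or $j+1$. Here Proposition~\ref{squareinconeclcomp0} shows these edges occur in forced pairs: an $(i-1)$-edge at one end of a $j$-edge forces one at the other end, and a $(j+1)$-edge at one end of an $i$-edge forces one at the other. As there are only two edges of each label, and each square-vertex has degree at most three by Proposition~\ref{squareedge}, the two pair-types cannot coexist without driving a vertex to degree four. So, up to the symmetry reversing the string, the only extra edges are two $(i-1)$-edges on the endpoints $a,c$ of one $j$-edge, the remaining two square-vertices $b,d$ having degree two and hence being moved in $\mathcal{G}$ only by $\rho_i$ and $\rho_j$.

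The hard part is to finish this rigid configuration. My plan is to exploit $\rho_{i+1}$, which by Proposition~\ref{squareinconeclcomp2} fixes all four square-vertices: if $\rho_{i+1}$ moves either neighbour $g=a\rho_{i-1}$ or $h=c\rho_{i-1}$, then the $(i-1)$-edge and the new $(i+1)$-edge there span an alternating square $q_{i-1,i+1}$ of gap $2$, which Proposition~\ref{swap} captures, giving the conclusion outright. The main obstacle is the residual sub-case in which the neighbourhood is completely rigid (for instance $\rho_{i+1}$ fixing both $g$ and $h$): relocating the square by Proposition~\ref{movingsquares} only raises the gap to $j-i+1$ and then returns it, so no purely local move settles it. I expect this to need either pushing the square along the attached tree toward a leaf, where Proposition~\ref{squareinconeclcomp2} must eventually produce a reducible $(i+1)$- or $(j-1)$-edge, or a global argument on the $\Gamma_{i-1}$- and $\Gamma_j$-orbits, using that $\{a,b,c,d\}$ is a full $\Gamma_{i-1}$-orbit, to exclude the configuration; making one of these precise is where the real work lies.
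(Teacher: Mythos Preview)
Your core reduction step is correct and is exactly what the paper does: by minimality, an emanating edge with label $i+1$ (or $j-1$) lets you swap to a square of strictly smaller gap, so only labels $i-1$ and $j+1$ can emanate. Your structural description of the resulting configuration (two $(i-1)$-edges at the endpoints of one $j$-edge, the other two square-vertices of degree~$2$) also matches the paper.

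The genuine gap is in the bad case. Your proposed use of $\rho_{i+1}$ cannot work: since $\rho_{i+1}$ commutes with $\rho_{i-1}$ and, as you yourself noted, fixes the square vertex $a$, it is forced to fix $g=a\rho_{i-1}$ as well (and likewise $h$). So your ``if $\rho_{i+1}$ moves $g$ or $h$'' branch is empty, and you are left entirely in the residual sub-case you could not finish. The paper's way out is precisely the ``pushing the square along'' idea you gestured at, but carried out systematically: swap (via Proposition~\ref{swap}/\ref{putincycle}) to make $q_{i-1,j}$ the square of a new connected $2$-fracture graph $\mathcal{Q}'$, and rerun the entire analysis on $\mathcal{Q}'$. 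Now the old vertices $v_3,v_4$ already have degree~$3$ in $\mathcal{Q}'$ (edges $i-1$, $j$, $i$), so any further emanating edge at the new outer vertices $v_5,v_6$ cannot carry label $j\pm1$ (that would force, via Proposition~\ref{squareinconeclcomp0}, a fourth edge at $v_3$ or $v_4$); and it cannot carry label $i$ (both $i$-edges are already placed). By Proposition~\ref{squareinconeclcomp2} applied in $\mathcal{Q}'$, the only remaining option is $k_2=i-2$. Iterating gives a chain $k_1=i-1,\,k_2=i-2,\ldots$ of labels moving monotonically away from $j$, so that no label strictly between $i$ and $j$ ever appears in the connected $2$-fracture graph; but a $2$-fracture graph contains two edges of \emph{every} label, and this is the contradiction.

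There is also a second omission: showing the minimum gap is at most~$2$ does not finish the proposition, which asks for a square of gap exactly~$2$. You still need the easy endgame the paper supplies: if the minimum gap is~$1$, say $j=i+1$, then by Proposition~\ref{squareinconeclcomp2} some vertex of $q_{i,i+1}$ carries an $(i-1)$- or $(i+2)$-edge, yielding an alternating square $q_{i-1,i+1}$ or $q_{i,i+2}$ in $\mathcal{G}$ to which one swaps.
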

\begin{proof}
Choose $\mathcal{Q}$, $i$, and $j$ so that the alternating square $q_{i,j}$ has the property that $|i- j|$ is minimal.  

First suppose that $|i - j|  \geq 3$. By Proposition~\ref{squareinconeclcomp2} there is a vertex of the square $q_{i,j}$ which is $k_1$-adjacent to another vertex with $k_1\in \{ i-1, i+1, j-1, j+1\}$. Without loss of generality, we can assume $k_1 = i \pm 1$, and thus there is an alternating square $q_{k_1,j}$ sharing an edge with $q_{i,j}$.  By Proposition~\ref{swap}, there is another connected 2-fracture graph $\mathcal{Q}'$ with $q_{k_1,j}$ as its alternating square.  Since we assumed that  $| i- j |$ is minimal we know that $ | k_1- j| > | i - j |$. 
Consider the vertices $v_i, \, i\in\{0,\ldots,6\}$ as in the following figure.
$$\xymatrix@-1.4pc{
*+[o][F]{v_1}\ar@{-}[rr]^i\ar@{-}[dd]_j &&*+[o][F]{v_3}   \ar@{-}[dd]^j\ar@{-}[rr]^{k_1}&& *+[o][F]{v_5}\ar@{--}[dd]^j \\
&&&&\\
*+[o][F]{v_2}\ar@{-}[rr]_i&&*+[o][F]{v_4} \ar@{-}[rr]_{k_1}&&  *+[o][F]{v_6}  }$$

By Proposition~\ref{squareedge} the degree of the vertices $v_3$ and $v_4$ is three. Suppose that  $v_1$ has degree three, then the label $k$ of the edge incident to $v_1$, not in $q_{i,j}$, would satisfy $ | k- j | < | i- j|$.   As above, using Proposition~\ref{swap}, we would obtain a contradiction.  Thus the degree of $v_1$ is two, and similarly the degree of $v_2$ is two.

As $n \geq 9$ the degree of either $v_5$ or $v_6$ is three.  Assume without loss of generality that $v_5$ is $k_2$-adjacent to a vertex $v_7$.  We have that  $k_2$ must be consecutive with $k_1$; furthermore, there is an alternating square $q_{k_2,j}$ with vertices $v_5,v_6,v_7, v_8$, and thus $ | k_2 - j| > | k_1 - j | > | i- j| $.  Continuing this process we get a sequence of labels $k_3,\ldots, k_s$ with $ | k_s - j| >\ldots> | k_1 - j | > | i- j| $. Thus $\mathcal{Q}$ does not have edges with labels between $i$ and $j$. We can then conclude that $\Gamma$ is not connected, a contradiction. Hence $|i - j|  \leq 2$.

Suppose that $|i-j|=1$, say $j=i+1$.
Then by Proposition~\ref{squareinconeclcomp2} there is a vertex either $i-1$ or $i+2$ adjacent to this square.  This situation guarantees another alternating square either $q_{i-1,i+1}$ or $q_{i+2,i}$.  By Proposition~\ref{swap}, there is another connected 2-fracture graph having a square $q_{i,j}$ with $j=i+2$, as wanted.
\end{proof}
\begin{prop}\label{i-j=2}
Let $n\geq 9$.  If $q_{i-1,i+1}$ is an alternating square of a connected 2-fracture graph $\mathcal{Q}$, then both $i$-edges are incident to the square as shown in the following figure. 
$$\xymatrix@-1.4pc{
*+[o][F]{}\ar@{-}[rr]^{i+1} \ar@{-}[dd]_{i-1} &&*+[o][F]{}   \ar@{-}[dd]^{i-1}\ar@{-}[rr]^i&& *+[o][F]{v} \\
&&&&\\
*+[o][F]{}\ar@{-}[rr]_{i+1}&&*+[o][F]{} \ar@{-}[rr]_i &&  *+[o][F]{w}  }$$
Moreover the degree, in $\mathcal{Q}$, of vertices $v$ and $w$  is one.
\end{prop}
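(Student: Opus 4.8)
The plan is to first nail down the global skeleton forced by the square, and only then locate the two $i$-edges. Since $q_{i-1,i+1}$ is alternating it carries two $(i-1)$-edges and two $(i+1)$-edges, and as $\mathcal{Q}$ is a $2$-fracture graph it has \emph{exactly} two edges of each label; hence by Proposition~\ref{isoncy} all $(i-1)$- and $(i+1)$-edges are the four edges of the square. The four vertices lie in a single $\Gamma_i$-orbit (they are joined by $\rho_{i\pm1}\in\Gamma_i$), so $\rho_i$ cannot send one square vertex to another: swapping two square-adjacent vertices would create a multiple edge (forbidden by Proposition~\ref{isoncy}), and swapping two diagonal vertices would be an $i$-edge with both ends in one $\Gamma_i$-orbit. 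Thus each vertex is either fixed by $\rho_i$ or sent outside the square, and by Proposition~\ref{squareinconeclcomp2} every edge leaving a square vertex lies in $\mathcal{Q}$ and has label in $\{i-2,i,i+2\}$; in particular any $i$-edge incident to the square is one of the two $i$-edges of $\mathcal{Q}$, so at most two square vertices are moved.

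The engine of the argument is a commutation (``phantom edge'') device: for $|m-i|\ge 2$ one has $\rho_m\rho_i=\rho_i\rho_m$, and combined with the fact that there are only two $(i-1)$- and two $(i+1)$-edges this is very rigid. First I would use it to show that a \emph{fixed} square vertex can carry no $(i-2)$- or $(i+2)$-edge: such an edge, transported across the incident rung (using $\rho_{i+2}\leftrightarrow\rho_{i-1}$) or rail (using $\rho_{i-2}\leftrightarrow\rho_{i+1}$), would force a \emph{third} $(i\mp1)$-edge, which is impossible. Hence every fixed square vertex has degree $2$ in $\mathcal{Q}$. Connectivity of $\mathcal{Q}$ together with $n\ge 9$ then forces the square to be linked to the remaining vertices through moved vertices, so both $i$-edges must be incident to the square. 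It remains to decide which two vertices are moved. The diagonal placement is excluded by the same phantom-edge relations, and one checks the two moved vertices must be joined by a square edge; the label-preserving symmetry of the square (generated by the horizontal and vertical reflections, which fix the $(i-1)$- and $(i+1)$-edge classes setwise) then lets me normalise the picture, if necessary re-choosing $\mathcal{Q}$ via Proposition~\ref{swap}, so that the two $i$-edges hang off the endpoints of one $(i-1)$-edge exactly as in the figure.

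For the degree statement I would argue directly. If $b$ is a moved vertex with $b\rho_i=v$, then $v$ has no $(i-1)$- or $(i+1)$-edge (all of these are in the square), and any further edge at $v$ has a label $m$ with $|m-i|\ge 2$; pushing that edge across the $i$-edge by $\rho_m\rho_i=\rho_i\rho_m$ and noting that $b$ already has degree $3$, so by Proposition~\ref{squareedge} it has no $m$-edge, forces the $m$-edge to return to $v$ itself, a contradiction. Hence $v$, and symmetrically $w$, has degree one. The hard part will be the configuration analysis of the middle paragraph: the dihedral orbit data of $\langle\rho_{i-1},\rho_i\rangle$ and $\langle\rho_i,\rho_{i+1}\rangle$ is by itself consistent with several placements of the $i$-edges, so the exclusion of the diagonal case and the precise positioning really do depend on the phantom-edge bookkeeping (and on Proposition~\ref{swap} for the normalisation), which is most cleanly carried out by drawing the candidate subgraphs and eliminating them one at a time, as in the proof of Proposition~\ref{minimalsquare}.
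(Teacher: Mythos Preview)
Your degree-one argument for $v$ and $w$ is correct and is essentially the paper's argument: any extra $\mathcal{Q}$-edge at $v$ has label $m$ with $|m-i|\ge 2$, so commutation forces an $m$-edge at the square vertex $b$, which by Propositions~\ref{squareinconeclcomp2} and~\ref{squareedge} already has degree exactly three in $\mathcal{G}$; contradiction.

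The gap is in your central structural claim, that a square vertex fixed by $\rho_i$ can carry no $(i\pm2)$-edge. Your justification is that transporting such an edge across the opposite rung ``would force a third $(i\mp1)$-edge, which is impossible''. But that phantom $(i\mp1)$-edge lives in $\mathcal{G}$, not in $\mathcal{Q}$. The constraint ``exactly two edges of each label'' applies only to the $2$-fracture graph $\mathcal{Q}$; the permutation $\rho_{i-1}$ may well have further $2$-cycles in $\mathcal{G}$. Concretely: if $v_3$ is fixed by $\rho_i$ and carries an $(i+2)$-edge to $v_5$, then commutation of $\rho_{i+2}$ with $\rho_{i-1}$ gives an alternating square $q_{i-1,i+2}$ in $\mathcal{G}$ on $v_3,v_4,v_5,v_6$. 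Proposition~\ref{squareinconeclcomp2} does force the $(i+2)$-edge $v_4\,$--$\,v_6$ into $\mathcal{Q}$, but the $(i-1)$-edge $v_5\,$--$\,v_6$ need not lie in $\mathcal{Q}$, and there is no immediate contradiction. The paper's own proof explicitly allows this configuration and disposes of it differently.

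What the paper does instead is an iterative chain argument. Starting from the square it grows a ladder of alternating squares $q_{i-1,i+2},\,q_{i-1,i+3},\ldots$ in $\mathcal{G}$, using Proposition~\ref{squareedge} at each step to cap the degree of the newly absorbed square vertices at three and thereby force the next label to be $i+k$ (never $i$). The contradiction is not a forbidden third edge but the fact that this ladder never accommodates an $i$-edge, so the connected graph $\mathcal{Q}$ cannot contain its two $i$-edges. The same chain device handles the placement question (ruling out the ``$v_1$ and $v_4$ diagonal'' case and the ``second $i$-edge nowhere on the square'' case) by showing that the wrong placement again launches an $i$-free ladder. Your phantom-edge bookkeeping is the right local tool, but it has to be fed into this growth argument rather than used as a one-shot obstruction.
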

\begin{proof}
Suppose that there is no $i$-edge incident to a vertex of the square $q_{i+1,i-1}$. 
Then, by Proposition~\ref{squareinconeclcomp2}, an edge incident to $q_{i+1,i-1}$ has labels $i+2$ or $i-2$.
By duality we may assume there is an edge incident to the square having label $i+2$. Then there is a square 
sharing an edge with $q_{i+1,i-1}$, as shown in the following figure. 
$$\xymatrix@-1.4pc{
*+[o][F]{v_3}\ar@{-}[rr]^{i+1} \ar@{-}[dd]_{i-1} &&*+[o][F]{v_1}   \ar@{-}[dd]^{i-1}\ar@{-}[rr]^{i+2}&& *+[o][F]{v} \ar@{--}[dd]^{i-1} \\
&&&&\\
*+[o][F]{v_4}\ar@{-}[rr]_{i+1}&&*+[o][F]{v_2} \ar@{-}[rr]_{i+2}&& *+[o][F]{w} }$$
By Proposition~\ref{squareedge} there is no $i$-edge incident to the vertices of the figure above. Furthermore, the vertices $v_1$ and $v_2$ have degree three and the vertices $v_3$ and $v_4$ have degree two. Thus one of the vertices on the right must have degree three. Suppose $v$ has degree three. Then it is $(i+3)$-incident to another vertex, and we get a square $q_{i+3,i-1}$ as pictured below. 
$$\xymatrix@-1.4pc{
*+[o][F]{}\ar@{-}[rr]^{i+1} \ar@{-}[dd]_{i-1} &&*+[o][F]{}   \ar@{-}[dd]^{i-1}\ar@{-}[rr]^{i+2}&& *+[o][F]{} \ar@{-}[rr]^{i+3} \ar@{--}[dd]^{i-1} && *+[o][F]{} \ar@{--}[dd]^{i-1}\\
&&&&&&\\
*+[o][F]{}\ar@{-}[rr]_{i+1}&&*+[o][F]{} \ar@{-}[rr]_{i+2}&& *+[o][F]{} \ar@{-}[rr]_{i+3} && *+[o][F]{}  }$$
Again, by  Proposition~\ref{squareedge}, there is no $i$-edges incident to the vertices of this figure. Continuing this process we get a graph that doesn't have $i$-edges, hence $\Gamma$ is disconnected, a contradiction.

Hence there is a vertex of the square $i$-adjacent to another vertex $v$. Consider $v_1$, $v_2$, $v_3$ and $v_4$ as in the following figure.
$$\xymatrix@-1.4pc{
*+[o][F]{v_3}\ar@{-}[rr]^{i+1} \ar@{-}[dd]_{i-1} &&*+[o][F]{v_1}   \ar@{-}[dd]^{i-1}\ar@{-}[rr]^i&& *+[o][F]{v} \\
&&&&\\
*+[o][F]{v_4}\ar@{-}[rr]_{i+1}&&*+[o][F]{v_2}  &&   && }$$

If the degree of $v$, in $\mathcal{Q}$, is greater than one, then there exist an edge with label $j$ not consecutive with $i$ and incident to $v$.
Then there is an alternating square $q_{i,j}$ containing $v$ and $v_1$.
Hence $v_1$ has degree at least four in  $\mathcal{G}$, which is not possible by Proposition ~\ref{squareedge}.
Thus the degree of $v$ is one in $\mathcal{Q}$.

Suppose that $v_4$ is $i$-adjacent, in $\mathcal{Q}$, to a vertex $w$. Then the degree of $w$, analogously to $v$, is one.  Thus  either $v_2$ or $v_3$ is $k$-adjacent to another vertex.  As $k$ is not consecutive either with $i+1$ or $i-1$ there is a square $q_{k, i\pm 1}$ containing $v_1$ or $v_4$. Thus $v_1$ or $v_4$ is of degree four and we get a contradiction.

Now suppose that neither $v_2$ nor $v_3$ is $i$-adjacent to any vertex.
Either $v_2$, $v_3$, or $v_4$ has degree three. Using duality we may assume that either $v_3$ or $v_4$ has degree three. Suppose at first that $v_3$ has degree three.
By the same arguments as above $v_3$ is $(i+2)$-adjacent to another vertex $v_5$. Then there exists an alternating square $q_{i+2,i-1}$ containing $v_3$, $v_4$, $v_5$, and $v_6$ as seen in the following figure. Thus, both $v_3$ and $v_4$ have degree three.

$$\xymatrix@-1.4pc{
*+[o][F]{v_5}\ar@{-}[rr]^{i+2}\ar@{--}[dd]_{i-1}&&*+[o][F]{v_3}\ar@{-}[rr]^{i+1} \ar@{-}[dd]_{i-1} &&*+[o][F]{v_1}   \ar@{-}[dd]^{i-1}\ar@{-}[rr]^i&& *+[o][F]{v} \\
&&&&&&\\
*+[o][F]{v_6}\ar@{-}[rr]_{i+2}&&*+[o][F]{v_4}\ar@{-}[rr]_{i+1}&&*+[o][F]{v_2} && }$$

  On the other hand, if we assume that $v_4$ has degree three, then using duality, we may assume it is incident to an edge of label $i+2$.  Thus there is an alternating square $q_{i-1,i+2}$ containing $v_3$; thus both situations give the same result.
  
Now if the other $i$-edge of $\mathcal{Q}$ is incident to either $v_5$ or $v_6$, this creates an alternating square $q_{i,i+2}$ containing either $v_3$ or $v_4$, which is not possible because they have degree three. Continuing this process we never get a way to connect the other $i$-edge, giving a contradiction.  

Hence either $v_2$ or $v_3$ is $i$-adjacent,  in $\mathcal{Q}$, to some vertex $w$. Moreover the same argument used to prove that $v$ has degree one holds for $w$.
\end{proof}
\begin{prop}\label{notconwithsquare}
Let $n\geq 9$. If $\mathcal{G}$ has a connected 2-fracture graph, then either the 2-fracture graph is a tree or $\Gamma$ is embedded in $C_2\wr S_{n/2}$.
\end{prop}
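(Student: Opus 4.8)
The plan is to show that if a connected 2-fracture graph $\mathcal{Q}$ is not a tree, then $\Gamma$ preserves a partition of the $n$ points into $n/2$ pairs, which is exactly the statement that $\Gamma$ embeds in $C_2\wr S_{n/2}$. Since a 2-fracture graph with no big cycle (Proposition~\ref{nobigcicles}) has only alternating squares as cycles (Proposition~\ref{isoncy}), a non-tree $\mathcal{Q}$ contains a square; by Proposition~\ref{minimalsquare} I may assume it is $q_{i-1,i+1}$, and Proposition~\ref{i-j=2} then pins down the local picture: a $2\times 2$ grid with rungs labelled $i-1$, the two parallel rail-edges labelled $i+1$, and two pendant $i$-edges running to degree-one vertices $v$ and $w$. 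I read this configuration as a segment of a ``ladder'' (prism) graph, whose two columns $\{a,c\}$ and $\{b,d\}$, together with the end column $\{v,w\}$, are the first candidate blocks of the desired size-two partition.

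The second step is to propagate this ladder structure over all of $\mathcal{G}$. Here I would use repeatedly that all edges of $\mathcal{G}$ at a square vertex lie in $\mathcal{Q}$ and have labels consecutive with $i-1$ or $i+1$ (Proposition~\ref{squareinconeclcomp2}), that square vertices have degree at most three (Proposition~\ref{squareedge}), and that any two edges at a vertex carrying non-consecutive labels must close up into an alternating square (by the string property). Starting from the square and moving along the two rails, each of these facts forces the next column and the two rail-edges leaving it to appear in parallel with equal labels; the degree bound prevents the ladder from branching, Proposition~\ref{swap} lets me normalise how new squares attach, and Propositions~\ref{isoncy} and \ref{common} control which of the forced edges can re-enter $\mathcal{Q}$, so that no vertex acquires two partners. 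Because $\mathcal{Q}$ is connected and spans all $n$ vertices, this inductive sweep must exhaust the graph, so $\mathcal{G}$ is a ladder on $n/2$ columns and in particular $n$ is even.

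Finally, the ladder exhibits the ``rail-swap'' permutation $\sigma$ that exchanges the two endpoints of every column; equivalently $\sigma$ is the fixed-point-free involution determined by transitivity together with the rule $\sigma(x\rho_k)=\sigma(x)\rho_k$ and the initial pairing $\sigma(a)=c$. Checking on each type of edge of the ladder shows that $\sigma$ commutes with every generator $\rho_k$, so $\Gamma\le C_{S_n}(\sigma)$; as $\sigma$ is a product of $n/2$ disjoint transpositions, $C_{S_n}(\sigma)=C_2\wr S_{n/2}$, giving the claimed embedding. I expect the global propagation in the second step to be the main obstacle: the delicate points are ruling out twists or branchings that would assign a vertex two different partners, and handling the pendant end-columns $\{v,w\}$ (which carry no rung) so that the pairing stays $\Gamma$-invariant there. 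These are precisely the situations that the structural results about squares in connected 2-fracture graphs are designed to exclude, so the argument is careful bookkeeping with those propositions rather than any new idea.
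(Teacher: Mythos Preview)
Your approach is correct and essentially the same as the paper's: both start from the $q_{i-1,i+1}$ configuration provided by Propositions~\ref{minimalsquare} and~\ref{i-j=2}, then propagate outward using Propositions~\ref{squareinconeclcomp2} and~\ref{squareedge} to force the ladder structure on all of $\mathcal{G}$. The paper makes explicit that the new rail-labels increase monotonically $i+2,i+3,\ldots$ (which forces $i=1$) and invokes Proposition~\ref{paths} to exclude any extra $\mathcal{G}$-edges beyond a possible $0$-edge at $\{v,w\}$; your centralizer formulation via the rail-swap $\sigma$ is equivalent to the paper's direct observation that the column partition is $\Gamma$-invariant.
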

\begin{proof}
Suppose  $\mathcal{G}$ has a connected 2-fracture graph that is not a tree and let  $\mathcal{Q}$ be a 2-fracture graph  with a square $q_{i+1,i-1}$ accordantly with Proposition~\ref{minimalsquare}. 

By Proposition~\ref{i-j=2} there are two $i$-edges incident to the square. Let $v_1$, $v_2$, $v_3$ and $v_4$ be the vertices of $q_{i+1,i-1}$, as in the following figure.
$$\xymatrix@-1.4pc{
*+[o][F]{v_3}\ar@{-}[rr]^{i+1} \ar@{-}[dd]_{i-1} &&*+[o][F]{v_1}   \ar@{-}[dd]^{i-1}\ar@{-}[rr]^i&& *+[o][F]{v} \\
&&&&\\
*+[o][F]{v_4}\ar@{-}[rr]_{i+1}&&*+[o][F]{v_2} \ar@{-}[rr]_i &&  *+[o][F]{w}  }$$

The degree of $v$ and $w$ are both one. Hence, by Proposition~\ref{i-j=2}, either  $v_3$ or $v_4$ has degree three.
Suppose without loss of generality that $v_3$ has degree three.
Now the label of the edge incident to $v_3$, not in the square $q_{i-1,i+1}$, must have label $i+2$. 
This creates an alternating square $q_{i-1,i+2}$ in $\mathcal{G}$. 
In particular the degree of $v_4$ is also three.
$$\xymatrix@-1.4pc{
*+[o][F]{v_5}\ar@{-}[rr]^{i+2}\ar@{--}[dd]_{i-1}&&*+[o][F]{v_3}\ar@{-}[rr]^{i+1} \ar@{-}[dd]_{i-1} &&*+[o][F]{v_1}   \ar@{-}[dd]^{i-1}\ar@{-}[rr]^i&& *+[o][F]{v} \\
&&&&&&\\
*+[o][F]{v_6}\ar@{-}[rr]_{i+2}&&*+[o][F]{v_4}\ar@{-}[rr]_{i+1}&&*+[o][F]{v_2} \ar@{-}[rr]_i&& *+[o][F]{w} }$$
As $n\geq 9$ either $v_5$ or $v_6$ has degree three. Assume that $v_5$ is incident to another vertex $v_7$. Then the label of the edge $\{v_5,v_7\}$ must be $(i+3)$ and the degree of $v_5$ must be three. Then there exists an alternating square $q_{i-1,i+3}$ in $\mathcal{G}$, and therefore the degree of $v_6$ must be three. If $n=10$ we are done. Otherwise we continue this process. The result is the graph below where $i=1$.
$$\xymatrix@-1.4pc{
*+[o][F]{}\ar@{--}[dd]_0\ar@{-}[rr]^{r-1}&&*+[o][F]{}\ar@{--}[dd]_0\ar@{.}[rr]&&*+[o][F]{}\ar@{-}[rr]^5\ar@{--}[dd]_0&&*+[o][F]{}\ar@{-}[rr]^4\ar@{--}[dd]_0&&*+[o][F]{}\ar@{-}[rr]^3\ar@{--}[dd]_0&&*+[o][F]{}\ar@{-}[rr]^2 \ar@{-}[dd]_0 &&*+[o][F]{}   \ar@{-}[dd]^0\ar@{-}[rr]^1&& *+[o][F]{} \\
&&&&&&&&&&&&&&\\
*+[o][F]{}\ar@{-}[rr]^{r-1}*+[o][F]{}&&*+[o][F]{}\ar@{.}[rr]&&*+[o][F]{}\ar@{-}[rr]_5&&*+[o][F]{}\ar@{-}[rr]_4&&*+[o][F]{}\ar@{-}[rr]_3&&*+[o][F]{}\ar@{-}[rr]_2&&*+[o][F]{} \ar@{-}[rr]_1&& *+[o][F]{} }$$
 Now the permutation graph of $\Gamma$ is the graph above or has another $0$-edge connecting the vertices on the right. Indeed these are all the edges of $\mathcal{G}$ for otherwise there is a cycle containing an edge of  $\mathcal{Q}$ and no other edge with the same label, a contradiction.
In any case we get $\Gamma$ embedded into $C_2\wr S_{\frac{n}{2}}$  (since all the permutations $\rho_i$ preserve the partition whose parts are the $0$-edges and  the pair of vertices on the right). 
\end{proof}
\begin{coro}
With the assumptions of Proposition~\ref{notconwithsquare}, if the 2-fracture graph is not a tree, there are two possibilities for $\Gamma$, namely
\begin{enumerate}
\item The permutation representation graph is the following and $\Gamma\cong C_2\times S_{n/2}$.
$$\xymatrix@-1.4pc{
*+[o][F]{}\ar@{-}[dd]_0\ar@{-}[rr]^{r-1}&&*+[o][F]{}\ar@{-}[dd]_0\ar@{.}[rr]&&*+[o][F]{}\ar@{-}[rr]^5\ar@{-}[dd]_0&&*+[o][F]{}\ar@{-}[rr]^4\ar@{-}[dd]_0&&*+[o][F]{}\ar@{-}[rr]^3\ar@{-}[dd]_0&&*+[o][F]{}\ar@{-}[rr]^2 \ar@{-}[dd]_0 &&*+[o][F]{}   \ar@{-}[dd]^0\ar@{-}[rr]^1&& *+[o][F]{} \ar@{-}[dd]^0\\
&&&&&&&&&&&&&&\\
*+[o][F]{}\ar@{-}[rr]_{r-1}*+[o][F]{}&&*+[o][F]{}\ar@{.}[rr]&&*+[o][F]{}\ar@{-}[rr]_5&&*+[o][F]{}\ar@{-}[rr]_4&&*+[o][F]{}\ar@{-}[rr]_3&&*+[o][F]{}\ar@{-}[rr]_2&&*+[o][F]{} \ar@{-}[rr]_1&& *+[o][F]{} }$$
\item 
 The permutation representation graph is the following and $\Gamma$ depends on parity of $n$.
 If $n/2$ is even, then $\Gamma\cong C_2\wr S_{n/2}$ and if $n/2$ is odd then $\Gamma\cong C_2^{n/2-1}: S_{n/2}$ which is a subgroup of index $2$ in $C_2\wr S_{n/2}$.
$$\xymatrix@-1.4pc{
*+[o][F]{}\ar@{-}[dd]_0\ar@{-}[rr]^{r-1}&&*+[o][F]{}\ar@{-}[dd]_0\ar@{.}[rr]&&*+[o][F]{}\ar@{-}[rr]^5\ar@{-}[dd]_0&&*+[o][F]{}\ar@{-}[rr]^4\ar@{-}[dd]_0&&*+[o][F]{}\ar@{-}[rr]^3\ar@{-}[dd]_0&&*+[o][F]{}\ar@{-}[rr]^2 \ar@{-}[dd]_0 &&*+[o][F]{}   \ar@{-}[dd]^0\ar@{-}[rr]^1&& *+[o][F]{} \\
&&&&&&&&&&&&&&\\
*+[o][F]{}\ar@{-}[rr]^{r-1}*+[o][F]{}&&*+[o][F]{}\ar@{.}[rr]&&*+[o][F]{}\ar@{-}[rr]_5&&*+[o][F]{}\ar@{-}[rr]_4&&*+[o][F]{}\ar@{-}[rr]_3&&*+[o][F]{}\ar@{-}[rr]_2&&*+[o][F]{} \ar@{-}[rr]_1&& *+[o][F]{} }$$
\end{enumerate}

\label{2fracfinal}
\end{coro}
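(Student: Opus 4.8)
The plan is to read off the two permutation representation graphs directly from the proof of Proposition~\ref{notconwithsquare}, and then, in each case, to identify $\Gamma$ by inspecting how its generators act. That proof shows that when the connected 2-fracture graph is not a tree, the permutation representation graph of $\Gamma$ is the ``ladder'' whose rungs are $0$-edges and whose two horizontal rails carry the labels $1,2,\ldots,r-1$; moreover there are exactly two possibilities, according as the rightmost pair of vertices is or is not joined by a further $0$-edge. These give cases (1) and (2) respectively, so the geometric part of the statement is already in hand. In both cases I would label the $m:=n/2$ rungs (blocks) as $B_1,\dots,B_m$ with $B_k=\{a_k,b_k\}$, so that $\rho_j=(a_j\,a_{j+1})(b_j\,b_{j+1})$ for $1\le j\le r-1$. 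The horizontal rails then supply the Coxeter generators $(j\,j{+}1)$ on the set of blocks, so $\langle\rho_1,\dots,\rho_{r-1}\rangle$ induces the full symmetric group $S_m$, acting identically on the two rails; since $\sigma\mapsto(\sigma,\sigma)$ is injective, this subgroup is the diagonal copy of $S_m$ inside $S_2\wr S_m$.

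In case (1) every rung carries a $0$-edge, so $\rho_0=\prod_{k=1}^{m}(a_k\,b_k)$ is the involution $z$ that interchanges the two points of every block; this is the central involution of $C_2\wr S_m$. As $z$ commutes with the diagonal $S_m$ but is not contained in it (diagonal elements preserve each rail, whereas $z$ swaps them), I would conclude $\Gamma=\langle z\rangle\times\langle\rho_1,\dots,\rho_{r-1}\rangle\cong C_2\times S_{n/2}$, as claimed.

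In case (2) the last rung carries no $0$-edge, so $\rho_0=\prod_{k=1}^{m-1}(a_k\,b_k)$ fixes $B_m$ pointwise. Writing the kernel $(C_2)^m$ of the block action additively as the permutation module over the field of order two, $\rho_0$ is the vector $\mathbf{1}-e_m$ of weight $m-1$, and the base group $K=\Gamma\cap(C_2)^m$ is the $S_m$-submodule generated by the orbit $\{\mathbf{1}-e_k:1\le k\le m\}$. The crux, and the only genuinely new ingredient beyond Proposition~\ref{notconwithsquare}, is the elementary parity computation in this module: since $(\mathbf{1}-e_j)+(\mathbf{1}-e_k)=e_j+e_k$, the submodule always contains the even-weight subspace $E$ of dimension $m-1$, and it contains an odd-weight generator $\mathbf{1}-e_k$ precisely when $m-1$ is odd, i.e.\ when $m$ is even. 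Hence if $m=n/2$ is even then $K=(C_2)^m$ and $\Gamma=C_2\wr S_{n/2}$, while if $m=n/2$ is odd then $K=E\cong(C_2)^{m-1}$ and $\Gamma=C_2^{n/2-1}:S_{n/2}$, of index $2$ in $C_2\wr S_{n/2}$. I expect no real obstacle here; the argument is essentially bookkeeping once the two ladders are granted, with the module-parity step being the one place where care is needed to split case (2) correctly according to the parity of $n/2$.
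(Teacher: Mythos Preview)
Your proposal is correct. The paper states this corollary without proof, treating it as an immediate consequence of Proposition~\ref{notconwithsquare}; your argument supplies exactly the details one would expect---reading off the two ladder graphs from that proposition and then identifying $\Gamma$ in each case by analysing the base group as an $S_{n/2}$-module over $\mathbb{F}_2$, with the parity of $n/2$ determining whether the weight-$(m-1)$ vector $\mathbf{1}-e_m$ lies in the even-weight submodule.
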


Finally in this section, we prove the main theorem in the case where all
$\Gamma_i$ are intransitive and $2$-fracture graphs exist. So our assumptions
now are:
\begin{itemize}\itemsep0pt
\item $\Gamma\cong A_n$;
\item all $\Gamma_i$'s are intransitive, and each $\rho_i$ interchanges at least
two pairs of points in different $\Gamma_i$-orbits.
\end{itemize}

The conclusions we reached from the second assumption are given in Propositions
\ref{dislc} and \ref{notconwithsquare}: either
\begin{itemize}\itemsep0pt
\item there is a $2$-fracture graph of which one component is a tree and the others are unicyclic; or
\item the permutations are as given in Corollary~\ref{2fracfinal}.
\end{itemize}
In the first case, such a graph has $n-1$
edges. So $2r\le n-1$, as required. 
In the second, the group $\Gamma$ is not $A_n$.

\section{All $\Gamma_i$'s intransitive: no $2$-fracture graphs}
\label{s:no2frac}

In this section we continue to handle the case where all subgroups $\Gamma_i$ are intransitive.  In particular, we deal with the case where $\Gamma$ does not have any possible 2-fracture graph.   Although some of our results are more general, throughout this section we will make the assumption that $\Gamma$ is isomorphic to $A_n$.

Suppose that all maximal parabolic subgroups of $\Gamma$ are intransitive but there exists $i\in\{0,\ldots, r-1\}$ such that $\rho_i$ permutes only one pair $\{a,b\}$ of vertices in different $\Gamma_i$-orbits. 
Consequently, the only generators that can act non-trivially on $a$ and $b$ are $\rho_{i-1}$ and $\rho_{i+1}$. 

We will say that the orbit of $a$ is the first $\Gamma_i$-orbit and the orbit of $b$ is the second $\Gamma_i$-orbit.
Let $n_1$ and $n_2$ be the sizes of the first and the second $\Gamma_i$-orbit, respectively; and let $A$ and $B$ be the correspondent groups, determined by the action of $\Gamma_i$ on each orbit.  Both $A$ and $B$ are string groups generating by involutions (or sggi's). Indeed let $\rho_j=\alpha_j\beta_j$ with $\alpha_j$ and $\beta_j$ being the permutations in each $\Gamma_i$-orbit.
Then $A := \langle \alpha_i\, |\, i\in \{0,\ldots, r-1\}\rangle$ and $B := \langle \beta_i\, |\, i\in \{0,\ldots, r-1\}\rangle$.

\begin{prop}\label{CCD}
If  $A$ is primitive, then the set $J_A:= \{ i \,|\, i \in \{0,\ldots, r-1\}$ and $\alpha_i \neq 1_A\}$ is an interval.  The same result holds for $B$.
\end{prop}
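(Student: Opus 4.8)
The plan is to prove the contrapositive: if $J_A$ is not an interval, then $A$ is not primitive. Suppose $J_A$ is not an interval, so there exist indices $s<m<t$ with $s,t\in J_A$ and $m\notin J_A$. Split $J_A$ into $J_1:=\{j\in J_A : j<m\}$ and $J_2:=\{j\in J_A : j>m\}$; both are nonempty. For any $j_1\in J_1$ and $j_2\in J_2$ we have $j_1\le m-1$ and $j_2\ge m+1$, so $|j_1-j_2|\ge 2$; the string property then forces $\rho_{j_1}$ and $\rho_{j_2}$ to commute, hence so do their restrictions $\alpha_{j_1}$ and $\alpha_{j_2}$. Thus the subgroups $A_1:=\langle\alpha_j : j\in J_1\rangle$ and $A_2:=\langle\alpha_j : j\in J_2\rangle$ centralise each other, $A=A_1A_2$, and each of $A_1,A_2$ is normalised by the other (and by itself), so both are nontrivial normal subgroups of $A$.

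Next I would feed this into standard primitive permutation group theory. Since $A$ is primitive and $A_1,A_2$ are nontrivial normal subgroups, each is transitive on the first $\Gamma_i$-orbit $\Omega_1$ (the orbits of a nontrivial normal subgroup form a block system, which here must be trivial). Now $A_2\le C_A(A_1)$, and the centraliser of a transitive group is semiregular; being also transitive, $A_2$ is therefore regular. By the symmetric argument $A_1$ is regular as well.

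Finally I would exploit the defining hypothesis that the only generators of $\Gamma$ acting nontrivially on $a$ are $\rho_{i-1}$ and $\rho_{i+1}$, so the only generators of $A=\langle\alpha_j : j\ne i\rangle$ that can move $a\in\Omega_1$ are $\alpha_{i-1}$ and $\alpha_{i+1}$. But in a regular group every nontrivial element is fixed-point-free, so each $\alpha_j$ with $j\in J_A$ (lying in $A_1$ or $A_2$) moves $a$. Hence $J_A\subseteq\{i-1,i+1\}$, forcing $|J_1|=|J_2|=1$ and $|A_1|=|A_2|=2$; regularity then gives $|\Omega_1|=|A_1|=2$. For $|\Omega_1|\ge 3$ (the case of interest, the degree-$2$ orbit being degenerate and absorbed by the running hypotheses) this is the desired contradiction, so $J_A$ is an interval. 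The claim for $B$ follows by the identical argument applied to the point $b$ and the orbit $\Omega_2$.

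The step I expect to be the main obstacle is controlling how large the two commuting normal subgroups can be: a priori a primitive group \emph{can} be a product of two nontrivial commuting normal subgroups (diagonal-type groups $T\times T$ being the genuine examples), so primitivity alone does not close the argument. The decisive point is that both factors are forced to act \emph{regularly}, and then the special hypothesis ``only $\rho_{i\pm1}$ move $a$'' caps at two the number of generators of $A$ that can move a fixed point, which is incompatible with regular factors of order larger than $2$. Getting the semiregular/regular reduction and the bookkeeping of exactly which generators fix $a$ right is where the care is needed.
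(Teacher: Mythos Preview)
Your proof is correct and essentially identical to the paper's: both split $J_A$ at a gap into two commuting normal subgroups, use primitivity to force each factor to be transitive, then invoke the fact that the centraliser of a transitive group is semiregular to conclude every $\alpha_j$ with $j\in J_A$ moves $a$, contradicting the constraint that only $\rho_{i\pm1}$ can do so. The only cosmetic differences are that you phrase the semiregularity conclusion via regularity of the factors $A_1,A_2$ while the paper applies it directly to individual generators, and you are more explicit about the degenerate degree-$2$ case (which the paper handles implicitly through its claim that $|J_1|,|J_2|\ge 2$).
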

\begin{proof}
Suppose that $J_A$ is not an interval.
Then $A = H \times K$ for $H = \langle \alpha_j \mid j \in J_1 \rangle$ and $K = \langle \alpha_j \mid j \in J_2 \rangle$ for some disjoint index sets $J_1$ and $J_2$ such that $J_A = J_1 \cup J_2$.  As both $H$ and $K$ are transitive on the $n_1$ points, the cardinality of  $J_1$ and $J_2$ is at least two.
Moreover each generator $\alpha_j$ commutes with all generators of a transitive group on $n_1$ points, either $H$ or $K$, which implies that it  has full support on the first $\Gamma_i$-orbit.  Therefore, it has a nontrivial action on $a$.  However, we have seen that the only generators that can act nontrivially on $a$ are $\rho_{i-1}$ and $\rho_{i+1}$.  This gives a contradiction, so $J_A$ is an interval.
The proof also works for $B$.
\end{proof}

Thanks to Proposition~\ref{CCD} we consider (up to duality) separately  the following cases : Case (1) $A$ and $B$ are both imprimitive; Case (2): $J_A$ and $J_B$ are intervals and $i\notin\{ 0,r-1\}$; Case (3): we deal with the remaining cases, particularly we assume that $J_B= \emptyset$ or an interval.
\subsection{Case (1): $A$ and $B$ are both imprimitive}

Let $A$ be embedded into $S_{k_1}\wr S_{m_1}$ and $B$ be embedded into $S_{k_2}\wr S_{m_2}$ with $n_1=m_1k_1$ and $n_2=m_2k_2$. 
Consider a minimal subset $M$ of the set of generators of $\Gamma_i$ generating the group induced on the two block systems. Let $R$ be the set containing the remaining generators of $\Gamma_i$.  

Consider the permutation representation graph  $\mathcal{X}$  for the block action, that is, a graph having  $m_1+m_2$ vertices, corresponding to the blocks, and with a $j$-edge between two blocks whenever $\rho_j$ swaps them.
As $\Gamma_i$ has exactly two orbits, the graph $\mathcal{X}$ has two connected components.   
Also, consider the subgraph $\mathcal{\bar{X}}$ of $\mathcal{X}$ with the same vertices and with a $j$-edge for each element of $M$, between blocks in different $\Gamma_j$-orbits. This is a fracture subgraph of $\mathcal{X}$, particularly $\mathcal{\bar{X}}$ has no cycles. Hence $|M| \leq m_1+m_2- 2 $.

Similarly, consider the graph $\mathcal{Y}$ with $k$ vertices corresponding to the $\langle M\rangle$-orbits, with  a $j$-edge between a pair of $\langle M\rangle$-orbits $L$ and $L'$  whenever there is  $x\in L$ such that $x\rho_j\in L'$ with $\rho_j\in R$. Let $\mathcal{\bar{Y}}$ be a fracture subgraph of  $\mathcal{Y}$  having only one $j$-edge for each element $\rho_j\in R$ between $\langle M\rangle$-orbits in different $\Gamma_j$-orbits. As before, $\mathcal{\bar{Y}}$ has no cycles and has at least two components. Hence $|R| \leq k - 2$. Note that $k\leq k_1+k_2$, hence $|R| \leq k_1+k_2- 2$.

\begin{prop}\label{M=m-2}
If $|M|= m_1+m_2-2$ then  $\mathcal{X}$ has two connected components and consecutive labels. Up to a duality,  $\mathcal{X}$ is the following graph.
$$\xymatrix@-1.3pc{ *+[F]{} \ar@{-}[rr]^{i-m_1+1} && *+[F]{ }  \ar@{.}[rr] && *+[F]{ }  \ar@{-}[rr]^{i-2} &&*+[F]{}  \ar@{-}[rr]^{i-1} && *+[F]{a} &&  *+[F]{b}  \ar@{-}[rr]^{i+1} && *+[F]{ } \ar@{-}[rr]^{i+2} && *+[F]{ }  \ar@{.}[rr]&&*+[F]{ } \ar@{-}[rr] ^{i+m_2-1}&&*+[F]{ }} $$
\end{prop}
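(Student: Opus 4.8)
The plan is to exploit the fact that $|M|=m_1+m_2-2$ forces the fracture subgraph $\mathcal{\bar X}$ of the block-action graph $\mathcal{X}$ to be as large as possible. Recall from the discussion preceding the statement that $\mathcal{\bar X}$ has no cycles and has at least two components (one for each of the two block systems, coming from the two $\Gamma_i$-orbits). A forest on $m_1+m_2$ vertices with exactly two components has precisely $m_1+m_2-2$ edges, so the hypothesis $|M|=m_1+m_2-2$ tells us that $\mathcal{\bar X}$ is a \emph{spanning forest with exactly two tree components}, one covering the $m_1$ blocks of the first orbit and the other covering the $m_2$ blocks of the second. In particular every generator in $M$ contributes exactly one edge to $\mathcal{\bar X}$, so the full graph $\mathcal{X}$ has no extra edges beyond these (any additional edge would create a cycle, contradicting that $\mathcal{\bar X}$ is already a maximal forest and that the two components are the $\Gamma_i$-orbits), and thus $\mathcal{X}=\mathcal{\bar X}$ is itself a forest with two tree components.

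**Forcing the path structure.** The next step is to show each of the two trees is in fact a path with consecutively-labelled edges. Here I would invoke the fracture-graph machinery already developed: the first bullet recalled from \cite{extension} says that two edges of $\mathcal{X}$ with the same label cannot both have their endpoints inside a single component of the fracture graph, which since $\mathcal{X}=\mathcal{\bar X}$ means \emph{each label appears at most once in each component}, hence at most twice overall. Combined with the string property (generators with non-consecutive labels commute, so they cannot be adjacent edges of the diagram and, on the block action, adjacent edges sharing a vertex must carry consecutive labels by the alternating-square argument used repeatedly in Section~\ref{s:2frac}), this rules out any branching: a vertex of degree $\ge 3$ would force two incident edges with non-consecutive labels, producing an alternating square and thereby a third component by the second bullet from \cite{extension}, contradicting that we have exactly two components. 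So every vertex has degree at most $2$, each tree is a path, and reading the labels along each path they must form a consecutive interval. This fixes $\mathcal{X}$ to be two labelled paths.

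**Locating $a$, $b$ and fixing the numbering.** Finally I would pin down where the vertices $a$ and $b$ (the blocks containing the special pair swapped by $\rho_i$) sit and how the intervals align. Since $\rho_i$ swaps only the single pair $\{a,b\}$ lying in different $\Gamma_i$-orbits, the block of $a$ is an endpoint of the first path and the block of $b$ is an endpoint of the second, joined conceptually across the $i$-position; the labels neighbouring $a$ and $b$ must be $i-1$ and $i+1$ respectively by the consecutiveness forced above and the string property applied to $\rho_i$. Counting edges, the first path uses $m_1-1$ edges with labels $i-m_1+1,\ldots,i-1$ and the second uses $m_2-1$ edges with labels $i+1,\ldots,i+m_2-1$, which is exactly the picture in the statement. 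The arrangement is determined up to the duality that exchanges the roles of the two orbits, giving the claimed form.

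\medskip

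\noindent\textbf{Main obstacle.} The delicate point is the degree/branching argument: one must rule out a block-action vertex of degree $\ge 3$ and confirm that the labels increase monotonically along each path rather than merely forming a set. This is where the alternating-square propositions and the two cited results from \cite{extension} must be marshalled carefully, since a branch point is precisely what would create a forbidden same-label repetition within a single tree component or an extra component. Everything else is essentially edge-counting.
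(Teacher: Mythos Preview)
Your approach is essentially the paper's: establish $\mathcal{X}=\bar{\mathcal{X}}$ from the edge count, observe that this forest has no alternating squares so incident edges carry consecutive labels, and anchor the labelling at the blocks of $a$ and $b$. You work harder than necessary on the no-branching step (once $\mathcal{X}$ is a forest, the absence of alternating squares is immediate, and three pairwise-consecutive integer labels at a single vertex is impossible, so no appeal to the results of \cite{extension} is needed here), and your claim that the blocks of $a,b$ are endpoints is asserted rather than argued: the paper's route is that only $\rho_{i\pm1}$ move $a$, hence only they can move its block, so any edge at that block is labelled $i-1$ or $i+1$, and as these differ by $2$ the block has degree at most~$1$.
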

\begin{proof}
Since $|M|= m_1+m_2-2$ and $\mathcal{\bar{X}}$ has two components, $\mathcal{\bar{X}} = \mathcal{X}$.  
As $\rho_i$ only swaps the points $a$ and $b$ in different $\Gamma_i$-orbits, a $j$-edge of $\mathcal{X}$ incident to the blocks containing $a$ and $b$,  must be consecutive with $i$.

The graph $\mathcal{X}$ does not have any alternating squares, and thus only edges with consecutive labels are incident.  Up to duality, we have determined $\mathcal{X}$.
\end{proof}

\begin{prop}\label{M=m-3(1)}
If $|M|= m_1+m_2-3$ then, up to duality, either $m_1=2$ or $m_1\geq 4$,  accordantly to one of the following graphs.
$$(1) \,\xymatrix@-1.2pc{  *+[F]{} \ar@{-}[rr]^{i-1} &&*+[F]{a}  &&*+[F]{b} \ar@{-}[rr]^{i-1} && *+[F]{ }  \ar@{-}[rr]^{i-2}& &*+[F]{} \ar@{.}[rr] &&*+[F]{} \ar@{-}[rr] ^{i-m_2-1}&&*+[F]{ } } $$
$$(2) \,\xymatrix@-1.3pc{  *+[F]{} \ar@{-}[rrr]^{i+m_1-3} &&&*+[F]{} \ar@{-}[rrr]^{i+m_1-2} &&&*+[F]{} \ar@{-}[rrr]^{i+m_1-3} &&&*+[F]{} \ar@{.}[rr] &&*+[F]{} \ar@{-}[rr] ^{i+1}&&*+[F]{a }  & &*+[F]{b} \ar@{-}[rr]^{i-1} &&*+[F]{}  \ar@{.}[rr]&&*+[F]{} \ar@{-}[rr] ^{i-m_2-1}&&*+[F]{ }} $$
\end{prop}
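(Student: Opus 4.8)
The plan is to reconstruct $\mathcal{X}$ in the spirit of the proof of Proposition~\ref{M=m-2}, the difference being that the fracture subgraph $\bar{\mathcal{X}}$ now has one extra component. Since $\bar{\mathcal{X}}$ is a forest on the $m_1+m_2$ block-vertices with $|M|=m_1+m_2-3$ edges, it has exactly three connected components, whereas $\mathcal{X}$ itself has exactly two (one per $\Gamma_i$-orbit, each connected). Hence exactly one orbit is split into two $\bar{\mathcal{X}}$-components while the other is spanned by a single tree of $\bar{\mathcal{X}}$. Up to duality I would take the first (the $a$-)orbit to be the split one and the second (the $b$-)orbit to be the connected one, and then recover $\mathcal{X}$ by determining each orbit-subgraph together with the single repeated label that glues $\mathcal{X}$ back into two components.

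I would first settle the non-split orbit. As in Proposition~\ref{M=m-2}, the only generators moving $a$ or $b$ are $\rho_{i-1}$ and $\rho_{i+1}$, so every edge of $\mathcal{X}$ incident to the block of $a$ or the block of $b$ has a label consecutive with $i$. Since the $b$-orbit is a single $\bar{\mathcal{X}}$-component, the same-label rule $(\cite{extension}$, Proposition~3.2$)$ forbids two equally labelled edges inside it; thus its $\mathcal{X}$-subgraph carries pairwise distinct labels and coincides with its spanning tree. A degree argument analogous to those of Section~\ref{s:2frac} (a vertex of degree three, or an interior vertex incident to two non-consecutive labels, would produce an alternating square and hence a forbidden configuration by $(\cite{extension}$, Proposition~3.6$)$) forces this tree to be a path of consecutive labels, with the block of $b$ at one end because it meets only an $i\pm1$ edge. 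Up to duality this is the path with successive labels $i-1,i-2,\dots$ shown in both target figures.

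Next I would analyse the split orbit. Counting labels, $\mathcal{X}$ has at least $m_1+m_2-3$ edges and, since it has two components, at least $m_1+m_2-2$; the heart of the argument is to prove that equality holds, so that a \emph{unique} generator $\rho_\ell$ induces a double transposition on the blocks and both orbit-subgraphs are trees. To rule out a second repeated label or an honest cycle I would again run the alternating-square analysis, showing that any such feature produces a cycle of $\mathcal{X}$ meeting $\bar{\mathcal{X}}$ in exactly one edge of its label, contradicting the fracture-graph facts of \cite{extension}. Granting this, the same degree argument makes the split orbit a path with the block of $a$ at an end, and only the location of the second $\ell$-edge remains to be decided.

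The two cases are then governed by the directions in which the distinguished blocks point. Up to duality I fix the block of $b$ to point with label $i-1$, so the $b$-orbit carries the decreasing labels $i-1,i-2,\dots$. If the block of $a$ also points with label $i-1$, the $a$-orbit path carries labels $i-1,i-2,\dots$ as well, the two label sets are overlapping intervals, and the unique repeated label must be shared between the orbits (the crossing case); a single common label forces $\min(m_1,m_2)=2$, and choosing the fracture graph to split the two-block orbit and taking it, up to duality, to be the $a$-orbit yields $m_1=2$ and figure~(1). If instead the block of $a$ points with label $i+1$, the $a$-orbit uses labels $>i$ and the $b$-orbit labels $<i$, so no label is shared and the repeated label is internal to the $a$-orbit; a double transposition inside one orbit needs at least four blocks, whence $m_1\ge4$, and consecutiveness places the two equal edges in the kink pattern $i+m_1-3,\,i+m_1-2,\,i+m_1-3$ at the end away from $a$, giving figure~(2). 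The value $m_1=3$ is impossible, since three blocks support neither an internal double transposition nor a single shared label in the crossing configuration. The main obstacle throughout is the middle step, namely proving that $\mathcal{X}$ has exactly one repeated label and no genuine cycle, and, within the non-crossing case, proving that this repeated label occupies precisely the kink position rather than being strung out along the path.
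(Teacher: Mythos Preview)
Your outline matches the paper's case split---the ``crossing'' case where the repeated block-label is shared between the two orbits (figure~(1)) versus the case where it sits inside one orbit (figure~(2))---and your endgame in each case is correct. But you take a longer route, and the step you flag as ``the main obstacle'' is exactly what the paper's device dissolves.

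Rather than trying to prove that $\mathcal{X}$ itself has a unique repeated label and no cycle, the paper simply picks one edge $e_j\in\mathcal{X}\setminus\bar{\mathcal{X}}$ merging two of the three $\bar{\mathcal{X}}$-components (one may take $j\in M$, since $\langle M\rangle$ already generates the block action) and works throughout with the forest $\bar{\mathcal{X}}\cup e_j$. This spanning forest has two components and a single repeated label $j$ \emph{by construction}, so the consecutive-labels reasoning of Proposition~\ref{M=m-2} applies to it directly. The case split is then: do the two $j$-edges lie in different $\Gamma_i$-orbits (forcing $j=i\pm1$ and hence figure~(1)) or in the same orbit (forcing the path joining them to be a single $(j\pm1)$-edge, hence figure~(2))? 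This is your crossing/non-crossing dichotomy, reached without ever controlling $\mathcal{X}$ globally.

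Your proposed attack on the obstacle does not work as stated. Any cycle of $\mathcal{X}$ through the fracture $k$-edge must contain a second $k$-edge (the fracture edge separates $\Gamma_k$-orbits and the cycle must return), so ``a cycle meeting $\bar{\mathcal{X}}$ in exactly one $k$-edge'' is precisely what always occurs, not a contradiction. Propositions~3.2 and~3.6 of \cite{extension} only tell you the second same-label edge lands in a different $\bar{\mathcal{X}}$-component; with three components available this does not by itself forbid extra edges. Adding $e_j$ first is the device that bypasses all of this.

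One smaller point: in the non-crossing case you assert the kink sits ``at the end away from $a$'' without justification. The paper's argument is that one of the four vertices incident to the two $j$-edges must have degree one in $\bar{\mathcal{X}}\cup e_j$ (else a second label would repeat), and this leaf pins the kink to an endpoint of the path.
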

\begin{proof}
As $\mathcal{\bar{X}}$ has $m_1+m_2-3$ edges, it has 3 connected components.  Since $\mathcal{X}$ has two components, there exists a $j$-edge $e_j$ of $\mathcal{X}$ such that $\mathcal{\bar{X}} \cup e_j$ has two components and no cycles.  Hence, as in Proposition~\ref{M=m-2} incident edges of $\mathcal{\bar{X}} \cup e_j$ must  have consecutive labels.

First suppose that the $j$-edges of $\mathcal{\bar{X}} \cup e_j$ are different $\Gamma_i$-orbits. We claim that $j = i \pm 1$. Assume the contrary.  Then there is a path in the first $\Gamma_i$-orbit connecting the block containing $a$ to the block moved by $\rho_j$; the same happens in the second  $\Gamma_i$-orbit: there is a path connecting the block containing $b$ to the block moved by $\rho_j$. 
$$\xymatrix@-1.3pc{  *+[F]{} \ar@{-}[rr]^j  &&*+[F]{}  \ar@{.}[rr] && *+[F]{} \ar@{-}[rr]^{i-1} &&*+[F]{a}  &&*+[F]{b} \ar@{-}[rr]^{i+1} && *+[F]{ }  \ar@{.}[rr]& &*+[F]{} \ar@{-}[rr]^j  &&*+[F]{}  } $$
Assume that $ j > i +1$. Then both of these paths have to contain the label $i+1$, a contradiction. Thus $j=i\pm 1$. Up to duality we may consider $j=i-1$ corresponding to the first possibility for $\mathcal{X}$.

Now consider the $j$-edges of $\mathcal{\bar{X}} \cup e_j$ in the same $\Gamma_i$-orbit; assume it is the first orbit.  There is a path in $\mathcal{\bar{X}} \cup e_j$ joining the four blocks swapped by $\rho_j$ which has consecutive labels and no repeating labels other than $j$.    Thus this path is a single edge with label $j \pm 1$.  Assume that this edge has label $j+1$. At least one of the four vertices of $\mathcal{\bar{X}} \cup e_j$ that are incident to the $j$-edges has degree one.  Otherwise, there would be another repeated label. This gives the second  possibility for the graph $\mathcal{X}$ where $j=i-m_1-3$ with $m_1\geq 4$.
\end{proof}

\begin{prop}\label{R<=k-3}
If an element of $R$ has a nontrivial action between more than one pair of $\langle M\rangle$-orbits, then $|R|\leq k -3$.
\end{prop}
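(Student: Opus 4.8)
The plan is to sharpen the counting argument that already yields $|R|\le k-2$, by showing that the hypothesis forces the fracture subgraph $\mathcal{\bar Y}$ to have at least three connected components rather than two. First I would record the relevant structure of $\mathcal{Y}$ and $\mathcal{\bar Y}$. Every generator in $R$ lies in $\Gamma_i$ and so preserves the two $\Gamma_i$-orbits; hence each $\langle M\rangle$-orbit lies in a single $\Gamma_i$-orbit, and every edge of $\mathcal{Y}$ (in particular every edge of $\mathcal{\bar Y}$) joins two $\langle M\rangle$-orbits belonging to the \emph{same} $\Gamma_i$-orbit. Writing $p$ and $q$ for the numbers of $\langle M\rangle$-orbits in the first and second $\Gamma_i$-orbit, so that $p+q=k$, it follows that $\mathcal{\bar Y}$ is the disjoint union of a forest on the $p$ first-orbit vertices and a forest on the $q$ second-orbit vertices. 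Since $\mathcal{\bar Y}$ is a forest with $k$ vertices and $|R|$ edges, it has exactly $k-|R|$ components; thus $|R|=k-2$ is attained precisely when each of these two parts is a single tree.

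Next I would bring in the hypothesis. Let $\rho_j\in R$ act between two distinct pairs of $\langle M\rangle$-orbits, let $e$ be the single $j$-edge selected for $\mathcal{\bar Y}$, and let $e'$ be a second $j$-edge of $\mathcal{Y}$. Suppose for contradiction that $|R|=k-2$, so that $\mathcal{\bar Y}$ has exactly two components, namely the first-orbit tree and the second-orbit tree. By the structural observation above, both endpoints of $e'$ lie in one and the same $\Gamma_i$-orbit, hence inside a single one of these two trees, i.e. in the same component of $\mathcal{\bar Y}$. On the other hand, $e$ is an edge of the fracture graph $\mathcal{\bar Y}$ and $e'$ carries the same label $j$; applying Proposition~3.2 of~\cite{extension} to the pair $e,e'$ should prevent the two endpoints of $e'$ from lying together in one component. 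This contradiction would show that $\mathcal{\bar Y}$ has at least three components, whence $|R|\le k-3$, as desired.

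The delicate point, and the step I expect to be the main obstacle, is making the application of the fracture-graph proposition to the second edge $e'$ genuinely force a splitting of components, \emph{irrespective} of how $e'$ sits relative to $e$. The troublesome configuration is a ``redundant'' $e'$ whose two endpoints happen to be joined inside $\mathcal{\bar Y}$ by a path carrying no label $j$ (which would place those endpoints in a common $\Gamma_j$-orbit and in a common component). Here I would lean on the acyclicity of fracture graphs: if such a path $P$ existed, then exchanging $e$ for $e'$ would produce the cycle $P\cup\{e'\}$ in a fracture graph, which is impossible --- provided $e'$ is itself between different $\Gamma_j$-orbits. Thus the real work is a short case split according to whether $e'$ lies in the same $\Gamma_i$-orbit as $e$ or in the opposite one, reducing in each case to the forbidden-cycle argument; once this bookkeeping is carried out, the bound $|R|\le k-3$ follows.
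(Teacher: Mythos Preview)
Your approach matches the paper's one-line proof: show that $\mathcal{\bar Y}$ has at least three components and then use the forest count. Your identification of the two baseline components coming from the $\Gamma_i$-orbit bipartition is exactly right. However, your resolution of the ``delicate point'' has a real gap. When $e$ lies in one $\Gamma_i$-orbit and $e'=\{L_3,L_4\}$ lies in the other, Proposition~3.2 of~\cite{extension} (read as ``one endpoint of $e'$ lies outside the component of $e$'') is vacuous, since both endpoints of $e'$ already lie outside $e$'s component. You propose to exchange $e$ for $e'$ and invoke acyclicity of the resulting fracture graph, but this requires $e'$ itself to join distinct $\Gamma_j$-orbits --- and in precisely this configuration it need not: the path $P$ from $L_3$ to $L_4$ inside $\mathcal{\bar Y}$ avoids the unique $j$-edge $e$ (which is in the other $\Gamma_i$-orbit), so it carries no label $j$, whence $L_3$ and $L_4$ lie in the same $\Gamma_j$-orbit. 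The swap is therefore unavailable exactly when you need it, and the ``bookkeeping'' you defer does not close the case.

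The fix is to run the cycle argument on an edge of $P$ rather than on $e'$. The path $P$ has length at least one; pick any edge of it, say with label $l$ (necessarily $l\ne j$). This $l$-edge lies in $\mathcal{\bar Y}$, so its endpoints are in distinct $\Gamma_l$-orbits. But the remainder of the cycle $P\cup\{e'\}$ uses only $\rho_j$ together with generators whose labels are distinct from $l$ (each label occurs once in $\mathcal{\bar Y}$), all of which lie in $\Gamma_l$; combined with $M\subset\Gamma_l$ this places those two endpoints in the same $\Gamma_l$-orbit, a contradiction. Hence $L_3$ and $L_4$ must lie in different components of $\mathcal{\bar Y}$; since both sit inside one $\Gamma_i$-orbit, that orbit contributes at least two components, and the other orbit at least one more, giving three. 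No swap is needed and no hypothesis on $e'$ separating $\Gamma_j$-orbits is required.
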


\begin{proof}
In this case $\mathcal{\bar{Y}}$ has at least three connected components, and still has no cycles.  Thus $|R|\leq k-3$.
\end{proof}

In what follows let $L_a$ be the $\langle M\rangle$-orbit containing $a$ and $L_b$ the $\langle M\rangle$-orbit containing $b$.

\begin{prop}\label{M=m-3(2)}
If $|M|=m_1+m_2-3$ then $|R|\leq k_1+k_2-3$.
\end{prop}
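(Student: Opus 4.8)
The plan is to start from the inequality $|R|\le k-2$ established just before the statement, where $k$ is the number of $\langle M\rangle$-orbits, and to save one further unit. Write $c_1$ and $c_2$ for the number of $\langle M\rangle$-orbits meeting the first and second block system, so that $k=c_1+c_2$. Since $M$ induces the (transitive) block action on each system, $\langle M\rangle$ is transitive on the blocks of each system, and therefore every $\langle M\rangle$-orbit meets each block of its system in the same number of points. Hence the orbits inside system $s$ all have the same size, $c_s$ divides $k_s$, and in particular $c_s\le k_s$. This gives $|R|\le k-2=c_1+c_2-2\le k_1+k_2-2$, so the whole task is to exclude the extremal case $|R|=k_1+k_2-2$.

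Assume, for contradiction, that $|R|=k_1+k_2-2$. Chaining the inequalities of the previous paragraph forces $c_1=k_1$ and $c_2=k_2$, and also forces $\bar{\mathcal{Y}}$ to be a spanning forest with exactly two components. These two components lie one inside each block system, because every element of $R$ belongs to $\Gamma_i$ and hence preserves the two $\Gamma_i$-orbits. Moreover, by Proposition~\ref{R<=k-3}, no element of $R$ can act nontrivially between more than one pair of $\langle M\rangle$-orbits. The equalities $c_s=k_s$ carry strong structural information: the kernel of the action of $\langle M\rangle$ within the blocks of system $s$ must be trivial, so each $\langle M\rangle$-orbit is a transversal of the blocks, and the only members of $M$ acting nontrivially on the points of system $s$ are exactly those that move its blocks. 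In short, $\langle M\rangle$ restricted to system $s$ is isomorphic to its block action.

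I would then feed this transversal picture into the two explicit shapes of $\mathcal{X}$ furnished by Proposition~\ref{M=m-3(1)}. In both admissible configurations a single generator, namely $\rho_{i-1}$, carries the repeated label of $\mathcal{X}$ and so moves blocks in both systems simultaneously. Tracking the action of $\rho_{i-1}$ through the transversal structure, together with the facts that $\rho_i$ interchanges only the single cross-system pair $\{a,b\}$ and that by hypothesis only $\rho_{i-1}$ and $\rho_{i+1}$ can move $a$ or $b$, I expect to show that some generator of $R$ (most plausibly $\rho_{i+1}$) is forced to connect at least two distinct pairs of $\langle M\rangle$-orbits, contradicting Proposition~\ref{R<=k-3}. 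Where this direct route is blocked, a parity count using $\Gamma\cong A_n$ on the transpositions that $\rho_{i-1}$ and $\rho_{i+1}$ contribute across the two transversal systems should supply the contradiction instead, showing that $c_1=k_1$, $c_2=k_2$ and a two-component $\bar{\mathcal{Y}}$ cannot coexist.

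The main obstacle is exactly this extremal analysis. Once $|R|=k_1+k_2-2$ is assumed, everything becomes maximal and rigid, and the contradiction has to be extracted from the delicate interaction between the generators adjacent to $\rho_i$ and the transversal $\langle M\rangle$-orbit structure imposed by $c_s=k_s$. I expect the two cases $m_1=2$ and $m_1\ge 4$ of Proposition~\ref{M=m-3(1)} to require separate treatment, since the position of the repeated label, and hence the behaviour of $\rho_{i-1}$, differs between them. By contrast, the reduction in the first paragraph and the extraction of the structural consequences of equality in the second are routine and follow the same fracture-graph bookkeeping used for Proposition~\ref{M=m-2}.
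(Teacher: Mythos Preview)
Your reduction to the extremal case $|R|=k_1+k_2-2$ is correct and matches the paper exactly: the divisibility $c_s\mid k_s$ coming from the transitive block action is how the paper's inequality $k\le k_1+k_2$ is justified, and the consequences you draw from equality (each $\langle M\rangle$-orbit is a transversal of the blocks in its system, so $\langle M\rangle$ acts faithfully on blocks) are right and implicitly used.

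Your sketch of the extremal contradiction, however, misidentifies the relevant generators. First, in configuration~(2) of Proposition~\ref{M=m-3(1)} the repeated label of $\mathcal{X}$ is $i+m_1-3$, lying entirely inside the first system; $\rho_{i-1}$ moves blocks only in the second system there, so it does not ``move blocks in both systems simultaneously''. Second, and more importantly, the paper does not extract the contradiction from $\rho_{i+1}$. Instead it looks at $L_b$. In both configurations the labels of $M$ acting on the second block system form a consecutive interval ending at some smallest label, and the only element of $R$ that can connect $L_b$ to another $\langle M\rangle$-orbit is the generator $\rho_l$ whose label sits just past that end. Because $\rho_l$ fails to commute with the adjacent element of $M$, it moves only \emph{one} point of $L_b$ (the one in the terminal block), giving a single transposition between $L_b$ and its neighbour. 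Parity then forces a second transposition of $\rho_l$, necessarily between a different pair of $\langle M\rangle$-orbits (each transversal meets the terminal block in a single point), and Proposition~\ref{R<=k-3} finishes. The case $m_1+m_2=4$ (so $M=\{\rho_{i-1}\}$) is handled separately with the same parity-plus-Proposition~\ref{R<=k-3} mechanism but different candidate labels.

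So your strategy is right; what needs correcting is where you look for the contradiction. The action happens at the far end of the $\mathcal{X}$-chain, not near $\rho_i$.
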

\begin{proof}
As $\mathcal{\bar{Y}}$ is a forest and has at least two components, $|R|\leq k-2$. 
If $k< k_1+k_2$ then $|R|\leq k_1+k_2-3$. Assume that $k=k_1+k_2$ and that we have 
the equality $|R|= k_1+k_2-2$. Then $\mathcal{\bar{Y}}$ has exactly two components and there are at least two $\langle M\rangle$-orbits in each $\Gamma_i$-orbit.
As $|M|=m_1+m_2-3$ we have, up to duality, one of the two possibilities for $\mathcal{X}$ given in Proposition~\ref{M=m-3(1)}.

First suppose that $m_1+m_2> 4$.  There are, up to duality, the two possibilities for $\mathcal{X}$ given in  Proposition~\ref{M=m-3(1)}. In graph (1) there is only one possibility to connect $L_a$ to another $\langle M\rangle$-orbit, that is using a pair of $(i+1)$-edges. Then in both cases,  (1) and (2), there is only one possibility to connect  $L_b$ to another $\langle M\rangle$-orbit, that is,  with a single edge with label $l=i-m_2-2$ (between vertices of the last block of the second $\Gamma_i$-orbit).
Furthermore, $\rho_l$ swaps exactly one pair of vertices of these $\langle M\rangle$-orbits. Then as $\Gamma$ is even $\rho_l$ must swap another pair of $\langle M\rangle$-orbits, hence  we have a contradiction with Proposition~\ref{R<=k-3}. 

Now let $m_1+m_2=4$. In this case $\mathcal{X}$ is as in figure (1) of Proposition~\ref{M=m-3(1)} and $M=\{\rho_{i-1}\}$. To connect $L_a$ to another $\langle M\rangle$-orbit, or $L_b$ to another $\langle M\rangle$-orbit, there are only two possibilities for the labels, either $l=i-2$  or $l=i+1$. By Proposition~\ref{R<=k-3} we may assume that either $L_a$ or $L_b$ is $(i-2)$-adjacent to another $\langle M\rangle$-orbit. Then we use the fact that $\Gamma$ is even and  Proposition~\ref{R<=k-3} to get a contradiction.
\end{proof}

\begin{prop}\label{MM=m-2}
If $|M| = m_1+m_2-2$ then $|R| \leq k_1+k_2-4$.
\end{prop}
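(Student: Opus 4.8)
The plan is to combine the rigid structure of $\mathcal{X}$ given by Proposition~\ref{M=m-2} with a parity argument, treating the two $\Gamma_i$-orbits separately and showing that each one forces the loss of one unit from the general bound $|R|\le k-2$. Write $t_1$ and $t_2$ for the numbers of $\langle M\rangle$-orbits contained in the first and second $\Gamma_i$-orbit, so that $k=t_1+t_2$ with $t_1\le k_1$ and $t_2\le k_2$. Since every element of $R$ lies in $\Gamma_i$ it preserves each $\Gamma_i$-orbit; hence every edge of $\mathcal{Y}$ (and of $\mathcal{\bar{Y}}$) joins two $\langle M\rangle$-orbits inside the same $\Gamma_i$-orbit, and $\mathcal{\bar{Y}}$ splits as the disjoint union of its restrictions to the two orbits. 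If $k\le k_1+k_2-2$ there is nothing to prove, since then $|R|\le k-2\le k_1+k_2-4$; so I may assume $k\ge k_1+k_2-1$, which forces $(t_1,t_2)$ to be one of $(k_1,k_2)$, $(k_1,k_2-1)$ or $(k_1-1,k_2)$. In particular at least one orbit, say the first, has $t_1=k_1$, meaning that each $\langle M\rangle$-orbit inside it meets every block in exactly one point; consequently $L_a\cap B_1=\{a\}$, where $B_1$ is the block containing $a$.

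First I would reduce, using Proposition~\ref{R<=k-3}, to the situation in which every element of $R$ connects exactly one pair of $\langle M\rangle$-orbits: as soon as one element of $R$ connects more than one pair, $\mathcal{\bar{Y}}$ acquires an extra component and $|R|\le k-3$. The heart of the matter is then the analysis of the orbit $L_a$ (and symmetrically $L_b$). Because $\Gamma_i$ is transitive on the first $\Gamma_i$-orbit, its $t_1$ $\langle M\rangle$-orbits must be fused by the elements of $R$ acting there, so the restriction of $\mathcal{Y}$ to these orbits is connected; were the corresponding restriction of $\mathcal{\bar{Y}}$ a spanning tree it would use exactly $t_1-1$ edges and contribute no extra component. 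I would therefore look at the element $\rho_j\in R$ whose $\mathcal{\bar{Y}}$-edge is incident to $L_a$. Since the only generators moving $a$ are $\rho_{i-1}$, $\rho_{i+1}$ and $\rho_i$, none of which lies in $R$, the permutation $\rho_j$ fixes $a$; as $L_a$ meets $B_1$ only in $a$, the orbit $L_a$ can be joined to another $\langle M\rangle$-orbit $L'$ only through points in blocks other than $B_1$, and $\rho_j$ acts on $L_a\cup L'$ as a product of transpositions, one for each block in which it interchanges the two orbits.

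The key step is the resulting parity computation. Using the string property, an element of $R$ of low index commutes with $\rho_{i+1},\dots,\rho_{i+m_2-1}$, which generate the transitive block action of the second orbit, so a centraliser argument of the type used repeatedly in the paper pins down the action of $\rho_j$ on the second $\Gamma_i$-orbit; combined with the requirement that $\Gamma\cong A_n$ consist of even permutations, this constrains the number of blocks in which $\rho_j$ swaps $L_a$ with $L'$. I expect to conclude that $L_a$ cannot be attached to the rest of its orbit by a single-pair element of $R$ without producing an odd permutation, so that the restriction of $\mathcal{\bar{Y}}$ to the first orbit cannot be a spanning tree and must have at least two components; the same argument applied to $L_b$ supplies a second extra component in the case $(t_1,t_2)=(k_1,k_2)$. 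Together these yield at least four components of $\mathcal{\bar{Y}}$, whence $|R|\le k-4\le k_1+k_2-4$, while in the intermediate case $k=k_1+k_2-1$ the single extra component obtained from the orbit with $t=k$ already gives $|R|\le k-3=k_1+k_2-4$. The main obstacle is precisely this parity/centraliser bookkeeping: one must track the action of each relevant $\rho_j$ on both orbits simultaneously and rule out every way of fusing $L_a$ (respectively $L_b$) by an even permutation, while separately dispatching the degenerate cases $m_1=2$ or small $k_1,k_2$ (where the point $a$ may already force $L_a$ into its own component) using the same evenness considerations together with Proposition~\ref{R<=k-3}.
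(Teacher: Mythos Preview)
Your overall strategy---force extra components of $\mathcal{\bar Y}$ near $L_a$ and $L_b$ using evenness---is the same as the paper's, but you are missing the structural observation that makes it go through, and the parity argument as you have sketched it would not close.

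The paper first notes that every element of $R$ fixes each block (otherwise some $\Gamma_j$ is transitive), so with $C\subseteq R$ the elements commuting with all of $M$ one has $R\setminus C\subseteq\{\rho_f,\rho_l\}$ where $\rho_f:=\rho_{i-m_1}$ and $\rho_l:=\rho_{i+m_2}$. The key point you do not isolate is this: any $\rho_j\in C$ commutes with the transitive block action on the first $\Gamma_i$-orbit and hence acts identically in every block there; since it fixes $a$, it fixes the corresponding point in each block, i.e.\ it fixes $L_a$ pointwise. Thus the \emph{only} element of $R$ that can attach $L_a$ to another $\langle M\rangle$-orbit is $\rho_f$. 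Moreover $\rho_f$ commutes with $\rho_{i-m_1+2},\dots,\rho_{i-1}$ and fixes $a$, so it fixes every point of $L_a$ except possibly the one in the extreme block: the connection between $L_a$ and $L_a'$ is a \emph{single} transposition. Evenness of $\rho_f$ then forces a second transposition elsewhere, which (via Proposition~\ref{R<=k-3}) supplies the extra component. The proof finishes with a short case split on $|R\setminus C|\in\{0,1,2\}$, the last case requiring a further argument to locate the ``elsewhere'' action of $\rho_f$ and $\rho_l$ and rule out coincidences.

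Your sketch allows an arbitrary $\rho_j\in R$ to connect $L_a$ and supposes it may swap $L_a$ with $L'$ in several blocks; you then hope to control the total sign by computing the action on the second $\Gamma_i$-orbit. But commuting with the block action on the second orbit only tells you that the sign there is $\epsilon^{m_2}$ for some $\epsilon$, which gives no constraint when $m_2$ is even; and without the single-transposition fact on the first orbit you cannot conclude that $\rho_j$ must act between a further pair of $\langle M\rangle$-orbits. The missing idea is exactly the identification of $\rho_f$ and $\rho_l$ as the only connectors of $L_a$ and $L_b$, each by a single pair of points; once you have that, the parity step is immediate and the rest is the paper's case analysis.
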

\begin{proof}
Up to duality we may consider $\mathcal{X}$ as in Proposition~\ref{M=m-2}. First the  elements of $R$ must fix all the blocks.
Otherwise there is $\rho_j\in M$ such that $\Gamma_j$ is transitive, a contradiction.  Let $C$ be the set of generators in $R$ that commute with all elements of $M$. We have that $|R \setminus C| \leq 2$. 

There is at most one $\langle M\rangle$-orbit adjacent to $L_a$ in $\mathcal{\bar{Y}}$, and the label of the edge which might connect them is $\rho_f := \rho_{i - m_1}$.  Similarly, there is at most one $\langle M\rangle$-orbit adjacent to $L_b$ in $\mathcal{\bar{Y}}$, and the label of the edge which might connect them is $\rho_l := \rho_{i + m_2}$.  We denote the $\langle M\rangle$-orbits adjacent to $L_a$ and $L_b$, $L'_a$ and $L'_b$ resp. if they exist.
Furthermore, both $\rho_f$ and $\rho_l$, if they exist, swap a single pair of points in these $\langle M\rangle$-orbits.  
$$ \xymatrix@-1.3pc{*+[o][F]{} \ar@{-}[rr]^{f-1}\ar@{-}[dd]^{f}  &&*+[o][F]{}  \ar@{.}[rr]&&*+[o][F]{}\ar@{-}[rr] ^{i-1}&& *+[o][F]{a}\ar@{-}[rr] ^i  &&*+[o][F]{b} \ar@{-}[rr]^{i+1} && *+[o][F]{ }  \ar@{.}[rr]&&*+[o][F]{} \ar@{-}[rr]^{l-1}  &&*+[o][F]{} \ar@{-}[dd]^l\\
&&&&&&&&&&&&&&\\
 *+[o][F]{} \ar@{-}[rr]^{f-1} &&*+[o][F]{}  \ar@{.}[rr]&& *+[o][F]{}\ar@{-}[rr]^{i-1}&& *+[o][F]{} &&*+[o][F]{} \ar@{-}[rr]^{i+1} && *+[o][F]{ }  \ar@{.}[rr]&&*+[o][F]{} \ar@{-}[rr]^{l-1}  &&*+[o][F]{}
   } $$
Since $\Gamma$ is even, $\rho_f$ and $\rho_l$ must both have nontrivial action on a point in another $\langle M\rangle$-orbit. We now consider separately the cases: $|R \setminus C| = 0$, $|R \setminus C| = 1$ and $|R \setminus C| = 2$.

If $R \setminus C = \emptyset$, then $L_a$ and $L_b$ are the unique $\langle M\rangle$-orbits, $R = \emptyset$ and $k=2$.  Thus $|R|=0\leq k_1+k_2-4$.

Suppose that  $R \setminus C=\{\rho_f\}$. In this case $L_b$ coincide with the second $\Gamma_i$-orbit. As $\mathcal{Y}$ has at least two $f$-edges, by Proposition~\ref{R<=k-3}, $|R| \leq k-3$. In addition, $k\leq k_1+k_2-1$, thus  $|R| \leq k-3\leq k_1+k_2-4$.

Now let $ R \setminus C=\{ \rho_f, \rho_l \}$.  We may assume $\mathcal{\bar{Y}}$ with an $f$-edge between $L_a$ and $L'_a$, and with an $l$-edge between $L_b$ and $L'_b$.  Since $\Gamma$ is even, both $\rho_f$ and $ \rho_l$ act non-trivially on a point in a $\langle M\rangle$-orbit other than $L_a, L_b, L'_a$ and $L'_b$. By Proposition~\ref{R<=k-3},
 $\mathcal{\bar{Y}}$  has at least three components. Furthermore, to have three components there must exist a pair of $\langle M\rangle$-orbits $\{L,L'\}$ such that both  $ \rho_l$ and $\rho_f$  swap a point in $L$ with  a point in $L'$. Indeed,  $\rho_l$ acts trivially on the points not in $L, L', L_b$ or $L_b'$ and  $\rho_f$ acts trivially on the points not in $L, L', L_a$ or $L_a'$.  Let us assume that $L$ and $L'$ are both in the first $\Gamma_i$-orbit. Then $\rho_l$ swaps $L$ and $L'$ entirely. 
 
First suppose $L = L'_a$.  Then, both $\rho_f$ and $\rho_l$, act nontrivially on a point in $L'_a$.  Thus there is an alternating square, with labels $f$ and $l$, containing this point.  Hence $\rho_l$ acts nontrivially on a point in $L_a$, which we have shown is impossible. Consequently  $L \neq  L'_a$, as well as $L'\neq L'_a$. 

Since $\Gamma_i$ has only two orbits, there is a generator $\rho_j\in R$  that sends a point in either $L$ or $L'$ to a point not in these two $\langle M\rangle$-orbits; without loss of generality we may assume that $\rho_j$ sends a point in $L'$ to a point in a $\langle M\rangle$-orbit which we denote $L''$. Then the unique possibility is $j = l + 1$. However, then $\rho_j$ commutes with all the elements of $M$ which act between blocks in the first $\Gamma_i$-orbit.  Thus $\rho_j$ swaps $L'$ and $L''$.  This guarantees an  alternating  square with labels $j$ and $f$, with $\rho_f$ acting nontrivially on $L''$. This implies that $L''=L'_a$. Moreover this forces $\rho_j$ to have a nontrivial action on $L_a$, a contradiction.
\end{proof}

\begin{prop}\label{bothimp}
If $A$ and $B$ are both imprimitive then  $r \leq  \frac{n -1}{2}$.
\end{prop}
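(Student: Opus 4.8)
The plan is to assemble the bounds on $|M|$ and $|R|$ proved above into a single estimate on $r$, and then to dispatch it by an elementary inequality. Since $\{\rho_j : j\neq i\}$ is the disjoint union of $M$ and $R$, we have $r=|M|+|R|+1$. The forest property of $\overline{\mathcal{X}}$ and $\overline{\mathcal{Y}}$ already gives the generic bounds $|M|\le m_1+m_2-2$ and $|R|\le k_1+k_2-2$. My goal is to upgrade these, case by case, to the uniform estimate $r\le m_1+m_2+k_1+k_2-5$, and then to check that the right-hand side never exceeds $(n-1)/2=(m_1k_1+m_2k_2-1)/2$.

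First I would split according to the value of $|M|$, which can only be $m_1+m_2-2$, $m_1+m_2-3$, or at most $m_1+m_2-4$. When $|M|=m_1+m_2-2$, Proposition~\ref{MM=m-2} gives $|R|\le k_1+k_2-4$; when $|M|=m_1+m_2-3$, Proposition~\ref{M=m-3(2)} gives $|R|\le k_1+k_2-3$; and when $|M|\le m_1+m_2-4$ the generic bound $|R|\le k_1+k_2-2$ suffices. The key observation is that in each of the three cases the loss in $|M|$ is exactly compensated by the strengthening of the bound on $|R|$, so that $|M|+|R|\le m_1+m_2+k_1+k_2-6$ uniformly, and therefore $r\le m_1+m_2+k_1+k_2-5$.

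It then remains to verify $m_1+m_2+k_1+k_2-5\le(m_1k_1+m_2k_2-1)/2$. Clearing denominators, this is equivalent to
\[ (m_1k_1-2m_1-2k_1)+(m_2k_2-2m_2-2k_2)\ge -9, \]
that is, to $(m_1-2)(k_1-2)+(m_2-2)(k_2-2)\ge -1$. Since $A$ and $B$ are transitive and imprimitive, each admits a nontrivial block system, so $k_1,k_2,m_1,m_2\ge 2$ and both products on the left are nonnegative; the inequality holds comfortably, which finishes the proof.

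I do not expect a serious obstacle in this final step: all the genuine content has been placed in the earlier propositions, in particular the structural analysis of $\mathcal{X}$ for large $|M|$ and the use of the evenness of $\Gamma$ (through Proposition~\ref{R<=k-3}) to force the extra decrement in $|R|$. The only point requiring care is the bookkeeping of the three cases, namely checking that the $-4$ and $-3$ appearing in Propositions~\ref{MM=m-2} and \ref{M=m-3(2)} are precisely what is needed to hold the total at $m_1+m_2+k_1+k_2-5$; once this uniform bound is in hand, the nonnegativity of $(m_i-2)(k_i-2)$ makes the comparison with $(n-1)/2$ automatic.
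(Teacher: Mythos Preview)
Your proposal is correct and follows essentially the same approach as the paper: combine the bounds from Propositions~\ref{MM=m-2} and~\ref{M=m-3(2)} (together with the generic forest bound for $|M|\le m_1+m_2-4$) to obtain $r\le m_1+m_2+k_1+k_2-5$, then verify the elementary inequality against $(n-1)/2$. Your write-up is in fact more complete than the paper's: you make the third case $|M|\le m_1+m_2-4$ explicit and you spell out the reduction to $(m_1-2)(k_1-2)+(m_2-2)(k_2-2)\ge -1$, both of which the paper leaves to the reader.
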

\begin{proof}
By Propositions~\ref{M=m-3(2)} and~\ref{M=m-2}, we have $r = |R| + | M |+1\leq k_1+k_2+ m_1+m_2 - 5$. As $k_1+ m_1 + k_2 + m_2 -5 \leq \frac{k_1m_1 +k_2m_2 - 1}{2}$, we conclude that $r \leq  \frac{n -1}{2}$.
\end{proof}

\subsection{Case 2: $J_A$ and $J_B$ are intervals and $i\notin\{ 0,r-1\}$} 
We first recall two propositions on sggi that can be found in \cite{flm2} that we use to deal with this case.
\begin{prop}\cite[Proposition 3.3]{flm2}\label{genAn}
Let $\Phi = \langle \alpha_0,\,\ldots,\,\alpha_{d-1} \rangle$ be a transitive permutation group acting on the set of points $\{ 1,\ldots, n \}$ with $n \geq 5$, and let $\Phi^* = \langle \alpha_0,\,\ldots,\,\alpha_{d-1},\,\alpha_d,\, \alpha_{d+1} \rangle$, where
\[\begin{array}{rl}
\alpha_r&=(i,n+1)(n+2,n+3) \mbox{ for some } i \in \{1,\ldots,n\}\\
\alpha_{r+1}&=(n+1,n+2)(n+3,n+4).
\end{array}\]
Then $\Phi^*$ is isomorphic to $S_{n+4}$ if it contains an odd permutation, and to $A_{n+4}$ otherwise .
\end{prop}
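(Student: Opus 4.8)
The plan is to prove the stronger statement that $\Phi^*$ contains the alternating group $A_{n+4}$ on $\{1,\dots,n+4\}$; the dichotomy then follows at once, since any subgroup of $S_{n+4}$ containing $A_{n+4}$ equals $A_{n+4}$ when it has no odd permutation and equals $S_{n+4}$ otherwise.

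The whole argument is driven by one computation: the product of the two new generators is a $5$-cycle. Multiplying $(i,n+1)(n+2,n+3)$ by $(n+1,n+2)(n+3,n+4)$ gives $\sigma:=\alpha_d\alpha_{d+1}=(i,n+2,n+4,n+3,n+1)$, supported on $i$ together with the four new points. First I would use the transitivity of $\Phi$. Every element of $\Phi$ fixes $n+1,n+2,n+3,n+4$ and moves $i$ to an arbitrary point of $\{1,\dots,n\}$, so conjugating $\sigma$ by $\Phi$ produces the $5$-cycle $\sigma_j:=(j,n+2,n+4,n+3,n+1)$ for every $j\in\{1,\dots,n\}$. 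A direct check then shows that for distinct $j,j'$ the four shared new points cancel in the product, leaving $\sigma_j\sigma_{j'}^{-1}=(j,j',n+1)$. Thus $\Phi^*$ contains every $3$-cycle of $\{1,\dots,n+1\}$ through the point $n+1$, and since the $3$-cycles through a common point generate the whole alternating group, $\Phi^*\supseteq A_{n+1}$ (acting on $\{1,\dots,n+1\}$ and fixing $n+2,n+3,n+4$).

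The remaining points are absorbed one at a time by conjugating a $3$-cycle by $\sigma$. The key lemma is elementary: if a group $G$ contains the alternating group $A_M$ on a set $M$ with $|M|\ge4$ together with a $3$-cycle having exactly one point $c$ outside $M$, then, because $A_M$ is $2$-transitive, conjugating that $3$-cycle by $A_M$ yields every $3$-cycle $(a,b,c)$ with $a,b\in M$, and these together with $A_M$ generate $A_{M\cup\{c\}}$. Starting from $(i,j',n+1)$ and conjugating repeatedly by $\sigma$ gives the chain of $3$-cycles $(i,j',n+1)$, $(n+2,j',i)$, $(n+4,j',n+2)$, $(n+3,j',n+4)$, each obtained from the previous one by conjugation by $\sigma$ and each introducing exactly one of the new points $n+2$, then $n+4$, then $n+3$ relative to the point set reached so far. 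Applying the lemma after each step enlarges the alternating group from $A_{n+1}$ to $A_{n+2}$, then to the alternating group on $\{1,\dots,n+2,n+4\}$, and finally to $A_{n+4}$, as required.

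All of these are routine cycle computations, so the only real content---and the step I would flag as the crux---is the initial observation that $\alpha_d\alpha_{d+1}$ is a $5$-cycle and that products of its $\Phi$-conjugates collapse to the $3$-cycles $(j,j',n+1)$; once $A_{n+1}$ is in hand the absorption is standard. The hypothesis $n\ge5$ enters only mildly, ensuring there are distinct points $j,j'$ with which to form the $3$-cycles and that every set $M$ arising in the absorption has at least four points, so that the $2$-transitivity of $A_M$ can be invoked.
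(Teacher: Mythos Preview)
The paper does not supply a proof of this proposition; it is simply quoted from \cite[Proposition~3.3]{flm2}. Your argument is correct and complete: the key computation $\alpha_d\alpha_{d+1}=(i,n+2,n+4,n+3,n+1)$ checks out, as does the collapse $\sigma_j\sigma_{j'}^{-1}=(j,j',n+1)$, and the absorption of the remaining three points via successive conjugation by $\sigma$ is sound. This is the natural direct argument for the statement and would serve perfectly well as an in-line proof.
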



The term sesqui-extension was first introduced in \cite{flm}. Let us recall its meaning.
Let $\Phi = \langle \alpha_0,\ldots,\alpha_{d-1} \rangle$ be a sggi, and let $\tau$ be an involution in a supergroup of $\Phi$ such that $\tau \not \in \Phi$ and $\tau$ commutes with all of $\Phi $.  For fixed $k$, we define the group $\Phi^*= \langle \alpha_i \tau^{\eta_i}\,|\, i\in \{0,\,\ldots,\,d-1\} \rangle$ where $\eta_i = 1$ if $i=k$ and 0 otherwise, the {\it sesqui-extension} of $\Phi $ with respect to $\alpha_k$ and $\tau$.

\begin{prop} \label{sesqui}\cite[Proposition 5.4]{flm2}
If $\Phi =\left<\alpha_i\,|\, i=0,\,\ldots,\,d-1\right>$ and $\Phi^*=\langle \alpha_i \tau^{\eta_i}\,|\, i\in \{0,\,\ldots,\,d-1\} \rangle$ is a sesqui-extension of $\Phi$ with respect to $\alpha_k$, then:
\begin{enumerate}
\item $ \Phi^* \cong \Phi$ or $\Phi^* \cong \Phi\times \langle \tau\rangle\cong\Phi\times 2$.
\item whenever $\tau \notin \Phi^*$, $\Phi$ is a string C-group if and only if $\Phi^*$ is a string C-group.
\end{enumerate}
\end{prop}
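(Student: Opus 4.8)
The plan is to realise $\Phi^*$ inside the direct product $\Phi\times\langle\tau\rangle$ and to analyse it through the projection onto $\Phi$. Since $\tau$ is an involution centralising $\Phi$ with $\tau\notin\Phi$, we have $\langle\tau\rangle\cap\Phi=1$ with $\tau$ central, so $\langle\Phi,\tau\rangle=\Phi\times\langle\tau\rangle$; and each generator $\gamma_i:=\alpha_i\tau^{\eta_i}$ lies in this product, whence $\Phi^*\le\Phi\times\langle\tau\rangle$. Let $\pi\colon\Phi\times\langle\tau\rangle\to\Phi$ be the projection killing $\tau$. Then $\pi(\gamma_i)=\alpha_i$ for every $i$, so $\pi(\Phi^*)=\Phi$ and $\pi$ restricts to a surjection $\Phi^*\to\Phi$ whose kernel is $\Phi^*\cap\langle\tau\rangle$, a group of order $1$ or $2$.

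For part (1) I would split according to this kernel. If $\tau\in\Phi^*$, then from $\gamma_k=\alpha_k\tau\in\Phi^*$ we obtain $\alpha_k=\gamma_k\tau\in\Phi^*$, while $\alpha_i=\gamma_i\in\Phi^*$ for $i\neq k$; thus $\Phi\le\Phi^*$, and together with $\tau\in\Phi^*$ this forces $\Phi^*=\Phi\times\langle\tau\rangle\cong\Phi\times 2$. If instead $\tau\notin\Phi^*$, the kernel $\Phi^*\cap\langle\tau\rangle$ is trivial, so $\pi$ restricts to an isomorphism $\Phi^*\xrightarrow{\ \sim\ }\Phi$, giving $\Phi^*\cong\Phi$. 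This disposes of part (1).

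For part (2) I would work in the remaining case $\tau\notin\Phi^*$, where $\pi$ is an isomorphism with $\pi(\gamma_i)=\alpha_i$. First I would check that $\Phi^*$ is an sggi: each $\gamma_i$ is an involution (for $i=k$ one has $(\alpha_k\tau)^2=\alpha_k^2\tau^2=1$, and $\gamma_k\neq 1$ since $\alpha_k\in\Phi$ while $\tau\notin\Phi$), and the string property is inherited because $\tau$ is central, so for $|i-j|>1$ we get $\gamma_i\gamma_j=\alpha_i\alpha_j\tau^{\eta_i+\eta_j}=\alpha_j\alpha_i\tau^{\eta_i+\eta_j}=\gamma_j\gamma_i$. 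Writing $\Phi_I=\langle\alpha_i:i\in I\rangle$ and $\Phi^*_I=\langle\gamma_i:i\in I\rangle$, the isomorphism $\pi$ carries $\Phi^*_I$ onto $\Phi_I$ for every $I$. Since $\pi$ is injective it preserves intersections, so $\pi(\Phi^*_I\cap\Phi^*_J)=\Phi_I\cap\Phi_J$ and $\pi(\Phi^*_{I\cap J})=\Phi_{I\cap J}$; hence $\Phi^*_I\cap\Phi^*_J=\Phi^*_{I\cap J}$ holds if and only if $\Phi_I\cap\Phi_J=\Phi_{I\cap J}$. As the intersection property is exactly this family of conditions, $\Phi^*$ satisfies it precisely when $\Phi$ does, which yields the claimed equivalence of string C-group status.

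The argument is essentially structural, so I expect no computational obstacle; the point requiring care is the dichotomy on whether $\tau\in\Phi^*$, since everything in part (2) relies on the kernel of $\pi|_{\Phi^*}$ being trivial (equivalently $\Phi^*\cong\Phi$ rather than $\Phi\times 2$). The only other subtlety is confirming that $\Phi^*$ is genuinely an sggi of the same rank, i.e.\ that the $\gamma_i$ are distinct involutions satisfying the string relations; this follows automatically once $\pi$ is known to be an injective homomorphism mapping $\gamma_i\mapsto\alpha_i$.
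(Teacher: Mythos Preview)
Your proof is correct. Note that the paper does not actually prove this proposition: it is quoted verbatim from \cite[Proposition~5.4]{flm2} and used as a black box, so there is no ``paper's own proof'' to compare against. Your argument via the projection $\pi\colon\Phi\times\langle\tau\rangle\to\Phi$ is the natural one and handles both parts cleanly; the only point worth remarking is that the equivalence in part~(b) really does use $\tau\notin\Phi^*$ (so that $\pi|_{\Phi^*}$ is injective and hence preserves intersections of subgroups), exactly as you emphasise.
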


In this case we may assume that $A=\Gamma_{<i}$ and $B=\Gamma_{>i}$. 
As $A$ or $B$ can have small degree $\leq 11$, in what follows we list all primitive even  string C-groups of small degree, with intransitive maximal parabolic subgroups, having rank $r\geq\frac{n-1}{2}$.

\begin{prop}\label{smalln}
Let  $\Phi$ be string C-group with a connected Coxeter diagram isomorphic to an even primitive group of degree $n\leq 11$. If $\Phi_j$ is intransitive for every $j\in\{0,\ldots,r-1\}$ then either $r \leq \frac{n-2}{2}$ or $\Phi$ has one of the permutation representation graphs given in Table~\ref{small}.
\begin{small}
\begin{table}
\begin{tabular}{|ccc|}
\hline
(1)&$D_{10}$& $ \xymatrix@-1pc{
     *+[o][F]{}   \ar@{-}[r]^0      & *+[o][F]{}  \ar@{-}[r]^1   & *+[o][F]{}  \ar@{-}[r]^0 & *+[o][F]{}  \ar@{-}[r]^1  & *+[o][F]{}  }$\\
(2)&$L_2(5)$ & $ \xymatrix@-1pc{
     *+[o][F]{}   \ar@{=}[r]^0_2      & *+[o][F]{}  \ar@{-}[r]^1   & *+[o][F]{}  \ar@{-}[r]^0 & *+[o][F]{}  \ar@{-}[r]^1 & *+[o][F]{}  \ar@{-}[r]^2 & *+[o][F]{}  }$\\
(3)&$L_2(5)$& $ \xymatrix@-1pc{
     *+[o][F]{}   \ar@{-}[r]^0      & *+[o][F]{}  \ar@{-}[r]^1   & *+[o][F]{}  \ar@{=}[r]^2_0 & *+[o][F]{}  \ar@{-}[r]^1 & *+[o][F]{}  \ar@{-}[r]^2 & *+[o][F]{}  }$\\
(4)&$A_9$ &$ \xymatrix@-1pc{ *+[o][F]{}   \ar@{-}[r]^0      & *+[o][F]{}  \ar@{-}[r]^1   & *+[o][F]{}  \ar@{-}[r]^0 & *+[o][F]{}  \ar@{-}[r]^1 & *+[o][F]{}  \ar@{-}[r]^2& *+[o][F]{} \ar@{-}[r]^3  & *+[o][F]{} \ar@{-}[r]^2& *+[o][F]{} \ar@{-}[r]^3  & *+[o][F]{} } $ \\
  (5)&$A_9$ &$ \xymatrix@-1pc{ *+[o][F]{}   \ar@{-}[r]^3      & *+[o][F]{}  \ar@{-}[r]^2   & *+[o][F]{}  \ar@{-}[r]^3 & *+[o][F]{}  \ar@{-}[r]^2 & *+[o][F]{}  \ar@{-}[r]^1& *+[o][F]{} \ar@{-}[r]^0 \ar@{-}[d]_2 & *+[o][F]{} \ar@{=}[d]^1_2\\
&&&&& *+[o][F]{}   \ar@{-}[r]_0      & *+[o][F]{} } $ \\
(6)&$A_9$ &$ \xymatrix@-1pc{ *+[o][F]{}   \ar@{-}[r]^3      & *+[o][F]{}  \ar@{-}[r]^2   & *+[o][F]{}  \ar@{-}[r]^1 \ar@{-}[d]_3& *+[o][F]{}  \ar@{-}[r]^0 \ar@{-}[d]_3& *+[o][F]{}\ar@{-}[d]_3\\
&*+[o][F]{}   \ar@{-}[r]_2  & *+[o][F]{}   \ar@{-}[r]_1      & *+[o][F]{}  \ar@{-}[r]_0 & *+[o][F]{}  } $ \\
(7)&$A_9$& $ \xymatrix@-1pc{ *+[o][F]{}   \ar@{-}[r]^3      & *+[o][F]{}  \ar@{-}[r]^2   & *+[o][F]{}  \ar@{-}[r]^1 \ar@{-}[d]_3& *+[o][F]{}  \ar@{-}[r]^0 \ar@{=}[d]_3^2& *+[o][F]{}\ar@{=}[d]_3^2\\
&*+[o][F]{}   \ar@{-}[r]_2  & *+[o][F]{}   \ar@{-}[r]_1      & *+[o][F]{}  \ar@{-}[r]_0 & *+[o][F]{}  } $ \\
(8)&$A_9$& $ \xymatrix@-1pc{ *+[o][F]{}   \ar@{-}[r]^1      &*+[o][F]{}   \ar@{-}[r]^0      & *+[o][F]{}  \ar@{-}[r]^1   & *+[o][F]{}  \ar@{-}[r]^2 \ar@{-}[d]_0& *+[o][F]{}  \ar@{-}[r]^3 \ar@{-}[d]_0& *+[o][F]{}\ar@{-}[d]_0\\
&&& *+[o][F]{}   \ar@{-}[r]_2      & *+[o][F]{}  \ar@{-}[r]_3 & *+[o][F]{}  } $\\
(9)&$A_9$ & $ \xymatrix@-1pc{  *+[o][F]{}   \ar@{=}[r]^0_2&*+[o][F]{}   \ar@{-}[r]^1      &*+[o][F]{}   \ar@{-}[r]^0      & *+[o][F]{}  \ar@{-}[r]^1   & *+[o][F]{}  \ar@{-}[r]^2 & *+[o][F]{}  \ar@{-}[r]^3 \ar@{-}[d]_1& *+[o][F]{}\ar@{-}[d]^1\\
&&&&  & *+[o][F]{}  \ar@{-}[r]_3 & *+[o][F]{}  } $\\
(10)&$A_{10}$ & $ \xymatrix@-1pc{
    *+[o][F]{}   \ar@{-}[r]^4      & *+[o][F]{}  \ar@{-}[r]^3   & *+[o][F]{}   \ar@{-}[r]^4      & *+[o][F]{}  \ar@{-}[r]^3   & *+[o][F]{}  \ar@{-}[r]^2 & *+[o][F]{}  \ar@{-}[r]^1 & *+[o][F]{}  \ar@{-}[r]^0& *+[o][F]{}  \ar@{-}[r]^1& *+[o][F]{}  \ar@{=}[r]^2_0 & *+[o][F]{}  }$\\
  (11)&$A_{10}$& $ \xymatrix@-1pc{
    *+[o][F]{}   \ar@{-}[r]^0      & *+[o][F]{}  \ar@{-}[r]^1   & *+[o][F]{}   \ar@{=}[r]^0_2      & *+[o][F]{}  \ar@{-}[r]^1   & *+[o][F]{}  \ar@{-}[r]^2 & *+[o][F]{}  \ar@{-}[r]^3 & *+[o][F]{}  \ar@{-}[r]^4& *+[o][F]{}  \ar@{-}[r]^3 &*+[o][F]{}  \ar@{-}[r]^4 & *+[o][F]{}  }$\\
(12)&$A_{10}$& $ \xymatrix@-1pc{  *+[o][F]{}   \ar@{-}[r]^4&*+[o][F]{}   \ar@{-}[r]^3      &*+[o][F]{}   \ar@{=}[r]^4_2      & *+[o][F]{}  \ar@{-}[r]^3   & *+[o][F]{}  \ar@{-}[r]^2 & *+[o][F]{}  \ar@{-}[r]^1& *+[o][F]{}  \ar@{-}[r]^0 \ar@{-}[d]_2& *+[o][F]{}\ar@{=}[d]^1_2\\
&&&&&& *+[o][F]{}  \ar@{-}[r]_0 & *+[o][F]{}  } $\\
 (13)&$A_{10}$&$ \xymatrix@-1pc{  *+[o][F]{}   \ar@{=}[r]^0_2&*+[o][F]{}   \ar@{-}[r]^1      &*+[o][F]{}   \ar@{-}[r]^0      & *+[o][F]{}  \ar@{-}[r]^1   & *+[o][F]{}  \ar@{-}[r]^2 & *+[o][F]{}  \ar@{-}[r]^3& *+[o][F]{}  \ar@{-}[r]^4 \ar@{-}[d]_2& *+[o][F]{}\ar@{=}[d]^3_2\\
&&&&&& *+[o][F]{}  \ar@{-}[r]_4 & *+[o][F]{}  } $\\
(14)&$A_{11}$ & $ \xymatrix@-1pc{
    *+[o][F]{}   \ar@{=}[r]^0_2      & *+[o][F]{}  \ar@{-}[r]^1   & *+[o][F]{}   \ar@{-}[r]^0      & *+[o][F]{}  \ar@{-}[r]^1   & *+[o][F]{}  \ar@{-}[r]^2 & *+[o][F]{}  \ar@{-}[r]^3 & *+[o][F]{}  \ar@{-}[r]^4& *+[o][F]{}  \ar@{=}[r]^3_5 &*+[o][F]{}  \ar@{-}[r]^4 & *+[o][F]{}   \ar@{-}[r]^5 & *+[o][F]{}  }$\\
    (15)&$A_{11}$& $ \xymatrix@-1pc{
    *+[o][F]{}   \ar@{=}[r]^0_2      & *+[o][F]{}  \ar@{-}[r]^1   & *+[o][F]{}   \ar@{-}[r]^0      & *+[o][F]{}  \ar@{-}[r]^1   & *+[o][F]{}  \ar@{-}[r]^2 & *+[o][F]{}  \ar@{-}[r]^3 & *+[o][F]{}  \ar@{-}[r]^4& *+[o][F]{}  \ar@{-}[r]^5 &*+[o][F]{}  \ar@{-}[r]^4 & *+[o][F]{}   \ar@{=}[r]^5_3 & *+[o][F]{}  }$\\
 (16)&$A_{11}$ & $ \xymatrix@-1pc{
    *+[o][F]{}   \ar@{-}[r]^0     & *+[o][F]{}  \ar@{-}[r]^1   & *+[o][F]{}   \ar@{=}[r]^0_2      & *+[o][F]{}  \ar@{-}[r]^1   & *+[o][F]{}  \ar@{-}[r]^2 & *+[o][F]{}  \ar@{-}[r]^3 & *+[o][F]{}  \ar@{-}[r]^4& *+[o][F]{}  \ar@{=}[r]^3_5 &*+[o][F]{}  \ar@{-}[r]^4 & *+[o][F]{}   \ar@{-}[r]^5 & *+[o][F]{}  }$\\
\hline
\end{tabular}
\caption{Even transitive string C-groups of degree $m$ with connected Coxeter diagram  having intransitive maximal parabolic subgroups and rank $\geq  \frac{m-1}{2}$ .}\label{small}
\end{table}
\end{small}
\end{prop}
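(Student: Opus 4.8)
The plan is to reduce Proposition~\ref{smalln} to a finite, explicitly checkable enumeration, using Conder's order bound to cut the list of candidate groups down to a short one. If $\Phi$ is a potential exception, then $r>\frac{n-2}{2}$, hence $2r-1\ge n-2$, and Conder's lower bound $|G|\ge 2^{2r-1}$ \cite{Conder} gives $|G|\ge 2^{n-2}$. Running through the even primitive groups of each degree $n\le 11$ (these are tabulated, e.g.\ in Sims's list as used for Table~\ref{primBound}) and discarding every group of order less than $2^{n-2}$ leaves only a handful of candidates: for $n=5$, the groups $D_{10}$ and $A_5$; for $n=6$, $L_2(5)\cong A_5$ and $A_6$; for $n=7,8$, the groups $L_2(7)\cong L_3(2)$, $\mathrm{PGL}_2(7)$, $\mathrm{P\Gamma L}_1(8)$, $\mathrm{ASL}_3(2)$ together with $A_7,A_8$; and for $n=9,10,11$, a small number of affine and classical groups together with $A_n$ and the relevant Mathieu groups.

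For each surviving candidate I would then inspect its string C-group representations and retain only those satisfying the three conditions in the statement (connected Coxeter diagram, all maximal parabolics $\Phi_j$ intransitive, and $r>\frac{n-2}{2}$). For the alternating groups $A_9,A_{10},A_{11}$, the representations attaining rank at least $\frac{n-1}{2}$ are exactly the maximal-rank ones already classified in \cite{flm}; these yield the permutation representation graphs (4)--(16) of Table~\ref{small}, and one checks directly from each graph that the diagram is connected and that every $\Phi_j$ is intransitive. The groups $A_6,A_7,A_8$ and the Mathieu groups $M_{10},M_{11}$ admit no string C-group at all (rank $0$), so they are vacuously excluded. For the remaining small groups ($D_{10}$, $L_2(5)$ on $6$ points, $L_2(7)$, $\mathrm{PGL}_2(7)$, and the degree-$8$ groups) I would consult the atlas of string C-group representations \cite{LVatlas} (or recompute directly with {\sc Magma} \cite{BCP97}), filter by the three conditions, and confirm that precisely the graphs (1)--(3) survive.

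The delicate point--and the place where care is genuinely needed--is that a single abstract group occurs with several inequivalent primitive actions of different degrees: for instance $A_5$ acts both on $5$ points and, as $L_2(5)$, on $6$ points, and $A_6\cong L_2(9)$. Since the conditions refer to the \emph{action} (intransitivity of the parabolics) and not merely to the isomorphism type, I must keep track of the specific permutation representation throughout; this is exactly why $A_5$ on $5$ points does not appear (its rank-$3$ representation there has a transitive $D_{10}$ parabolic) while $L_2(5)$ on $6$ points does. The proof is thus essentially a finite verification whose real content is \emph{completeness}: showing that no case is overlooked, which rests on the correctness of the primitive-group tables and of the string C-group enumeration. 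The only steps that are not purely mechanical are the borderline degrees $n=7,8$, where the rank threshold is tight and one must check that each high-rank candidate fails either the intransitivity condition or the intersection property; I would treat these explicitly by listing all involution generating sets with the string and intersection properties and confirming that none meets all three requirements, so that the final output coincides exactly with Table~\ref{small}.
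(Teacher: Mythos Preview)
Your proposal is correct and takes essentially the same approach as the paper, whose proof reads in its entirety: ``We used \textsc{Magma} to get this result.'' Your preliminary filtering via Conder's bound and your appeal to existing atlases are reasonable ways to organize the search, but ultimately both your argument and the paper's rest on the same finite computer enumeration.
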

\begin{proof}
We used \textsc{Magma} to get this result.
\end{proof}

\begin{prop}\label{small123}
Let $j\in \{0,\ldots,\, r-1\}$. Suppose that $\Gamma_{<j}$ is transitive on $m$ points and fixes the remaining $m-n$ points.
If $\Gamma_{<j}$ is a primitive group of degree $m<12$ and $j\geq \frac{m-1}{2}$ then it must have permutation representation graph (1), (2) or (3) of Table~\ref{small}. 
\end{prop}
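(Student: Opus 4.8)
The plan is to recognise $\Gamma_{<j}$ inside the classification already carried out in Proposition~\ref{smalln} and then to discard all but three of the sixteen resulting graphs. First I would verify that $\Gamma_{<j}$ meets every hypothesis of Proposition~\ref{smalln}. Because $\Gamma_{<j}\le\Gamma\cong A_n$ fixes the $n-m$ points outside its orbit, its action on the moved points has the same parity as its action on all of $\{1,\ldots,n\}$, so $\Gamma_{<j}$ is an even permutation group; it is primitive of degree $m<12$ by hypothesis; and its maximal parabolics are intransitive, since $\Gamma_{<j}$ is a proper parabolic of $\Gamma$ and in this section all parabolics of $\Gamma$ are intransitive. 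Finally, the Coxeter diagram of $\Gamma_{<j}$ is connected: if it were not, then as in the disconnected case of Proposition~\ref{prim} the primitive string C-group $\Gamma_{<j}$ would decompose as a direct product of two simple regular groups, which is incompatible with primitivity at degree $m<12$ (such factors cannot be generated by involutions at this size). Thus Proposition~\ref{smalln} applies, and since $j\ge\frac{m-1}{2}>\frac{m-2}{2}$ excludes the possibility $j\le\frac{m-2}{2}$, the group $\Gamma_{<j}$ is one of the graphs (1)--(16) of Table~\ref{small}.

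It then remains to eliminate (4)--(16). The structural input comes from the distinguished point $a$ of the section (applying the proposition with $\Gamma_{<j}=A=\Gamma_{<i}$, and dually with $B=\Gamma_{>i}$): the generator $\rho_i$ is the only one carrying $a$ out of the first $\Gamma_i$-orbit, and among $\rho_0,\ldots,\rho_{j-1}$ only $\rho_{j-1}$ can move $a$, since $\rho_0,\ldots,\rho_{j-2}$ all commute with $\rho_i$. Hence the permutation representation graph of $\Gamma_{<j}$ must contain a vertex incident only to edges carrying the top label $j-1$. Checking this condition against Table~\ref{small} already discards those graphs in which every top-label edge also meets a lower-labelled edge, for instance (8) and (9).

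For the graphs that survive this vertex test the group $\Gamma_{<j}$ is the full alternating group $A_m$ with $m\in\{9,10,11\}$. These are excluded because an alternating front parabolic is not a genuinely new primitive configuration here: it is precisely the input to the generation and sesqui-extension results recalled in Propositions~\ref{genAn} and \ref{sesqui}, so the case $\Gamma_{<j}\cong A_m$ is absorbed by the induction on $n$ (the ranks of $A_9$, $A_{10}$ and $A_{11}$ being already known) rather than contributing to the primitive analysis of the present case. Discarding it leaves exactly graphs (1), (2) and (3), namely $D_{10}$ in degree $5$ and the two representations of $L_2(5)$ in degree $6$, as claimed.

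The step I expect to be the main obstacle is this final exclusion of the alternating candidates $A_9$, $A_{10}$, $A_{11}$: the vertex test only removes some of (4)--(16), and for the remainder one must argue cleanly that a front parabolic isomorphic to $A_m$ belongs to the inductive/alternating branch of the overall proof rather than to the primitive branch. Getting this bookkeeping right, together with confirming the connectedness and intransitivity hypotheses needed to invoke Proposition~\ref{smalln}, is where the real care lies; by contrast, reading off the three surviving graphs once the alternating cases are removed is routine.
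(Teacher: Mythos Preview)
Your overall strategy is right: invoke Proposition~\ref{smalln} and then eliminate all but graphs (1)--(3). The vertex test you describe---that there must be a vertex of the permutation representation graph of $\Gamma_{<j}$ incident only to a $(j-1)$-edge---is exactly what the paper uses, and it does rule out (8), (9), (13), (15). (Your derivation of this test via the distinguished vertex $a$ is unnecessarily tied to the special case $j=i$; the general argument is simply that the $j$-edge leaving $X$ at some vertex $c$ forces $\rho_0,\ldots,\rho_{j-2}$ to fix $c$, since they commute with $\rho_j$ and fix $c\rho_j\notin X$.)

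The genuine gap is your elimination of the alternating candidates (4)--(7), (10)--(12), (14), (16). Saying that these are ``absorbed by the induction on $n$'' is not an argument: the proposition is a standalone structural claim about $\Gamma$, and each of these $A_m$ graphs does satisfy the rank hypothesis $j\geq\frac{m-1}{2}$, so you must actually rule them out. Knowing the rank of $A_9$, $A_{10}$, $A_{11}$ is irrelevant here---you need to show that if $\Gamma_{<j}$ had one of these graphs, then $\Gamma$ itself would fail to be a string C-group.

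The paper's mechanism is the intersection property. Take graph~(4), where $\Gamma_{<4}\cong A_9$ and the vertex $c$ sits at the right end. Attaching the $4$-edge at $c$, Proposition~\ref{genAn} forces $\Gamma_{>1}\cong A_{n-4}$ (or $2\times A_{n-4}$ via Proposition~\ref{sesqui} in cases (5)--(7)). Then $\Gamma_{>1}\cap\Gamma_{<4}$ contains the alternating group $A_5$ acting on the five rightmost points, whereas the intersection property demands $\Gamma_{>1}\cap\Gamma_{<4}=\langle\rho_2,\rho_3\rangle$, which is merely dihedral. The same pattern---$\Gamma_{>l}\cap\Gamma_{<j}\supseteq A_5$ but $\langle\rho_{l+1},\ldots,\rho_{j-1}\rangle$ dihedral---handles all the remaining $A_m$ graphs uniformly. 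This is the concrete content you are missing; your gestures toward Propositions~\ref{genAn} and~\ref{sesqui} point to the right tools but do not deploy them.
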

The dual version of this proposition is also true.
\begin{proof}
By Proposition~\ref{smalln}, $\Gamma_{<j}$ has one of the 16 permutation representation graphs given in Table~\ref{small}.
Let $X:=\{1,\ldots,n\}\setminus \mathrm{Fix}(\Gamma_{<j})$ with $|X|=m$.
Then there exists a generator  $\rho_k$ of $\Gamma$, with $k\geq j$, such that  $c\rho_k=d$ with $c\in X$ and $d\not \in X$. 
As $\Gamma_{<j}$ is primitive, $k$ must be consecutive with $j-1$.
Moreover, $c$ has degree two with the edges incident to $c$ having labels $j-1$ and $j$. 
In graphs (8), (9),  (13) and (15) there is no such vertex $c$. Let us consider the remaining graphs.

Now let (4) be the permutation representation graph of $\Gamma_{<j}$. Then $\Gamma_{<j}=\Gamma_{<4}\cong A_9$ and $\Gamma_{>1}\cong A_{n-4}$ by Proposition~\ref{genAn} (note that $n-4\geq 5$), thus $\Gamma_{>1} \cap \Gamma_{<4}\cong A_5$. However, $\langle \rho_2, \rho_3 \rangle$ is a dihedral group. Consequently $\Gamma$ does not satisfy the intersection condition, a contradiction.

Only in cases (11) and (16) there are two possibilities for the vertex $c$, but as in case (4),  case (16) is self-dual. 
Using similar arguments, summarised below, we conclude that $\Gamma_{<j}$ cannot be any of the graphs of Table~\ref{small} except graphs (1), (2) and (3).  Let (11)* denote the dual of (11)  with  its labels interchanged by $k \leftrightarrow 4-k$. 
\begin{center}

\begin{tabular}{cl}
(5), (6), (7): &$\Gamma_{>1}\cong 2\times A_{n-4}$ or $\Gamma_{>1}\cong A_{n-4}$ (by Proposition~\ref{sesqui}); \\[5pt]
&$\,\Gamma_{<4}\cong A_9$;  $\,\Gamma_{>1} \cap \Gamma_{<4}\cong A_5\not \cong \langle \rho_2, \rho_3 \rangle$.\\[5pt]

(10), (11), (11)*, (12): &$\Gamma_{>2}\cong A_{n-5}$; $\,\Gamma_{<5}\cong A_{10}$;  $\,\Gamma_{>2} \cap \Gamma_{<5}\cong A_5\not \cong \langle \rho_3, \rho_4 \rangle$.\\[5pt]

(14), (16):& $\Gamma_{>3}\cong A_{n-6}$; $\,\Gamma_{<6}\cong A_{11}$;  $\,\Gamma_{>3} \cap \Gamma_{<6}\cong A_5\not \cong \langle \rho_4, \rho_5 \rangle$.
\end{tabular}
\end{center}

\end{proof}

\begin{prop}\label{RI}
 Let $\Phi=\langle \alpha_0,\ldots,\alpha_{d-1}\rangle$ be a  transitive sggi  embedded into $S_a\wr S_b$  with $X$ being the set of generators of  $\Phi$ generating the block action independently.
If  $\Phi_j$  is intransitive for all $j\in\{0,\ldots,d-1\}$, then 
\begin{enumerate}
\item $d\leq |X|+a-1$;
\item $d\leq  \frac{ab-2}{2}$ for $a,b\neq 2$ and $ab\neq 9$;
\item if $(a,d)=(2,b)$ or $(b,d)=(2,a)$ or $(a,b)=(3,3)$ then $\Phi$
either contains an odd
permutation or has disconnected diagram.
\end{enumerate}
\end{prop}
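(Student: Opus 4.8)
The plan is to prove the three parts in order, using a single fracture-graph argument for (a), a short counting estimate for (b), and a parity analysis for the tight configurations in (c).

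For (a), write $Y$ for the set of generators not in $X$, so that $d=|X|+|Y|$; the claim is equivalent to $|Y|\le a-1$. Since the images of $X$ generate the (transitive) block action, $\langle X\rangle$ is transitive on the $b$ blocks, and hence each orbit of $\langle X\rangle$ on the $ab$ points meets every block; consequently the number $t$ of $\langle X\rangle$-orbits is at most $a$, because these orbits partition each block. I would then form a graph $\mathcal{Y}$ whose vertices are the $t$ orbits, putting, for each $\alpha_j\in Y$, one $j$-edge between two orbits lying in different $\Phi_j$-orbits. The first key step is that such an edge always exists: because $\langle X\rangle\le\Phi_j$, any two points in the same $\langle X\rangle$-orbit lie in the same $\Phi_j$-orbit, so when the intransitive $\Phi_j$ is completed to the transitive $\Phi$ by $\alpha_j$, the pair witnessing this automatically lies in two different $\langle X\rangle$-orbits. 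The second key step is that $\mathcal{Y}$ is acyclic: assigning to each $\langle X\rangle$-orbit the $\Phi_j$-orbit containing it, every edge with label $h\ne j$ stays inside one $\Phi_j$-orbit (its generator lies in $\Phi_j$), while the unique $j$-edge jumps between two, so no cycle through a $j$-edge can close up. Thus $\mathcal{Y}$ is a forest on $t$ vertices with exactly $|Y|$ edges, giving $|Y|\le t-1\le a-1$ and hence $d\le |X|+a-1$.

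For (b), I would combine (a) with the standard fact that an independent generating set of a transitive group of degree $b$ has size at most $b-1$, so that $|X|\le b-1$ (as used in \cite{transitive}). This yields $d\le |X|+a-1\le a+b-2$. The desired inequality $a+b-2\le\frac{ab-2}{2}$ is equivalent to $(a-2)(b-2)\ge 2$, which holds precisely when $a,b\ge 3$ and $(a,b)\ne(3,3)$, i.e.\ exactly under the hypotheses $a,b\ne 2$ and $ab\ne 9$.

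For (c), the excluded regimes are exactly those where the estimate $d\le a+b-2$ overshoots $\frac{ab-2}{2}$, so the tight configurations must be removed by a parity argument. In the case $(a,d)=(2,b)$ one has $|X|=b-1$ and $|Y|=1$; here $|X|=b-1$ forces the block action to be $S_b$ on its Coxeter generators (via \cite{CC}), so $\Phi\le C_2\wr S_b$. I would then use the sign homomorphism $\epsilon(f,\sigma)=(-1)^{|f|}$ on the $2b$ points, where $f\in\mathbb{F}_2^{b}$ records the within-block flips, together with the fact that the kernel $\Phi\cap\mathbb{F}_2^{b}$ is an $S_b$-submodule of $\mathbb{F}_2^{b}$ — whose only members are $0$, $\langle z\rangle$, the even-weight module, and the whole space — to show that requiring every generator to be even is incompatible with the diagram being connected. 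The dual case $(b,d)=(2,a)$ (so $\Phi\le S_a\wr S_2$, as in the $m=2$ analysis of Section~\ref{s:imprim}) is handled symmetrically, tracking the parity of the block-interchanging generator, and the remaining case $(a,b)=(3,3)$ is the single degree $n=9$ situation, which I would settle directly from the classification in Proposition~\ref{smalln}. I expect (c) to be the main obstacle: parts (a) and (b) are clean fracture-graph-plus-arithmetic, whereas (c) requires pinning the block action down exactly and then extracting the precise parity/module obstruction that rules out an even group with connected diagram.
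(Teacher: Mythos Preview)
Your arguments for (a) and (b) are exactly the paper's: the fracture graph on $\langle X\rangle$-orbits for (a), and the arithmetic $(a-2)(b-2)\ge 2$ for (b). Nothing to add there.

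For (c) your route diverges from the paper's, and there is a genuine gap. The paper does not use any $S_b$-module or sign-homomorphism analysis. Instead it argues directly that the equalities $|X|=b-1$ and $|Y|=1$ (respectively $|X|=1$, $|Y|=a-1$), together with evenness, force the permutation representation graph to be the ``ladder'' pictured in the proof; from that graph one reads off immediately that $\alpha_0$ commutes with every $\alpha_i$, so the diagram is disconnected. The two cases $(a,d)=(2,b)$ and $(b,d)=(2,a)$ are really the same case viewed through two different block systems (when $b=2$ the single block-swapping generator has $a$ two-cycles, which form an alternative block system with blocks of size $2$), which is what ``up to duality'' means here --- not sggi duality. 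Your proposed module argument is both more work and, as sketched, incomplete: knowing that $\Phi\cap\mathbb{F}_2^{\,b}$ is one of the four standard submodules does not by itself force either an odd generator or a disconnected diagram; you still have to analyse the possible $f$-parts of $\alpha_0$ compatible with the commuting relations $[\alpha_0,\alpha_j]=1$ for $j\ge 2$, and your sketch does not do this. If you carry that analysis out you will find yourself reproducing the paper's direct determination of the graph anyway.

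Finally, your appeal to Proposition~\ref{smalln} for the case $(a,b)=(3,3)$ is incorrect: that proposition is stated only for \emph{primitive} $\Phi$, whereas here $\Phi$ is imprimitive by hypothesis, so it does not apply. (The paper's own proof, incidentally, does not treat the $(3,3)$ case explicitly either; it is not actually needed for Proposition~\ref{induction}, which only uses the result for $m\ge 12$.)
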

\begin{proof}
Consider a partition $P$ of $\{1,\ldots,ab\}$ determined by the orbits of $\langle X\rangle$. 
Now let $\mathcal{X}$ be a graph with $|P|$ vertices corresponding the partitions of $P$ and with exactly one  $j$-edge for each generator of $\Phi$ that is not in $X$, connecting two partitions in different $\Phi_j$-orbits. This graph has no cycles and $|P|\leq a$, thus
$d-|X|\leq a-1$.  Hence, $d\leq |X|+a-1$. Moreover as $X$ generate the bock action independenty $|X|\leq b-1$. Hence  $d\leq \leq a+b-2 \frac{ab-2}{2}$ for $a,b\neq 2$ and $ab\neq 9$.

Now suppose that $(a,d)=(2,b)$ or  $(b,d)=(2,a)$ and $\Phi$ is even, up to duality, $\Phi$ has the  following permutation representation graph, and 

$$
   \xymatrix@-1.3pc{*+[o][F]{} \ar@{-}[rr]^1 \ar@{-}[dd]_0 &&
 *+[o][F]{}\ar@{-}[rr]^2\ar@{-}[dd]_0 &&
 *+[o][F]{} \ar@{-}[rr]^3\ar@{-}[dd]_0 &&
 *+[o][F]{} \ar@{.}[rr]\ar@{-}[dd]_0 &&
 *+[o][F]{}\ar@{-}[rr]^{d-1}\ar@{-}[dd]_0 && 
  *+[o][F]{}\ar@{-}[dd]_0 \\ 
&& && && && &&\\
*+[o][F]{} \ar@{-}[rr]_1 &&
 *+[o][F]{}\ar@{-}[rr]_2&&
 *+[o][F]{} \ar@{-}[rr]_3&&
 *+[o][F]{} \ar@{.}[rr]&&
 *+[o][F]{}\ar@{-}[rr]_{d-1}&& 
  *+[o][F]{}\\ }
  $$
 Thus $\Phi$ has a disconnected diagram.
\end{proof}

Using induction over $n$ and  Proposition~\ref{RI} we get the following result.
\begin{prop}\label{induction} 
Suppose that $\Phi$ is string C-group generated by involutions of rank $d$, with connected diagram, having all maximal parabolic subgroups intransitive. If  $\Phi$ is a transitive even group of degree $m$ with $12\leq m< n$  then,  $d\leq \frac{m-1}{2}$.  Moreover if $d= \frac{m-1}{2}$ then $\Phi$ is the alternating group $A_m$.
\end{prop}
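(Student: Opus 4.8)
The plan is to use the fact that, being transitive and even, $\Phi$ is a transitive subgroup of $\Alt_m$, and to split the argument according to whether $\Phi$ is primitive or imprimitive, singling out the possibility $\Phi\cong\Alt_m$. The three hypotheses on $\Phi$ (even, connected diagram, all maximal parabolics intransitive) are precisely what is needed to feed into Propositions~\ref{prim} and~\ref{RI}; the genuinely new ingredient is the appeal to the main theorem at a smaller degree, which is legitimate because $m<n$ and we are arguing by induction on $n$. The strategy is to show that $\Phi\cong\Alt_m$ is the \emph{only} case in which $d$ can reach $\frac{m-1}{2}$, and that every other configuration gives a strict inequality.

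First I would dispose of the case $\Phi\cong\Alt_m$. Here the rank $d$ of the string C-group $\Phi$ is at most the rank of the abstract group $\Alt_m$, which by the induction hypothesis (Theorem~\ref{maintheorem} in degree $m$, available since $12\le m<n$) equals $\lfloor\frac{m-1}{2}\rfloor\le\frac{m-1}{2}$. This is the only branch in which equality $d=\frac{m-1}{2}$ can be attained; when it is, $(m-1)/2$ must be an integer, so $m$ is odd, and $\Phi\cong\Alt_m$, which is exactly the ``moreover'' clause. It then remains to show that in every other case the inequality is strict.

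If $\Phi$ is primitive but not isomorphic to $\Alt_m$, then $\Phi<\Alt_m$ is a primitive group of degree $m\ge12$, and Proposition~\ref{prim} applies verbatim (with $n$ replaced by $m$) to give $d\le\frac{m-3}{2}<\frac{m-1}{2}$. If instead $\Phi$ is imprimitive, I would fix a nontrivial block system, writing $\Phi\le S_a\wr S_b$ with $ab=m$ and $a,b\ge2$, and apply Proposition~\ref{RI}, whose hypothesis (all $\Phi_j$ intransitive) is guaranteed. When $a,b\ge3$, part~(b) gives $d\le\frac{ab-2}{2}=\frac{m-2}{2}<\frac{m-1}{2}$; the excluded value $ab=9$ cannot occur here since $m\ge12$, and the same bound dispatches any odd $m$ even more comfortably.

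The delicate case, which I expect to be the main obstacle, is a block of size two or only two blocks, i.e.\ $a=2$ or $b=2$; here $m$ is necessarily even and the crude estimate from part~(a), namely $d\le|X|+a-1$, only yields $d\le\frac{m}{2}$, one more than we can afford. This is exactly where the evenness and connectedness hypotheses must be spent: part~(c) of Proposition~\ref{RI} says the extremal configuration $(a,d)=(2,b)$ (respectively $(b,d)=(2,a)$) forces $\Phi$ either to contain an odd permutation or to have a disconnected diagram, both of which are excluded. Hence $d\le\frac{m}{2}-1=\frac{m-2}{2}<\frac{m-1}{2}$. Collecting the cases, $d\le\frac{m-1}{2}$ always, with strict inequality unless $\Phi\cong\Alt_m$, which completes the argument.
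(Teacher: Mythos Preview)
Your proposal is correct and follows essentially the same approach as the paper: split into the cases $\Phi$ primitive but not $\Alt_m$ (Proposition~\ref{prim}), $\Phi$ imprimitive (Proposition~\ref{RI}, with the $a=2$ or $b=2$ subcase handled via part~(c)), and $\Phi\cong\Alt_m$ (induction on $n$). Your write-up is in fact more explicit than the paper's in invoking part~(c) to rule out the extremal value $d=m/2$ when one block parameter equals $2$, and in noting that the exclusion $ab=9$ in part~(b) is irrelevant since $m\ge12$.
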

\begin{proof}
If $\Phi$  is primitive and not the alternating group then, by Proposition~\ref{prim},  $d\leq \frac{m-2}{2}$.  If  $\Phi$ is imprimitive, then it is embedded into a group $S_a\wr S_b$ with $ab=m$ and with the block action being generated by at most $b-1$ elements.
As the maximal parabolic subgroups of $\Phi$ are intransitive we may use Proposition~\ref{RI} to get 
 \[d\leq \frac{m-2}{2}.\]
 for $a,b\neq 2$. If either $a=2$ or $b=2$ then, as  $\Phi$ is even and has a connected diagram, $d< \frac{m}{2}$ with $m$ even, hence $d\leq \frac{m}{2}$.
Finally if $\Phi\cong A_m$ with $12\leq m< n$, we then get the result by induction on $n$.

\end{proof}

\begin{prop}\label{small(1)} 
Suppose that $\Gamma_{<3}$ is not one of the string $C$-groups (2) or (3) Table~\ref{small}.
If $A$ is the string $C$-group (1) of Table~\ref{small}, then $r \leq \frac{n-1}{2}$. The same result holds for $B$.
\end{prop}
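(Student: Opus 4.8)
The plan is to exploit the fact that $A$ being the group (1) of Table~\ref{small} pins down a great deal of structure. Since (1) is $D_{10}$ acting on $5$ points with independent generators $\rho_0,\rho_1$, and $A=\Gamma_{<i}$, the index $i$ must equal $2$: any $\rho_j$ with $2\le j<i$ would act trivially on the first $\Gamma_i$-orbit (because $A=\langle\alpha_0,\alpha_1\rangle$) and trivially on the second (because $B=\Gamma_{>i}$), hence be trivial. Thus the first orbit has size $5$, the second has size $n_2=n-5$, and $B=\Gamma_{>2}$ is a parabolic subgroup acting on the second orbit as a string C-group of rank $d_B=r-3$. The goal $r\le\frac{n-1}{2}$ is then equivalent to $d_B\le\frac{n_2-2}{2}$, and the whole proof is about bounding this quantity by induction.

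Next I would analyse $\rho_2=\rho_i$. It is even, it swaps the unique cross-orbit pair $\{a,b\}$ with $a$ in the first orbit and $b$ in the second, and it commutes with $\rho_0$ and with $\rho_4,\dots,\rho_{r-1}$. The key claim is that $\rho_2$ fixes every point of the first orbit except $a$. Suppose not; then $\rho_2$ moves a further point of the first orbit, so $\Gamma_{<3}=\langle\rho_0,\rho_1,\rho_2\rangle$ is a transitive even string C-group of degree $6$ whose maximal parabolic subgroups $\langle\rho_0,\rho_1\rangle\cong D_{10}$, $\langle\rho_1,\rho_2\rangle$ (dihedral) and $\langle\rho_0,\rho_2\rangle\cong C_2\times C_2$ are all easily checked to be intransitive. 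By Proposition~\ref{smalln} (equivalently, the degree-$6$ entries of Table~\ref{small}, noting that $A_6$ has rank $0$ and is therefore excluded) it would have to be (2) or (3), contrary to hypothesis. Hence $\rho_2=(a,b)\sigma$ with $\sigma$ an odd permutation supported on the second orbit, and $\rho_2$ fixes the first orbit pointwise apart from $a\mapsto b$.

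With this in hand, $\Gamma_{>1}=\langle\rho_2,\dots,\rho_{r-1}\rangle$ fixes the four points of the first orbit other than $a$ and acts faithfully and transitively on the remaining $n-4$ points; being a parabolic subgroup it is an even string C-group of rank $r-2$ with connected diagram. I would then split into two cases. If $\Gamma_{>1}\cong A_{n-4}$, then since $n-4<n$ the inductive form of Theorem~\ref{maintheorem} gives $r-2\le\lfloor\frac{n-5}{2}\rfloor$, whence $r\le\lfloor\frac{n-1}{2}\rfloor$. If instead $\Gamma_{>1}\not\cong A_{n-4}$, I would pass to $B=\Gamma_{>2}$: were $B\cong A_{n_2}$, then $\langle B,\rho_2\rangle$ would be obtained by adjoining the point $a$ to $A_{n_2}$ through the even permutation $\rho_2$, and since the stabiliser of $a$ already contains the transitive group $A_{n_2}$ this join is $2$-transitive and contains $A_{n_2}$, forcing $\Gamma_{>1}\cong A_{n_2+1}=A_{n-4}$, a contradiction. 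So $B$ is a non-alternating even transitive string C-group, and Proposition~\ref{induction} yields $d_B\le\frac{n_2-2}{2}$, i.e. $r\le\frac{n-1}{2}$. The statement with $B$ in place of $A$ then follows by duality.

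The delicate point, and where I expect to spend the most effort, is the interlocking of the two inductive bounds across the parity of $n$: the alternating boundary case saturates the bound coming from $B$ (giving only $r\le\frac{n}{2}$), and the gain of one point in $\Gamma_{>1}$ is exactly what recovers $\lfloor\frac{n-1}{2}\rfloor$, so the argument must be arranged to route the alternating case through $\Gamma_{>1}$ rather than through $B$. Two hypotheses of Proposition~\ref{induction} also need verification: that the diagram of $B$ is connected (immediate, as its labels form an interval of the string), and that every maximal parabolic of $B$ is intransitive, which I would deduce by restricting the intransitivity of each $\Gamma_k$ to the second orbit. Finally, the small degrees $n_2<12$ fall outside Proposition~\ref{induction} and would be disposed of using Proposition~\ref{small123} and Table~\ref{small}, or by direct computation.
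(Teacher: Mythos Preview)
Your overall strategy is close to the paper's, but the ``key claim'' is where the argument breaks. You assert that if $\rho_2$ moves a further point of the first orbit then $\Gamma_{<3}$ is an even string C-group of degree~$6$, and hence is (2) or (3). This conclusion requires $\rho_2$ to fix every point of the second orbit except $b$; but nothing forces that. The permutation $\rho_2$ is even, so beyond the cross-orbit transposition $(a,b)$ it carries an odd permutation which may be supported partly on $\{v_1,v_2,v_3,v_4\}$ \emph{and} partly on the second orbit. In that situation $\Gamma_{<3}$ has degree strictly greater than $6$: writing $\tau$ for the action of $\rho_2$ on the second orbit away from $b$, the group $\Gamma_{<3}$ is the sesqui-extension $\langle\alpha_0,\alpha_1,\alpha_2\tau\rangle$ of the degree-$6$ group $\Phi=\langle\alpha_0,\alpha_1,\alpha_2\rangle$, and by Proposition~\ref{sesqui} one only gets $\Gamma_{<3}\cong\Phi$ or $\Phi\times C_2$. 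The hypothesis excludes $\Gamma_{<3}$ from \emph{being} (2) or (3); it does not exclude $\Gamma_{<3}\cong\Phi\times C_2$ with $\Phi$ equal to (2) or (3). So your reduction to ``case~(a) always holds'' is unjustified, and this case genuinely occurs.

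The paper treats exactly this residual case. With $\tau$ nontrivial one has $\tau\in\Gamma_{<3}$ (indeed $\tau=(\rho_1\rho_2)^3$ or $(\rho_1\rho_2)^5$ according as $\Phi$ is (2) or (3)). If $B=\Gamma_{>2}\cong A_{n_2}$, then $\tau$, being an even permutation supported on the second orbit, lies in $\Gamma_{>2}$ as well; thus $\tau\in\Gamma_{<3}\cap\Gamma_{>2}\setminus\langle\rho_2\rangle$, violating the intersection property. Hence $B$ is not alternating, and Proposition~\ref{induction} gives $r-3\le\frac{(n-5)-2}{2}$, i.e.\ $r\le\frac{n-1}{2}$. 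Once this case is added, your treatment of the remaining situation via $\Gamma_{>1}$ is essentially the paper's argument (the paper applies Proposition~\ref{induction} directly to $\Gamma_{>1}$ rather than splitting on whether $\Gamma_{>1}\cong A_{n-4}$, but the content is the same).
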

\begin{proof}
Consider first  $n-5 < 12$. If $r-3\geq \frac{(n-5)-1}{2}$ then, by Proposition~\ref{small123}, $B\,(=\Gamma_{>2})$ is one of the small examples (1), (2) or (3) of Table~\ref{small}, thus $n<12$, a contradiction. Hence $r-3<\frac{(n-5)-1}{2}$. Let $n-5 \geq 12$.  

Suppose first that $\rho_2$ has a trivial action on the first $\Gamma_i$- orbit. In this case $\Gamma_{>1}$ is an even transitive group on $n-4$ points.
By Proposition~\ref{sesqui} $\Gamma_1 \cong \langle \rho_0\rangle \times \Gamma_{>1}$. Now, by Proposition~\ref{induction}, we get that $r-2 \leq \frac{(n-4)-1}{2}$ and hence $r\leq \frac{n-1}{2}$.

Now suppose that  $\rho_2$ has a nontrivial action on the first block. As $\Gamma_{<3}$ cannot be one of the string C-groups (2) or (3) of Table~\ref{small}, it is a sesqui extension of it (with respecto to $\rho_2$). 
If $B$ is not the alternating group, then by Proposition~\ref{induction}, we get that $r-3 \leq \frac{(n-5)-2}{2}$. We need only to consider the case $B=\Gamma_{>2}\cong A_{n_2}$.
Let  $\Phi$ be either the string  C-group (2)  or (3)  of Table~\ref{small} and $\tau$ be the action of $\rho_2$ in the second $\Gamma_2$-orbit. As either $\tau=(\rho_1\rho_2)^3$ or $\tau=(\alpha_1\alpha_2)^5$ (according to each case  (1) or (2)), $\Gamma_{<3}$  is isomorphic to $\langle \tau\rangle\times \Phi$. But then, as $\Gamma_{>2}\cong A_{n_2}$, $\tau\in \Gamma_{>2}\cap  \Gamma_{<3}$, a contradiction.
Hence $r \leq \frac{n-1}{2}$.
\end{proof}

\begin{prop}\label{inot0} 
Suppose neither $\Gamma_{<3}$ nor $\Gamma_{>r-4}$  is one of the string $C$-groups (2) or (3) of Table~\ref{small}, or their duals.
If $A$ and $B$ are not both the alternating groups, then $r \leq \frac{n-1}{2}$.
\end{prop}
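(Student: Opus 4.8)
The plan is to exploit the decomposition $r=\mathrm{rank}(A)+\mathrm{rank}(B)+1$. Since we are in Case 2 with $A=\Gamma_{<i}$ and $B=\Gamma_{>i}$, the sets $J_A=\{0,\dots,i-1\}$ and $J_B=\{i+1,\dots,r-1\}$ are disjoint intervals, so each $\rho_j$ with $j\neq i$ acts nontrivially on exactly one orbit; hence $\mathrm{rank}(A)=i$, $\mathrm{rank}(B)=r-1-i$, and the two orbits partition $\{1,\dots,n\}$ into sets of sizes $n_1,n_2$ with $n_1+n_2=n$. First I would record three structural facts. The factors are \emph{even}: for $j<i$ we have $\beta_j=1$, so $\rho_j=\alpha_j$ is an even permutation supported on the first orbit, whence $A\le A_{n_1}$, and dually $B\le A_{n_2}$. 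Their diagrams are \emph{connected}: as $A_n$ is simple it is not a nontrivial direct product, so the diagram of $\Gamma$ is a single path and $A,B$ are induced on complementary sub-paths. Their maximal parabolics are \emph{intransitive}: were some parabolic of $A$ transitive on the first orbit, then the corresponding $\Gamma_k$ ($k<i$), which already contains the transitive group $B$ on the second orbit together with $\rho_i$ joining the two orbits, would be transitive on all of $\{1,\dots,n\}$, contradicting the standing hypothesis that every $\Gamma_k$ is intransitive.

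With these facts the heart of the argument is the bound
$$\mathrm{rank}(X)\le\frac{\deg X-1}{2},\qquad\text{with equality forcing }X\cong A_{\deg X},$$
applied to $X=A$ and $X=B$. For degree at least $12$ this is exactly Proposition~\ref{induction}. For degree below $12$ I would first argue that neither factor can be alternating: by Proposition~\ref{small123}, where the intersection property does the work, a primitive parabolic $\Gamma_{<j}$ of degree $m<12$ with $j\ge(m-1)/2$ must be one of the groups (1), (2), (3) of Table~\ref{small}, none of which is an alternating group; in particular the primitive actions of $A_9,A_{10},A_{11}$ (graphs (4)--(16)), whose ranks meet or exceed $(m-1)/2$, are excluded. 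Group (1) ($D_{10}$) is disposed of by Proposition~\ref{small(1)}, while (2) and (3) have rank $3$ and would force $A=\Gamma_{<3}$ or $B=\Gamma_{>r-4}$, excluded by hypothesis. An imprimitive factor of small degree has rank at most $(\deg X-2)/2$ by Proposition~\ref{RI} (its block-size-$2$ subcases are incompatible with being even and connected, and the degree-$9$ exception is closed by Proposition~\ref{smalln}). Hence every factor of degree below $12$ is non-alternating with rank at most $(\deg X-2)/2$, and any alternating factor has degree at least $12$, where Proposition~\ref{induction} gives $\mathrm{rank}\le(\deg X-1)/2$.

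It remains to combine the two bounds, using that a non-alternating factor satisfies the stronger estimate $\mathrm{rank}(X)\le(\deg X-2)/2$ (strict inequality in Proposition~\ref{induction} plus integrality, or the small-degree analysis above). If neither $A$ nor $B$ is alternating, then $\mathrm{rank}(A)+\mathrm{rank}(B)\le(n_1-2)/2+(n_2-2)/2=(n-4)/2$. If exactly one, say $A$, is alternating — the case of both being alternating being excluded by hypothesis — then $\mathrm{rank}(A)+\mathrm{rank}(B)\le(n_1-1)/2+(n_2-2)/2=(n-3)/2$. In either case $r=\mathrm{rank}(A)+\mathrm{rank}(B)+1\le(n-1)/2$, as required. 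The hard part is entirely in the small-degree bookkeeping: one must be certain that every even transitive group of degree below $12$ that could occur as $A$ or $B$ and that violates the clean bound $(\deg-1)/2$ — namely $D_{10}$, the two degree-$6$ copies of $L_2(5)$, and the primitive actions of $A_9,A_{10},A_{11}$ — is eliminated, either by the intersection-property argument packaged in Proposition~\ref{small123}, by Proposition~\ref{small(1)}, or by the explicit exclusion of $\Gamma_{<3}$ and $\Gamma_{>r-4}$ in the hypothesis.
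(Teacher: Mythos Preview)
Your proof is correct and follows essentially the same route as the paper: use $r=i+(r-1-i)+1$, bound $i$ and $r-1-i$ separately via Propositions~\ref{induction}, \ref{smalln}, \ref{small123}, \ref{RI}, and \ref{small(1)}, and combine. Your write-up is in fact more explicit than the paper's in two respects: you spell out why $A$ and $B$ are even with connected diagram and intransitive maximal parabolics (prerequisites for invoking Proposition~\ref{induction} that the paper leaves implicit), and you separate the small-degree imprimitive case (handled by Proposition~\ref{RI}) from the primitive case (handled by Proposition~\ref{smalln}), whereas the paper's citation of ``Propositions~\ref{induction} and \ref{smalln}'' glosses over this.
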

\begin{proof}
By Proposition~\ref{small(1)} it may be assumed that neither $A$ nor $B$ is the string $C$-group (1) of Table~\ref{small}.

If $A$ and $B$ are not  alternating groups then by Propositions~\ref{induction} and \ref{smalln}, $i  \leq  \frac{n_1-2}{2}$ and $r-1-i  \leq  \frac{n_2-2}{2}$. If $A$ is the alternating group, then either  $n\leq 12$ and $i\leq \frac{n_1-2}{2}$ (by Proposition~\ref{smalln}) or, $n_1\geq 12$ and then, by Proposition~\ref{induction} (induction) we conclude that $i\leq \frac{n_1-1}{2}$. Analogously  if $B$ is the alternating group then  $r-1-i\leq \frac{n_2-1}{2}$ . In any case if $A$ and $B$ are not both the alternating groups, $r \leq  1 + \frac{n_1+n_2-3}{2}= \frac{n-1}{2}$.
\end{proof}

\begin{prop}\label{bothAn} 
Suppose neither $\Gamma_{<3}$ nor $\Gamma_{>r-4}$  is one of the string $C$-groups (2) or (3) of Table~\ref{small}, or their duals.
If $A$ and $B$ are both alternating groups then $r \leq \frac{n-1}{2}$.
\end{prop}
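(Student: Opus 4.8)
The plan is to prove that $r$ can exceed $\frac{n-1}{2}$ only in a single extremal configuration, and then to rule that configuration out by a parity argument combined with the intersection property. Write $A=\Gamma_{<i}\cong A_{n_1}$ and $B=\Gamma_{>i}\cong A_{n_2}$, acting faithfully on the two $\Gamma_i$-orbits $O_1\ni a$ and $O_2\ni b$ of sizes $n_1$ and $n_2$, so that $\Gamma_i=A\times B$, $n=n_1+n_2$, $A$ has the $i$ independent generators $\rho_0,\dots,\rho_{i-1}$ and $B$ the $r-1-i$ generators $\rho_{i+1},\dots,\rho_{r-1}$. Applying Proposition~\ref{induction} to $A$ and to $B$ when $n_1,n_2\ge12$, and otherwise the explicit data of Proposition~\ref{smalln} and Table~\ref{small} (the exceptional small representations being excluded by the standing hypothesis on $\Gamma_{<3}$ and $\Gamma_{>r-4}$ and by Proposition~\ref{small(1)}), gives $i\le\frac{n_1-1}{2}$ and $r-1-i\le\frac{n_2-1}{2}$, whence $r\le\frac{n_1+n_2}{2}=\frac n2$. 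Thus $r\le\frac{n-1}{2}$ holds automatically unless $r=\frac n2$, and the latter forces $n_1$ and $n_2$ to be odd and both $A$ and $B$ to realise their maximal rank exactly. I would assume this extremal situation from now on.

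Next I would determine the permutations near the junction. Since $\rho_i$ interchanges only the one cross-orbit pair $\{a,b\}$ and only $\rho_{i-1},\rho_{i+1}$ can move $a$ or $b$, the point $a$ is moved inside $A$ solely by $\rho_{i-1}$ and $b$ inside $B$ solely by $\rho_{i+1}$; so $a$ and $b$ are the pendant points of the extremal representations of $A_{n_1}$ and $A_{n_2}$. For the principal extremal shape (the degrees $\equiv1\pmod4$ produced by the inductive construction underlying Proposition~\ref{genAn}) the last generator of $A$ is a product of two transpositions, $\rho_{i-1}=(x\,y)(a'\,a)$, and the parabolic $\langle\rho_0,\dots,\rho_{i-2}\rangle$ leaves on $O_1\setminus\{a\}$ exactly one residual $2$-orbit $\{y,a'\}$, so its centraliser in $\mathrm{Sym}(O_1\setminus\{a\})$ is $\langle(y\,a')\rangle$; the $B$-side is symmetric. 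As $\rho_i$ commutes with $\langle\rho_0,\dots,\rho_{i-2}\rangle$ and with $\langle\rho_{i+2},\dots,\rho_{r-1}\rangle$, I can therefore write $\rho_i=(a\,b)\,c_1c_2$ where $c_1$ is either the identity or $(y\,a')$, and $c_2$ is either the identity or the corresponding transposition on the $B$-side.

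The contradiction now comes from parity and the intersection property. Each of $c_1,c_2$ is trivial or a transposition, so $\rho_i$ is a product of $1$, $2$ or $3$ transpositions, and evenness of $\rho_i$ forces exactly one of $c_1,c_2$ to be nontrivial. Up to the duality interchanging the two ends I may take $c_1=(y\,a')$ and $c_2=1$. Then a direct computation gives $\rho_{i-1}\rho_i=(x\,y\,a\,b\,a')$, a $5$-cycle on the five junction points, so $\Gamma_{\{i-1,i\}}=\langle\rho_{i-1},\rho_i\rangle$ is dihedral of order $10$ and contains no $3$-cycle. On the other hand $\Gamma_{\{i-1,\dots,r-1\}}=\langle\rho_{i-1},\rho_i,B\rangle$ acts on $\{x,y,a',a\}\cup O_2$ (fixing the remaining points of $O_1$), and this action is primitive: a block meeting $O_2$ must be a single point, since $B\cong A_{n_2}$ is primitive on $O_2$ and fixes $\{x,y,a',a\}$, while a block cannot be all of $O_2$ because $n_2=|O_2|$ does not divide $n_2+4$ for $n_2\ge5$. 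As this primitive group contains $B$, hence a $3$-cycle, Jordan's theorem gives $\Gamma_{\{i-1,\dots,r-1\}}\supseteq\mathrm{Alt}(\{x,y,a',a\}\cup O_2)$, so it contains the $3$-cycle $(x\,y\,a')$. But $(x\,y\,a')$ also lies in $\Gamma_{\{0,\dots,i\}}=\langle A,\rho_i\rangle$, which by the same primitivity argument equals $\mathrm{Alt}(O_1\cup\{b\})$. Hence $(x\,y\,a')\in\Gamma_{\{0,\dots,i\}}\cap\Gamma_{\{i-1,\dots,r-1\}}$; by the intersection property this intersection equals $\Gamma_{\{i-1,i\}}$, which has no $3$-cycle, a contradiction. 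Therefore $r\le\frac{n-1}{2}$.

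The step I expect to be the real obstacle is the structural input of the second paragraph: deducing, from the classification of maximal-rank alternating representations, that $\rho_{i-1}$ is exactly a pair of transpositions and that the adjacent parabolic leaves precisely one residual $2$-orbit, so that $c_1\in\{1,(y\,a')\}$. For the degrees $\equiv3\pmod4$, whose extremal representations carry doubled edges, and for the small admissible degrees $9,10,11$ occurring in Table~\ref{small}, the local picture at the pendant differs and must be checked case by case; but in each case one again finds that the even completion of $(a\,b)$ produces a short cycle with $\rho_{i-1}$, and the same intersection-property argument applies. Once these finitely many local shapes are verified, the parity-and-$5$-cycle mechanism closes every case uniformly.
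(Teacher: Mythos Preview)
Your opening count is right: induction on $A$ and $B$ gives $i\le\frac{n_1-1}{2}$ and $r-1-i\le\frac{n_2-1}{2}$, so $r\le\frac{n}{2}$, and only the extremal case $r=\frac{n}{2}$ with both inequalities tight needs to be excluded. Your parity observation is also correct and is exactly what the paper uses: writing $\rho_i=(a\,b)\,c_1c_2$ with $c_1$ supported on $O_1\setminus\{a\}$ and $c_2$ on $O_2\setminus\{b\}$, evenness of $\rho_i$ forces exactly one of $c_1,c_2$ to be an odd permutation.

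The genuine gap is the structural claim in your second paragraph. You assert that in any extremal representation of $A_{n_1}$ the parabolic $\Gamma_{<i-1}$ has centraliser $\{1,(y\,a')\}$ on $O_1\setminus\{a\}$, so that $c_1$ is either trivial or a single transposition. This holds for the particular construction from \cite{flm2} that you have in mind, but there is no classification of maximal-rank string C-group representations of $A_m$ available here (proving such a classification is a separate, harder problem), so you cannot assume this shape. Without it, $c_1$ could be any odd involution in a possibly larger centraliser, and the clean $5$-cycle computation on five points, together with the Jordan/intersection argument, no longer applies. Your final paragraph acknowledges exactly this obstacle but does not overcome it; ``check case by case'' is not possible when the cases are not enumerated.

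The paper avoids the difficulty entirely by using the parity observation in a different way. Rather than analysing $c_1$, it notes that the restriction of $\langle\rho_0,\ldots,\rho_i\rangle$ to $O_1\cup\{b\}$ is a sesqui-extension of a transitive string C-group $\Phi$ of degree $n_1+1$, and the intersection property together with $B\cong A_{n_2}$ forces $\Gamma_{<i+1}\cong\Phi$ (ruling out the $2\times\Phi$ alternative). Since exactly one of $c_1,c_2$ is odd, exactly one of $\Phi,\Psi$ is an \emph{even} group; say $\Phi$ is. Now apply the induction (Proposition~\ref{induction}) one degree higher, to $\Phi$ at degree $n_1+1$: this gives $i+1\le\frac{n_1}{2}$, i.e.\ $i\le\frac{n_1-2}{2}$, which combined with $r-1-i\le\frac{n_2-1}{2}$ yields $r\le\frac{n-1}{2}$ directly. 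No knowledge of the internal structure of any extremal representation is needed.
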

\begin{proof}
In this case $\Gamma_{<i+1}$ is a sesqui extension of a sggi  $\Phi$ with respect to $\rho_i$, where $\Phi$ is a group of degree $n_1+1$.
By Proposition~\ref{sesqui} $\Gamma_{<i+1}$ is isomorphic either to $2\times \Phi$ or $\Phi$. 
Suppose that  $\Gamma_{<i+1}\cong 2\times \Phi$. In that case there is an even permutation $\tau$ on the second $\Gamma_i$-orbit that belongs to $\Gamma_{<i+1}$.
As $\Gamma_{>i}\cong A_{n_2}$, $\tau\in \Gamma_{>i}$ and therefore $\tau\in \Gamma_{<i+1}\cap \Gamma_{>i}$, a contradiction.
Hence $\Gamma_{<i+1}\cong \Phi$ and $\Phi$ is itself a string $C$-group. 
Using the same argument $\Gamma_{>i-1}$ is also isomorphic to a transitive group $\Psi$ of degree $n_2+1$.
Moreover either $\Phi$ or $\Psi$ is a even group. Suppose $\Phi$ is even. 
Then, as $A$ is not one of the string C-groups (1), (2) or (3), either $n_1<12$ and  $i\leq \frac{n_1-2}{2}$, or $n_1+1\geq 12$.
In latest case, by Proposition~\ref{induction} $i+1\leq \frac{(n_1+1)-1}{2}$.
In addition, $r-1-i\leq \frac{n_2-1}{2}$, hence $r \leq \frac{n-1}{2}$.
\end{proof}

\begin{prop}\label{Case2} 
Suppose neither $\Gamma_{<3}$ nor $\Gamma_{>r-4}$  is one of the string $C$-groups (2) or (3) of Table~\ref{small}, or their duals.
Let $i\notin\{0,r-1\}$. If $A\cong \Gamma_{<i}$ and $B=\Gamma_{>i}$ then $r \leq \frac{n-1}{2}$.
\end{prop}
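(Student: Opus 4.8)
The plan is to recognise that this proposition is the conclusion of Case~2 and is obtained by gluing together the two halves already established. Under the standing hypotheses, the preamble to this subsection lets us take $A\cong\Gamma_{<i}$ acting faithfully on the first $\Gamma_i$-orbit (of size $n_1$) and fixing the second orbit pointwise, and $B=\Gamma_{>i}$ acting faithfully on the second orbit (of size $n_2$) and fixing the first, with $\rho_i=(a,b)$ the unique generator bridging the two orbits; in particular $n=n_1+n_2$, and the ranks of $A$ and $B$ are $i$ and $r-1-i$ respectively. Both $A$ and $B$ are transitive sggi's all of whose maximal parabolic subgroups are intransitive, so the structural results proved earlier in this section apply to each factor separately.

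I would then split according to whether or not $A$ and $B$ are both alternating groups, since these two alternatives are clearly exhaustive. If they are \emph{not} both alternating, then Proposition~\ref{inot0} applies and yields $r\le\frac{n-1}{2}$; if they \emph{are} both alternating, then Proposition~\ref{bothAn} applies and again yields $r\le\frac{n-1}{2}$. Hence the bound holds in every case, which is exactly the desired conclusion. The assumption that neither $\Gamma_{<3}$ nor $\Gamma_{>r-4}$ is the group (2) or (3) of Table~\ref{small} (nor a dual) is precisely the hypothesis required by both Proposition~\ref{inot0} and Proposition~\ref{bothAn}, so it transfers to them verbatim and no extra bookkeeping is needed here.

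Because the substance of the argument lives in the two propositions being invoked, I would limit this proof to recording how their bounds combine, and flag where the subtlety actually is. In the non-alternating case one adds $i\le\frac{n_1-2}{2}$ for the non-alternating factor together with $r-1-i\le\frac{n_2-1}{2}$ for the other (these coming from Propositions~\ref{induction} and \ref{smalln}), obtaining $r\le 1+\frac{n_1+n_2-3}{2}=\frac{n-1}{2}$; the essential point is that at least one factor is non-alternating and so contributes the sharper $-2$ term. The genuinely delicate case is \emph{both alternating}, where the naive degree bound is weaker by one: there one must instead realise $\Gamma_{<i+1}$ as a sesqui-extension of a string C-group $\Phi$ of degree $n_1+1$ (Propositions~\ref{sesqui} and \ref{genAn}) and apply the induction hypothesis of Proposition~\ref{induction} to that larger group, giving $i+1\le\frac{n_1}{2}$, which together with $r-1-i\le\frac{n_2-1}{2}$ again yields $r\le\frac{n-1}{2}$. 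The small configurations of Table~\ref{small} are exactly the cases that could defeat the sesqui-extension step, and since they are excluded by the standing hypothesis and treated separately, the present proof needs no further case analysis.
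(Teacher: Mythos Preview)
Your proposal is correct and takes essentially the same approach as the paper: the proof there is literally the one-line statement that the result is a consequence of Propositions~\ref{inot0} and~\ref{bothAn}, which is exactly the dichotomy (not both alternating versus both alternating) you invoke. Your additional explanatory paragraphs accurately summarise the content of those two propositions, though the reference to Proposition~\ref{genAn} in the both-alternating case is unnecessary---only the sesqui-extension machinery of Proposition~\ref{sesqui} is actually used there.
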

\begin{proof}
This is a consequence of Propositions~\ref{inot0} and \ref{bothAn}.
\end{proof}

To complete this case  we still need to deal with $\Gamma_{<3}$, $\Gamma_{>r-4}$, or both, being one of the string $C$-groups (2) or (3) of Table~\ref{small}. 
This is included in Case 3, at the end.
\subsection{Case 3: The remaining cases}
Assume in this case that $J_B$ is either empty or an interval. As before let $\mathcal{G}$ be the permutation representation graph of $\Gamma$.

\begin{prop}\label{path}
If $e$ is an $f$-edge of $\mathcal{G}$ not in an alternating square, then any path (not containing another $f$-edge) from $e$ to an edge with label $l$, with $l<f$ (resp. $l>f$), contains all labels between $l$ and $f$. 
Moreover, there exists a path from $e$ to an $l$-edge, that is fixed by $\Gamma_{>l}$  (resp. $\Gamma_{<l}$).
\end{prop}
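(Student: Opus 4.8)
The plan is to begin by isolating the purely local consequence of the hypothesis that $e=\{v,w\}$ is an $f$-edge lying in no alternating square. I would record the observation that then $\rho_j$ fixes both $v$ and $w$ whenever $|j-f|\geq 2$. Indeed, such a $\rho_j$ commutes with $\rho_f$ by the string property, so if $\rho_j$ moved $v$ to some $v'\neq v$ we would get $\rho_f(v')=\rho_f\rho_j(v)=\rho_j\rho_f(v)=\rho_j(w)$; checking that then $\rho_j(w)=w'\neq w$ and that $v,w,w',v'$ are distinct, the $f$-edges $\{v,w\},\{v',w'\}$ together with the $j$-edges $\{v,v'\},\{w,w'\}$ form an alternating square $q_{f,j}$ through $e$, contrary to hypothesis. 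The degenerate coincidences (e.g. $v'=w$) are excluded separately, as they would place a second edge on $\{v,w\}$.

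For the first assertion I would argue by contradiction using the commuting decomposition of a maximal parabolic. Suppose a label $m$ with $l<m<f$ is absent from the path $P$. Then $P$ uses only generators $\rho_j$ with $j\neq m$, so all its vertices lie in a single orbit $C$ of $\Gamma_m=\langle \rho_j : j\neq m\rangle$, and $v\in C$ since $e$ is the first edge of $P$. As $m\leq f-1$, every generator of $\Gamma_{<m}=\langle\rho_0,\dots,\rho_{m-1}\rangle$ has label at most $f-2$, hence fixes $v$ by the observation above. By the string property $\Gamma_{<m}$ and $\Gamma_{>m}$ commute elementwise, so $C=\Gamma_m v=\Gamma_{>m}v$; and for $u=hv\in C$ with $h\in\Gamma_{>m}$ and any $g\in\Gamma_{<m}$ we get $gu=ghv=hgv=hv=u$, so $\Gamma_{<m}$ fixes $C$ pointwise. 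This contradicts the fact that the terminal $l$-edge $\{c,d\}\subseteq C$ is moved by $\rho_l\in\Gamma_{<m}$ (as $l<m$). Hence no intermediate label is missing, which with the labels $l$ and $f$ of the end edges gives every label in $\{l,\dots,f\}$. The case $l>f$ is the mirror image, using instead that $\Gamma_{>m}$ fixes $v$ and fixes $C$ pointwise, while $\rho_l\in\Gamma_{>m}$.

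For the ``moreover'' I would construct a path of consecutive labels $e=e_f,e_{f-1},\dots,e_l$, each $e_j$ a $j$-edge meeting the next, and show its terminal $l$-edge $\{c,d\}$ lies in $\mathrm{Fix}(\Gamma_{>l})$. The generators $\rho_j$ with $j\geq l+2$ commute with $\rho_l$, so if any moved $c$ it would create an alternating square $q_{l,j}$ on $\{c,d\}$; choosing $P$ of minimal length (so the labels descend without detours) and invoking the square-moving and swap propositions rules this out, forcing each such $\rho_j$ to fix $c$ and $d$. The delicate point — which I expect to be the main obstacle — is the single non-commuting neighbour $\rho_{l+1}$: one must arrange the descent so that $\rho_{l+1}$ also fixes $c$ and $d$ rather than carrying the $l$-edge into a new orbit. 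I would resolve this by applying the same local observation to $e_{l+1}$ to restrict which generators can act on the vertex shared with $e_l$, re-choosing the final step if necessary so that the whole of $\Gamma_{>l}$ fixes $\{c,d\}$; the dual construction with $\Gamma_{<l}$ then handles the case $l>f$.
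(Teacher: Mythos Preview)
Your argument for the first assertion is correct and takes a genuinely different route from the paper. The paper works directly in the graph: assuming a label $k$ strictly between $f$ and $l$ is missing, it locates the first edge along the path whose label $u$ lies on the far side of $k$ from $f$; since every earlier edge then has label at distance $\geq 2$ from $u$, the action of $\rho_u$ propagates back along the path one alternating square at a time until it reaches $e$ itself, producing a square $q_{f,u}$ through $e$ and contradicting the hypothesis. Your argument replaces this graph-chasing with the structural observation that $\Gamma_m=\Gamma_{<m}\Gamma_{>m}$ with elementwise-commuting factors, combined with your local lemma that $\Gamma_{<m}$ fixes $v$, to conclude that $\Gamma_{<m}$ fixes the entire $\Gamma_m$-orbit of $v$ pointwise; then $\rho_l\in\Gamma_{<m}$ cannot swap the endpoints of the terminal $l$-edge. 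This is clean and makes the role of the string relation more transparent; the paper's version has the small virtue of actually exhibiting the offending square, which is occasionally useful downstream.

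Your treatment of the ``moreover'' clause has a real gap. You appeal to ``the square-moving and swap propositions'' to rule out an alternating square on the terminal $l$-edge $\{c,d\}$, but those results belong to the $2$-fracture graph section and concern replacing edges of a chosen $2$-fracture subgraph; they give no obstruction whatsoever to an alternating square sitting in $\mathcal{G}$ at $\{c,d\}$. The hypothesis ``not in an alternating square'' applies only to $e$, not to the $l$-edge you construct, so ``$\rho_j$ moving $c$ would create $q_{l,j}$'' is simply not a contradiction. The $\rho_{l+1}$ case you flag as delicate is left essentially unresolved, and your descent $e_f,e_{f-1},\ldots$ does not by itself guarantee that each $\rho_{f-k}$ actually moves the current endpoint. (The paper's own proof is likewise silent on this clause, arguing only the first assertion; in practice the content of the ``moreover'' is read off from how the proposition is applied later in the section rather than from a self-contained argument here, so you should not expect the $2$-fracture machinery to be the missing ingredient.)
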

\begin{proof}
Consider a path starting in $e$ and containing an $l$-edge. Let $f<l$.
Suppose that none of the edges of the path has label $k$, for some $f<k<l$.
Then in this path there is an edge with label $<k$ meeting an edge with label $u>k$.
Suppose that this is the first time in the path that this happens.
Then there is an alternating square, containing $e$ and a $u$-edge, a contradiction, as shown in the following figure.
$$ \xymatrix@-1.3pc{*+[o][F]{}\ar@{.}[rr] &&*+[o][F]{}&&&&&& *+[o][F]{}  \ar@{.}[ddll]_u\ar@{.}[ddrr]&&&&&&&&\\
&&&& \ldots&&&&&&\\
*+[o][F]{} \ar@{-}[rr]_f \ar@{.}[uu]^u&&*+[o][F]{} \ar@{.}[uu]^u\ar@{~}[rrrr]&&&&*+[o][F]{}\ar@{.}[rr]&&*+[o][F]{} \ar@{-}[rr]^u && *+[o][F]{} \ar@{~}[rrrr] &&&&*+[o][F]{}\ar@{-}[rr]^l&&*+[o][F]{}
 }
  $$
\end{proof}

\begin{prop}\label{i=0andh=r-1}
Let $i=0$.
If  $\rho_{r-1}$ acts non-trivially on both $\Gamma_i$-orbits, then $r\leq  \frac{n-1}{2}$.
\end{prop}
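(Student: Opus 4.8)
The plan is to exploit the fact that $i=0$ sits at the end of the diagram, so that the two distinguished points $a$ and $b$ are moved only by $\rho_0$ and $\rho_1$. First I would record the structural consequences. Here $A$ and $B$ are the transitive actions of $\Gamma_0$ on its two orbits $O_1\ni a$ and $O_2\ni b$, of sizes $n_1$ and $n_2$ with $n_1+n_2=n$. Since $\rho_2,\dots,\rho_{r-1}$ fix both $a$ and $b$, transitivity of $A$ and $B$ forces $\alpha_1\neq 1$ and $\beta_1\neq 1$; hence $1\in J_A\cap J_B$, $n_1,n_2\ge 2$, and inside $O_1$ (resp. $O_2$) the point $a$ (resp. $b$) is moved only by $\alpha_1$ (resp. $\beta_1$). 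The hypothesis that $\rho_{r-1}$ is nontrivial on both orbits gives $r-1\in J_A\cap J_B$; as we are in Case~3, $J_B$ is an interval, and being an interval that contains both $1$ and $r-1$ it must equal $\{1,\dots,r-1\}$. Thus $B=\langle\beta_1,\dots,\beta_{r-1}\rangle$ is transitive of degree $n_2$ with $b$ moved only by the end generator $\beta_1$, and $J_A\subseteq J_B$.

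The target $r\le\frac{n-1}{2}$ reduces, via $n_1\ge 2$, to the bound $r-1\le\frac{n_2-1}{2}$, so the heart of the matter is to bound the number of generators of the degree-$n_2$ group $B$ by $\frac{n_2-1}{2}$. Because $b$ is a point moved only by the end generator $\beta_1$, the configuration $(B,\{\beta_1,\dots,\beta_{r-1}\})$ is exactly of the shape handled by the induction of Proposition~\ref{induction}, \emph{provided} $B$ is exhibited as an even transitive string C-group with connected diagram and intransitive maximal parabolics. I would obtain this in the spirit of Case~2: the relevant action is a sesqui-extension of a genuine string group on $O_2$ (or on a one-point extension of it), and Proposition~\ref{sesqui} identifies it with either that group or its product with a central involution $\tau$; the intersection property of $\Gamma$ excludes the stray $\tau$ from $\Gamma_{<1}\cap\Gamma_{>0}$, just as in Propositions~\ref{small(1)} and~\ref{bothAn}. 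For $n_2<12$ I would instead invoke Proposition~\ref{small123}, which forces $B$ into one of the graphs (1)--(3) of Table~\ref{small} and bounds $r$ directly. In each case, together with $n_1\ge 2$, this yields $r\le\frac{n_2+1}{2}\le\frac{n-1}{2}$.

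The main obstacle I anticipate is that $A$ and $B$ are only the two coordinate projections of the subdirect product $\Gamma_0\le A\times B$, so a generator redundant in $B$ need not be redundant in $\Gamma$; one cannot simply equate $r-1$ with the rank of $B$. This is precisely where evenness must be used: for every index $j$ with $\alpha_j=1$ (that is, $j\in J_B\setminus J_A$) the generator $\rho_j=\beta_j$ is itself an even permutation of $O_2$, which supplies the ``doubling'' responsible for the factor $\tfrac12$, while the indices $j\in J_A$ are paid for on the $O_1$ side through the transitive action $A$ of degree $n_1$. The delicate bookkeeping — allocating each index to $O_1$ or to $O_2$ without double counting, while respecting both the interval structure of $J_B$ and the parity constraint $\mathrm{sgn}(\alpha_j)=\mathrm{sgn}(\beta_j)$ — is the step I expect to require the most care; the hypothesis $r-1\in J_A$ is what anchors the far end of the diagram and prevents a degenerate split of the two factors. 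The borderline equalities (where $B\cong A_{n_2}$) and the small orbits are then cleared exactly as in Case~2.
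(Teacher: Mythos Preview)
Your plan has a genuine gap and aims at a harder intermediate target than necessary. In Case~2 the groups $A=\Gamma_{<i}$ and $B=\Gamma_{>i}$ are honest parabolic subgroups of $\Gamma$, hence string C-groups in their own right, and that is what makes Proposition~\ref{induction} and the sesqui-extension trick applicable. Here, with $i=0$ and $\rho_{r-1}$ acting on \emph{both} orbits, your $A$ and $B$ are only the coordinate projections of $\Gamma_0\le A\times B$: they are sggi's, not string C-groups; their generators need not be independent; and for $j\in J_A$ the permutation $\beta_j$ need not be even (only $\rho_j=\alpha_j\beta_j$ is). Proposition~\ref{induction} therefore does not apply to $B$, and the paragraph promising ``delicate bookkeeping'' does not supply the missing argument. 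Note also that your target $r-1\le(n_2-1)/2$, i.e.\ $n_2\ge 2r-1$, is a stronger per-orbit bound than is actually needed.

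The paper bypasses all of this with a short symmetric counting argument on the permutation representation graph. Since $\rho_{r-1}$ is nontrivial on each $\Gamma_0$-orbit, each orbit contains an $(r-1)$-edge; Proposition~\ref{path} then forces, in each orbit, a path from that edge down to the $1$-edge at $a$ (resp.\ $b$) carrying every label $1,\dots,r-1$. Together with the $0$-edge $\{a,b\}$ this is a tree with $2(r-1)+1$ edges, so $2r\le n$. In the equality case the graph is exactly the double path, the only $0$-edge is $\{a,b\}$, and $\rho_0$ would be a single transposition, contradicting evenness (the tiny exception $r=3$ being trivial since $n\ge 12$). Hence $r\le(n-1)/2$. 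This uses only Proposition~\ref{path} and the parity of $\rho_0$, with no structural analysis of $B$ at all.
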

\begin{proof}
By Proposition~\ref{path} there are two paths, one in the first and the other in the second $\Gamma_i$-orbit, each containing  all labels from $1$ to $r-1$.
Thus $2(r-1)+1\leq n-1$. Hence $r\leq\frac{n}{2}$. Suppose we have the equality. In that case the paths have consecutive labels, as in the following figure and $n$ is precisely the number of vertices of the two paths.
$$\xymatrix@-1.3pc{*+[o][F]{}\ar@{-}[rr]^{r-1} && *+[o][F]{} \ar@{-}[rr]^{r-2}&&*+[o][F]{} \ar@{.}[rr] && *+[o][F]{}\ar@{-}[rr]^1 && *+[o][F]{}\ar@{-}[rr]^0 &&*+[o][F]{} \ar@{-}[rr]^1&& *+[o][F]{}\ar@{.}[rr]&& *+[o][F]{} \ar@{-}[rr]^{r-2}&& *+[o][F]{} \ar@{-}[rr]^{r-1}&&*+[o][F]{}  }
$$
But then there is no place for extra $i$-edges unless $r=3$ (which  gives the bound trivially as $n\geq 12$).
\end{proof}

\begin{prop}\label{i=0andh<>r-1}
Let $i=0$. Let $h\neq r-1$ be the maximal label such that $\rho_h$ acts non-trivially on both $\Gamma_i$-orbits.
There exists a set of vertices $X$, contained in the $\Gamma_i$-orbit fixed by $\rho_{r-1}$, such that $h\leq \frac{n-|X|-1}{2}$ and $\Gamma_{>h}$ fixes $\{1,\ldots, n\}\setminus X$. Moreover if $h= \frac{n-|X|-1}{2}$ then $\Gamma_{<h}$ has the following permutation representation graph, where the black dots represent the vertices of $\{1,\ldots, n\}\setminus X$. 
$$\xymatrix@-1.3pc{\bullet\ar@{-}[rr]^h && \bullet \ar@{-}[rr]^{h-1}&&\bullet \ar@{.}[rr] && \bullet\ar@{-}[rr]^1 && \bullet\ar@{-}[rr]^0 &&\bullet \ar@{-}[rr]^1&& \bullet \ar@{.}[rr]&& \bullet \ar@{-}[rr]^{h-1}&&  \bullet \ar@{-}[rr]^{h}&&*+[o][F]{}  }
$$
\end{prop}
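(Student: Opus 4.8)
The plan is to run the whole argument off Proposition~\ref{path} applied to the $0$-edge $e=\{a,b\}$. First I would record that $e$ is the \emph{only} edge of $\mathcal{G}$ joining the two $\Gamma_i$-orbits: every $\rho_j$ with $j\neq 0$ preserves both orbits, and $\rho_0$ moves only the pair $\{a,b\}$ across them. Hence $e$ lies in no alternating square (a square $q_{0,l}$ through $e$ would force a second cross-orbit $0$-edge, contradicting uniqueness), so Proposition~\ref{path} applies with $f=0$: any path in $\mathcal{G}$ from $e$ to a $k$-edge $(k>0)$ that avoids a second $0$-edge must use every label $1,\dots,k-1$.

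Next I would show that $\Gamma_{>h}$ is supported on a single orbit. Each $\rho_j$ with $j>h$ acts on exactly one orbit, by maximality of $h$. Writing $J_B$ for the set of labels acting on the second orbit, which is an interval by the Case~3 hypothesis and contains $h$, let $t=\max J_B$. If $h<t<r-1$, then $\rho_t$ (second orbit only) and $\rho_{t+1}$ (first orbit only) have disjoint support, hence commute; this removes the diagram edge between $t$ and $t+1$ and disconnects the Coxeter diagram, impossible since $A_n$ is simple and so not a nontrivial direct product. Thus $t=r-1$ or $t=h$, and in either case $\rho_{h+1},\dots,\rho_{r-1}$ all act on one orbit $O^{\ast}$ (the orbit $\rho_{r-1}$ moves), while the other orbit $O^{\circ}$ is fixed pointwise by $\Gamma_{>h}$. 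I then take $X$ to be the set of points of $O^{\ast}$ moved by $\Gamma_{>h}$, so that $\Gamma_{>h}$ fixes $\{1,\dots,n\}\setminus X=O^{\circ}\cup(O^{\ast}\setminus X)$.

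The heart of the proof is the count, and here Proposition~\ref{path} does the real work. Since $\rho_h$ acts on both orbits, each orbit contains an $h$-edge, and a minimal path from $e$ to such an edge realises all of $1,\dots,h$; thus $|O^{\circ}|\ge h+1$, all of these points fixed by $\Gamma_{>h}$. In $O^{\ast}$, choose a minimal path $e,v_0,\dots,v_h$ with $v_0=e\cap O^{\ast}$ realising $1,\dots,h$. I claim $v_0,\dots,v_{h-1}$ are fixed by $\Gamma_{>h}$: if some $v_p$ with $p\le h-1$ met a $k$-edge with $k>h$, then $e,v_0,\dots,v_p$ followed by that edge would be a path from $e$ to a $k$-edge carrying only the $\le p\le h-1$ labels on $v_0\dots v_p$ together with $k$, hence missing some label in $\{1,\dots,h\}\subseteq\{1,\dots,k-1\}$, contradicting Proposition~\ref{path}. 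This yields $h$ further fixed points, all distinct and in $O^{\ast}$, so $n-|X|=|\{1,\dots,n\}\setminus X|\ge(h+1)+h=2h+1$, i.e.\ $h\le\frac{n-|X|-1}{2}$.

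Finally, for equality I would note that $n-|X|=2h+1$ forces $|O^{\circ}|=h+1$, so $O^{\circ}$ is exactly the minimal path, and forces exactly $h$ fixed points in $O^{\ast}$; in particular every $\rho_j$ with $1\le j\le h$ acts on both orbits, and on each side one has a transitive sggi on $h+1$ points generated by $h$ involutions. By the result of \cite{CC} (used in the same way earlier in the paper) these must be the Coxeter generators, so the labels occur consecutively along each arm, and gluing the two arms along the central $0$-edge $e=\{a,b\}$ produces the displayed graph. The main obstacle is precisely the no-shortcut step establishing the $h$ near-bridge fixed points in $O^{\ast}$, and then extracting the consecutive labelling in the equality case; by comparison the single-orbit reduction via connectedness of the diagram is routine.
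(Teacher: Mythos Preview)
Your approach matches the paper's: both take $X$ to be the complement of $\mathrm{Fix}(\Gamma_{>h})$ and use Proposition~\ref{path} from the distinguished $0$-edge $\{a,b\}$ to produce, in each $\Gamma_0$-orbit, a path carrying labels $1,\dots,h$ whose vertices (save the far endpoint on one side) are fixed by $\Gamma_{>h}$, yielding $2h+1\le n-|X|$. You are more explicit than the paper's three-line proof in two useful places: you actually prove that $\Gamma_{>h}$ is supported on a single orbit (via connectedness of the diagram and simplicity of $A_n$, using the standing Case~3 hypothesis that $J_B$ is an interval), and you unpack the ``moreover'' clause of Proposition~\ref{path} into a direct no-shortcut argument.

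One small issue: in the equality case you invoke \cite{CC}, but that result classifies independent generating sets of size $m-1$ for $S_m$, and you have not verified that the restricted action on either arm is the full symmetric group on $h+1$ points. The consecutive labelling you want follows more cheaply by iterating your own no-shortcut step: since only $\rho_1$ moves $a$ (respectively $b$), the $1$-edge at $a$ is itself not in an alternating square, so Proposition~\ref{path} applied to it forces the next edge to carry label $2$, and so on along the arm. This is presumably what the paper's one-line treatment of equality is doing.
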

\begin{proof}
By Proposition~\ref{path} there exist a path $\mathcal{P}_1$ from the $h$-edge in the first $\Gamma_i$-orbit to the vertex $a$, and a path $\mathcal{P}_2$ from the $h$-edge in the second $\Gamma_i$-orbit to the vertex $b$, each of them containing all labels from $1$ to $h-1$, and fixed by $\Gamma_{>h}$. 
In that case, the two paths give us $2h+1 \leq |\mathrm{Fix}(\Gamma_{>h})|=n-|X|$ with $X := \{1, \ldots, n\}\setminus \mathrm{Fix}(\Gamma_{>h})$ as required. When equality holds, the permutation representation graph is the one given in this proposition.
\end{proof}

\begin{prop}\label{<jon>jtran}
Let $X:= \{1,\ldots,n\} \setminus \mathrm{Fix}(\Gamma_{>j})$.
If $\Gamma_{>j}$ is transitive on $X$ and there exists a permutation $\rho_l$ with $l<j$ acting non trivially on $X$, then   $r-1-j\leq \frac{|X|}{2}-1$. Moreover if $r-1-j= \frac{|X|}{2}-1$ then,  $\Gamma_{>j}$ has one of the following permutation representation graphs for some $k\in\{j+1,\ldots, r-1\}$.
$$ \xymatrix@-1.3pc{*+[o][F]{} \ar@{-}[rr]^{j+1} \ar@{.}[dd]_l &&
 *+[o][F]{}\ar@{.}[rr] \ar@{.}[dd]_l&&
 *+[o][F]{} \ar@{-}[rr]^k\ar@{.}[dd]_l&&
 *+[o][F]{} \ar@{-}[rr]^{k+1}\ar@{.}[dd]_l&&
 *+[o][F]{}\ar@{-}[rr]^{k+2}\ar@{.}[dd]_l\ar@<.3ex>@{-}[dd]^k && 
 *+[o][F]{}\ar@{.}[dd]_l\ar@<.3ex>@{-}[dd]^k  \ar@{.}[rr] &&
 *+[o][F]{} \ar@{-}[rr]^{r-1} \ar@{.}[dd]_l \ar@<.3ex>@{-}[dd]^k && 
 *+[o][F]{}\ar@{.}[dd]_l\ar@<.3ex>@{-}[dd]^k \\
&& && && && && && &&\\
*+[o][F]{} \ar@{-}[rr]_{j+1}  && *+[o][F]{}\ar@{.}[rr] &&*+[o][F]{} \ar@{-}[rr]_k && *+[o][F]{}\ar@{-}[rr]_{k+1}&& *+[o][F]{}\ar@{-}[rr]_{k+2} &&*+[o][F]{} \ar@{.}[rr] &&*+[o][F]{}\ar@{-}[rr]_{r-1}  && *+[o][F]{}}
  $$
  $$ \xymatrix@-1.3pc{*+[o][F]{} \ar@{-}[rr]^{j+1} \ar@{.}[dd]_l \ar@<.3ex>@{-}[dd]^k &&
 *+[o][F]{}\ar@{.}[rr] \ar@{.}[dd]_l\ar@<.3ex>@{-}[dd]^k &&
 *+[o][F]{} \ar@{-}[rr]^{k-2}\ar@{.}[dd]_l\ar@<.3ex>@{-}[dd]^k &&
 *+[o][F]{} \ar@{-}[rr]^{k-1}\ar@{.}[dd]_l\ar@<.3ex>@{-}[dd]^k&&
 *+[o][F]{}\ar@{-}[rr]^{k}\ar@{.}[dd]_l && 
 *+[o][F]{}\ar@{.}[dd]_l \ar@{.}[rr] &&
 *+[o][F]{} \ar@{-}[rr]^{r-1} \ar@{.}[dd]_l  && 
 *+[o][F]{}\ar@{.}[dd]_l\\
&& && && && && && &&\\
*+[o][F]{} \ar@{-}[rr]_{j+1}  && *+[o][F]{}\ar@{.}[rr] &&*+[o][F]{} \ar@{-}[rr]_{k-2} && *+[o][F]{}\ar@{-}[rr]_{k-1}&& *+[o][F]{}\ar@{-}[rr]_k &&*+[o][F]{} \ar@{.}[rr] &&*+[o][F]{}\ar@{-}[rr]_{r-1}  && *+[o][F]{}}
  $$
\end{prop}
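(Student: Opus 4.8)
The plan is to exploit the fact that $\rho_l$ centralises all of $\Gamma_{>j}$. Since $l<j$, for every $k\geq j+1$ we have $k-l\geq (j+1)-(j-1)=2$, so the string property gives that $\rho_l$ commutes with $\rho_k$; hence $\rho_l$ commutes with $\Gamma_{>j}=\langle\rho_{j+1},\ldots,\rho_{r-1}\rangle$. As $\Gamma_{>j}$ is transitive on $X$ and $\rho_l$ acts non-trivially on $X$, the restriction $z:=\rho_l|_X$ is a non-identity element of the centraliser of a transitive group, which is semiregular; thus $z$ is a fixed-point-free involution, $|X|=2m$ is even, and the $z$-orbits form a $\Gamma_{>j}$-invariant partition of $X$ into $m$ blocks of size two. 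Equivalently $\Gamma_{>j}\leq C_2\wr S_m$, and moreover $\Gamma_{>j}$ consists of even permutations of $X$ since each $\rho_k$ with $k>j$ is even. This reduces the proposition to bounding the rank $r-1-j$ of $\Gamma_{>j}$ by $m-1$, a bound two smaller than the generic transitive-imprimitive bound of \cite{transitive}; the extra saving must come from the presence of the fixed-point-free central matching $z$ together with evenness.

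Next I would bound $r-1-j$ using paths. The key point is that $z$ is a label-preserving automorphism of the permutation representation graph $\mathcal{G}_{>j}$ of $\Gamma_{>j}$ on $X$: because $z$ commutes with each $\rho_k$, every $k$-edge $\{x,y\}$ has a mate $\{xz,yz\}$ with the same label, and the $z$-orbits are exactly the $m$ blocks. Using Proposition~\ref{path} (legitimate because we are in the no-$2$-fracture-graph regime, so the relevant top edge lies in no alternating square) I would build a path $P$ in $\mathcal{G}_{>j}$ realising each of the labels $j+1,\ldots,r-1$, hence using at least $r-1-j$ edges and so at least $r-j$ vertices. Applying the matching $z$ produces a second path $Pz$ with the identical label sequence. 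Showing that $P$ and $Pz$ are essentially vertex-disjoint then gives $2(r-j)\leq |X|=2m$, that is $r-1-j\leq m-1=\frac{|X|}{2}-1$, as required.

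For the equality case I would argue that $2(r-j)=|X|$ forces the two paths to partition $X$ and to have consecutive labels, so the block action of $\Gamma_{>j}$ is the standard Coxeter action of $S_m$ on a linear diagram with labels $j+1,\ldots,r-1$; this is the step where I invoke the description of the involutory generating sets of $S_m$ from \cite{CC,fl,Corr}, exactly paralleling the $k=2$ imprimitive analysis. In particular no generator lies in the kernel of the block action. It then remains to locate the within-block transpositions carried by the generators: since $\Gamma_{>j}$ is even while a single within-block transposition is odd, these must occur in even numbers, and the requirements that $\mathcal{G}_{>j}$ be connected and carry no cycle containing a single edge of a chosen fracture graph pin the configuration down, up to the symmetry around the distinguished label $k$, to exactly the two permutation representation graphs displayed.

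The main obstacle is the disjointness/extremal step. It is not automatic that $P$ and $Pz$ avoid one another, and indeed the two displayed extremal graphs arise precisely from the two admissible ways in which the $z$-image of the path can interact with the original through a single distinguished generator $\rho_k$ (the solid $k$-edges). Handling this cleanly requires combining the $F$-module structure of the kernel $K\leq F^m$, where $F$ is the field with two elements, with the intersection property and evenness to rule out the coexistence of a full block action and a non-trivial kernel generator, together with the control over adjacent non-consecutive labels furnished by Proposition~\ref{path}. Once the near-extremal configurations are shown to be rigid, reading off the two graphs is routine.
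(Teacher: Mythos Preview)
Your setup is correct: $\rho_l$ centralises $\Gamma_{>j}$, so $z=\rho_l|_X$ is a fixed-point-free involution and $\Gamma_{>j}$ preserves the partition of $X$ into $m=|X|/2$ blocks of size two. But from there you take a substantially harder route than the paper, and the route has a real gap.

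The paper's argument is a five-line trick you are missing. Let $M\subseteq\{\rho_{j+1},\ldots,\rho_{r-1}\}$ be an independent generating set for the block action. If some $\rho_k$ with $k>j$ is \emph{not} in $M$, then $M\subseteq\Gamma_k$, so $\Gamma_k$ is transitive on the $m$ blocks; since $\rho_l\in\Gamma_k$ swaps the two points of every block, $\Gamma_k$ is transitive on $X$. But $\rho_k$ moves only points of $X$, so $\Gamma_k$ has the same orbits on $\{1,\ldots,n\}$ as $\Gamma=\langle\Gamma_k,\rho_k\rangle$; hence $\Gamma_k$ is transitive on $\{1,\ldots,n\}$, contradicting the standing hypothesis of the section that every maximal parabolic is intransitive. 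Therefore every $\rho_k$ with $k>j$ lies in $M$, whence $r-1-j=|M|\le m-1$. The equality case then falls out because $M$ is an independent sggi generating set of size $m-1$ for the block action on $m$ points, forcing the Coxeter pattern on blocks, and the two pictures record the two ways the single ``kernel'' contribution can sit relative to the distinguished label $k$.

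Your path-doubling approach never invokes the intransitivity of the $\Gamma_k$'s, which is precisely the leverage that makes the bound two units sharper than the generic imprimitive bound. Without it you cannot rule out generators lying in the kernel of the block action, and your inequality $2(r-j)\le|X|$ does not follow. Concretely: your appeal to Proposition~\ref{path} requires the chosen top edge to lie in no alternating square, which ``no $2$-fracture graph'' does not guarantee; and the disjointness of $P$ and $Pz$ is exactly what fails in the displayed extremal graphs (the solid $k$-edges join the two rows), so you cannot assume it and then recover the pictures as the boundary case. The missing idea is the transitivity contradiction for $\Gamma_k$ above; once you use it, the path machinery is unnecessary.
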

\begin{proof}
If $\rho_l$ acts non trivially on $X$ then $\Gamma_{>j}$ must be imprimitive with blocks of size two, with $\rho_l$ swapping all pairs of vertices inside the blocks. 
Let $M$ an independent generating set for the block action of $\Gamma_{>j}$ and suppose that the group generated by $M$ is intransitive on $X$. Then there must be two orbits on $X$ under the action of $\langle M \rangle$ and some $k>j$ such that $\rho_k$ swaps points of distinct orbits. But then, as $\rho_l \in \Gamma_k$,  $\Gamma_k$ is transitive on $X$.
Moreover, $\rho_k$ only moves points in $X$ which is the union of two $\Gamma_k$-orbits. These two orbits are already fused by $\rho_l\in\Gamma_k$. Therefore, $\Gamma_k$ has to be transitive on $\{1,\ldots, n\}$, a contradiction.
So the action of $\langle M\rangle$ must be transitive on $X$.
If  $|M|=\frac{|X|}{2}-1$ there are only the given possibilities for $\Gamma_{>j}$. 
\end{proof}

In what follows we suppose that $i > 0$ and that all generators acting on the second  $\Gamma_i$-orbit have labels $> i$. As $J_A$ is not an interval, $\Gamma_{<i}$ can either transitive or intransitive on the first $\Gamma_i$-orbit. We consider these cases separately.

\begin{prop}\label{<itran}
Let $r>\frac{n-1}{2}$. Let $h>i$ be the maximal label of  a permutation acting non-trivially on both $\Gamma_i$-orbits.
If $\Gamma_{<i}$ is transitive on the first $\Gamma_i$-orbit then $h=i+1$, $h<r-1$ and there exists a set of vertices $X$, contained in the second $\Gamma_i$-orbit, such that $h\leq \frac{n-|X|-1}{2}$ and $\Gamma_{>h}$ fixes $\{1,\ldots, n\}\setminus X$. 

Moreover if $h= \frac{n-|X|-1}{2}$ then $\Gamma_{<h}$ (with $h=i+1$)  has the following permutation representation graph for some $k\in\{2,\ldots,i-1\}$ where the black dots represent the vertices of $\{1,\ldots, n\}\setminus X$. 
 $$
\xymatrix@-1.3pc{\bullet\ar@{-}[rr]^0 \ar@{=}[d]_{i+1}^k && \bullet \ar@{-}[rr]^1\ar@{=}[d]_{i+1}^k&&\bullet\ar@{=}[d]_{i+1}^k\ar@{.}[rr] && \bullet\ar@{-}[rr]^{i-1}\ar@{-}[d]_{i+1} && \bullet\ar@{-}[rr]^i \ar@{-}[d]_{i+1} &&\bullet \ar@{-}[rr]^{i+1}&&*+[o][F]{} \\
 \bullet\ar@{-}[rr]_0 && \bullet \ar@{-}[rr]_1&&\bullet \ar@{.}[rr] && \bullet\ar@{-}[rr]_{i-1}&& \bullet&&&&
 }
  $$
\end{prop}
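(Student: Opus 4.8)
The plan is to keep the standing hypotheses ($r>\frac{n-1}{2}$, all $\Gamma_j$ intransitive, $i>0$, every generator acting on the second orbit has label $>i$, and the fact recorded at the start of the section that only $\rho_{i-1}$ and $\rho_{i+1}$ can move $a$ or $b$) and to proceed by a chain of structural reductions. First I would pin down $h$ and $X$ by a single commuting argument. For any $j\ge i+2$ and any $k\le i-1$ we have $|j-k|\ge 2$, so $\rho_j$ commutes with all of $\Gamma_{<i}=\langle\rho_0,\dots,\rho_{i-1}\rangle$, which is transitive on the first $\Gamma_i$-orbit by hypothesis. Since the centraliser of a transitive group is semiregular, the restriction $\alpha_j$ of $\rho_j$ to the first orbit is either trivial or fixed-point-free; in the latter case it would move $a$, impossible for $j\notin\{i-1,i+1\}$. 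Hence every $\rho_j$ with $j\ge i+2$ fixes the first orbit pointwise. Applied to $\rho_h$ (which by definition acts nontrivially on the first orbit) this forces $h\le i+1$, hence $h=i+1$; applied to $\Gamma_{>h}=\langle\rho_{i+2},\dots,\rho_{r-1}\rangle$ it shows $\Gamma_{>h}$ fixes the whole first orbit, so $X=\{1,\dots,n\}\setminus\mathrm{Fix}(\Gamma_{>h})$ lies in the second orbit.

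Next I would use the same commuting argument on $\rho_{i+1}$ itself: it commutes with $\Gamma_{<i}$ and acts nontrivially on the first orbit, so $\alpha_{i+1}$ is a fixed-point-free involution there. Thus the first orbit carries a $\Gamma_{<i}$-invariant system of $n_1/2$ blocks of size two, on which the rank-$i$ transitive sggi $\Gamma_{<i}$ acts, forcing $n_1\ge 2i+2$. Because $b$ is moved only by $\rho_i$ and $\rho_{i+1}$, it is fixed by $\Gamma_{>h}$, so $\mathrm{Fix}(\Gamma_{>h})\supseteq(\text{first orbit})\cup\{b\}$ has at least $2i+3=2h+1$ points. This yields $h=i+1\le\frac{n-|X|-1}{2}$. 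One can also phrase the count through Proposition~\ref{path}, producing a path in the first orbit that realises all labels down to $0$ and is fixed by $\Gamma_{>h}$; the factor of two then comes from the semiregular pairing, while the second orbit contributes little since all its labels are $\ge i+1$.

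For $h<r-1$ I would exclude $h=r-1$. In that case $\Gamma_{>h}$ is trivial and $X=\emptyset$, the second-orbit action is generated by the single involution $\beta_{i+1}$, so the second orbit has exactly two points, one of them a pendant vertex attached by the interior label $i+1$; moreover $\alpha_{i+1}$ is semiregular on the first orbit. The inequality becomes $r\le\frac{n+1}{2}$, which together with $r>\frac{n-1}{2}$ forces $n$ even and a single tight value of $i$ with $r=\frac n2$. I would then pass to $\Gamma_{r-1}$, which fixes the pendant point, is transitive of degree $n-1$, and lies in $A_{n-1}$; the induction hypothesis on $A_{n-1}$ bounds its rank $r-1$, and this, combined with the rigidity just described and the intersection property, contradicts $r=\frac n2$.

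Finally I would treat the equality case $h=\frac{n-|X|-1}{2}$ by tracing back through the count: equality forces $n_1=2i+2$ exactly, so the block action of $\Gamma_{<i}$ is the Coxeter path giving $S_{i+1}$, and $\mathrm{Fix}(\Gamma_{>h})\cap(\text{second orbit})=\{b\}$. Realising the first orbit as a $2\times(i+1)$ grid paired vertically by $\rho_{i+1}$, with horizontal labels $0,\dots,i-1$ on both rows and $\rho_i$ attaching $a$ to $b$, forces the displayed graph; the only remaining freedom is the single label $k$ on which $\rho_k$ coincides with the vertical swap on an initial block segment, and both the admissible range $2\le k\le i-1$ and the length of the doubling are dictated by the requirement that $\Gamma$ be \emph{even} and satisfy the intersection property. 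I expect this last step to be the main obstacle: proving that the equality configuration is rigid and reconstructing the exact permutation representation graph (including which labels $k$ occur) requires careful label bookkeeping via repeated use of Proposition~\ref{path}, rather than one clean inequality, with the boundary case $h=r-1$ above as the secondary delicate point.
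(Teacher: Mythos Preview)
Your overall strategy matches the paper's. The centraliser argument (each $\rho_j$ with $j\ge i+2$ commutes with the transitive $\Gamma_{<i}$, hence acts semiregularly on the first orbit, and so trivially since a fixed-point-free action would move $a$) is exactly how the paper obtains $h=i+1$; your bound $i\le n_1/2-1$ via the size-two block system is the dual of Proposition~\ref{<jon>jtran}; and the equality case is handled by tracing equality back through that same proposition, just as in the paper.

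The genuine gap is your exclusion of $h=r-1$. Passing to $\Gamma_{r-1}$ and invoking induction (or Proposition~\ref{induction}) on a transitive even group of degree $n-1$ yields only $r-1\le\frac{n-2}{2}$, i.e.\ $r\le\frac n2$. This is \emph{consistent} with the boundary value $r=\frac n2$; it does not rule it out, and your appeal to ``rigidity \ldots\ and the intersection property'' does not name a concrete obstruction. The paper closes this case differently: from $r=\frac n2$ the equality case of the dual of Proposition~\ref{<jon>jtran} pins down the permutation representation graph on the first orbit; the extra transpositions of $\rho_i$ (beyond $(a,b)$, needed since $\rho_i$ is even) are then forced to be vertical edges in that grid (using that every $\Gamma_j$ is intransitive), and a direct parity count shows that one of $\rho_i$, $\rho_{i+1}$ must be an odd permutation, contradicting $\Gamma\le A_n$. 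You need a parity argument of this kind; the induction step alone does not finish.
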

\begin{proof}
In this case, $\Gamma_{<i}$ is transitive on the first $\Gamma_i$-orbit $O_1$, which has size $n_1$. Moreover there exists $h>i$ such that $\rho_h$ acts non-trivially on $O_1$.
 This action is fixed-point-free and hence $\rho_h$ moves $a$ and $n_1$ is even. But since the $i$-edge $\{a,b\}$ is not in a square by hypothesis, $h=i+1$ and $\Gamma_{>h}$ acts trivially on $O_1$.
Moreover by the dual of Proposition~\ref{<jon>jtran} we have that $i\leq \frac{n_1}{2}-1$.
Thus $h=i+1\leq \frac{n_1}{2}$. 
As $J_B$ is an interval, $\rho_{i+1}$ is the unique permutation acting nontrivially on $b$. If $h\neq r-1$,  $\Gamma_{>h}$ fixes $O_1$ and  a vertex in the second $\Gamma_i$-orbit, so at least $n_1+1$ vertices altogether.
Then $h\leq \frac{|\mathrm{Fix}(\Gamma_{>h})|-1}{2}\leq \frac{n-|X|-1}{2}$. The equality cannot occur as it would imply that $n-|X| = n_1+1$ is even, a contradiction. So $h\leq \frac{n-|X|-1}{2}$ as wanted.
Moreover, when the equality holds, by the dual of Proposition~\ref{<jon>jtran}, the only possibility is the permutation representation graph given in the statement of this proposition. 

Now suppose that  $h=r-1$. Then $r-1\leq \frac{n_1}{2}$. As $n\geq n_1+2$, $r\leq \frac{n}{2}$. As by hypothesis $r> \frac{n-1}{2}$, we have the equality $r= \frac{n}{2}$. Then $\Gamma$ has the permutation  representation graph given in the statement of this proposition with $h=i+1=r-1$, with some additional $i$-edges. As all $\Gamma_j$'s must be intransitive, the extra $i$-edges must be vertical edges. Hence one of $\rho_i$ and $\rho_{i+1}$ has to be an odd permutation, a contradiction.
\end{proof}


\begin{prop}\label{<iint}
Let $r>\frac{n-1}{2}$. Let $h>i$ be the maximal label of  a permutation acting non-trivially on both $\Gamma_i$-orbits.
If $\Gamma_{<i}$ is intransitive in the first $\Gamma_i$-orbit, then $h<r-1$ and there exists a set of vertices $X$, contained in the second $\Gamma_i$-orbit,  such that $h\leq \frac{n-|X|-1}{2}$ and $\Gamma_{>h}$ fixes $\{1,\ldots, n\}\setminus X$. 

Moreover if $h= \frac{n-|X|-1}{2}$ then $h=i+1$ and $\Gamma_{<h+1}$ has the following permutation representation graph, where the black dots represent the vertices of $X$.
$$\xymatrix@-1.3pc{\bullet\ar@{-}[rr]^0 \ar@{-}[d]_{i+1} && \bullet \ar@{-}[rr]^1\ar@{-}[d]_{i+1}&&\bullet\ar@{-}[d]_{i+1} \ar@{.}[rr] && \bullet\ar@{-}[rr]^{i-1}\ar@{-}[d]_{i+1} && \bullet\ar@{-}[rr]^i \ar@{-}[d]_{i+1} &&\bullet \ar@{-}[rr]^{i+1}&& *+[o][F]{} \\
 \bullet\ar@{-}[rr]_0 && \bullet \ar@{-}[rr]_1&&\bullet \ar@{.}[rr] && \bullet\ar@{-}[rr]_{i-1}&& \bullet&&\\ 
 }
  $$
\end{prop}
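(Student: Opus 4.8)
The plan is to analyse the subgroup $\Gamma_{>h}=\langle\rho_{h+1},\dots,\rho_{r-1}\rangle$ and to show that it is supported on (a subset of) the second $\Gamma_i$-orbit while fixing the whole first orbit together with the point $b$. Write $O_1,O_2$ for the first and second $\Gamma_i$-orbits and $n_1=|O_1|$. The essential difference from Proposition~\ref{<itran} is that here $\Gamma_{<i}$ is \emph{not} transitive on $O_1$, so the argument that $\rho_h$ acts fixed-point-freely on $O_1$ because it centralises a transitive group is no longer available; this is why a direct counting argument through the path lemma is needed instead. Note first that the cross $i$-edge $\{a,b\}$, being the unique edge between the two orbits, lies in no alternating square, so Proposition~\ref{path} applies to it freely. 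Also, since $\rho_{i-1}$ fixes $O_2$ by hypothesis, only $\rho_i$ and $\rho_{i+1}$ move $b$, and both have label $\le h$ (as $h\ge i+1$), so $\Gamma_{>h}$ fixes $b$.

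I would first settle $h<r-1$. If $h=r-1$ then $\Gamma_{>h}=1$, and the counting bound established below forces $r$ to take its extreme value; the resulting permutation representation graph is then completely determined, and exactly as at the end of Proposition~\ref{<itran} one of the generators turns out to be an odd permutation, contradicting $\Gamma\cong A_n$. Hence $h<r-1$.

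The key structural step, which I expect to be the main obstacle, is to show that no generator of label $>h$ moves a point of $O_1$, so that $\Gamma_{>h}$ fixes $O_1$ pointwise. Since $h$ is the maximal label acting on both orbits, each $\rho_m$ with $m>h$ acts on a single orbit. Using that $J_B$ is an interval contained in $(i,r-1]$, the maximality of $h$, the connectedness of the diagram (so consecutive generators do not commute), and Proposition~\ref{path} applied to $\{a,b\}$, I would rule out any label $m>h$ acting on $O_1$: a highest such label would act on $O_1$ only and, being separated from $h$ by generators supported on $O_2$, would either commute with its neighbour in the diagram or produce a forbidden configuration. This gives $\mathrm{Fix}(\Gamma_{>h})\supseteq O_1\cup\{b\}$; setting $X=\{1,\dots,n\}\setminus\mathrm{Fix}(\Gamma_{>h})$ then yields $X\subseteq O_2$, as required. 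The difficulty is exactly that the centraliser argument of Proposition~\ref{<itran} breaks down, so this exclusion must be done combinatorially on the diagram.

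It then remains to bound $h$ and read off the equality case. Applying Proposition~\ref{path} to the $i$-edge produces a path inside $O_1$ issuing from $a$ and carrying all labels $0,1,\dots,i-1$, hence of length at least $i+1$. Because $\Gamma_{<i}$ is intransitive on $O_1$, a generator of label $h$ fusing two $\Gamma_{<i}$-suborbits centralises $\rho_0,\dots,\rho_{i-1}$ (as $h\ge i+1$ forces $h-j\ge2$ for $j\le i-1$) and therefore carries this path isomorphically onto a disjoint parallel copy, giving $n_1\ge 2(i+1)$; incorporating the labels $i+1,\dots,h$ that act in $O_1$ improves this to $n_1\ge 2h$, with strict inequality unless $h=i+1$. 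Combining with the previous step, $n-|X|=|\mathrm{Fix}(\Gamma_{>h})|\ge n_1+1\ge 2h+1$, that is $h\le\frac{n-|X|-1}{2}$. In the case of equality we must have $n_1=2h$ and $\mathrm{Fix}(\Gamma_{>h})=O_1\cup\{b\}$ exactly, which forces $h=i+1$ and pins down $\Gamma_{<h+1}=\Gamma_{<i+2}$ as the ladder graph displayed in the statement; the remaining generators $\rho_{i+2},\dots,\rho_{r-1}$ act only on $X\subseteq O_2$ and need not be determined.
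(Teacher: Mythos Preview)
Your proposal has two genuine gaps, both in the heart of the argument.

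\textbf{The diagram argument in step 3 does not work.} You claim that if some $\rho_m$ with $m>h$ acted on $O_1$, then the highest such label would be ``separated from $h$ by generators supported on $O_2$'' and hence commute with its neighbour. This is false: by connectedness of the diagram and the maximality of $h$, once one $\rho_m$ with $m>h$ is supported on $O_1$ only, \emph{all} of $\rho_{h+1},\dots,\rho_{r-1}$ are supported on $O_1$ only (each must overlap its successor). There is no diagram-level obstruction. The paper handles this case by a counting argument instead: if $\rho_{r-1}$ acts on $O_1$, one builds a path $\mathcal P_1$ in $O_1$ from an $(r-1)$-edge down to $a$ carrying every label $i+1,\dots,r-1$, together with its $\rho_{i-1}$-translate $\mathcal P_2$ (which is disjoint since $\rho_{i-1}$ moves $a$ and commutes with all higher labels), plus the two ``vertical'' paths $\mathcal P_3,\mathcal P_4$ inside the blocks $\beta$ and $\beta\rho_{i+1}$. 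These four paths already occupy $2(r-1)+1$ vertices of $O_1$, while $O_2$ contains at least two more (namely $b$ and the far end of the $h$-edge), giving $2(r-1)+1\le n-2$ and contradicting $r>\frac{n-1}{2}$.

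\textbf{The translate-by-$\rho_h$ step in your counting is wrong.} For $h>i+1$ the generator $\rho_h$ \emph{fixes} $a$ (only $\rho_{i-1},\rho_i,\rho_{i+1}$ can move $a$), and since $\rho_h$ centralises $\Gamma_{<i}$ it then fixes the entire $\Gamma_{<i}$-orbit of $a$ pointwise; so it does not carry your path $\mathcal P_3$ anywhere. The correct translate is by $\rho_{i+1}$, giving only $n_1\ge 2(i+1)$. Your subsequent claim $n_1\ge 2h$ does not follow: the paths $\mathcal P_1,\mathcal P_3,\mathcal P_4$ in $O_1$ account for only $h+i+1$ vertices. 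What rescues the bound is the path $\mathcal P_2$ in $O_2$ from $b$ to the $h$-edge there, contributing $h-i$ further vertices fixed by $\Gamma_{>h}$ (all but the far endpoint $c$). Together these give $|\mathrm{Fix}(\Gamma_{>h})|\ge 2h+1$, i.e.\ $h\le\frac{n-|X|-1}{2}$. You omitted $\mathcal P_2$ entirely.

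The paper also uses the imprimitivity of $A$ (from Proposition~\ref{CCD}, since $J_A$ is not an interval) to organise the block structure and to dispose of a side case where a label $l>i$ acts inside a block; this case forces $m=2$, $l=i+2=h$, and a separate short count. Your outline does not address this subcase either.
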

\begin{proof}
In this case $J_A$ is not an interval thus, by Proposition~\ref{CCD},  $A$ is  imprimitive embedded into  $S_k\wr S_m$ and $\Gamma_{<i}$ is fixing all the blocks (with $k,m\geq 2$).
By Proposition~\ref{path} there exist a path $\mathcal{P}_1$ from the $h$-edge in the first $\Gamma_i$-orbit to the vertex $a$, and a path $\mathcal{P}_2$ from the $h$-edge in the second $\Gamma_i$-orbit to the vertex $b$, each of them containing all labels from $i+1$ to $h-1$, and fixed by $\Gamma_{>h}$. 
In addition there is a path $\mathcal{P}_3$ in the block $\beta$ containing the vertex $a$, and edges with all labels from $0$ to $i-1$.
Moreover, there is also a path $\mathcal{P}_4$ in the block $\beta\rho_{i+1}$ also containing edges with all labels from $0$ to $i-1$. 

Let us first assume that there is an edge with label $l>i$ inside one of the two blocks $\beta$ or $\beta\rho_{i+1}$. 
Then the generator $\rho_l$ is fixed-point-free on the block containing that edge.
Suppose that block is $\beta$. Then $l = i+1$, a contradiction. Suppose then that the edge is in $\beta\rho_{i+1}$. Then $l=i+2 = h$ and there are only two blocks in the block system. 
Moreover, $\rho_l$ acts inside $\beta\rho_{i+1}$ and provides an embedding of $\Gamma_{<i}$ into $S_2\wr S_{k/2}$.
Thus, since $\Gamma_j$ is intransitive for every $j$, a similar argument  to the one used in the proof of Proposition~\ref{<jon>jtran} shows that $i \leq k/2-1$.
Hence $h-2 \leq k/2-1$.
Now if $h=r-1$, as $n\geq 2k+2$ (the extra 2 coming from the $h$-edge in the second $\Gamma_i$-orbit),
we get $r-1\leq \frac{n+2}{4} < \frac{n-2}{2}$ (as $n$ must be at least 7 in this case)
giving a contradiction with the hypotheses of the proposition.
Thus  $h< r-1$ and  $\Gamma_{>h}$ fixes a set $P$ of size $2k+2$.
Therefore, $h\leq \frac{|P|+2}{4} \leq \frac{|P|-1}{2}$ for every $|P| \geq 4$ which is obviously the case here as $k\geq 2$.  

Let us now assume that all edges with label $l>i$ are not contained in $\beta$ nor in $\beta\rho_{i+1}$.
Then the paths  $\mathcal{P}_1$, $\mathcal{P}_2$, $\mathcal{P}_3$ and $\mathcal{P}_4$ have no edge in common, as shown in the following figure.
$$\xymatrix@-1.3pc{  *+[o][F]{} \ar@{-}[rr]^h&&*+[.][F]{} \ar@{~}[rrrr]^{\mathcal{P}_1}   &&&&*+[.][F]{}\ar@{-}[d]_{i-1}\ar@{-}[rr]^{i+1} && *+[o][F]{a}\ar@{-}[d]^{i-1} \ar@{-}[rr]^i&& *+[o][F]{b}\ar@{~}[rrrr]^{\mathcal{P}_2}&&&& *+[.][F]{}\ar@{-}[rr]^h&& *+[o][F]{c}\\
  &&&&&&*+[.][F]{}\ar@{~}[dd]_{\mathcal{P}_4}\ar@{.}[rr]_{i+1}&&*+[.][F]{}\ar@{~}[dd]^{\mathcal{P}_3}&&&&&&\\
  &&&&&&&&&&&&&&\\
  &&&&&&*+[o][F]{}&& *+[o][F]{} &&&&&&}
 $$

Suppose that $h\neq r-1$ and that  $\rho_{h+1}$ acts trivially on the first $\Gamma_i$-orbit.
Let $P$ be the set of vertices of $\mathcal{P}_1\cup\mathcal{P}_2\cup\mathcal{P}_3\cup \mathcal{P}_4$  excluding the vertex $c$ on the right hand side of the diagram above, and $X=\{1,\ldots,n\}\setminus P$. 
The set $P$ is fixed by $\Gamma_{>h}$ and $2h+1\leq |P|=n-|X|$. 
Moreover when $2h+1= |P|$ the  permutation representation of $\Gamma_{<h}$ is the one given in this proposition with $h=i+1$.  

Now if  $h=r-1$, we have  $2(r-1)+1\leq n-1$. Hence $r\leq \frac{n}{2}$, and by hypotheses we must have the equality. 
Therefore $\Gamma$ has precisely the  permutation representation graph given in the statement of this proposition with some additional $i$-edges. 
As all $\Gamma_j$'s must be intransitive, the extra $i$-edges must be vertical. Hence one of $\rho_i$ and $\rho_{i+1}$ has to be an odd permutation, a contradiction. 

It remains to prove that $\Gamma_{>h}$ fixes the first $\Gamma_i$-orbit. Suppose the contrary. Then $\rho_{r-1}$ acts nontrivially on the first $\Gamma_i$-orbit.
 In this case consider the path $\mathcal{P}_1$ as before, with the label of its first edge being $r-1$ instead of $h$, and consider $\mathcal{P}_3$ and $\mathcal{P}_4$ has before.
 Now let $\mathcal{P}_2$ be a copy of $\mathcal{P}_1$ whose  last vertex is $a\rho_{i-1}$, as in the following figure.
 $$\xymatrix@-1.3pc{  *+[o][F]{} \ar@{-}[rr]^{r-1}&&*+[.][F]{} \ar@{~}[rrrr]^{\mathcal{P}_1}   &&&&*+[.][F]{}\ar@{-}[d]_{i-1}\ar@{-}[rr]^{i+1} && *+[o][F]{a}\ar@{-}[d]^{i-1} \ar@{-}[rr]^i&& *+[o][F]{b}\ar@{~}[rrrr]&&&& *+[.][F]{}\ar@{-}[rr]^h&& *+[o][F]{}\\
   *+[o][F]{} \ar@{-}[rr]_{r-1}&&*+[.][F]{} \ar@{~}[rrrr]_{\mathcal{P}_2}   &&&&*+[.][F]{}\ar@{~}[dd]_{\mathcal{P}_4}\ar@{.}[rr]_{i+1}&&*+[.][F]{}\ar@{~}[dd]^{\mathcal{P}_3}&&&&&&\\
  &&&&&&&&&&&&&&\\
  &&&&&&*+[o][F]{}&& *+[o][F]{} &&&&&&}
 $$
 In this case $2(r-1)+1\leq (n-1)-1$, a contradiction.
\end{proof}

\begin{prop}\label{h}
Let $r>\frac{n-1}{2}$. Let $i\neq 0$ and $J_B$ be an interval with labels $>i$.
If  $h>i$ is the maximal label of  a permutation acting non-trivially on both $\Gamma_i$-orbits, then $h< r-1$ and  there exists a set of vertices $X$, contained in the second $\Gamma_i$-orbit, such that $h\leq \frac{n-|X|-1}{2}$ and $\Gamma_{>h}$ acts trivially on $\{1,\ldots, n\}\setminus X$.
\end{prop}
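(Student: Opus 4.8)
The plan is to recognise that Proposition~\ref{h} is nothing more than the union of the two cases already settled in Propositions~\ref{<itran} and \ref{<iint}, so that the only thing to do is to exhibit an exhaustive dichotomy matching their hypotheses. First I would recall the standing assumptions in force here: $\Gamma\cong A_n$, every $\Gamma_j$ is intransitive, $i\neq 0$, and $J_B$ is an interval all of whose labels exceed $i$; equivalently, every generator moving a point of the second $\Gamma_i$-orbit has label greater than $i$. Under these hypotheses the integer $h>i$ is well defined as the largest label of a generator acting non-trivially on both $\Gamma_i$-orbits.

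Next I would make the case split explicit. Since $\Gamma_{<i}=\langle\rho_0,\dots,\rho_{i-1}\rangle$ is contained in $\Gamma_i$, it preserves the first $\Gamma_i$-orbit, and its action there is either transitive or intransitive; these alternatives are mutually exclusive and jointly exhaustive, so no configuration is missed. In the transitive case I would invoke Proposition~\ref{<itran}, whose hypotheses are exactly $r>\frac{n-1}{2}$, $i\neq0$ and $J_B$ an interval with labels $>i$; it yields (in particular) $h<r-1$ together with a set $X$ inside the second $\Gamma_i$-orbit satisfying $h\le\frac{n-|X|-1}{2}$ with $\Gamma_{>h}$ fixing $\{1,\dots,n\}\setminus X$ pointwise. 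In the intransitive case I would instead invoke Proposition~\ref{<iint}, which under the identical hypotheses delivers the same conclusion: $h<r-1$ and a set $X$ in the second orbit with $h\le\frac{n-|X|-1}{2}$ and $\Gamma_{>h}$ trivial off $X$. As ``$\Gamma_{>h}$ fixes $\{1,\dots,n\}\setminus X$'' and ``$\Gamma_{>h}$ acts trivially on $\{1,\dots,n\}\setminus X$'' say the same thing, either branch produces precisely the assertion of Proposition~\ref{h}.

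All the genuine combinatorial work --- the path estimates from Proposition~\ref{path} that manufacture the vertex set $X$, the degree-count bound $h\le\frac{n-|X|-1}{2}$, and the ruling out of $h=r-1$ via the even-permutation/oddness obstruction --- is performed inside Propositions~\ref{<itran} and \ref{<iint}. Consequently the present argument is essentially bookkeeping, and the one point I would double-check carefully, which I regard as the only real obstacle, is that the hypotheses of the two cited propositions coincide verbatim with those stated for Proposition~\ref{h}, so that no hidden side condition is introduced; in particular I would confirm that the sharper conclusion $h=i+1$ available only in the transitive case (Proposition~\ref{<itran}) is not required here, the weaker common conclusion being all that Proposition~\ref{h} asserts. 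With that verified, the result is immediate from the case split.
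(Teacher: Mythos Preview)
Your proposal is correct and matches the paper's proof essentially verbatim: the paper's argument is the single sentence ``This is consequence of Propositions~\ref{<itran} and \ref{<iint},'' and your write-up simply spells out the transitive/intransitive dichotomy on $\Gamma_{<i}$ that makes those two propositions jointly exhaustive. Your added verification that only the common conclusion (not the sharper $h=i+1$ from Proposition~\ref{<itran}) is needed is accurate and harmless.
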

\begin{proof}
This is consequence of Propositions~\ref{<itran} and \ref{<iint}.
\end{proof}

\begin{prop}\label{ineq0Btrivial}
Let $n\geq 8$.
If  $i\neq 0$ and $J_B=\emptyset$ then $r\leq\frac{n-1}{2}$.
\end{prop}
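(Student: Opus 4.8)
The plan is to use the hypothesis $J_B=\emptyset$ to reduce $\Gamma_i$ to a point stabiliser and then bound its rank inductively. First I would note that $J_B=\emptyset$ means every generator other than $\rho_i$ fixes $b$, so the second $\Gamma_i$-orbit is the single point $\{b\}$ and $\Gamma_i$ is transitive on the remaining $n-1$ points, which I call $O_1$. Since $\Gamma\cong A_n$ and $\Gamma_i$ fixes $b$, the group $\Gamma_i$ consists of even permutations of $O_1$, so $\Gamma_i\le A_{n-1}$ in its natural action. Writing $\Gamma_{<i}=\langle\rho_0,\dots,\rho_{i-1}\rangle$ and $\Gamma_{>i}=\langle\rho_{i+1},\dots,\rho_{r-1}\rangle$, the string property forces these two subgroups to commute and the intersection property forces $\Gamma_{<i}\cap\Gamma_{>i}=1$, so $\Gamma_i=\Gamma_{<i}\times\Gamma_{>i}$; and since $i\ne0$ the factor $\Gamma_{<i}$ is nontrivial. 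I would also record that every maximal parabolic of $\Gamma_i$ is intransitive on $O_1$: were some $\Gamma_{i,j}$ transitive on $O_1$, then $\Gamma_j=\langle\Gamma_{i,j},\rho_i\rangle$ would be transitive on all $n$ points (as $\rho_i$ links $b$ to $O_1$), contradicting the standing assumption that every $\Gamma_j$ is intransitive.

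Next I would bound the rank $r-1$ of $\Gamma_i$ by two cases. If $\Gamma_{>i}\ne 1$, then $\Gamma_i$ is a nontrivial commuting product, hence imprimitive: the $\Gamma_{<i}$-orbits form a block system which $\Gamma_{>i}$ permutes transitively, so $\Gamma_i\le S_s\wr S_t$ with $st=n-1$ and $s,t\ge2$. Proposition~\ref{RI}, applicable since all maximal parabolics of $\Gamma_i$ are intransitive, then gives $r-1\le\frac{(n-1)-2}{2}$ and hence $r\le\frac{n-1}{2}$; the degenerate block sizes ($s$ or $t$ equal to $2$, or $st=9$) I would clear using Proposition~\ref{RI}(3) together with the fact that $\Gamma_i$ is even (note that for $n$ even both $s,t$ are odd, so $2$ cannot occur) and, for the few genuinely small degrees, as base cases. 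If instead $\Gamma_{>i}=1$, then $i=r-1$, the diagram of $\Gamma_{r-1}$ is connected, and by the previous paragraph all its maximal parabolics are intransitive, so Proposition~\ref{induction} yields $r-1\le\frac{n-2}{2}$ with equality only when $\Gamma_{r-1}\cong A_{n-1}$. When $n$ is odd this already gives $r\le\frac{n-1}{2}$, and when $\Gamma_{r-1}\not\cong A_{n-1}$ the inequality is strict; so we are reduced to the single boundary case in which $n$ is even, $i=r-1$, and $\Gamma_{r-1}\cong A_{n-1}$ attains its maximal rank $\frac{n-2}{2}$.

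The hard part is ruling out this boundary case. Here $\Gamma_{r-1}=(A_n)_b\cong A_{n-1}$ sits in its natural degree-$(n-1)$ representation, $\rho_{r-1}=(a,b)\tau$ where $\tau$ is an odd permutation of $O_1\setminus\{a\}$ (odd because $(a,b)$ is a transposition and $\rho_{r-1}$ is even), and $\rho_{r-1}$ centralises $P:=\langle\rho_0,\dots,\rho_{r-3}\rangle$ by the string property. A short computation using that $P$ fixes $b$ while $\rho_{r-1}$ carries $a$ to $b$ shows that $P$ must fix the point $a$; thus $P$ is a maximal parabolic of the maximal-rank string C-group $A_{n-1}$ that fixes a point of $O_1$. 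At this stage I would invoke the classification of maximal-rank representations of $A_{n-1}$ from~\cite{flm2} to determine $P$ and its orbits on $O_1$, compute the centraliser of $P$, and verify that no odd involution $\tau$ commuting with $P$ can extend the representation to a string C-group isomorphic to $A_n$. I expect this centraliser-and-parity analysis to be the main obstacle: the obstruction disappears for the small exceptional degrees --- $n=10$ is a genuine exception, where $A_{10}$ really does have rank $5$, and $n=12$ is settled by {\sc Magma} --- so the argument must show that the extension is impossible precisely when $n\ge14$, which is where the rigidity of the maximal-rank representation of $A_{n-1}$ becomes essential.
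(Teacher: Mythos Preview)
Your approach --- reducing to $\Gamma_i=\Gamma_{<i}\times\Gamma_{>i}$ acting imprimitively on $n-1$ points and invoking Proposition~\ref{RI} --- is genuinely different from the paper's, which instead builds four explicit paths in the permutation representation graph (two running from $(r-1)$-edges down to the neighbourhood of $a$, two inside the block containing $a$ and its $\rho_{i+1}$-image) and counts their vertices to get $2(r-1)+1\le n$, then squeezes out the last unit by parity and by observing that $k,m\ge3$ when $n$ is even.

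There is a real gap in your handling of block size (or block number) equal to~$2$ when $n$ is odd. You invoke Proposition~\ref{RI}(c) together with evenness of $\Gamma_i$, but that clause only says that at the extremal rank $\Gamma_i$ is odd \emph{or} has disconnected diagram --- and $\Gamma_i$ \emph{does} have disconnected diagram, since the factors $\Gamma_{<i}$ and $\Gamma_{>i}$ commute. So no contradiction arises, and Proposition~\ref{RI}(a) leaves you at $r-1\le(n-1)/2$, i.e.\ $r\le(n+1)/2$, one more than required. This is not a small-degree exception: it occurs for every $n\equiv1\pmod4$ (the residue needed to keep the block-swap even when $i=1$ and the blocks have size~$2$). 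A further argument --- pinning down the extremal ladder from Proposition~\ref{RI}(c) and showing, say, that some other $\Gamma_j$ then becomes transitive --- would be needed, and you have not supplied one.

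Your treatment of $i=r-1$ also does not go through: you appeal to a ``classification of maximal-rank representations of $A_{n-1}$ from~\cite{flm2}'', but that paper only constructs examples and proves no uniqueness or classification. In fact the paper's own proof of this proposition tacitly assumes $i<r-1$ (the path construction needs $\rho_{i+1}$); the case where $\Gamma_{r-1}$ has a trivial second orbit is explicitly carved out as case~(b) in Proposition~\ref{case3} and is settled only later via the $2$-fracture-graph machinery of Propositions~\ref{specialbubbles} and~\ref{specials}. So you are right that this is where the difficulty concentrates, but it needs those later tools rather than an appeal to~\cite{flm2}.
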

\begin{proof}
Similarly to the proof of  Proposition~\ref{<iint}, $A$ is embedded into a wreath product $S_k\wr S_m$ with $n=km+1$ and $\Gamma_{<i}$ fixing the blocks.
If there is a label $l>i$ such that $\rho_l$ acts nontrivially inside a block then $m=2$, $r-1=l=i+2$ and  $\Gamma_{<i}$ is embedded into $S_2\wr S_{k/2}$, giving the inequality $i=r-3\leq k/2-1\leq \frac{n-1}{4}-1$. Hence for $n\geq 8$, $r\leq\frac{n-1}{2}$. 

Suppose then that
every generator with label $>i$ acts trivially on the blocks it fixes.
Thus there are four paths  $\mathcal{P}_1$, $\mathcal{P}_2$, $\mathcal{P}_3$ and $\mathcal{P}_4$, as in the following graph, containing all but one label twice and one cycle that is an alternating square. 
$$\xymatrix@-1.3pc{  *+[o][F]{} \ar@{-}[rr]^{r-1}&&*+[.][F]{} \ar@{~}[rrrr]^{\mathcal{P}_1}   &&&&*+[.][F]{}\ar@{-}[d]_{i-1}\ar@{-}[rr]^{i+1} && *+[o][F]{a}\ar@{-}[d]^{i-1} \ar@{-}[rr]^i&& *+[o][F]{b}\\
   *+[o][F]{} \ar@{-}[rr]_{r-1}&&*+[.][F]{} \ar@{~}[rrrr]_{\mathcal{P}_2}   &&&&*+[.][F]{}\ar@{~}[dd]_{\mathcal{P}_4}\ar@{-}[rr]_{i+1}&&*+[.][F]{}\ar@{~}[dd]^{\mathcal{P}_3}&&\\
  &&&&&&&&&&\\
  &&&&&&*+[o][F]{}&& *+[o][F]{} &&}
 $$
 Hence $2(r-1)+1\leq n$. 
If equality holds, the paths contain exactly one edge for each label, but then as all $\Gamma_j$'s are intransitive,  $\rho_i$ acts trivially on $\{1,\ldots,n\}\setminus \{a,b\}$, and thus is a transposition, a contradiction. Therefore $r\leq \frac{n}{2}$. If $n$ is odd then $r\leq \frac{n-1}{2}$. If $n$ is even then $n_1$ is odd, and neither $k=2$ nor $m=2$. Hence  there at least two more vertices not in the paths, and then $2(r-1)+1\leq n-2$. Therefore $r\leq\frac{n-1}{2}$. 
\end{proof}

\begin{prop}\label{>htran}
Let $n\geq 12$. Suppose that $h>i$ is the maximal label of  a permutation acting non-trivially on both $\Gamma_i$-orbits.
If $h\neq r-1$ and $\Gamma_{>h}$ is transitive on $X:= \{1,\ldots,n\} \setminus \mathrm{Fix}(\Gamma_{>h})$ then, either $r\leq \frac{n-1}{2}$, or one of the groups $\Gamma_{<3}$ or $\Gamma_{>r-4}$  is one the string C-groups (2) and (3) given in Table~\ref{small} or their duals.
\end{prop}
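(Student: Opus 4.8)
The plan is to argue by contradiction: assume $r>\frac{n-1}{2}$ and that neither $\Gamma_{<3}$ nor $\Gamma_{>r-4}$ is one of the exceptional groups (2), (3) of Table~\ref{small} or a dual, and derive a contradiction. Under these hypotheses Proposition~\ref{h} applies, so $h<r-1$ and, writing $X=\{1,\ldots,n\}\setminus\mathrm{Fix}(\Gamma_{>h})$, we have $h\le\frac{n-|X|-1}{2}$. Since every element of $\Gamma_{>h}$ fixes the complement of $X$ pointwise and lies in $A_n$, the group $\Gamma_{>h}$ is an even permutation group, transitive on $X$ by hypothesis, and by the intersection property it is a string C-group of rank $r-1-h$ with connected diagram. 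Thus the whole problem reduces to bounding $r-1-h$ against $|X|$ and adding the two estimates.

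The easy case is when some generator $\rho_l$ with $l<h$ acts non-trivially on $X$. Then Proposition~\ref{<jon>jtran} (with $j=h$) gives $r-1-h\le\frac{|X|}{2}-1$, and combining this with $h\le\frac{n-|X|-1}{2}$ yields $r-1\le\frac{n-3}{2}$, i.e.\ $r\le\frac{n-1}{2}$, contradicting our assumption. So from now on all of $\rho_0,\ldots,\rho_{h-1}$ fix $X$ pointwise. Since $\rho_h\in\Gamma_i$ preserves the two $\Gamma_i$-orbits and $X$ lies in the second one, if $\rho_h$ also fixed $X$ pointwise then $X$ would be invariant under every generator, contradicting the transitivity of $\Gamma\cong A_n$; hence $\rho_h$ is the \emph{unique} generator with label at most $h$ acting on $X$. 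In this situation no fixed-point-free involution centralising $\Gamma_{>h}$ is available, the strong bound of Proposition~\ref{<jon>jtran} is out of reach, and $\Gamma_{>h}$ may be as large as an alternating group.

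In this remaining case I would bound $r-1-h$ directly. If $|X|\ge 12$, then $\Gamma_{>h}$ is a transitive even string C-group with connected diagram and (one checks, using the intersection property of $\Gamma$) intransitive maximal parabolic subgroups, so Proposition~\ref{induction} gives $r-1-h\le\frac{|X|-1}{2}$; moreover in every non-alternating case the estimate improves to at most $\frac{|X|-3}{2}$, which together with $h\le\frac{n-|X|-1}{2}$ already forces $r\le\frac{n-2}{2}$, a contradiction. If $|X|<12$, Proposition~\ref{smalln} shows that either $r-1-h\le\frac{|X|-2}{2}$ (again giving $r\le\frac{n-1}{2}$) or $\Gamma_{>h}$ is one of the graphs of Table~\ref{small}. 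So we are reduced to the boundary situations where $r-1-h=\frac{|X|-1}{2}$ with $\Gamma_{>h}\cong A_{|X|}$ (or one of the small Table~\ref{small} groups), $|X|$ odd and $n$ even, and $h=\frac{n-|X|-1}{2}$; the latter pins down the permutation representation graph of $\Gamma_{<h}$ exactly as in Propositions~\ref{<itran} and \ref{<iint}.

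The heart of the argument, and the step I expect to be hardest, is to eliminate these rigid boundary configurations. Here $\rho_h$ centralises $\Gamma_{>h+1}=\langle\rho_{h+2},\ldots,\rho_{r-1}\rangle$, and $\Gamma_{>h}$ realises the maximal-rank string C-group of $A_{|X|}$ (or a small Table~\ref{small} group). I would feed the rigid shape of $\Gamma_{<h}$ together with the known structure of the maximal parabolic $\Gamma_{>h+1}$ into the intersection property, exactly in the spirit of the proof of Proposition~\ref{small123} and using Propositions~\ref{genAn} and \ref{sesqui} to identify the relevant sections. For $|X|\ge12$, and for every Table~\ref{small} graph other than the degree-$6$ representations (2) and (3) of $L_2(5)$, this produces either an odd permutation or a subgroup intersection larger than the intersection property permits, contradicting $\Gamma\cong A_n$. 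The only survivors are the graphs (2) and (3); as these have rank $3$, they force $h=r-4$, so that $\Gamma_{>h}=\Gamma_{>r-4}$ is precisely one of (2), (3), while running the same argument on the dual sggi places the exceptional block at the bottom and yields $\Gamma_{<3}$. This is exactly the alternative allowed in the statement, completing the proof.
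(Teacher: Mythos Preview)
Your overall plan is the same as the paper's: combine Proposition~\ref{h} (and its $i=0$ analogue, Proposition~\ref{i=0andh<>r-1}, which you omit) to get $h\le\frac{n-|X|-1}{2}$, bound $r-1-h$ in terms of $|X|$, and then eliminate the boundary case in which both inequalities are equalities. Your ``easy case'' via Proposition~\ref{<jon>jtran} and your use of Proposition~\ref{induction} for $|X|\ge12$ are fine. Two remarks on the reduction: for $|X|<12$ you need not only Proposition~\ref{smalln} but also Propositions~\ref{small123} and~\ref{small(1)} to discard graphs (4)--(16) and (1), so that only (2), (3) survive; and you should note that Proposition~\ref{h} literally assumes $i\neq0$, with the $i=0$ case handled by Proposition~\ref{i=0andh<>r-1}.

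The genuine gap is the elimination of the boundary configuration $\Gamma_{>h}\cong A_{|X|}$ with $h=\frac{n-|X|-1}{2}$. You describe this as the hardest step and propose to ``feed the rigid shape of $\Gamma_{<h}$ into the intersection property in the spirit of Proposition~\ref{small123}'', but you do not actually carry this out; the claimed contradictions are asserted, not derived. The paper's argument here is both different and much shorter, and it does not use the intersection property at all. First, if $\rho_i$ acted non-trivially on $X$, Proposition~\ref{<jon>jtran} would force $\Gamma_{>h}$ to be imprimitive with blocks of size two, impossible for $A_{|X|}$; so $\rho_i$ fixes $X$ pointwise. Now the equality $h=\frac{n-|X|-1}{2}$ pins down the permutation representation graph of $\Gamma_{\le h}$ on $\{1,\dots,n\}\setminus X$ completely (Propositions~\ref{i=0andh<>r-1}, \ref{<itran}, \ref{<iint}). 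For $i\neq0$ this forces every extra $i$-edge to be ``vertical'' in the displayed graph, which makes one of $\rho_i$, $\rho_{i+1}$ an odd permutation---a direct parity contradiction. For $i=0$ the string (commuting) relations on that rigid graph force $h=2$, and then $\Gamma_{<3}$ is exactly graph~(2) of Table~\ref{small}, contradicting the standing hypothesis. So the boundary case dies by a short parity/string-relation argument on an explicitly determined graph, not by an intersection-property computation; you should replace your final paragraph by this concrete analysis.
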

\begin{proof}
Suppose that $\Gamma_{>h}$ is neither the dual of the string C-group (2) nor the string C-group (3) given in Table~\ref{small}.  Then, by Propositions~\ref{small123}, \ref{induction} and \ref{small(1)}, $r-1-h\leq \frac{|X|-1}{2}$. Moreover $r-1-h= \frac{|X|-1}{2}$ if $\Gamma_{>h}$ is the alternating group $A_{|X|}$.

By Proposition~\ref{h},  $h\leq \frac{n-|X|-1}{2}$. Suppose that  $\Gamma_{>h}\cong A_{|X|}$ and $h= \frac{n-|X|-1}{2}$.
If $\rho_i$ has a nontrivial action on $X$ then,  by Proposition~\ref{<jon>jtran}, $\Gamma_{>h}$ must be imprimitive with blocks of size two, a contradiction.
Thus we may assume that $\rho_i$ fixes $X$ pointwise and swaps a pair of vertices in $ \{1,\ldots,n\} \setminus X$.

If $i=0$ then $\Gamma_{<h}$ has the permutation representation graph given in Proposition~\ref{i=0andh<>r-1}. The string condition implies that $h=2$ and $\Gamma_{<3}$ must be the string C-group (2) given in Table~\ref{small}, a contradiction.

When $i\neq 0$, either $r\leq \frac{n-1}{2}$ or $\Gamma_{<h}$ has either the permutation representation graph given in Proposition~\ref{<itran} or the one given in Proposition~\ref{<iint}. 
As all $\Gamma_j$'s must be intransitive, the extra $i$-edges must be vertical edges. Hence one of $\rho_i$ and $\rho_{i+1}$ has to be an odd permutation, a contradiction.

Hence either $h<\frac{n-|X|-1}{2}$ or $r-1-h<\frac{|X|-1}{2}$, which implies in both cases that $r\leq \frac{n-1}{2}$.
\end{proof}

In the previous proposition we considered the case where $\rho_h$ acts nontrivially in both $\Gamma_i$-orbits and $\Gamma_{>h}$ is transitive on the points it does not fix.
Let us now deal with the case where $\Gamma_{>h}$ is intransitive.


In what follows we use the results of the previous section on 2-fracture graphs.
Observe that in the proofs  of Propositions~\ref{max1squareforcomp} and \ref{dislc} none of the following three conditions on $\Gamma$ were needed: intersection condition, connectedness of the diagram and the group being even.
Indeed all we need is a string group $G$ generated by a set $\{\delta_j \,|\, j \in \{0,\ldots ,d-1\}\}$ of involutions
 where $\delta_j$ swaps two pairs of vertices in different $G_j$-orbits for each $j \in \{0,\ldots ,d-1\}$.
 
\begin{prop}\label{t}
Let $t\in\{0,\ldots,r-2\}$ and $U:=\{1,\ldots, n\}\setminus {\rm Fix}(\Gamma_{>t})$. If $t$ is such that
\begin{itemize}
\item  $t\leq \frac{n-|U|-1}{2}$,
\item $\Gamma_{>t}$ has a 2-fracture graph,
\item $\Gamma_{>t}$  acts intransitively on $U$,
\end{itemize}
then $r\leq \frac{n-1}{2}$.
\end{prop}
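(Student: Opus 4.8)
The plan is to carry the structure theory of 2-fracture graphs developed in Section~\ref{s:2frac} over to the subgroup $\Gamma_{>t}$ acting on $U$. Note first that $\Gamma_{>t}$ is an sggi generated by the involutions $\rho_{t+1},\dots,\rho_{r-1}$, and the hypothesis that it has a 2-fracture graph is precisely the condition needed to run the machinery; the remark preceding this proposition guarantees that Propositions~\ref{max1squareforcomp} and \ref{dislc} apply, since their proofs never invoke the intersection property, connectedness of the diagram, or evenness. Since $\Gamma_{>t}$ acts intransitively on $U$, its permutation representation graph on $U$ is disconnected (the components being the orbits), and as every 2-fracture graph is a spanning subgraph of that graph, no 2-fracture graph of $\Gamma_{>t}$ can be connected. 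Hence by Proposition~\ref{dislc} I may choose a 2-fracture graph $Q$ of $\Gamma_{>t}$ in which one component is a tree and every other component is unicyclic with an alternating square as its only cycle.

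The next step is an edge count. A 2-fracture graph of $\Gamma_{>t}$ has exactly $2(r-1-t)$ edges, two for each of the $r-1-t$ generators. If $Q$ has $c_1\ge 1$ tree components and the rest unicyclic, then on $|U|$ vertices it has exactly $|U|-c_1$ edges, so $2(r-1-t)=|U|-c_1\le |U|-1$. Combining with the hypothesis $t\le\frac{n-|U|-1}{2}$ and adding,
\[ r-1=(r-1-t)+t\le\frac{|U|-1}{2}+\frac{n-|U|-1}{2}=\frac{n-2}{2}, \]
whence $r\le n/2$. Since $r$ is an integer, this already gives $r\le\frac{n-1}{2}$ when $n$ is odd.

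The residual task, which I expect to be the main obstacle, is to exclude $r=n/2$ when $n$ is even. Observe that $c_1=|U|-2(r-1-t)$ is pinned down by the fixed edge count; if $c_1\ge 2$ then $2(r-1-t)\le |U|-2$ and the estimate above improves to $r\le\frac{n-1}{2}$. In particular, if $|U|$ is even then $c_1$ is even and so $c_1\ge 2$, and we are done. Thus the only surviving configuration has $|U|$ odd, $c_1=1$, the inequality $t\le\frac{n-|U|-1}{2}$ tight, and $n$ even; in this case $Q$ is a single tree together with at least one unicyclic component, so it must contain an alternating square.

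Finally I would rule out this rigid configuration using $\Gamma\cong A_n$. The alternating square forces $\Gamma_{>t}$ to act on the orbit carrying it as a group preserving a pairing, in the manner of Corollary~\ref{2fracfinal}; feeding this back, together with the tightness of $t\le\frac{n-|U|-1}{2}$, pins down the permutation representation graph closely enough to exhibit either an odd permutation or a block system preserved by every $\rho_j$, contradicting the fact that $A_n$ is even and, for $n\ge 12$, primitive. This is the delicate step, and it parallels the way equality is excluded in Propositions~\ref{>htran}, \ref{<itran} and~\ref{<iint}, where a tight path-and-square structure is shown to produce an odd generator.
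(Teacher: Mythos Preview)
Your edge-counting argument is correct and cleaner than the paper's route: applying Proposition~\ref{dislc} to the sggi $\Gamma_{>t}$ on $U$ is legitimate (the paper explicitly notes this just before the statement), and the identity $2(r-1-t)=|U|-c_1$ combined with $t\le\frac{n-|U|-1}{2}$ does give $r\le n/2$. Your parity reduction to the single case $n$ even, $|U|$ odd, $c_1=1$, $t$ tight is also correct.

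The gap is the last paragraph. Corollary~\ref{2fracfinal} does not apply here: that result concerns a group whose \emph{own} 2-fracture graph is connected with a square, and pins down the permutation representation of that group. In your situation the unicyclic component of $Q$ is merely a component of the global 2-fracture graph of $\Gamma_{>t}$; it is not, a priori, a 2-fracture graph for the action of $\Gamma_{>t}$ on the orbit containing it (a given label may contribute $0$, $1$, or $2$ edges to that orbit). So you cannot read off a $C_2\wr S_m$ structure on that orbit. Moreover, tightness of $t\le\frac{n-|U|-1}{2}$ tells you nothing about $\rho_0,\dots,\rho_t$, so ``pinning down the permutation representation graph'' of all of $\Gamma$ from the square alone is not possible; the parallels you cite with Propositions~\ref{<itran}, \ref{<iint}, \ref{>htran} all exploit very specific path pictures for $\Gamma_{<h}$ that were derived earlier in that chain, and nothing analogous is available here.

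For comparison, the paper's proof does not use the global 2-fracture graph of $\Gamma_{>t}$ at all. It first replaces $t$ by the \emph{maximal} index satisfying the three hypotheses, then works orbit by orbit with a single fracture graph $\mathcal{F}$ chosen to satisfy an extra distribution property (P1). For each orbit $U_s$ it asks whether the induced action $G_s$ admits a 2-fracture graph on the label set $F_s\subseteq\{t+1,\dots,r-1\}$ of edges of $\mathcal{F}$ inside $U_s$. If some $G_s$ does not, Proposition~\ref{path} produces a label $x$ with $x\le\frac{n-|X|-2}{2}$, and the \emph{maximality of $t$} forces $\Gamma_{>x}$ to be transitive on its support, after which Proposition~\ref{<jon>jtran} or Proposition~\ref{induction} closes out. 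If all $G_s$ do, one shows that at most one orbit can carry a connected 2-fracture graph, giving $r-1-t\le\frac{|U|-(c-1)}{2}$, and a short $c=2$ endgame (using $(G_q)_{t+2}$ intransitive and a label collision in $I_s\cap I_q$) finishes. The maximality of $t$ and the per-orbit label bookkeeping are precisely what rescue the equality case; your outline supplies neither.
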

\begin{proof}
Assume first that $t=r-2$. In this case, $|U| \geq 4$ for $\rho_{r-2}$ to be an even permutation. Hence $r\leq \frac{n-1}{2}$.
Let $t<r-2$ be the maximal label satisfying the conditions of this proposition.

Suppose that $\Gamma_{>t}$ has $c$ nontrivial orbits  $U_1,\ldots, U_c$.
For each set $U_s$ with $s\in\{1,\ldots, c\}$ denote by $I_s$ the set of labels 
$l\,(>t)$ of edges in $U_s$. 
Consider the graph $\mathcal{C}$ 
whose vertices are the orbits $U_1, \ldots, U_c$, and two orbits $U_s$ and $U_q$ are joined by an $l$-edge if there exist a point in $U_s$ and a point in $U_q$ that are swapped by $\rho_l$.

Consider a (simple) fracture graph $\mathcal{F}$ of $\Phi:=\Gamma_{>t}$, that is, a graph with $|U|$ vertices and  $r-1-t$ edges. Such a graph exists by the second hypothesis of the proposition.
Each $l$-edge of $\mathcal{F}$ connects vertices in different $\Phi_l$-orbits. Let $s\in\{1,\ldots,c\}$ and $F_s$ be the set of labels of edges of $\mathcal{F}$ within $U_s$.
Clearly $F_s\subseteq I_s$.
Choose $\mathcal{F}$ such that it satisfies the following property:
\begin{enumerate}
 \item[P1] if $l\in F_s$ is the label of the unique $l$-edge in one component swapping vertices in different $\Gamma_l$-orbits, then no other component has more than one pair of vertices in different $\Gamma_l$-orbits.
\end{enumerate}
Let $G_s$ be the group action of $\Gamma_{>t}$ in $U_s$. We have that $G_s$ is generated by a set of involutions (not necessarily independent) with labels in $I_s$. The subset of involutions with labels in $F_s$ is independent since $F_s$ is the subset of labels of edges of $\mathcal{F}$.
We denote by $(G_s)_j$ the group generated by all  involutions of the generating set of $G_s$ except the one with label $j$.

If $G_s$ does not admit a 2-fracture graph with set of labels $F_s$, then there exists an edge $e$ with label $l\in F_s$ with $\rho_l$ swapping only one pair of vertices  of $U_s$, in different $\Gamma_l$-orbits. Let $m$ be the minimal label and $x$ be the maximal label of an edge inside $U_s$. 
By Proposition~\ref{path} there exists a path $\mathcal{P}_1$ containing all labels from $l-1$ to $m$ and another path $\mathcal{P}_2$ containing all labels from $l+1$ $x$ in $U_s$. Let $\mathcal{P}=\mathcal{P}_1\cup
\{e\} \cup\mathcal{P}_2$. Now we deal separately with the cases $m>t+1$ and $m=t+1$ and we conclude, in both cases, that $x \leq \frac{n-|X|-2}{2}$ where  $X := \{1,\ldots,n\}\setminus \mathrm{Fix}(\Gamma_{>x})$.\\

$\underline{m>t+1}$: If $m>t+1$ then a component $U_q$ adjacent to $U_s$ also contains all labels from $m$ to $x$. 
Let $U_z$ be a component containing an edge with label $m-1$.
We can reach it from $U_s$ with a shortest path in $\mathcal{C}$. 
The last component before $U_z$ in this path, say $U_w$, contains a copy $\mathcal{P}'$ of $\mathcal{P}$.
Thus there is a $t$-edge from $U_w$ to $U_z$.
$$\xymatrix@-1.3pc{  &&*+[o][F]{} \ar@{-}[rr]^m\ar@{-}[d]^t&&*+[o][F]{} \ar@{-}[rr]^{m+1}\ar@{-}[d]^t&&*+[o][F]{} \ar@{~}[rrrr]\ar@{-}[d]^t&&&&*+[o][F]{}\ar@{-}[rr]^x\ar@{-}[d]^t&&*+[o][F]{}\ar@{-}[d]^t& U_w\\
 *+[o][F]{v}\ar@{-}[rr]_{m-1}&&*+[o][F]{} \ar@{-}[rr]_m&&*+[o][F]{} \ar@{-}[rr]_{m+1}&&*+[o][F]{} \ar@{~}[rrrr]&&&&*+[o][F]{}\ar@{-}[rr]_x&&*+[o][F]{}&U_z}
 $$
As $m$ is the minimal label in $U_{w}$,   $m-1\notin U_{w}$, thus $m-1=t+1$. Let $P$ be the set of vertices of $\mathcal{P}\cup\mathcal{P}'$. 
We have $2(x-(t+1))\leq |P|-2$, hence $x\leq t+\frac{|P|}{2}\leq \frac{n-|U|+|P|-1}{2}$.
As $\Gamma_{>t}$ has a 2-fracture graph by hypothesis, there is at least one more edge with label $t+1$, so $|U|>|P|+1$ and $x\leq \frac{n-3}{2}$.
If $x=r-1$ then $r\leq\frac{n-1}{2}$. 
Let $x\neq r-1$.  The vertex $v$ is fixed by  $\Gamma_{>x}$ as well as all vertices of $P$. Let $X := \{1,\ldots,n\}\setminus \mathrm{Fix}(\Gamma_{>x})$.
Then we have $x \leq \frac{n-|X|-2}{2}$.\\

$\underline{m=t+1}$: Suppose that $m=t+1$. Furthermore assume that any component containing a unique $l$-edge between vertices in different $\Gamma_l$-orbits, has minimal label $t+1$. 
Let $x$ be the maximal label in $U_s$ and let $\mathcal{P}$ be as before.
We now use the fact that $\Gamma_{>t}$ has a 2-fracture graph, and thus there exists another component $U_q$ having one $l$-edge $e'$ between vertices in different components. Moreover by Property (P1) this $l$-edge cannot be in an alternating square inside $U_q$. By assumption the minimal label in $U_q$ is also $t+1$. 
Let $y$ be the maximal label in $U_q$. 
Assume that $y\geq x$. Then, there exists a path $\mathcal{P}'_1$ containing all labels from $l-1$ to $t+1$ and another path $\mathcal{P}'_2$ containing all labels from $l+1$ to $x$, both in $U_q$. Consider $\mathcal{P}'=\mathcal{P}'_1\cup\{e'\} \cup\mathcal{P}'_2$.
There is at most one vertex of $\mathcal{P}'$ that is not fixed by $\Gamma_{>x}$.
Let $Q$ be the set of vertices of $\mathcal{P}\cup\mathcal{P}'$.
$$\xymatrix@-1.3pc{  *+[o][F]{} \ar@{-}[rr]^{t+1}&&*+[o][F]{} \ar@{~}[rrrr]&&&&*+[o][F]{}\ar@{-}[rr]^x&&*+[o][F]{}&U_s&&&&\\
*+[o][F]{} \ar@{-}[rr]^{t+1}&&*+[o][F]{} \ar@{~}[rrrr]&&&&*+[o][F]{}\ar@{-}[rr]^x&&*+[o][F]{c} \ar@{~}[rrrr]&&&&*+[o][F]{}\ar@{-}[rr]^y&&*+[o][F]{}&U_q}$$
In this case $\Gamma_{>x}$ fixes the set $Q\setminus\{c\}$.
Hence $2(x-t)\leq |Q|-2$ and we get 
$$x\leq \frac{n-|X|-2}{2}$$
where $X := \{1,\ldots,n\}\setminus \mathrm{Fix}(\Gamma_{>x})$.\\

In both cases, thanks to the maximality of $t$ we conclude that either $x=r-1$ or  $\Gamma_{>x}$ is transitive on $X$. If $x=r-1$, $r\leq \frac{n-2}{2}$.
Suppose $\Gamma_{>x}$ is transitive on $X$. In this case $X$ is a subset of  $U_s$ for some $s\in\{1,\ldots,c\}$.
If $\Gamma_{>x}$ is fix-point-free on $U_s$, then $\rho_t$ centralizes $\Gamma_{>x}$, thus, by Proposition~\ref{<jon>jtran}, $r-1-x\leq \frac{|X|}{2}-1$.
Hence $r\leq \frac{n-1}{2}$. If $\Gamma_{>x}$ has fixed points in $U_s$, then $x\neq  \frac{n-|X|-2}{2}$ and, as by Proposition~\ref{induction} $r-1-x\leq \frac{X}{2}$, we get $r\leq \frac{n-1}{2}$.
This concludes the case where $G_s$ has no 2-fracture graph.

Therefore we may assume that $G_s$ admits a 2-fracture graph with set of labels $F_s$ and therefore $F_s\leq \frac{|U_s|}{2}$. We can use the results of Section~\ref{s:2frac}.

Suppose that $\Gamma_{>t+1}$ is transitive on $U_s$.
Take the closest orbit $U_q$ to $U_s$ in $\mathcal{C}$ such that $\Gamma_{> t+1}$ is intransitive on $U_q$ and let $P$ be a shortest path from $U_s$ to $U_q$.
We can use $\rho_t$ modify $\mathcal{F}$ along this path, to concentrate all edges of the fracture graph in $U_q$. We obtain another fracture graph also satisfying P1.
Hence, we may assume that $F_x$ is empty for every orbit $U_x$ of $P$ except $U_q$.
Therefore $|F_x| = 0 \leq \frac{|U_x|-2}{2}$.
Thus in every component with $F_s\neq \emptyset$ there exists an edge with label $t+1$ between vertices in different  $\Gamma_{t+1}$-orbits.
Consequently there is at most one component having a connected 2-fracture graph, and $r-1-t\leq \frac{|U|-(c-1)}{2}$
 where $c$ is the number of $\Gamma_{>t}$-orbits of size at least 2.
 Whenever $c>2$ or if $|F_x| = 0$ for some $x$, we get $r \leq \frac{n-1}{2}$. 
Let us now assume $c=2$. In this case, if $r = \frac{n}{2}$, the 2-fracture graph on $U_s$ is connected and has an alternating square (by Proposition~\ref{max1squareforcomp}), and the 2-fracture graph on $U_q$ is disconnected, and has two components, one having an alternating square and the other being a tree. These two components in $U_q$ must moreover be connected by a $t$-edge.
These components satisfy the following  equalities: $|F_s|=\frac{|U_s|}{2}$, $|F_q|=\frac{|U_q|-1}{2}$, $I_s=F_s$ and $I_q=F_q\cup\{t+1\}$.
Let $G_s$ be the group action in $U_s$ and let $G_q$ be the group action in $U_q$.

Recall that $t < r-2$. If  $(G_q)_{t+2}$ is transitive on $U_q$ then $\rho_{t+1}$ centralizes $G_q$, thus the two components of the 2-fracture graph of $G_q$ have the same shape, so they must both be trees, a contradiction. Thus $(G_q)_{t+2}$ is intransitive on $U_q$. Moreover $t+2\in F_q$ as $I_q=F_q\cup\{t+1\}$. Therefore $t+2\notin F_s=I_s$, hence  $\rho_{t+1}$ centralizes $G_s$. But then there exists an edge with label $l\neq t+1$ in $U_s$ incident to the $t$-edge that connects $U_s$ to $U_q$, thus $l\in I_s\cap I_q$, a contradiction.
\end{proof}


\begin{prop}\label{case3}
Let $n\geq 12$. Suppose that $i$ is the maximal label such that 
\begin{itemize}
\item $\Gamma_i$ has two orbits with a single $i$-edge joining them;
\item there exists $h>i$ such that $\rho_h$ acts non-trivially on both $\Gamma_i$-orbits.
\end{itemize}
If $r> \frac{n-1}{2}$, then one of the following occurs.
\begin{enumerate}
\item $\Gamma_{>r-4}$  is the dual of the string C-group (2), or the string C-group (3) given in Table~\ref{small};
\item $\Gamma_{r-1}$ has exactly two orbits, one being trivial.
\end{enumerate}
\end{prop}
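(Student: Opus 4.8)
The plan is to assemble the structural propositions already proved in this section into a single case analysis, in which the two alternatives of the conclusion collect exactly the configurations that those propositions leave open. Throughout I assume $r>\frac{n-1}{2}$ and aim for a contradiction in every case except the two listed. First I would pin down the global position of $\rho_{r-1}$. Since the hypothesis provides $h>i$ with $\rho_h$ acting on both $\Gamma_i$-orbits, the second orbit is genuinely moved, so $J_B\neq\emptyset$ and Proposition~\ref{ineq0Btrivial} is vacuous here. Applying Proposition~\ref{i=0andh=r-1} (when $i=0$) or Proposition~\ref{h} (when $i\neq 0$), I obtain $h<r-1$ together with a set $X$ contained in one $\Gamma_i$-orbit such that $h\leq\frac{n-|X|-1}{2}$ and $\Gamma_{>h}$ fixes $\{1,\dots,n\}\setminus X$ pointwise. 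In particular $\rho_{r-1}$ moves only points of $X$, which lies inside a single $\Gamma_i$-orbit; this is the feature that will ultimately feed conclusion~(2).

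Next I would split on the action of $\Gamma_{>h}$ on $X$. If $\Gamma_{>h}$ is transitive on $X$, then Proposition~\ref{>htran} applies and yields either $r\leq\frac{n-1}{2}$ (contradicting the assumption) or that one of $\Gamma_{<3}$, $\Gamma_{>r-4}$ is one of the string C-groups (2), (3) of Table~\ref{small}, or a dual. Because (2) and (3) have rank three and $\Gamma_{>h}$ is precisely the transitive constituent, counting generators forces $r-1-h=3$, so the exceptional group must be $\Gamma_{>r-4}=\Gamma_{>h}$. A short argument using the string and intersection properties at the attaching generator $\rho_{r-4}$ then fixes the orientation, excludes (2) itself, and leaves the dual of (2) or the self-dual (3), which is exactly conclusion~(1). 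The $\Gamma_{<3}$-exception can occur only for $i=0$; there I would read the permutation representation graph supplied by Proposition~\ref{i=0andh<>r-1} from the opposite end, i.e.\ pass to the dual polytope, to reduce it to the $\Gamma_{>r-4}$ situation just treated, or else dispose of the finitely many remaining graphs directly.

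If instead $\Gamma_{>h}$ is intransitive on $X$, I set $t:=h$ and $U:=X$, so that $t\leq\frac{n-|U|-1}{2}$ and $\Gamma_{>t}$ is intransitive on $U$. When $\Gamma_{>h}$ admits a $2$-fracture graph, Proposition~\ref{t} gives $r\leq\frac{n-1}{2}$, again a contradiction. The remaining possibility is that $\Gamma_{>h}$ has no $2$-fracture graph, so some $\rho_j$ with $j>h$ moves only a single pair between $(\Gamma_{>h})_j$-orbits. Here I would invoke the maximality of $i$: the orbits of $\Gamma_j$ are coarser than those of $(\Gamma_{>h})_j$, so if this single edge still joined two $\Gamma_j$-orbits and some later generator acted on both of them, then $j>i$ would satisfy both defining properties of $i$, a contradiction. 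Forcing all the residual breaking to the top in this way leaves $\rho_{r-1}$ joining just one point to the rest of its orbit, so that $\Gamma_{r-1}$ has exactly two orbits, one of them trivial, which is conclusion~(2).

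The hard part will be this last sub-case, where none of the black-box propositions of the section applies directly: one must combine the maximality of $i$ with the fact that $\Gamma_{>h}$ fixes everything outside $X$ to show that the only surviving configuration is the one isolating a single point under $\Gamma_{r-1}$. A secondary, purely finite, subtlety is the orientation argument distinguishing (2) from its dual in conclusion~(1), together with the $i=0$ reduction via duality. Both steps are structural and introduce no new inequalities beyond those already obtained, so the main theorem's bound is never in jeopardy once the case analysis closes.
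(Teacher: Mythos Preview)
Your outline follows the paper's proof correctly through the first two sub-cases (transitive $\Gamma_{>h}$, and intransitive $\Gamma_{>h}$ with a $2$-fracture graph), but the third sub-case, where $\Gamma_{>h}$ is intransitive on $X$ and admits no $2$-fracture graph, contains a genuine gap. You correctly locate an index $j>h$ with a single $j$-edge between $\Gamma_j$-orbits, and the maximality of $i$ does force that no $\rho_{h'}$ with $h'>j$ acts on both $\Gamma_j$-orbits. But your next sentence, ``forcing all the residual breaking to the top in this way leaves $\rho_{r-1}$ joining just one point to the rest of its orbit,'' is a non-sequitur. Nothing so far prevents both $\Gamma_j$-orbits from having size at least~$2$, and nothing forces $j=r-1$. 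What you have actually shown is that any generator acting on both $\Gamma_j$-orbits has label $g<j$; you have \emph{not} shown that the second $\Gamma_j$-orbit is a single point, nor that $\Gamma_{r-1}$ has a trivial orbit.

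The paper's proof fills this gap with a substantial further case analysis. Writing $C$ and $D$ for the actions of $\Gamma_j$ on its two orbits $L_1\ni\{a,b\}$ and $L_2$, one splits on: (i) $D$ trivial, which gives conclusion~(b) or a contradiction via the dual of Proposition~\ref{ineq0Btrivial}; (ii) $C=\Gamma_{<j}$ and $D=\Gamma_{>j}$, handled by Proposition~\ref{Case2}; (iii) some $\rho_g$ with $g<j$ acts on both $\Gamma_j$-orbits, whereupon four sub-cases (depending on whether $j=r-1$ and which of $J_C,J_D$ is an interval) produce a dual bound $r-g\le\frac{n-|Y|-1}{2}$ with $Y=\{1,\dots,n\}\setminus\mathrm{Fix}(\Gamma_{<g})$. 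Combining this with $h\le\frac{n-|X|-1}{2}$ and a $2$-fracture-graph bound $g-h-1\le\frac{|X\cap Y|-1}{2}$ on the middle range $\Gamma_{\{h+1,\dots,g-1\}}$ (using $n=|X|+|Y|-|X\cap Y|$) yields $r\le\frac{n-1}{2}$. This triple-inequality argument is the heart of the proposition and is entirely absent from your sketch. A secondary issue: your reduction of the $\Gamma_{<3}$ alternative to the $\Gamma_{>r-4}$ alternative by ``passing to the dual polytope'' does not work directly, since the hypothesis that $i$ is \emph{maximal} dualises to a minimality condition, not the same hypothesis.
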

\begin{proof}
Let $h$ be maximal.
Let us choose the vertices $a$ and $b$, and consequently the groups $A$ and $B$,  such that $J_B$ is an interval with all labels in $J_B$ being $>i$. If $J_A$ is also an interval, then, as $h\in J_A\cap J_B$,  $i=0$; we assume without loss of generality that $r-1\in J_A\setminus J_B$ (Recall that the case $h=r-1$ has been dealt with in Proposition~\ref{i=0andh=r-1}).

Denote by $O_1$ and $O_2$ the orbits of $A$ and $B$ respectively.
By Proposition~\ref{i=0andh<>r-1} when $i=0$, and Proposition~\ref{h} when $i\neq 0$, we have that  $h< r-1$ and $h\leq \frac{n-|X|-1}{2}$  with $X:=\{1,\ldots, n\}\setminus {\rm Fix}(\Gamma_{>h})\subseteq O_2$. If $\Gamma_{>h}$ is transitive, by Proposition~\ref{>htran}, one of the groups $\Gamma_{<3}$ or $\Gamma_{>r-4}$  is one of the string C-groups (2) or (3)  given in Table~\ref{small} or their duals. Thus we may assume that  $\Gamma_{>h}$ is intransitive.
Now if $\Gamma_{>h}$ has a 2-fracture graph,  by Proposition~\ref{t},  $r\leq \frac{n-1}{2}$, a contradiction.
Hence $\Gamma_{>h}$ does not admit a 2-fracture graph.
Then there exists $j>h$ such that $\rho_j$ only swaps one pair of vertices in different $\Gamma_j$-orbits. 
Choose $j$ minimal with this property. Then $\Gamma_{\{h+1,\ldots j-1\}}$ has a 2-fracture graph.
Let $C$ and $D$ be the group actions of $\Gamma_j$ in each $\Gamma_j$-orbit with $C$ acting on the orbit $L_1$ containing the $i$-edge $\{a,b\}$. Let $L_2$ denote the other $\Gamma_j$-orbit.
If one of the groups is trivial then either we get case (b) of the statement of this proposition or $r\leq \frac{n-1}{2}$ by the dual of Proposition~\ref{ineq0Btrivial}, a contradiction.
Thus assume both $C$ and $D$ are nontrivial sggi's. Indeed let $\rho_j=\gamma_j\delta_j$ with $\gamma_j$ and $\delta_j$ being the permutations in each $\Gamma_j$-orbit.
Then $C = \langle \gamma_i\, |\, i\in \{0,\ldots, r-1\}\rangle$ and $D := \langle \delta_i \,| \,i\in \{0,\ldots, r-1\}\rangle$.
Let $J_C:= \{  i \in \{0,\ldots, r-1\}\,|\,\gamma_i \neq 1_C\}$ and  $J_D:= \{  i \in \{0,\ldots, r-1\}\,|\,\delta_i \neq 1_D\}$.
By Propositions~\ref{CCD} and \ref{bothimp} either $J_C$ or $J_D$ is an interval. 
If $C=\Gamma_{<j}$ and $D=\Gamma_{>j}$ then by Proposition~\ref{Case2}, $r\leq \frac{n-1}{2}$ or $\Gamma_{>r-4}$  is, up to duality, one of the  string C-groups (2) or (3)  given in Table~\ref{small}. This gives case (a) of the statement.

It remains to consider the case where there exists a permutation $\rho_g$ acting non-trivially on both $\Gamma_j$-orbits. 
Choose $g$ minimal.
Thanks to the maximality of $i$,  $g<j$. 
As $g\in J_D$ we have that $g>i$. We now consider four cases:

(1) If $j=r-1$ and $g=0$, then by the dual of Proposition~\ref{i=0andh=r-1}, we get a contradiction.

(2) If $j=r-1$ and $g\neq 0$, then both $J_C$ and $J_D$ are intervals and $g$ is the minimal label of a permutation acting nontrivially on $L_2$. 
Then we can use the dual of Proposition~\ref{i=0andh<>r-1} to conclude that 
$r-g\leq \frac{n-|Y|-1}{2}$ with $Y:= \{1,\ldots, n\}\setminus {\rm Fix}(\Gamma_{<g})\subseteq L_1$. 
If $g\leq h$ then, by Proposition~\ref{path}, there is a path in $X$ containing all labels from $h$ to $r-2$ twice. Let $V$ be the set of vertices of this path, then $r-h\leq\frac{|V|}{2}$.  Hence $r\leq \frac{n-|X|-|V|-1}{2}$, giving a contradiction.
Hence $g> h$ and $X\cap Y\neq \emptyset$.

(3) Let $j\neq r-1$ and $J_D$  be an interval. In this case both $\rho_0$ and $\rho_{r-1}$ act nontrivially on the first $\Gamma_j$-orbit. Hence we can use the dual argument to the one used in the last paragraph of the proof of Proposition~\ref{<iint}, to conclude that $r\leq \frac{n-1}{2}$, a contradiction.

(4) Let $j\neq r-1$ and $J_C$ be an interval. By the dual of Proposition~\ref{h}, $g<0$ and  $r-g\leq \frac{n-|Y|-1}{2}$ with $Y:= \{1,\ldots, n\}\setminus {\rm Fix}(\Gamma_{<g})\subseteq L_1$. By the same reasoning as in (2), $h>g$.

Only cases (2) and (4) are possible and both give the inequality  $r-g\leq \frac{n-|Y|-1}{2}$ with $Y:= \{1,\ldots, n\}\setminus {\rm Fix}(\Gamma_{<g})\subseteq L_1$ and  $h>g$.

By choice of $j$, there is no other label $l$ between $h$ and $g$ having only one pair of vertices in different $\Gamma_l$-orbits. 
Then $\Gamma_{\{h+1,\ldots, g-1\}}$ has a 2-fracture graph. Moreover $\Gamma_{>h} \cap \Gamma_{<g}$
is intransitive on  $X\cap Y$
for if it were transitive, then $\Gamma_{>h}$ would be transitive on $X$, contradicting the assumptions of the proposition.
Hence the 2-fracture graph for $\Gamma_{\{h+1,\ldots, g-1\}}$ is disconnected and $g-h-1\leq \frac{|X\cap Y|-1}{2}$. 
We have $h\leq \frac{n-|X|-1}{2}$, $g-h\leq \frac{|X\cap Y|-1}{2}+1$,  $r-g \leq \frac{n-|Y|-1}{2}$ and $n= |X| + |Y| - |X \cap Y|$, thus $r\leq \frac{n-1}{2}$, a contradiction.
\end{proof}

We now consider that $A$ is trivial or has the permutation representation graph (2) or (3) of Table~\ref{small}.

\begin{prop}\label{0trivialtran}
Let $i=0$. If $A$ is trivial and $\Gamma_{>1}$ is transitive on $\{1,\ldots,n\}\setminus\{a,b\}$, then $r\leq \frac{n-1}{2}$.
\end{prop}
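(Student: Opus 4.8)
The plan is to exploit the very special structure forced by the hypotheses. We have $i=0$, the group $A$ (the action of $\Gamma_0$ on the first $\Gamma_0$-orbit) is trivial, and $\Gamma_{>1}=\langle\rho_2,\ldots,\rho_{r-1}\rangle$ acts transitively on the set $X:=\{1,\ldots,n\}\setminus\{a,b\}$, which has size $n-2$. Since $A$ is trivial, the only generator touching the vertex $a$ is $\rho_1$ (the generators acting nontrivially on $a$ are among $\rho_{-1},\rho_1$, and $\rho_{-1}$ does not exist), so $\rho_0$ is the transposition-like involution $(a,b)\cdots$ whose only nontrivial pair in different $\Gamma_0$-orbits is $\{a,b\}$; in fact since $A$ is trivial, $\rho_0$ fixes $X$ pointwise and thus $\rho_0=(a\,b)$, meaning $\Gamma_{>1}=\Gamma_{0,1}$ fixes $a$ as well. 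So $\Gamma_{>1}$ is a transitive group of degree $n-2$ on $X$ (fixing $a$ and $b$), and $\{\rho_2,\ldots,\rho_{r-1}\}$ is an independent generating set giving it the structure of a string group generated by involutions of rank $r-2$.

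**Applying the transitivity results.**

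First I would dispose of the small-degree cases. If $n-2<12$, then $n\le 13$, and the desired bound follows from the cases handled by \textsc{Magma} (the tables), so we may assume $n-2\ge 12$. Now $\Gamma_{>1}$ is a transitive even (or general) permutation group of degree $n-2$ with $r-2$ string generators. The key step is to bound $r-2$ by $(n-2-1)/2$ using the induction hypothesis together with the transitive-group results already established: either $\Gamma_{>1}$ is primitive, in which case Proposition~\ref{prim} (if it is not $A_{n-2}$) gives $r-2\le (n-2-3)/2$, while if $\Gamma_{>1}\cong A_{n-2}$ then Proposition~\ref{induction} gives $r-2\le(n-2-1)/2$; or $\Gamma_{>1}$ is transitive imprimitive, in which case Proposition~\ref{RI} bounds $r-2$ by $(n-2-2)/2$ (the exceptional wreath cases $a=2$ or $b=2$ are even, giving a connected-diagram contradiction or the weaker bound that still suffices). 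In all cases $r-2\le (n-3)/2$, hence $r\le (n+1)/2$.

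**Closing the parity gap.**

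The last paragraph leaves a gap of one: the naive bound is $r\le(n+1)/2$, but we want $r\le(n-1)/2$. The main obstacle is therefore to rule out the extremal equality $r-2=(n-3)/2$, i.e. $r=(n+1)/2$, which forces $\Gamma_{>1}\cong A_{n-2}$ realized at full rank. The plan here is to use the intersection property together with the fact that $\rho_0=(a\,b)$ commutes with everything in $\Gamma_{>1}$ (since $\Gamma_{>1}$ fixes both $a$ and $b$). If $\Gamma_{>1}\cong A_{n-2}$ with $r-2=\lfloor(n-3)/2\rfloor$, then $\rho_0$ is a transposition and $\Gamma=\langle\rho_0,\Gamma_{>1}\rangle$ would have $\rho_0$ centralizing $A_{n-2}=\Gamma_{>1}$; but $\Gamma\cong A_n$ and the centralizer in $A_n$ of a copy of $A_{n-2}$ fixing two points is trivial (for $n-2\ge 5$), so $\rho_0\in\Gamma_{>1}$, contradicting the intersection property $\Gamma_0\cap\Gamma_1=\Gamma_{0,1}$ and the independence of the generators (equivalently, $\rho_0$ is an odd permutation, contradicting $\Gamma\cong A_n$). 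I expect the parity/centralizer argument to be the delicate part: one must confirm that $\Gamma_{>1}$ genuinely fixes $a$ (not merely $b$), which follows from $A$ being trivial, and that $\rho_0$ being a single transposition makes it odd, contradicting $\Gamma\cong A_n$. With the equality case excluded, we obtain $r\le(n-1)/2$ as required.
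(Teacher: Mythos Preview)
Your setup contains a genuine error that undermines the argument. You claim that ``since $A$ is trivial, $\rho_0$ fixes $X$ pointwise and thus $\rho_0=(a\,b)$.'' But $A$ trivial only means the first $\Gamma_0$-orbit is the singleton $\{a\}$; it says nothing about how $\rho_0$ acts on $X$. In fact the opposite is true: since $\Gamma\cong A_n$, the involution $\rho_0$ is even, so besides the pair $(a,b)$ it must move at least one further pair, and that pair necessarily lies in $X$. You even use ``$\rho_0$ is a single transposition, hence odd'' at the end to get a contradiction --- but if that held unconditionally (as your setup asserts), the hypotheses would be vacuous.

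The paper's proof exploits precisely the fact you deny: $\rho_0$ acts nontrivially on $X$ and centralises the transitive group $\Gamma_{>1}$ on $X$. This is exactly the hypothesis of Proposition~\ref{<jon>jtran} with $j=1$ and $l=0$, which forces $\Gamma_{>1}$ to be imprimitive with blocks of size two and immediately yields $r-2\le\tfrac{n-2}{2}-1$, i.e.\ $r\le n/2$. The equality case is then ruled out by the explicit permutation representation graphs in that proposition, where one checks $\rho_0$ or $\rho_1$ would be odd.

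Your case analysis on $\Gamma_{>1}$ is weaker even if one corrects the setup. The centraliser argument you sketch does rule out $\Gamma_{>1}$ primitive (not just $A_{n-2}$), since a non-regular primitive group has trivial centraliser in the ambient symmetric group, forcing $\rho_0|_X=\mathrm{id}$ and hence $\rho_0=(a\,b)$ odd. But that leaves the imprimitive case, where your appeal to Proposition~\ref{RI} gives only $r-2\le(n-4)/2$, i.e.\ $r\le n/2$; for $n$ even this is one short of the required $r\le(n-1)/2$, and you have no mechanism to close that gap. The missing ingredient is precisely that the centralising involution $\rho_0$ forces the block size to be $2$, which is what Proposition~\ref{<jon>jtran} packages.
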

\begin{proof}
As $\rho_0$ acts nontrivially on $\{1,\ldots,n\}\setminus\{a,b\}$ and  centralizes $\Gamma_{>1}$, by Proposition~\ref{<jon>jtran}, 
$r-2\leq \frac{n-2}{2}-1$ and therefore $r\leq n/2$. Suppose that we have the equality. Then $\Gamma_{>1}$ is as given in  Proposition~\ref{<jon>jtran}, and either $\rho_0$ or $\rho_1$ is odd, a contradiction. Hence $r\leq \frac{n-1}{2}$.
\end{proof}

\begin{prop}\label{12Tran}
Let  $A$ have the permutation representation $F_x=\emptyset$  or (3) of Table~\ref{small} (this implies that $n\geq 7$). 
If $\Gamma_{>4}$ is transitive on $n-7$ vertices then  $r\leq \frac{n-1}{2}$. 
\end{prop}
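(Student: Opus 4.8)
The plan is to exploit the very rigid shape of $A$. Since $A$ is $L_2(5)\cong A_5$ in its degree-$6$ action given by graph~(2) or (3) of Table~\ref{small}, its nontrivial generators carry precisely the labels $\{0,1,2\}$, so the first $\Gamma_i$-orbit $O_1$ has exactly six points, $|O_2|=n-6$, and the special index is $i=1$ with $a$ an endpoint of the $A_5$-diagram. Reading off the two graphs one checks that the only generators that can move $b$ are $\rho_0,\rho_1,\rho_2$ (the admissible actors $\rho_{i-1},\rho_i,\rho_{i+1}$). Hence every $\rho_j$ with $j\ge 3$ fixes $O_1$ pointwise (its label is not in $J_A$) and fixes $b$. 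Consequently the parabolic $\Gamma_{>2}=\langle\rho_3,\ldots,\rho_{r-1}\rangle$ fixes the seven points $O_1\cup\{b\}$ and acts on the remaining $n-7$ points of $O_2$; since $\Gamma_{>2}\supseteq\Gamma_{>4}$ and the latter is transitive there by hypothesis, $\Gamma_{>2}$ is transitive on these $n-7$ points, and its induced action is faithful and even.

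I would then bound the rank of $\Gamma_{>2}$, which equals $r-3$. As a parabolic of the string C-group $\Gamma$, $\Gamma_{>2}$ is itself a string C-group, and its diagram is the subpath of the (connected) diagram of $\Gamma$ on the labels $\{3,\ldots,r-1\}$, hence connected. Moreover all its maximal parabolics are intransitive: if some $(\Gamma_{>2})_j$ were transitive on $O_2\setminus\{b\}$, then adjoining $\rho_0,\rho_1,\rho_2$, which link $b$ and $O_1$ to $O_2\setminus\{b\}$ through the bridge $\rho_1$, would make $\Gamma_j$ transitive on all $n$ points, contradicting the standing hypothesis that every maximal parabolic of $\Gamma$ is intransitive.

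With these hypotheses verified I can feed $\Gamma_{>2}$ into the degree-$(n-7)$ bounds. For $n-7\ge 12$, Proposition~\ref{induction} gives $r-3\le\frac{(n-7)-1}{2}=\frac{n-8}{2}$, whence $r\le\frac{n-2}{2}\le\frac{n-1}{2}$; note that the equality case $\Gamma_{>2}\cong A_{n-7}$ already satisfies the desired inequality, so, unlike in Proposition~\ref{small(1)}, no separate sesqui-extension argument is needed to discard it. For $n-7<12$ (the only remaining range once $n\ge 12$) I would invoke Propositions~\ref{smalln} and \ref{small123}: either the bound $r-3\le\frac{(n-7)-2}{2}$ holds, giving $r\le\frac{n-3}{2}$, or $\Gamma_{>2}$ is one of the finitely many listed graphs, which are excluded exactly as in Proposition~\ref{small123} (they violate the intersection property or force $n$ to a value already settled by \textsc{Magma}).

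The main obstacle is the verification that $\Gamma_{>2}$ meets the hypotheses of Proposition~\ref{induction}, and in particular that all its maximal parabolics are intransitive. Without this one is left only with the weaker transitive-imprimitive bound $\frac{n-7}{2}+1$ from \cite{transitive}, which yields $r\le\frac{n+1}{2}$ and is too weak; it is also why bounding the rank of $\Gamma_{>4}$ directly (rank $r-5$) fails, as it only gives $r\le\frac{n+2}{2}$. The reduction of the intransitivity of the parabolics of $\Gamma_{>2}$ to the global intransitivity hypothesis via the bridge generator, together with the careful reading of graphs~(2) and (3) to locate $a$, $b$ and the index $i$, is what makes the argument go through.
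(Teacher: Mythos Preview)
Your identification of the special index is wrong: you claim $i=1$, but in fact $i=3$. By definition $A$ is the action of $\Gamma_i$ on the first orbit, so $i\notin J_A$; since graphs~(2) and~(3) of Table~\ref{small} use exactly the labels $\{0,1,2\}$, this already forces $i\ge3$. The vertex $a$ is the endpoint of the $A$-graph incident only to a $2$-edge (the vertex singled out in Proposition~\ref{small123}), and the constraint that only $\rho_{i-1}$ and $\rho_{i+1}$ may act on $a$ then gives $i-1=2$, so $i=3$. This is the opening line of the paper's proof.

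With $i=3$ your reduction collapses: $\rho_3=\rho_i$ lies in $\Gamma_{>2}$ and swaps $a$ with $b$, so $\Gamma_{>2}$ does not fix the seven points $O_1\cup\{b\}$, and there is no faithful degree-$(n-7)$ action of $\Gamma_{>2}$ to feed into Proposition~\ref{induction}. You yourself note that bounding the rank of $\Gamma_{>4}$ by induction is too weak, and this is exactly where the paper does something different. Since $\rho_3$ commutes with $\Gamma_{>4}$ and (being even, with the only other candidate support $O_1\setminus\{a\}$ ruled out because $\langle\rho_0,\rho_1\rangle$ is transitive on those five points) must act nontrivially on the $n-7$ points on which $\Gamma_{>4}$ is transitive, it has full support there. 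Now Proposition~\ref{<jon>jtran} applies with $j=4$ and $l=3$: it forces $\Gamma_{>4}$ to be imprimitive with blocks of size two and yields $r-5\le\frac{n-7}{2}-1$. In the equality case that proposition pins down the permutation representation of $\Gamma_{>4}$, and one checks that either $\rho_3$ or $\rho_4$ would then be odd; hence the inequality is strict and $r\le\frac{n-1}{2}$.
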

\begin{proof}
In this case $i=3$ and $\rho_i$ has full support on the $n-7$ vertices that are not fixed by $\Gamma_{>4}$.
So $n-7$ is even.  By Proposition~\ref{<jon>jtran}, $r-5\leq \frac{n-7}{2}-1$.
Moreover if we have the equality then either $\rho_3$ or $\rho_4$ is odd, a contradiction.
Hence $r-5< \frac{n-7}{2}-1$ and $r\leq \frac{n-1}{2}$. 
\end{proof}
\begin{prop}\label{specialbubbles}
Suppose that $A$ is trivial or has permutation representation graph (2) or (3) of Table~\ref{small} and $B$ is the alternating group.
If $\Gamma_{>i+1}$ is intransitive on the second $\Gamma_i$-orbit and  $\Gamma_{>i}$ has a 2-fracture graph, then $r\leq \frac{n-1}{2}$. 
Moreover if $r> \frac{n-3}{2}$ then one of the following possibilities must occur.
\begin{enumerate}
\item there exists $x\in\{0,\ldots, r-1\}$ such that $x\leq\frac{n-|X|-1}{2}$ with $X=\{1,\ldots, n\}\setminus {\rm Fix}(\Gamma_{>x})$.
\item $\Gamma$ has the following permutation representation graph.
$$\xymatrix@-1.3pc{ &&*+[o][F]{} \ar@{-}[rr]^1&&*+[o][F]{} \ar@{-}[rr]^2\ar@{-}[d]^0&&*+[o][F]{} \ar@{-}[rr]^3\ar@{=}[d]^1_0&&*+[o][F]{}\ar@{=}[d]^1_0 \ar@{.}[rr]\ar@{=}[d]^1_0&&*+[o][F]{}\ar@{-}[rr]^{r-1}\ar@{=}[d]^1_0&&*+[o][F]{}\ar@{=}[d]^1_0\\
   *+[o][F]{}\ar@{-}[rr]_0&&*+[o][F]{}\ar@{-}[rr]_1&&*+[o][F]{}\ar@{-}[rr]_2&&*+[o][F]{} \ar@{-}[rr]_3&&*+[o][F]{} \ar@{.}[rr]&&*+[o][F]{}\ar@{-}[rr]_{r-1}&&*+[o][F]{}}$$
\end{enumerate}
\end{prop}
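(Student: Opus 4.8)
The plan is to transfer everything onto the second $\Gamma_i$-orbit and to use the 2-fracture graph machinery of Section~\ref{s:2frac}, which by the remark preceding Proposition~\ref{t} is available here even though $\Gamma_{>i}$ need not be even, satisfy the intersection property, or have a connected diagram. Since $A$ is trivial or one of the degree-$6$ representations (2)/(3) of Table~\ref{small}, all of whose edge labels lie in $\{0,1,2\}$, the generators $\rho_{i+1},\ldots,\rho_{r-1}$ act trivially on the first orbit; hence $\Gamma_{>i}$ fixes the first orbit pointwise and acts on the second orbit, of size $n_2$, as $B\cong A_{n_2}$, transitively. First I would feed the given 2-fracture graph of $\Gamma_{>i}$ into Proposition~\ref{notconwithsquare}: a connected 2-fracture graph of $A_{n_2}$ must be a tree, because $A_{n_2}$ does not embed into $C_2\wr S_{n_2/2}$ for the relevant $n_2$ (and the few genuinely small $n_2$ are covered by induction and Table~\ref{small}); if instead no connected 2-fracture graph exists, Proposition~\ref{dislc} provides one with a tree component.

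The bound then comes from counting. A 2-fracture graph of the rank-$(r-1-i)$ group $\Gamma_{>i}$ has $2(r-1-i)$ edges on $n_2$ vertices, and a forest with a tree component obeys $2(r-1-i)\le n_2-1$; together with $n=n_1+n_2$ and $n_1\in\{1,6\}$ this yields $r\le n/2$. To improve this to $r\le (n-1)/2$ I would combine the induction hypothesis on $\Gamma_{>i}\cong A_{n_2}$ (Proposition~\ref{induction}, applied to the second orbit) with the path-counting of Proposition~\ref{path} issuing from the $i$-edge $\{a,b\}$, and then exploit that $\Gamma\cong A_n$ is even: the generator $\rho_{i+1}$, being the unique one among $\rho_{i+1},\ldots,\rho_{r-1}$ that fuses the $\Gamma_{>i+1}$-orbits of the second orbit, must move at least two pairs of points, and $\rho_i$ cannot be odd. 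These parity constraints are exactly what rules out the borderline tree realisation that would force $r=n/2$, in the same spirit as the endgames of Propositions~\ref{<itran}, \ref{<iint} and~\ref{ineq0Btrivial}.

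For the refined statement I would track equality throughout. If $r>(n-3)/2$ then the forest above is forced to be a single tree realised in an essentially unique way up to the exchange moves of Section~\ref{s:2frac}; translating this tree back into a permutation representation graph and re-attaching the fixed first-orbit part (a single point, or the $L_2(5)$-tail of graph (2)/(3)) leaves only two possibilities. Either some earlier label $x$ already satisfies $x\le (n-|X|-1)/2$ with $X=\{1,\ldots,n\}\setminus\mathrm{Fix}(\Gamma_{>x})$, which is conclusion (a) and is precisely the hypothesis that will later feed Proposition~\ref{t} or the dual of Proposition~\ref{<jon>jtran}; or the whole diagram collapses to the explicit ladder graph of conclusion (b), in which the vertical double edges carry the labels $\{0,1\}$ and a direct vertex count gives $n=2r+1$, consistently with $r=(n-1)/2$. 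A parity check using the evenness of $\Gamma$ and $B\cong A_{n_2}$ eliminates every intermediate configuration, in particular those making $\rho_i$ or $\rho_{i+1}$ odd.

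The step I expect to be the main obstacle is this final extremal analysis: showing that, when $r$ is within one of its maximum, the fixed first-orbit structure glues onto the tree on the second orbit in exactly one way, so that the permutation representation graph is forced to be the single graph in (b). The careful bookkeeping — that only $\rho_{i-1}$ and $\rho_{i+1}$ may act on $a$ and $b$, that the double edges $\{0,1\}$ appear exactly where the diagram claims, and that all of this stays consistent with $\Gamma\cong A_n$ and the intersection property — is heaviest in the degree-$6$ case $A\cong L_2(5)$, where $i=3$ and the naive edge count leaves a gap of one that is closed only by the evenness argument.
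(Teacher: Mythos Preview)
Your edge count $2(r-1-i)\le n_2-1$ gives $r\le i+1+(n_2-1)/2$. With $(i,n_1)=(3,6)$ this is $r\le 4+(n-7)/2=(n+1)/2$, not $r\le n/2$ as you write; even for $(i,n_1)=(0,1)$ you only reach $r\le n/2$. So before any parity argument you are one full unit short in the $i=3$ case and half a unit short in the $i=0$ case. The parity improvement you sketch does not close this: the extra transpositions of $\rho_i$ live outside $\Gamma_{>i}$, so they do not reduce the number of edges available to the 2-fracture graph of $\Gamma_{>i}$ on the second orbit, and invoking induction on $\Gamma_{>i}\cong A_{n_2}$ merely reproduces the same bound $r-1-i\le (n_2-1)/2$.

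The hypothesis you never substantively use is that $\Gamma_{>i+1}$ is intransitive on the second orbit. The paper's proof is organised around precisely this: it sets $t=i+1$ and works with the $\Gamma_{>t}$-orbits $U_s$, exploiting that $\rho_i$ centralises $\Gamma_{>t}$ and therefore permutes the $U_s$. The argument then splits on whether each $U_s$ carries its own 2-fracture graph; when they all do, a four-way case analysis on where the extra transpositions of $\rho_i$ land (between distinct $U_s$; inside some $U_s$ with $F_s\ne\emptyset$, $s\ne p$; inside $U_p$; inside some $U_s$ with $F_s=\emptyset$) forces the defect $S=\sum_s(|F_s|-|U_s|/2)$ to be at most $-3$, which is exactly the missing margin. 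When some $U_s$ fails to have a 2-fracture graph, one instead extracts a label $x$ satisfying conclusion (a). Your global approach via $\Gamma_{>i}$ cannot see this fine orbit structure, so it cannot recover the lost unit. The extremal classification you correctly flag as the hard step is, in the paper, a catalogue spread over four tables of candidate permutation-representation graphs, each eliminated by a parity or intersection-property check; the graph in (b) is the unique survivor, and it does not drop out of a tree classification in the way you propose.
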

\begin{proof}
Consider the graph $\mathcal{C}$ as in the proof of Proposition~\ref{t}, with $t=i+1$. 
Let  $U$, $U_s$, $G_s$, $F_s$ and $I_s$ be as in Proposition~\ref{t}.

Suppose there is a component $U_s$ that does not have a 2-fracture graph. 
Let $m$ and $x$ be the minimal and the maximal  label  of that component respectively. 
We proved in Proposition~\ref{t} that $m\in\{t+1, t+2\}$ and accordantly to these possibilities for $m$ the permutation representation graph of $\Gamma$ contains one of the following graphs.

\begin{tabular}{cc}
$m=t+2:$&$\xymatrix@-1.3pc{ &&*+[o][F]{} \ar@{-}[rr]^{t+2}\ar@{-}[d]^t&&*+[o][F]{} \ar@{-}[rr]^{t+3}\ar@{-}[d]^t&&*+[o][F]{} \ar@{~}[rrrr]\ar@{-}[d]^t&&&&*+[o][F]{}\ar@{-}[rr]^x\ar@{-}[d]^t&&*+[o][F]{}\ar@{-}[d]^t\\
 *+[o][F]{}\ar@{-}[rr]_{t+1}&&*+[o][F]{} \ar@{-}[rr]_{t+2}&&*+[o][F]{} \ar@{-}[rr]_{t+3}&&*+[o][F]{} \ar@{~}[rrrr]&&&&*+[o][F]{}\ar@{-}[rr]_x&&*+[o][F]{}}$\\

$m=t+1$:& $\xymatrix@-1.3pc{  *+[o][F]{} \ar@{-}[rr]^{t+1}&&*+[o][F]{} \ar@{~}[rrrr]&&&&*+[o][F]{}\ar@{-}[rr]^x&&*+[o][F]{}&&&&\\
*+[o][F]{} \ar@{-}[rr]^{t+1}&&*+[o][F]{} \ar@{~}[rrrr]&&&&*+[o][F]{}\ar@{-}[rr]^x&&*+[o][F]{c} }$
\end{tabular}

Let $P$ be the set of vertices of the first graph  and $Q$ be the set of vertices of the second graph. 
If $x\neq r-1$, in the first case $\Gamma_{>x}$ fixes $P$, and in the second case  $\Gamma_{>x}$ fixes  $Q\setminus\{c\}$. If $i=0$ and $t=1$ we have that $x\leq \frac{|P|+1}{2}\mbox{ or }x\leq \frac{|Q|}{2}.$
If $i=3$ (and $t=4$), $x\leq \frac{|P|+7}{2}\mbox{ or }x\leq \frac{|Q|+6}{2}.$

Suppose that $x=r-1$ and $r\geq \frac{n-2}{2}$.
Consider first that $A$ is trivial and $m=t+2$. In this case $n\geq |P|+2$ and $\frac{|P|+1}{2}\geq \frac{n-4}{2}$, thus $|P|\in\{n-5,n-4,n-3,n-2\}$.  Then it is possible to determined the permutation representation graph of $\Gamma$ according to the value of  $|P|$.  If $|P|=n-2$ then $\Gamma$ is the graph (1) of Table~\ref{GammaWhenATrivial1}  which is not an even group. It is also not possible to get an even group when $|P|=n-3$, as there must exist a $\Gamma_{>1}$-orbit with a 2-edge. For  $|P|=n-4$ it is possible to create such a component and we get the graph (2) of Table~\ref{GammaWhenATrivial1}.  But as $\Gamma_{>1}$ does not have a 2-fracture graph we get a contradiction. 
If $|P|=n-5$, it is not possible to create a third $\Gamma_{>1}$-orbit, thus $\Gamma_{>1}$ has exactly two components, one containing $P$
 and the other having an even number of vertices swapped pairwise by $\rho_0$, a contradiction.

\begin{table}
\begin{small}
\[\begin{tabular}{|c|c|c|}
\hline
\#&$ |P|$& \textbf{Possibilities for $\Gamma$ when $A$ is trivial }\\
\hline
\textbf{1}&$n-2$&$\xymatrix@-1.5pc{ &&&&&&*+[o][F]{} \ar@{-}[rr]^3\ar@{-}[d]^1&&*+[o][F]{}\ar@{-}[d]^1 \ar@{.}[rr]\ar@{-}[d]^1&&*+[o][F]{}\ar@{-}[rr]^{r-1}\ar@{-}[d]^1&&*+[o][F]{}\ar@{-}[d]^1\\
   *+[o][F]{}\ar@{-}[rr]_0&&*+[o][F]{}\ar@{-}[rr]_1&&*+[o][F]{}\ar@{-}[rr]_2&&*+[o][F]{} \ar@{-}[rr]_3&&*+[o][F]{} \ar@{.}[rr]&&*+[o][F]{}\ar@{-}[rr]_{r-1}&&*+[o][F]{}}$
\\
\hline
\textbf{2}&$n-4$&$\xymatrix@-1.5pc{&&&& &&&&&&*+[o][F]{} \ar@{-}[rr]^3\ar@{-}[d]^1&&*+[o][F]{}\ar@{-}[d]^1 \ar@{.}[rr]\ar@{-}[d]^1&&*+[o][F]{}\ar@{-}[rr]^{r-1}\ar@{-}[d]^1&&*+[o][F]{}\ar@{-}[d]^1\\
   *+[o][F]{}\ar@{-}[rr]_0&&*+[o][F]{}\ar@{-}[rr]_1&&*+[o][F]{}\ar@{=}[rr]_2^0&&*+[o][F]{}\ar@{-}[rr]_1&&*+[o][F]{}\ar@{-}[rr]_2&&*+[o][F]{} \ar@{-}[rr]_3&&*+[o][F]{} \ar@{.}[rr]&&*+[o][F]{}\ar@{-}[rr]_{r-1}&&*+[o][F]{}}$
\\
\hline
\end{tabular}\]
\end{small}
\caption{Possibilities depending on $|P|$ for $\Gamma$ when $A$ is trivial}\label{GammaWhenATrivial1} 
\end{table}

Now let $A$ be trivial and  $m=t+1$.  In this case $|Q|\in\{n-4,n-3,n-2\}$. In Table~\ref{GammaWhenATrivial2} we list all possibilities for the permutation representation graph of $\Gamma$ for each value of $|Q|$.  If the permutation representation graph of  $\Gamma$ is one of the graphs  (3), (5a), (5b) or (5c), then $\Gamma$ is odd, a contradiction. Thus the only possibility is the permutation representation graph (4), giving the graph of the statement of this proposition.

\begin{table}
\begin{small}
\[\begin{tabular}{|c|c|c|}
\hline
\#&$|Q|$ & \textbf{Possibilities for $\Gamma$ when $A$ is trivial }\\
\hline
\textbf{3}&$n-2$&$\xymatrix@-1.5pc{ &&&&*+[o][F]{} \ar@{-}[rr]^2\ar@{-}[d]^0&&*+[o][F]{} \ar@{-}[rr]^3\ar@{=}[d]^1_0&&*+[o][F]{}\ar@{=}[d]^1_0 \ar@{.}[rr]\ar@{=}[d]^1_0&&*+[o][F]{}\ar@{-}[rr]^{r-1}\ar@{=}[d]^1_0&&*+[o][F]{}\ar@{=}[d]^1_0\\
   *+[o][F]{}\ar@{-}[rr]_0&&*+[o][F]{}\ar@{-}[rr]_1&&*+[o][F]{}\ar@{-}[rr]_2&&*+[o][F]{} \ar@{-}[rr]_3&&*+[o][F]{} \ar@{.}[rr]&&*+[o][F]{}\ar@{-}[rr]_{r-1}&&*+[o][F]{}}$
\\
\hline
\textbf{4}&$n-3$&$\xymatrix@-1.5pc{ &&*+[o][F]{} \ar@{-}[rr]^1&&*+[o][F]{} \ar@{-}[rr]^2\ar@{-}[d]^0&&*+[o][F]{} \ar@{-}[rr]^3\ar@{=}[d]^1_0&&*+[o][F]{}\ar@{=}[d]^1_0 \ar@{.}[rr]\ar@{=}[d]^1_0&&*+[o][F]{}\ar@{-}[rr]^{r-1}\ar@{=}[d]^1_0&&*+[o][F]{}\ar@{=}[d]^1_0\\
   *+[o][F]{}\ar@{-}[rr]_0&&*+[o][F]{}\ar@{-}[rr]_1&&*+[o][F]{}\ar@{-}[rr]_2&&*+[o][F]{} \ar@{-}[rr]_3&&*+[o][F]{} \ar@{.}[rr]&&*+[o][F]{}\ar@{-}[rr]_{r-1}&&*+[o][F]{}}$
\\
\hline 
\textbf{5a}&$n-4$&$\xymatrix@-1.5pc{ *+[o][F]{} \ar@{-}[rr]^2&&*+[o][F]{} \ar@{-}[rr]^1&&*+[o][F]{} \ar@{-}[rr]^2\ar@{-}[d]^0&&*+[o][F]{} \ar@{-}[rr]^3\ar@{=}[d]^1_0&&*+[o][F]{}\ar@{=}[d]^1_0 \ar@{.}[rr]\ar@{=}[d]^1_0&&*+[o][F]{}\ar@{-}[rr]^{r-1}\ar@{=}[d]^1_0&&*+[o][F]{}\ar@{=}[d]^1_0\\
   *+[o][F]{}\ar@{-}[rr]_0&&*+[o][F]{}\ar@{-}[rr]_1&&*+[o][F]{}\ar@{-}[rr]_2&&*+[o][F]{} \ar@{-}[rr]_3&&*+[o][F]{} \ar@{.}[rr]&&*+[o][F]{}\ar@{-}[rr]_{r-1}&&*+[o][F]{}}$
\\
  && or \\
&&$\xymatrix@-1.5pc{ *+[o][F]{} \ar@{=}[rr]^2_0&&*+[o][F]{} \ar@{-}[rr]^1&&*+[o][F]{} \ar@{-}[rr]^2\ar@{-}[d]^0&&*+[o][F]{} \ar@{-}[rr]^3\ar@{=}[d]^1_0&&*+[o][F]{}\ar@{=}[d]^1_0 \ar@{.}[rr]\ar@{=}[d]^1_0&&*+[o][F]{}\ar@{-}[rr]^{r-1}\ar@{=}[d]^1_0&&*+[o][F]{}\ar@{=}[d]^1_0\\
   *+[o][F]{}\ar@{-}[rr]_0&&*+[o][F]{}\ar@{-}[rr]_1&&*+[o][F]{}\ar@{-}[rr]_2&&*+[o][F]{} \ar@{-}[rr]_3&&*+[o][F]{} \ar@{.}[rr]&&*+[o][F]{}\ar@{-}[rr]_{r-1}&&*+[o][F]{}}$\\
\hline 
\textbf{5b} &$n-4$ &$\xymatrix@-1.5pc{ &&&&*+[o][F]{} \ar@{-}[rr]^2\ar@{-}[d]^0&&*+[o][F]{} \ar@{-}[rr]^3\ar@{=}[d]^1_0&&*+[o][F]{}\ar@{=}[d]^1_0 \ar@{.}[rr]\ar@{=}[d]^1_0&&*+[o][F]{}\ar@{-}[rr]^{r-2}\ar@{=}[d]^1_0&&*+[o][F]{}\ar@{=}[d]^1_0\ar@{-}[rr]^{r-1}\ar@{=}[d]^1_0&&*+[o][F]{}\ar@{=}[d]^1_0\ar@{-}[rr]^{r-2}\ar@{=}[d]^1_0&&*+[o][F]{}\ar@{=}[d]^1_0\\
   *+[o][F]{}\ar@{-}[rr]_0&&*+[o][F]{}\ar@{-}[rr]_1&&*+[o][F]{}\ar@{-}[rr]_2&&*+[o][F]{} \ar@{-}[rr]_3&&*+[o][F]{} \ar@{.}[rr]&&*+[o][F]{}\ar@{-}[rr]_{r-2}&&*+[o][F]{}\ar@{-}[rr]_{r-1}&&*+[o][F]{}\ar@{-}[rr]_{r-2}&&*+[o][F]{}}$
\\
\hline 
\textbf{5c} &$n-4$ &$\xymatrix@-1.5pc{ &&&&&&&&*+[o][F]{} \ar@{-}[rr]^2\ar@{-}[d]^0&&*+[o][F]{} \ar@{-}[rr]^3\ar@{=}[d]^1_0&&*+[o][F]{}\ar@{=}[d]^1_0 \ar@{.}[rr]\ar@{=}[d]^1_0&&*+[o][F]{}\ar@{-}[rr]^{r-1}\ar@{=}[d]^1_0&&*+[o][F]{}\ar@{=}[d]^1_0\\
   *+[o][F]{}\ar@{-}[rr]_0&&*+[o][F]{}\ar@{-}[rr]_1&&*+[o][F]{}\ar@{-}[rr]_2&&*+[o][F]{}\ar@{-}[rr]_1&&*+[o][F]{}\ar@{-}[rr]_2&&*+[o][F]{} \ar@{-}[rr]_3&&*+[o][F]{} \ar@{.}[rr]&&*+[o][F]{}\ar@{-}[rr]_{r-1}&&*+[o][F]{}}$\\
   
   && or \\
   &&$\xymatrix@-1.5pc{ &&&&&&&&*+[o][F]{} \ar@{-}[rr]^2\ar@{-}[d]^0&&*+[o][F]{} \ar@{-}[rr]^3\ar@{=}[d]^1_0&&*+[o][F]{}\ar@{=}[d]^1_0 \ar@{.}[rr]\ar@{=}[d]^1_0&&*+[o][F]{}\ar@{-}[rr]^{r-1}\ar@{=}[d]^1_0&&*+[o][F]{}\ar@{=}[d]^1_0\\
   *+[o][F]{}\ar@{-}[rr]_0&&*+[o][F]{}\ar@{-}[rr]_1&&*+[o][F]{}\ar@{=}[rr]_2^0&&*+[o][F]{}\ar@{-}[rr]_1&&*+[o][F]{}\ar@{-}[rr]_2&&*+[o][F]{} \ar@{-}[rr]_3&&*+[o][F]{} \ar@{.}[rr]&&*+[o][F]{}\ar@{-}[rr]_{r-1}&&*+[o][F]{}}$\\
\hline
\end{tabular}\]
\end{small}
\caption{Possibilities depending on $|Q|$ for $\Gamma$ when $A$ is trivial}\label{GammaWhenATrivial2} 
\end{table}

Now let $A$ be the permutation representation graph (2) of Table~\ref{small} and  $m=t+2$. If $r\geq \frac{n-2}{2}$  and $x=r-1$ then $|P|\in\{n-11,\, n-10,\,n-9,\,n-8,\,n-7\}$.
In Table~\ref{GammaWhenA(1)} we list all possibilities for $\Gamma$ according to $|P|$. For $|P|=n-7$ we get the permutation representation graph (1) and $\Gamma$ is odd, a contradiction. Then $|P|<n-8$,  as there must exist a $\Gamma_{>4}$-orbit containing a $5$-edge and a $3$-edge.  Thus for $|P|=n-9$ we get the permutation representation graph (2), but  $\Gamma_{>4}$ doesn't have a 2-fracture graph, a contradiction. Now suppose $|P|\leq n-10$.
Either there are two $\Gamma_{>4}$-orbits, one having a set of vertices $P$ 
and  another having $\rho_3$ swapping all its vertices pairwise, or
there exist a third  $\Gamma_{>4}$-orbit containing a $5$-edge. For this to happen at least two additional vertices are needed, thus $|P|= n-11$. This gives the permutation representation graph (3), which again corresponds to an odd group, a contradiction.

\begin{table}
\begin{small}
\[\begin{tabular}{|c|c|c|}
\hline
\# &$|P|$& \begin{tabular}{c} \textbf{Possibilities for $\Gamma$ when $A$}\\\textbf{ has the permutation representation graph (2)} \end{tabular}\\
\hline
\textbf{1} & $n-7$ &$\xymatrix@-1.5pc{&& && && && && && && &&*+[o][F]{}\ar@{-}[rr]^6\ar@{-}[d]^4&& *+[o][F]{}\ar@{.}[rr]\ar@{-}[d]^4&&*+[o][F]{}\ar@{-}[rr]^{r-1}\ar@{-}[d]^4&&*+[o][F]{}\ar@{-}[d]^4\\
   *+[o][F]{}\ar@{=}[rr]_0^2&&*+[o][F]{}\ar@{-}[rr]_1&&*+[o][F]{}\ar@{-}[rr]_0&&*+[o][F]{}\ar@{-}[rr]_1&&*+[o][F]{}\ar@{-}[rr]_2&&*+[o][F]{}\ar@{-}[rr]_3&&*+[o][F]{}\ar@{-}[rr]_4&&*+[o][F]{}\ar@{-}[rr]_5&&*+[o][F]{}\ar@{-}[rr]_6&&*+[o][F]{} \ar@{.}[rr]&&*+[o][F]{}\ar@{-}[rr]_{r-1}&&*+[o][F]{}}$\\
\hline 
\textbf{2} & $n-9$ &$\xymatrix@-1.6pc{&& && && && && && && && && &&*+[o][F]{}\ar@{-}[rr]^6\ar@{-}[d]^4&& *+[o][F]{}\ar@{.}[rr]\ar@{-}[d]^4&&*+[o][F]{}\ar@{-}[rr]^{r-1}\ar@{-}[d]^4&&*+[o][F]{}\ar@{-}[d]^4\\
   *+[o][F]{}\ar@{=}[rr]_0^2&&*+[o][F]{}\ar@{-}[rr]_1&&*+[o][F]{}\ar@{-}[rr]_0&&*+[o][F]{}\ar@{-}[rr]_1&&*+[o][F]{}\ar@{-}[rr]_2&&*+[o][F]{}\ar@{-}[rr]_3&&*+[o][F]{}\ar@{-}[rr]_4&&*+[o][F]{}\ar@{=}[rr]_5^3&&*+[o][F]{}\ar@{-}[rr]_4&&*+[o][F]{}\ar@{-}[rr]_5&&*+[o][F]{}\ar@{-}[rr]_6&&*+[o][F]{} \ar@{.}[rr]&&*+[o][F]{}\ar@{-}[rr]_{r-1}&&*+[o][F]{}}$\\
\hline 
\textbf{3} & $n-11$ &$\xymatrix@-1.6pc{&& && && && && && && *+[o][F]{}\ar@{=}[rr]^3_4&&*+[o][F]{}  && &&*+[o][F]{}\ar@{-}[rr]^6\ar@{-}[d]^4&& *+[o][F]{}\ar@{.}[rr]\ar@{-}[d]^4&&*+[o][F]{}\ar@{-}[rr]^{r-1}\ar@{-}[d]^4&&*+[o][F]{}\ar@{-}[d]^4\\
   *+[o][F]{}\ar@{=}[rr]_0^2&&*+[o][F]{}\ar@{-}[rr]_1&&*+[o][F]{}\ar@{-}[rr]_0&&*+[o][F]{}\ar@{-}[rr]_1&&*+[o][F]{}\ar@{-}[rr]_2&&*+[o][F]{}\ar@{-}[rr]_3&&*+[o][F]{}\ar@{-}[rr]_4&&*+[o][F]{}\ar@{-}[u]^5\ar@{-}[rr]_3&&*+[o][F]{}\ar@{-}[u]_5\ar@{-}[rr]_4&&*+[o][F]{}\ar@{-}[rr]_5&&*+[o][F]{}\ar@{-}[rr]_6&&*+[o][F]{} \ar@{.}[rr]&&*+[o][F]{}\ar@{-}[rr]_{r-1}&&*+[o][F]{}}$\\
\hline 
\end{tabular}\]
\end{small} 
\caption{Possibilities depending on $|P|$ for $\Gamma$ when $A$ has permutation representation graph (2)}\label{GammaWhenA(1)} 
\end{table}

In Table~\ref{GammaWhenA(1)2}, we list all possibilities for $\Gamma$ when $A$ has the permutation representation graph (2) of Table~\ref{small} and  $m=t+1$. As before $r\geq \frac{n-2}{2}$ and $x=r-1$, hence $|Q|\in\{n-10,\,n-9,\,n-8,\,n-7\}$. In (4), (6a), (6b), (6c), (6d), (7b), (7c), (7d), (7e) and (7f) $\Gamma$ is odd. In the remaining case the intersection condition fails. 

\begin{table}
\begin{small}
\[\begin{tabular}{|c|c|c|}
\hline
\# &$|Q|$& \begin{tabular}{c} \textbf{Possibilities for $\Gamma$ when $A$}\\\textbf{ has the permutation representation graph (2)} \end{tabular}\\
\hline

\textbf{4} & $n-7$ &$\xymatrix@-1.5pc{&& && && && && && &&*+[o][F]{}\ar@{-}[rr]^5\ar@{-}[d]^3 &&*+[o][F]{}\ar@{-}[rr]^6\ar@{=}[d]^4_3&& *+[o][F]{}\ar@{.}[rr]\ar@{=}[d]^4_3&&*+[o][F]{}\ar@{-}[rr]^{r-1}\ar@{=}[d]^4_3&&*+[o][F]{}\ar@{=}[d]^4_3\\
   *+[o][F]{}\ar@{=}[rr]_0^2&&*+[o][F]{}\ar@{-}[rr]_1&&*+[o][F]{}\ar@{-}[rr]_0&&*+[o][F]{}\ar@{-}[rr]_1&&*+[o][F]{}\ar@{-}[rr]_2&&*+[o][F]{}\ar@{-}[rr]_3&&*+[o][F]{}\ar@{-}[rr]_4&&*+[o][F]{}\ar@{-}[rr]_5&&*+[o][F]{}\ar@{-}[rr]_6&&*+[o][F]{} \ar@{.}[rr]&&*+[o][F]{}\ar@{-}[rr]_{r-1}&&*+[o][F]{}}$\\
\hline 

\textbf{5} & $n-8$ &$\xymatrix@-1.5pc{&& && && && && &&*+[o][F]{}\ar@{-}[rr]^4 &&*+[o][F]{}\ar@{-}[rr]^5\ar@{-}[d]^3 &&*+[o][F]{}\ar@{-}[rr]^6\ar@{=}[d]^4_3&& *+[o][F]{}\ar@{.}[rr]\ar@{=}[d]^4_3&&*+[o][F]{}\ar@{-}[rr]^{r-1}\ar@{=}[d]^4_3&&*+[o][F]{}\ar@{=}[d]^4_3\\
   *+[o][F]{}\ar@{=}[rr]_0^2&&*+[o][F]{}\ar@{-}[rr]_1&&*+[o][F]{}\ar@{-}[rr]_0&&*+[o][F]{}\ar@{-}[rr]_1&&*+[o][F]{}\ar@{-}[rr]_2&&*+[o][F]{}\ar@{-}[rr]_3&&*+[o][F]{}\ar@{-}[rr]_4&&*+[o][F]{}\ar@{-}[rr]_5&&*+[o][F]{}\ar@{-}[rr]_6&&*+[o][F]{} \ar@{.}[rr]&&*+[o][F]{}\ar@{-}[rr]_{r-1}&&*+[o][F]{}}$\\
\hline

\textbf{6a} & $n-9$ &$\xymatrix@-1.5pc{&& && && && &&*+[o][F]{}\ar@{-}[rr]^5 &&*+[o][F]{}\ar@{-}[rr]^4 &&*+[o][F]{}\ar@{-}[rr]^5\ar@{-}[d]^3 &&*+[o][F]{}\ar@{-}[rr]^6\ar@{=}[d]^4_3&& *+[o][F]{}\ar@{.}[rr]\ar@{=}[d]^4_3&&*+[o][F]{}\ar@{-}[rr]^{r-1}\ar@{=}[d]^4_3&&*+[o][F]{}\ar@{=}[d]^4_3\\
   *+[o][F]{}\ar@{=}[rr]_0^2&&*+[o][F]{}\ar@{-}[rr]_1&&*+[o][F]{}\ar@{-}[rr]_0&&*+[o][F]{}\ar@{-}[rr]_1&&*+[o][F]{}\ar@{-}[rr]_2&&*+[o][F]{}\ar@{-}[rr]_3&&*+[o][F]{}\ar@{-}[rr]_4&&*+[o][F]{}\ar@{-}[rr]_5&&*+[o][F]{}\ar@{-}[rr]_6&&*+[o][F]{} \ar@{.}[rr]&&*+[o][F]{}\ar@{-}[rr]_{r-1}&&*+[o][F]{}}$\\
   \hline
  \textbf{6b} & $n-9$ &$\xymatrix@-1.5pc{&&&& && && && && && && &&*+[o][F]{}\ar@{-}[rr]^5\ar@{-}[d]^3 &&*+[o][F]{}\ar@{-}[rr]^6\ar@{=}[d]^4_3&& *+[o][F]{}\ar@{.}[rr]\ar@{=}[d]^4_3&&*+[o][F]{}\ar@{-}[rr]^{r-1}\ar@{=}[d]^4_3&&*+[o][F]{}\ar@{=}[d]^4_3\\
   *+[o][F]{}\ar@{=}[rr]_0^2&&*+[o][F]{}\ar@{-}[rr]_1&&*+[o][F]{}\ar@{-}[rr]_0&&*+[o][F]{}\ar@{-}[rr]_1&&*+[o][F]{}\ar@{-}[rr]_2&&*+[o][F]{}\ar@{-}[rr]_3&&
   *+[o][F]{}\ar@{-}[rr]_4&&*+[o][F]{}\ar@{-}[rr]_5&&*+[o][F]{}\ar@{-}[rr]_4&&*+[o][F]{}\ar@{-}[rr]_5&&*+[o][F]{}\ar@{-}[rr]_6&&*+[o][F]{} \ar@{.}[rr]&&*+[o][F]{}\ar@{-}[rr]_{r-1}&&*+[o][F]{}}$\\
\hline 
\textbf{6c} & $n-9$ & $\xymatrix@-1.6pc{&& && && && && && &&*+[o][F]{}\ar@{-}[rr]^5\ar@{-}[d]^3 &&*+[o][F]{}\ar@{-}[rr]^6\ar@{=}[d]^4_3&& *+[o][F]{}\ar@{.}[rr]\ar@{=}[d]^4_3&&*+[o][F]{}\ar@{-}[rr]^{r-2}\ar@{=}[d]^4_3&&*+[o][F]{}\ar@{-}[rr]^{r-1}\ar@{=}[d]^4_3&&*+[o][F]{}\ar@{-}[rr]^{r-2}\ar@{=}[d]^4_3&&*+[o][F]{}\ar@{=}[d]^4_3\\
   *+[o][F]{}\ar@{=}[rr]_0^2&&*+[o][F]{}\ar@{-}[rr]_1&&*+[o][F]{}\ar@{-}[rr]_0&&*+[o][F]{}\ar@{-}[rr]_1&&*+[o][F]{}\ar@{-}[rr]_2&&*+[o][F]{}\ar@{-}[rr]_3&&*+[o][F]{}\ar@{-}[rr]_4&&*+[o][F]{}\ar@{-}[rr]_5&&*+[o][F]{}\ar@{-}[rr]_6&&*+[o][F]{} \ar@{.}[rr]&&*+[o][F]{}\ar@{-}[rr]_{r-2}&&*+[o][F]{}\ar@{-}[rr]_{r-1}&&*+[o][F]{}\ar@{-}[rr]_{r-2}&&*+[o][F]{}}$\\
\hline
\textbf{6d} & $n-9$ &$\xymatrix@-1.5pc{&& && && && &&*+[o][F]{}\ar@{=}[rr]^5_3 &&*+[o][F]{}\ar@{-}[rr]^4 &&*+[o][F]{}\ar@{-}[rr]^5\ar@{-}[d]^3 &&*+[o][F]{}\ar@{-}[rr]^6\ar@{=}[d]^4_3&& *+[o][F]{}\ar@{.}[rr]\ar@{=}[d]^4_3&&*+[o][F]{}\ar@{-}[rr]^{r-1}\ar@{=}[d]^4_3&&*+[o][F]{}\ar@{=}[d]^4_3\\
   *+[o][F]{}\ar@{=}[rr]_0^2&&*+[o][F]{}\ar@{-}[rr]_1&&*+[o][F]{}\ar@{-}[rr]_0&&*+[o][F]{}\ar@{-}[rr]_1&&*+[o][F]{}\ar@{-}[rr]_2&&*+[o][F]{}\ar@{-}[rr]_3&&*+[o][F]{}\ar@{-}[rr]_4&&*+[o][F]{}\ar@{-}[rr]_5&&*+[o][F]{}\ar@{-}[rr]_6&&*+[o][F]{} \ar@{.}[rr]&&*+[o][F]{}\ar@{-}[rr]_{r-1}&&*+[o][F]{}}$\\
   \hline
\textbf{7a} & $n-10$ & $\xymatrix@-1.6pc{&& && && && && &&*+[o][F]{}\ar@{-}[rr]^4 &&*+[o][F]{}\ar@{-}[rr]^5\ar@{-}[d]^3 &&*+[o][F]{}\ar@{-}[rr]^6\ar@{=}[d]^4_3&& *+[o][F]{}\ar@{.}[rr]\ar@{=}[d]^4_3&&*+[o][F]{}\ar@{-}[rr]^{r-2}\ar@{=}[d]^4_3&&*+[o][F]{}\ar@{-}[rr]^{r-1}\ar@{=}[d]^4_3&&*+[o][F]{}\ar@{-}[rr]^{r-2}\ar@{=}[d]^4_3&&*+[o][F]{}\ar@{=}[d]^4_3\\
   *+[o][F]{}\ar@{=}[rr]_0^2&&*+[o][F]{}\ar@{-}[rr]_1&&*+[o][F]{}\ar@{-}[rr]_0&&*+[o][F]{}\ar@{-}[rr]_1&&*+[o][F]{}\ar@{-}[rr]_2&&*+[o][F]{}\ar@{-}[rr]_3&&*+[o][F]{}\ar@{-}[rr]_4&&*+[o][F]{}\ar@{-}[rr]_5&&*+[o][F]{}\ar@{-}[rr]_6&&*+[o][F]{} \ar@{.}[rr]&&*+[o][F]{}\ar@{-}[rr]_{r-2}&&*+[o][F]{}\ar@{-}[rr]_{r-1}&&*+[o][F]{}\ar@{-}[rr]_{r-2}&&*+[o][F]{}}$\\
 \hline
 \textbf{7b} & $n-10$ &$\xymatrix@-1.5pc{&& && && && *+[o][F]{}\ar@{-}[rr]^4&&*+[o][F]{}\ar@{-}[rr]^5 &&*+[o][F]{}\ar@{-}[rr]^4 &&*+[o][F]{}\ar@{-}[rr]^5\ar@{-}[d]^3 &&*+[o][F]{}\ar@{-}[rr]^6\ar@{=}[d]^4_3&& *+[o][F]{}\ar@{.}[rr]\ar@{=}[d]^4_3&&*+[o][F]{}\ar@{-}[rr]^{r-1}\ar@{=}[d]^4_3&&*+[o][F]{}\ar@{=}[d]^4_3\\
   *+[o][F]{}\ar@{=}[rr]_0^2&&*+[o][F]{}\ar@{-}[rr]_1&&*+[o][F]{}\ar@{-}[rr]_0&&*+[o][F]{}\ar@{-}[rr]_1&&*+[o][F]{}\ar@{-}[rr]_2&&*+[o][F]{}\ar@{-}[rr]_3&&*+[o][F]{}\ar@{-}[rr]_4&&*+[o][F]{}\ar@{-}[rr]_5&&*+[o][F]{}\ar@{-}[rr]_6&&*+[o][F]{} \ar@{.}[rr]&&*+[o][F]{}\ar@{-}[rr]_{r-1}&&*+[o][F]{}}$\\
   \hline
  \textbf{7c} & $n-10$ &$\xymatrix@-1.5pc{&&&& && && && && && &&  *+[o][F]{}\ar@{-}[rr]^4&&*+[o][F]{}\ar@{-}[rr]^5\ar@{-}[d]^3 &&*+[o][F]{}\ar@{-}[rr]^6\ar@{=}[d]^4_3&& *+[o][F]{}\ar@{.}[rr]\ar@{=}[d]^4_3&&*+[o][F]{}\ar@{-}[rr]^{r-1}\ar@{=}[d]^4_3&&*+[o][F]{}\ar@{=}[d]^4_3\\
   *+[o][F]{}\ar@{=}[rr]_0^2&&*+[o][F]{}\ar@{-}[rr]_1&&*+[o][F]{}\ar@{-}[rr]_0&&*+[o][F]{}\ar@{-}[rr]_1&&*+[o][F]{}\ar@{-}[rr]_2&&*+[o][F]{}\ar@{-}[rr]_3&&
   *+[o][F]{}\ar@{-}[rr]_4&&*+[o][F]{}\ar@{-}[rr]_5&&*+[o][F]{}\ar@{-}[rr]_4&&*+[o][F]{}\ar@{-}[rr]_5&&*+[o][F]{}\ar@{-}[rr]_6&&*+[o][F]{} \ar@{.}[rr]&&*+[o][F]{}\ar@{-}[rr]_{r-1}&&*+[o][F]{}}$\\
\hline 
\textbf{7d} & $n-10$ & $\xymatrix@-1.6pc{&& && && && && && *+[o][F]{}\ar@{-}[rr]^4 &&*+[o][F]{}\ar@{-}[rr]^5\ar@{-}[d]^3 &&*+[o][F]{}\ar@{-}[rr]^6\ar@{=}[d]^4_3&& *+[o][F]{}\ar@{.}[rr]\ar@{=}[d]^4_3&&*+[o][F]{}\ar@{-}[rr]^{r-2}\ar@{=}[d]^4_3&&*+[o][F]{}\ar@{-}[rr]^{r-1}\ar@{=}[d]^4_3&&*+[o][F]{}\ar@{-}[rr]^{r-2}\ar@{=}[d]^4_3&&*+[o][F]{}\ar@{=}[d]^4_3\\
   *+[o][F]{}\ar@{=}[rr]_0^2&&*+[o][F]{}\ar@{-}[rr]_1&&*+[o][F]{}\ar@{-}[rr]_0&&*+[o][F]{}\ar@{-}[rr]_1&&*+[o][F]{}\ar@{-}[rr]_2&&*+[o][F]{}\ar@{-}[rr]_3&&*+[o][F]{}\ar@{-}[rr]_4&&*+[o][F]{}\ar@{-}[rr]_5&&*+[o][F]{}\ar@{-}[rr]_6&&*+[o][F]{} \ar@{.}[rr]&&*+[o][F]{}\ar@{-}[rr]_{r-2}&&*+[o][F]{}\ar@{-}[rr]_{r-1}&&*+[o][F]{}\ar@{-}[rr]_{r-2}&&*+[o][F]{}}$\\
\hline
\textbf{7e} & $n-10$ &$\xymatrix@-1.5pc{&& && && && *+[o][F]{}\ar@{-}[rr]^6&&*+[o][F]{}\ar@{-}[rr]^5 &&*+[o][F]{}\ar@{-}[rr]^4 &&*+[o][F]{}\ar@{-}[rr]^5\ar@{-}[d]^3 &&*+[o][F]{}\ar@{-}[rr]^6\ar@{=}[d]^4_3&& *+[o][F]{}\ar@{.}[rr]\ar@{=}[d]^4_3&&*+[o][F]{}\ar@{-}[rr]^{r-1}\ar@{=}[d]^4_3&&*+[o][F]{}\ar@{=}[d]^4_3\\
   *+[o][F]{}\ar@{=}[rr]_0^2&&*+[o][F]{}\ar@{-}[rr]_1&&*+[o][F]{}\ar@{-}[rr]_0&&*+[o][F]{}\ar@{-}[rr]_1&&*+[o][F]{}\ar@{-}[rr]_2&&*+[o][F]{}\ar@{-}[rr]_3&&*+[o][F]{}\ar@{-}[rr]_4&&*+[o][F]{}\ar@{-}[rr]_5&&*+[o][F]{}\ar@{-}[rr]_6&&*+[o][F]{} \ar@{.}[rr]&&*+[o][F]{}\ar@{-}[rr]_{r-1}&&*+[o][F]{}}$\\
    \hline
 \textbf{7f} & $n-10$ &$\xymatrix@-1.5pc{&& && && && *+[o][F]{}\ar@{-}[rr]^4&&*+[o][F]{}\ar@{=}[rr]^5_3 &&*+[o][F]{}\ar@{-}[rr]^4 &&*+[o][F]{}\ar@{-}[rr]^5\ar@{-}[d]^3 &&*+[o][F]{}\ar@{-}[rr]^6\ar@{=}[d]^4_3&& *+[o][F]{}\ar@{.}[rr]\ar@{=}[d]^4_3&&*+[o][F]{}\ar@{-}[rr]^{r-1}\ar@{=}[d]^4_3&&*+[o][F]{}\ar@{=}[d]^4_3\\
   *+[o][F]{}\ar@{=}[rr]_0^2&&*+[o][F]{}\ar@{-}[rr]_1&&*+[o][F]{}\ar@{-}[rr]_0&&*+[o][F]{}\ar@{-}[rr]_1&&*+[o][F]{}\ar@{-}[rr]_2&&*+[o][F]{}\ar@{-}[rr]_3&&*+[o][F]{}\ar@{-}[rr]_4&&*+[o][F]{}\ar@{-}[rr]_5&&*+[o][F]{}\ar@{-}[rr]_6&&*+[o][F]{} \ar@{.}[rr]&&*+[o][F]{}\ar@{-}[rr]_{r-1}&&*+[o][F]{}}$\\
    \hline
\end{tabular}\]
\end{small}
\caption{Possibilities depending on $|Q|$ for $\Gamma$ when $A$ has permutation representation graph (2)}\label{GammaWhenA(1)2} 
\end{table}

If $A$ has the permutation representation graph (3) of Table~\ref{small} we get the same contradictions as in Tables~\ref{GammaWhenA(1)} and~\ref{GammaWhenA(1)2}.

Now suppose that $x\neq r-1$. The group $\Gamma_{>x}$ fixes the first $\Gamma_i$-orbit and the vertex $b$. 
Let $X:=\{1,\ldots, n\}\setminus {\rm Fix}(\Gamma_{>x})$.
When $i=0$ and $t=1$ we have $|X|\leq n-(|P|+2)$ and $x\leq \frac{|P|+1}{2}$, or $|X|\leq n-(|Q\setminus\{c\}|+2)$  and $x\leq\frac{|Q|}{2} $, giving in any case $x\leq\frac{n-|X|-1}{2}$ .
When $i=3$ and $t=4$, we have $|X|\leq n-(|P|+7)$ and $x\leq\frac{|P|+7}{2} $, or  $|X|\leq n-(|Q\setminus\{c\}|+7)$ and  $x\leq\frac{|Q|+6}{2}$. Hence $x\leq\frac{n-|X|}{2}$. Suppose we have the equality. Then $\Gamma$ contains one of following two graphs, having all vertices fixed by $\Gamma_{>x}$, except one of vertices $c$ or $d$ of graph (2).
$$(1)\; \xymatrix@-1.5pc{&& && && && && && && &&*+[o][F]{}\ar@{-}[rr]^6\ar@{-}[d]^4&& *+[o][F]{}\ar@{.}[rr]\ar@{-}[d]^4&&*+[o][F]{}\ar@{-}[rr]^{x}\ar@{-}[d]^4&&*+[o][F]{}\ar@{-}[d]^4\\
   *+[o][F]{}\ar@{=}[rr]_0^2&&*+[o][F]{}\ar@{-}[rr]_1&&*+[o][F]{}\ar@{-}[rr]_0&&*+[o][F]{}\ar@{-}[rr]_1&&*+[o][F]{}\ar@{-}[rr]_2&&*+[o][F]{}\ar@{-}[rr]_3&&*+[o][F]{}\ar@{-}[rr]_4&&*+[o][F]{}\ar@{-}[rr]_5&&*+[o][F]{}\ar@{-}[rr]_6&&*+[o][F]{} \ar@{.}[rr]&&*+[o][F]{}\ar@{-}[rr]_{x}&&*+[o][F]{}}$$    
or   
$$(2)\; \xymatrix@-1.5pc{&& && && && && && &&*+[o][F]{}\ar@{-}[rr]^5&&*+[o][F]{}\ar@{-}[rr]^6&& *+[o][F]{}\ar@{.}[rr]&&*+[o][F]{}\ar@{-}[rr]^x&&*+[o][F]{c}\\
   *+[o][F]{}\ar@{=}[rr]_0^2&&*+[o][F]{}\ar@{-}[rr]_1&&*+[o][F]{}\ar@{-}[rr]_0&&*+[o][F]{}\ar@{-}[rr]_1&&*+[o][F]{}\ar@{-}[rr]_2&&*+[o][F]{}\ar@{-}[rr]_3&&*+[o][F]{}\ar@{-}[rr]_4&&*+[o][F]{}\ar@{-}[rr]_5&&*+[o][F]{}\ar@{-}[rr]_6&&*+[o][F]{} \ar@{.}[rr]&&*+[o][F]{}\ar@{-}[rr]_x&&*+[o][F]{d}}$$

Then as $x\neq r-1$ there is another vertex $v$, not fixed by $\Gamma_{>x}$, incident to one of the vertices of one of the two graphs above.
If $\Gamma$ contains the  graph (1), $v$ must be attached to it by a $4$-edge, which is not possible. This rules out graph (1).
In the second graph, the two components of the figure are not adjacent, since otherwise $c$ and $d$ are both fixed by $\Gamma_{>x}$. 
Suppose first that $c$ is  the vertex not fixed by $\Gamma_{>x}$.
Then there is another component $U_l$ adjacent to the $\Gamma_{>4}$-orbit containing the vertex $d$, as in the following figure. 
$$\xymatrix@-1.5pc{&& && && && && && && &&*+[o][F]{}\ar@{-}[rr]^6\ar@{-}[d]^4&& *+[o][F]{}\ar@{.}[rr]\ar@{-}[d]^4&&*+[o][F]{}\ar@{-}[rr]^{x}\ar@{-}[d]^4&&*+[o][F]{}\ar@{-}[d]^4&U_l\\
   *+[o][F]{}\ar@{=}[rr]_0^2&&*+[o][F]{}\ar@{-}[rr]_1&&*+[o][F]{}\ar@{-}[rr]_0&&*+[o][F]{}\ar@{-}[rr]_1&&*+[o][F]{}\ar@{-}[rr]_2&&*+[o][F]{}\ar@{-}[rr]_3&&*+[o][F]{}\ar@{-}[rr]_4&&*+[o][F]{}\ar@{-}[rr]_5&&*+[o][F]{}\ar@{-}[rr]_6&&*+[o][F]{} \ar@{.}[rr]&&*+[o][F]{}\ar@{-}[rr]_{x}&&*+[o][F]{d}&}$$
But $\Gamma_{>x}$ is fix-point-free in $U_l$, a contradiction. This shows that $x\leq\frac{n-|X|-1}{2}$. 
If $d$ is the vertex not fixed by $\Gamma_{>x}$, then the component adjacent to  the component having the vertex $c$ must be fixed by $\Gamma_{>x}$, giving a contradiction as before.

This finishes the case where some component does not have a 2-fracture graph.

We now consider that each group $G_s$ has a 2-fracture graph for $F_s$. Particularly each component $U_s$ with $F_s \ne \emptyset$ has at least four vertices. 
In addition let  $\mathcal{F}$ be a fracture graph satisfying the property (P1) and such that $E:=\{F_s\,|\, F_s=\emptyset, s\in\{1,\ldots,c\}\}$ has maximal size.
Denote by $S$ and $\delta_s$ the following numbers.
$$\delta_s= |F_s|-\frac{|U_s|}{2}\mbox{ and }  S:=\displaystyle\sum_{s=1}^c\delta_s.$$ 
As $G_s$ has a 2-fracture graph for $F_s$, $\delta_s\leq 0$ for all $s\in\{1,\ldots, c\}$.
Let $U_p$ be the component such that $t+1\in F_p$. 
In Proposition~\ref{t} we proved that $(G_s)_{t+1}$ cannot be transitive in $U_s$. Therefore a fracture graph for $G_s$ with $s\neq p$
is disconnected. Hence $\delta_s\leq -0.5$ for $s\neq p$.
If $S\leq -3$, then $r-1-t\leq \frac{|U|}{2}-3$. For $(i,t)=(0,1)$, $|U|\leq n-2$ hence $r-2\leq \frac{n-2}{2}-3$. For $(i,t)=(3,4)$, $|U|\leq n-7$ hence $r-5\leq \frac{n-7}{2}-3$. In any case $r\leq \frac{n-3}{2}$. In what follows we prove that either $S\leq -3$ or we have (a) of the statement of this proposition. 

Note that if $F_s\neq \emptyset$, then as $G_s$ has a 2-fracture graph for $F_s$, $|U_s|$ has at least four vertices.

As $\rho_i$ is an even permutation and fixes the first $\Gamma_i$-orbit except the vertex $a$, it must act nontrivially as an odd permutation in $U$.
In what follows we consider separately the following cases: (1) $\rho_i$ swaps an odd number of pairs of vertices $(v,w )$ with $v\in U_s$ and $w\in U_x$ with $s\neq x$;
(2) $\rho_i$ acts as an odd permutation inside a component $U_s$ with  $F_s\neq \emptyset$ and $s\neq p$; (3) $\rho_i$ acts as an odd permutation inside $U_p$; (4)  $\rho_i$ acts as an odd permutation inside a component $U_s$ with $F_s= \emptyset$.

(1) If $\rho_i$ swaps an odd number of pairs of vertices $(v,w )$ with $v\in U_s$ and $w\in U_x$, then $|U_s|=|U_x|$ is odd. If $|U_x|=|U_s|=3$ then $F_x=F_s=\emptyset$, hence $S\leq -3$. Consider $|U_x|=|U_s|\geq 5$.
As $|E|$ is maximal, either $F_x=\emptyset$ or $F_s=\emptyset$.   We may assume that $|F_x|=0$. Then $|F_s|< \frac{5}{2}$.
Thus  $\delta_x\leq -2,5$ and $\delta_s\leq -0,5$. Hence $S\leq -3$.

(2) Consider now that  $\rho_i$ acts as an odd permutation inside a $\Gamma_{>t}$-orbit $U_s$ with $F_s\neq \emptyset$ and $s\neq p$.
In this case $\rho_i$ centralizes $G_s$, therefore  $|U_s|$ is even and $|U_s|\geq 6$. 
Moreover there is a  2-fracture graph for $G_s$ with labels in $S:=F_s\cup\{t+1\}$ being disconnected and having no cycles.
Hence $|F_s|+1\leq \frac{|U_s|-2}{2}$, thus $\delta_s\leq -2$.
Suppose  $\delta_s=-2$. In that case the permutation representation graph of $G_s$ is as follows, where $x$ is the maximal label in $U_s$ and $k\in\{t+1,\ldots, x\}$. 

$$ \xymatrix@-1.3pc{*+[o][F]{} \ar@{-}[rr]^{t+1} \ar@{.}[dd]_i &&
 *+[o][F]{}\ar@{.}[rr] \ar@{.}[dd]_i&&
 *+[o][F]{} \ar@{-}[rr]^k\ar@{.}[dd]_i&&
 *+[o][F]{} \ar@{-}[rr]^{k+1}\ar@{.}[dd]_i&&
 *+[o][F]{}\ar@{-}[rr]^{k+2}\ar@{.}[dd]_i\ar@<.3ex>@{-}[dd]^k && 
 *+[o][F]{}\ar@{.}[dd]_i\ar@<.3ex>@{-}[dd]^k  \ar@{.}[rr] &&
 *+[o][F]{} \ar@{-}[rr]^x \ar@{.}[dd]_i \ar@<.3ex>@{-}[dd]^k && 
 *+[o][F]{}\ar@{.}[dd]_i\ar@<.3ex>@{-}[dd]^k \\
&& && && && && && &&\\
*+[o][F]{} \ar@{-}[rr]_{t+1}  && *+[o][F]{}\ar@{.}[rr] &&*+[o][F]{} \ar@{-}[rr]_k && *+[o][F]{}\ar@{-}[rr]_{k+1}&& *+[o][F]{}\ar@{-}[rr]_{k+2} &&*+[o][F]{} \ar@{.}[rr] &&*+[o][F]{}\ar@{-}[rr]_x && *+[o][F]{}}
  $$
Suppose that $x\neq r-1$. Then $\Gamma_{>x}$ fixes $U_s$, the vertex $b$ and the first $\Gamma_i$-orbit. Hence there exists $x\in\{0,\ldots, r-1\}$ as in statement (a) of this proposition. Thus we may consider that $\delta_s< -2$. As in this case $|U_s|$ is even, $\delta_s$ is an integer thus $\delta_s\leq -3$ and $S\leq -3$.

If $x=r-1$, we have $r-1-t\leq \frac{|U_s|}{2}-1$. For $(i,t)=(0,1)$, $|U_s|\leq (n-2)-4$ where $4$ is the minimal size of $U_p$. For $(i,t)=(3,4)$,  $|U_s|\leq (n-7)-4$.
In any case we get $r\leq \frac{n-3}{2}$.

(3) If  $\rho_i$ acts as an odd permutation inside $U_p$, then a 2-fracture graph of $G_p$ is disconnected  without cycles. 
Hence $|F_p|\leq \frac{|U_p|-2}{2}$, that is $\delta_p\leq -1$.  Suppose we have the equality $|F_p|=\frac{|U_p|-2}{2}$. Then the permutation representation graph of $G_s$ is as the permutation representation graph given in case (2).
The only difference is that in this case  $t+1\in F_p$. In this case we get exactly the same result as before, that is statement (a) of this proposition.
Assume now that $\delta_p<-1$. As in case (2), $|U_p|$ is even,  thus $\delta_p$ is an integer. Then we may assume that $\delta_p\leq -2$. 

Suppose that $t+2\notin F_p$. If $t+2\notin  I_p$ or $(G_p)_{t+2}$ is transitive in $U_p$, then $\rho_{t+1}$ also centralizes $G_p$. But then there exists $l\in I_p\cap I_x$ with $I_x$  being the set of labels of a component $U_x$ adjacent to $U_p$. Then a 2-fracture graph of $G_x$ has at least three components, hence $\delta_x\leq -1$. Thus $\delta_p+\delta_s\leq -3$ and $|S|\leq -3$. If $t+2\in I_p$ and $(G_p)_{t+2}$ is intransitive in $U_p$, then $G_p$ has a disconnected 2-fracture graph without cycles for $F_s\cup \{\rho_{t+2}\}$. Hence $2(|F_p|+1)\leq |U_p|-2$. Moreover if we have the equality then $G_s$ is as the permutation representation graph given in case (2). Hence $2(|F_p|+1)< |U_p|-2$ and $\delta_p\leq -3$. 

We now consider that $t+2\in F_p$.
Suppose $|S|=-2.5$. Then $c=2$  and the second component $U_s$ is such that $\delta_s= -0.5$. In particular, $F_s\neq \emptyset$, for otherwise $\delta_s\leq -1$.
First suppose that $(G_s)_{t+2}$ is transitive in $U_s$. Then $\rho_{t+1}$ centralizes $G_s$. Hence $2|F_s|\leq |U_s|-2$ and $\delta_s\leq -1$, a contradiction.
Thus $G_{t+2}$ is intransitive in $U_s$. As $t+2\notin F_s$, a 2-fracture graph for $G_s$, with  labels in $F_s$, has at least 3 components with at most one cycle, hence $2|F_s|\leq |U_s|-2$, as before, a contradiction.

(4) Suppose that $\rho_i$ acts as an odd permutation inside a component $U_s$ with $F_s=\emptyset$. Either  $|U_s|=2$ or $|U_s|\geq 6$. But in the second case clearly  $\delta_s\leq -3$, thus we consider $U_s=\{u,v\}$.  Let $l$  be the number of components of size two, whose vertices are swapped by $\rho_i$. We need to consider the case $l$ odd. If $l\geq 3$ then $S\leq -3$, hence we assume $l=1$. Moreover let $\rho_i$ be a 2-transposition, $\rho_i=(a\,b)(u\,v)$. Then $I_s=\{t+1\}$.
$$ \xymatrix@-.1pc{
    *+[o][F]{}   \ar@{-}[r]^t     & *+[o][F]{u}  \ar@{=}[r]^{t+1}_{i=t-1}   & *+[o][F]{v}   \ar@{-}[r]^t      & *+[o][F]{}   }$$
    
If $t+1=r-1$ then $\Gamma$ has one of the following permutation representation graphs. 

$(i,t)=(0,1):$
$$ \xymatrix@-1pc{
 *+[o][F]{}   \ar@{-}[r]^0     & *+[o][F]{}  \ar@{-}[r]^1   & *+[o][F]{}   \ar@{=}[r]^0_2      & *+[o][F]{}  \ar@{-}[r]^1   & *+[o][F]{}  \ar@{-}[r]^2 & *+[o][F]{}  \ar@{-}[r]^1 & *+[o][F]{}  \ar@{-}[r]^2& *+[o][F]{}  \ar@{-}[r]^1  & *+[o][F]{} \ar@{.}[r] &*+[o][F]{} \ar@{-}[r]^1 & *+[o][F]{}  \ar@{-}[r]^2&*+[o][F]{} } $$

$(i,t)=(3,4):$
$$ \xymatrix@-1pc{
    *+[o][F]{}   \ar@{-}[r]^0     & *+[o][F]{}  \ar@{-}[r]^1   & *+[o][F]{}   \ar@{=}[r]^0_2      & *+[o][F]{}  \ar@{-}[r]^1   & *+[o][F]{}  \ar@{-}[r]^2 & *+[o][F]{}  \ar@{-}[r]^3 & *+[o][F]{}  \ar@{-}[r]^4& *+[o][F]{}  \ar@{=}[r]^3_5 &*+[o][F]{}  \ar@{-}[r]^4 & *+[o][F]{}   \ar@{-}[r]^5 & *+[o][F]{} \ar@{-}[r]^4 & *+[o][F]{}   \ar@{-}[r]^5 & *+[o][F]{} \ar@{-}[r]^4&*+[o][F]{} \ar@{-}[r]^5  & *+[o][F]{} \ar@{.}[r] &*+[o][F]{} \ar@{-}[r]^5 & *+[o][F]{}   \ar@{-}[r]^4 & *+[o][F]{} }$$
As by hypotheses $\Gamma_{>i}$ has a 2-fracture graph, for $(i,t)=(0,1)$, we have $n\geq 9 $, and for $(i,t)=(3,4)$, we have $n\geq 15$. In both cases $r\leq \frac{n-3}{2}$.

Assume $t+1\neq r-1$.
In this case $\Gamma_{>t+1}$ fixes the first $\Gamma_i$-orbit and $\{b,u,v,u\rho_t, v\rho_t\}$. 

If $|\{b,u,v,u\rho_t, v\rho_t\}|=5$, consider $x=t+1$. Then $x$ satisfies (a) of this proposition.
Otherwise, $|\{b,u,v,u\rho_t, v\rho_t\}|=4$. Then $\Gamma$ contains one of the following graphs.

\vspace{5pt}

For $(i,t)=(0,1)$:
$$ \xymatrix@-1pc{
    *+[o][F]{}   \ar@{-}[r]^0     & *+[o][F]{}  \ar@{-}[r]^1   & *+[o][F]{}   \ar@{=}[r]^0_2      & *+[o][F]{}  \ar@{-}[r]^1   & *+[o][F]{}  \ar@{-}[r]^2 & *+[o][F]{}    \ar@{.}[r] &  *+[o][F]{}  \ar@{-}[r]^2 & *+[o][F]{}\ar@{.}[r]& *+[.][F]{} }$$
    
For $(i,t)=(3,4)$:
$$ \xymatrix@-1pc{
    *+[o][F]{}   \ar@{-}[r]^0     & *+[o][F]{}  \ar@{-}[r]^1   & *+[o][F]{}   \ar@{=}[r]^0_2      & *+[o][F]{}  \ar@{-}[r]^1   & *+[o][F]{}  \ar@{-}[r]^2 & *+[o][F]{}  \ar@{-}[r]^3 & *+[o][F]{}  \ar@{-}[r]^4& *+[o][F]{}  \ar@{=}[r]^3_5 &*+[o][F]{}  \ar@{-}[r]^4 & *+[o][F]{}   \ar@{-}[r]^5 & *+[o][F]{}  \ar@{.}[r] & *+[o][F]{}  \ar@{-}[r]^5 & *+[o][F]{}\ar@{.}[r]& *+[.][F]{}}$$
In the first case, $\Gamma_{>2}$ has at least five fixed points. Therefore $x=2$ satisfies the statement (a) of this proposition. 
Consider the second case. As $B=\Gamma_{>3}\cong A_{n-6}$,  $\Gamma_{>2}\cong A_{n-5}$. In addition $\Gamma_{<5}\cong A_{10}$ or $\Gamma_{<5}\cong A_{10}\times \langle \tau\rangle $ where $\tau$ is an even involution. Thus $\Gamma_{>2}\cap\Gamma_{<5}$ is either $A_5$ or $A_5\times \langle \tau\rangle$. But $\langle \rho_3,\rho_4\rangle\cong D_5\times \langle \tau\rangle$, contradicting the intersection condition.
\end{proof}

\begin{prop}\label{specials}
Let $n\geq 12$. If $A$ is trivial or $A$ has the permutation representation graph (2) or (3) of Table~\ref{small} and $B$ is an alternating group, then $r\leq \frac{n-1}{2}$.
\end{prop}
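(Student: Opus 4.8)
The plan is to dispatch the statement by a short case analysis that funnels into the three preparatory results already proved for exactly this configuration. First I would pin down the index $i$. If $A$ is trivial then the first $\Gamma_i$-orbit is the single point $\{a\}$, forcing $i=0$; if instead $A$ is the string C-group (2) or (3) of Table~\ref{small}, then $A\cong L_2(5)$ acting on the six points of the projective line, with $J_A=\{0,1,2\}$, so $i=3$, $n=n_2+6$ and $B=\Gamma_{>3}\cong A_{n_2}$. In both cases $i+1\in\{1,4\}$, and the natural split is according to whether $\Gamma_{>i+1}$ acts transitively on the second $\Gamma_i$-orbit.

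If $\Gamma_{>i+1}$ is transitive on the second orbit the result is immediate: Proposition~\ref{0trivialtran} gives $r\le\frac{n-1}{2}$ when $A$ is trivial, and Proposition~\ref{12Tran} gives it when $A$ is graph (2) or (3). So I may assume $\Gamma_{>i+1}$ is intransitive on the second orbit. Under this assumption, if $\Gamma_{>i}$ possesses a 2-fracture graph, then Proposition~\ref{specialbubbles} applies verbatim and yields $r\le\frac{n-1}{2}$. Hence the entire substance of the proof is concentrated in the one remaining subcase: $\Gamma_{>i+1}$ intransitive on the second orbit while $\Gamma_{>i}$ admits no 2-fracture graph.

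To treat that subcase I would exploit that $B=\Gamma_{>i}\cong A_{n_2}$ is itself an alternating group, all of whose maximal parabolic subgroups are intransitive (being restrictions of parabolics of $\Gamma$) and which has no 2-fracture graph; thus $B$ lies in precisely the regime of this section at the smaller degree $n_2<n$. Concretely, the failure of a 2-fracture graph produces a smallest label $j>i$ for which $\rho_j$ moves a single pair of points lying in distinct $\Gamma_j$-orbits inside the second orbit. Decomposing $B$ at $j$ into its two orbit-actions $C$ and $D$ as in Proposition~\ref{CCD}, I would invoke the induction hypothesis of the main theorem on $n$ together with Propositions~\ref{bothimp}, \ref{Case2} and the present proposition applied to $B$ to bound $r-1-i=\mathrm{rank}(B)\le\lfloor\frac{n_2-1}{2}\rfloor$.

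The hard part will be that the naive combination of bounds is not quite sharp when $A\cong L_2(5)$: there $i=3$, $n_1=6$, and even the optimal inductive estimate $\mathrm{rank}(B)\le\frac{n_2-1}{2}=\frac{n-7}{2}$ only gives $r\le\frac{n+1}{2}$, overshooting the target by one (and likewise $r\le\frac{n}{2}$ when $A$ is trivial and $n$ is even). Removing this last unit is where the evenness of $\Gamma$ and, above all, the intersection property must intervene: in the extremal configuration $B$ attains its maximal rank and is glued to the $L_2(5)$-block through $\rho_3$ in a way that forces a forbidden coincidence of subgroups, exactly of the type $\Gamma_{>2}\cap\Gamma_{<5}\cong A_5\not\cong\langle\rho_3,\rho_4\rangle$ exploited in Proposition~\ref{small123}. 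I expect essentially all the delicacy to lie in confirming that this intersection (or parity) obstruction always rules out the boundary value $r=\frac{n+1}{2}$ (respectively $r=\frac{n}{2}$), thereby collapsing the estimate to $r\le\frac{n-1}{2}$ in every case.
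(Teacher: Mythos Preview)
Your reduction of the first two subcases is correct and matches the paper: when $\Gamma_{>i+1}$ is transitive on its support you invoke Propositions~\ref{0trivialtran} and~\ref{12Tran}, and when $\Gamma_{>i}$ has a $2$-fracture graph you invoke Proposition~\ref{specialbubbles}. The divergence is entirely in the remaining subcase, where $\Gamma_{>i+1}$ is intransitive but $\Gamma_{>i}$ has no $2$-fracture graph.

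There you propose to bound $\operatorname{rank}(B)$ directly by induction on $n$, obtaining $r-1-i\le\lfloor(n_2-1)/2\rfloor$. As you yourself note, this overshoots: with $i=3$ and $n_2=n-6$ you only get $r\le(n+1)/2$, and with $i=0$ and $n$ even only $r\le n/2$. Your plan to eliminate the extremal value by an ``intersection or parity obstruction'' is not a proof: you would need to know the explicit structure of \emph{every} maximal-rank string $C$-group on $A_{n_2}$ and check each cannot be glued to $A$ through a single $i$-edge, and you have not done this. The appeal to ``the present proposition applied to $B$'' is also circular unless you first verify that the $A$-side of $B$'s decomposition at $j$ is again trivial or one of the graphs~(2),~(3) of Table~\ref{small}, which need not hold.

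The paper closes this gap differently, and the mechanism is worth internalising. It does not bound $\operatorname{rank}(B)$ directly. Instead it first uses Propositions~\ref{Case2} and~\ref{case3} to show that any label $j$ with a single crossing $j$-edge must have $D$ trivial or $D$ equal to the dual of (2) or graph~(3); this singles out a subgroup $C$ (either $\Gamma_{r-1}$ or $\Gamma_{<j}$) which again has $A$-side trivial or (2)/(3) and alternating $B$-side, so Proposition~\ref{specialbubbles} applies to $C$. Crucially, it is the \emph{moreover} clause of Proposition~\ref{specialbubbles} that does the work: either $r_C\le(n_C-3)/2$ (which already gives $r\le(n-1)/2$), or $C$ has a specific forbidden graph, or there exists $x$ with $x\le\frac{n-|X|-1}{2}$ where $X=\{1,\dots,n\}\setminus\operatorname{Fix}(\Gamma_{>x})$. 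The dual argument produces $y$ with $r-y\le\frac{n-|Y|-1}{2}$. On the middle segment $\Gamma_{\{x+1,\dots,y-1\}}$ a (possibly disconnected) $2$-fracture graph exists, giving $y-1-x\le\frac{|X\cap Y|-1}{2}$. Summing the three inequalities and using $n=|X|+|Y|-|X\cap Y|$ yields $r\le\frac{n-1}{2}$ on the nose, with no extra unit to shave off. Your proposal never uses the trichotomy in the moreover clause, and that is precisely the missing idea.
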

\begin{proof}
Assume for contradiction that $r > \frac{n-1}{2}$.
By the dual of Proposition~\ref{ineq0Btrivial} we may consider, up to duality, that when $A$ is trivial, $i=0$.
By Propositions~\ref{0trivialtran}, \ref{12Tran} and  \ref{specialbubbles} we may assume that $\Gamma_{>i+1}$ is intransitive on $\{1,\ldots,n\}\setminus  {\rm Fix}(\Gamma_{>i+1})$, but  $\Gamma_{>i}$ does not have a 2-fracture graph. In addition, by Propositions~\ref{Case2} and \ref{case3} we can consider that if $\Gamma_j$ has exactly two orbits and a single $j$-edge connecting them, then the group orbits are either $C=\Gamma_{r-1}\cong A_{n-1}$ and $D$ trivial, or $C=\Gamma_{<j}\cong A_{n-6}$ and $D=\Gamma_{>j}\cong A_5$, $D$  having the permutation representation graph dual of (2) or the graph (3). 

Consider the group $C$. Let $r_C$ be the rank of $C$ and $n_C$ the degree of $C$. Thanks to the intersection condition $\Gamma_{i+1,\ldots,j-1}$ is the alternating group.
In addition $C_{>i+1}$ is intransitive on the second $C_i$-orbit (the orbit containing the vertex $b$), for otherwise $\Gamma_{>i+1}$ is transitive on $\{1,\ldots,n\}\setminus  {\rm Fix}(\Gamma_{>i+1})$. Thus $C$ satisfies the condition of Proposition~\ref{specialbubbles}.
Accordantly with that proposition there are three possibilities. 
The first one is $r_C\leq \frac{n_C-3}{2}$ which implies $r\leq \frac{n-1}{2}$, a contradiction.  
The second one gives  the following permutation representation graph for $C$.
$$\xymatrix@-1.3pc{ &&*+[o][F]{} \ar@{-}[rr]^1&&*+[o][F]{} \ar@{-}[rr]^2\ar@{-}[d]^0&&*+[o][F]{} \ar@{-}[rr]^3\ar@{=}[d]^1_0&&*+[o][F]{}\ar@{=}[d]^1_0 \ar@{.}[rr]\ar@{=}[d]^1_0&&*+[o][F]{}\ar@{-}[rr]^{r_C-1}\ar@{=}[d]^1_0&&*+[o][F]{}\ar@{=}[d]^1_0\\
   *+[o][F]{}\ar@{-}[rr]_0&&*+[o][F]{}\ar@{-}[rr]_1&&*+[o][F]{}\ar@{-}[rr]_2&&*+[o][F]{} \ar@{-}[rr]_3&&*+[o][F]{} \ar@{.}[rr]&&*+[o][F]{}\ar@{-}[rr]_{r_C-1}&&*+[o][F]{}}$$ 
But then, it is not possible to attach the permutation representation graph of $D$ by a single $j$-edge, a contradiction. 
The last establishes that 
there exists $x>i$ such that $x\leq \frac{n-|X|-1}{2}$ with $X:=\{1,\ldots, n\}\setminus {\rm Fix}(\Gamma_{>x})$.
On the other hand the dual of Proposition~\ref{specialbubbles} gives the same three possibilities for $B$ and, as before, one of them gives $r\leq \frac{n-1}{2}$, and the second gives a contradiction. Thus it may be assumed that there exists $y>i$ such that $r-y\leq \frac{n-|Y|-1}{2}$ with $X:=\{1,\ldots, n\}\setminus {\rm Fix}(\Gamma_{<y})$. If $\Gamma_{\{x+1,\ldots,y-1\}}$ is intransitive on $X\cap Y=\{1,\ldots, n\}\setminus {\rm Fix}(\Gamma_{\{x+1,\ldots,y-1\}})$ then it has a disconnected 2-fracture graph. Hence $y-1-x\leq \frac{|X\cap Y|-1}{2}$. Otherwise, as $\Gamma_{\{x+1,\ldots,y-1\}}$ has a 2-fracture graph it cannot be one of the graphs (2) or (3) of Table~\ref{small}. Thus by Propositions~\ref{small123} and \ref{induction}, we also have  $y-1-x\leq \frac{|X\cap Y|-1}{2}$. As $n= |X| + |Y| - |X \cap Y|$, we get $r\leq \frac{n-1}{2}$, a contradiction.
\end{proof}

The cases we have covered, that some $\Gamma_i$ is primitive, or transitive
imprimitive, or all $\Gamma_i$ are intransitive and $2$-fracture graphs do or do
not exist, exhaust all possibilities; so Theorem~\ref{maintheorem} is proved.

\section{Acknowledgements}
This research was supported by a Marsden grant (UOA1218) of the Royal Society of New Zealand, and by the Portuguese Foundation for Science and Technology (FCT-Funda\c{c}\~{a}o para a Ci\^{e}ncia e a Tecnologia), through CIDMA - Center for Research and Development in Mathematics and Applications, within project UID/MAT/04106/2013.

\bibliographystyle{amsplain}

\end{document}